\newtheorem{theorem}{Theorem}[section]
 \newtheorem{corollary}[theorem]{Corollary}
 \newtheorem{lemma}[theorem]{Lemma}
 \newtheorem{proposition}[theorem]{Proposition}
 \theoremstyle{definition}
 \newtheorem{definition}[theorem]{Definition}
 \theoremstyle{remark}
 \newtheorem{remark}[theorem]{Remark}
 \newtheorem{ex}[theorem]{Example}
 \numberwithin{equation}{section}
\def \bC {\mathbb C}
\def \bH {\mathbb H}
\def \bN {\mathbb N}
\def \bQ {\mathbb Q}
\def \bR {\mathbb R}
\def \bS {\mathbb S}
\def \bZ {\mathbb Z}
\def \cC {\mathcal C}
\def \cD {\mathcal D}
\def \cF {\mathcal F}
\def \cH {\mathcal H}
\def \cK {\mathcal K}
\def \cL {\mathcal L}
\def \cM {\mathcal M}
\def \cR {\mathcal R}
\def \cS {\mathcal S}
\def \cR {\mathcal R}
\def \cR {\mathcal R}
\def \cX {\mathcal X}
\def \cY {\mathcal Y}
\def \fg {\mathfrak g}
\def \fz {\mathfrak z}
\def \fS {\mathfrak S}
\def \fU {\mathfrak U}
\def \tr {\text{\rm tr}\,}
\def \trN {\text{\rm tr}_N}
\def \trK {\text{\rm tr}_K}
\def \diag {\text{\rm Diag}}
\def \supp {\text{\rm supp}\, }
\def \princ {\text{\rm princ} }
\def\Op{\text{\rm Op}}
\def\id{\text{\rm I}}
\def\Gh{\widehat G}
\def\dom {\text {\rm Dom}}
\def\eps{\varepsilon}
\def\div{\text{\rm div}}
\def\Tend#1#2{\mathop{\longrightarrow}\limits_{#1\rightarrow#2}}
\def\TendWeak#1#2{\mathop{\rightharpoonup}\limits_{#1\rightarrow#2}}
\def \sL{\mathscr L}
\def\and{\quad\mbox{and}\quad}
\begin{document}

\title[Defect measures on graded Lie groups]
{Defect measures on graded Lie groups}
\author{V. Fischer and C. Fermanian-Kammerer}
\date{June 2017}

\maketitle

\makeatletter
\renewcommand\l@subsection{\@tocline{2}{0pt}{3pc}{5pc}{}}
\makeatother

\begin{abstract}
In this article, we define a generalisation of microlocal defect measures (also known as H-measures) to the setting of graded nilpotent Lie groups. 
This requires to develop the notions of homogeneous symbols and classical pseudo-differential calculus adapted to this setting and defined via the representations of the groups.
Our method relies on the study of  the $C^*$-algebra of 0-homogeneous  symbols. Then, we compute microlocal defect measures for concentrating and oscillating sequences, which also requires to investigate the notion of oscillating sequences in graded Lie groups.  
Finally, we discuss compacity compactness approaches in the context of graded nilpotent Lie groups.  
\end{abstract}

$ $

{\bf 2010 Mathematics Subject Classification} : 35 - 35S05, 22 - 22E30, 46 - 46L89.

$ $

{\bf Keywords} : Analysis on graded Lie groups, pseudodifferential operators, classical pseudodifferential operators, classical symbol, microlocal defect measure, compacity compactness. 

$ $

\tableofcontents

\section{Introduction}

The aim of this article is to develop a new approach for analysing the lack of compactness of bounded square integrable families on nilpotent  Lie groups. The idea is to generalise the notions of  microlocal defect measures (MDM)  which were originally defined and studied  in the Euclidean setting
 by Luc Tartar and Patrick G\'erard independently  in the 90's, see 
\cite{tartar} and
\cite{gerard_91} respectively;
the original definition of~\cite{gerard_91} is recalled  in the next paragraph.
These notions have given a new insight on compensated compactness theorems as developed by Di Perna and Lions~\cite{di_perna_lions} for example. Such theorems allow one to pass to the limit on quadratic quantities appearing in mechanics for example and of the form $(Au_k,v_k)$ for weakly converging subsequences $(u_k)$ and $(v_k)$ provided their MDM's and the operator $A$ satisfy convenient assumptions. Such descriptions were already possible in some cases thanks to the {\it Div-Curl } Lemma \cite{francfort_murat}. The analysis of MDM's extends the range of applications of the ideas which are behind this lemma. 
It happens that the  Div-Curl lemma has recently been studied in the context of Lie groups: see the article~\cite{baldi_franchi} in the context of the Heisenberg group and~\cite{baldi_franchi_tchou_tesi} for Carnot groups. This motivates the investigation of MDM's and of compensated compactness questions on Lie groups.
Notice also that the MDM and their semi-classical counterpart (also called semi-classical or Wigner measures~\cite{GMMP}) have also proved useful for the analysis of pde-s in different context, from quantum chemistry  to theory of chaos and analysis of quantum ergodicity, including control theory
\cite{lebeau1,lebeau2}. And questions not so far to those of the latter references are now addressed in the context of sub-laplacians (see~\cite{colin_de_verdiere_trelat}).

\smallskip

Before discussing the setting of nilpotent Lie groups in more details, let us recall briefly 
the definition of an MDM in the Euclidian case.
On an open subset $\Omega$ of $\bR^n$, an MDM of a sequence 
$(u_k)_{k\in \bN}$ of functions  converging weakly to a distribution $u$ in $L^2(\Omega,loc)$ is a positive measure 
$\gamma$ on $ \Omega\times \bS^{n-1}$
such that, up to extraction of a subsequence, we have the convergence
\begin{equation}
\label{eq_limit_intro}
(A(u_{k_j}-u),u_{k_j}-u)_{L^2}
  \longrightarrow_{j\to\infty}
  \int_{\Omega\times \bS^{n-1}} a_0(x,\xi) \, d\gamma(x,\xi),
\end{equation}
for any test pseudodifferential operator $A$ of order 0
with principal symbol $a_0$;
by test pseudo-differential operators we mean for instance operators in the classical H\"ormander calculus, properly supported, defined through inverse Fourier transform by 
\begin{equation}\label{eq:Rnpseudo}
Au(x)=\int_{ \bR^n} a_0(x,\xi) {\rm e}^{ix\cdot\xi} \widehat u(\xi) d\xi,\;\; u\in{\mathcal S}(\bR^n),\;\; x\in\bR^n,
\end{equation}
(for simplicity, in the formula above, we have assumed that the symbol of $A$  exactly $a_0$ and have no term of lower order)
This extends easily to closed manifolds by replacing $\bS^{n-1}\times \Omega$
with the spherical co-tangent bundle, 
and also to vector-valued functions by taking suitable traces. Note that in the approach of Luc Tartar~\cite{tartar} test operators are the ones which are tensor products of  multiplication operators with Fourier multipliers, which is enough to construct the measure $\gamma$. 

\smallskip 

As we see from the paragraph above, the notion of (Euclidean) MDM relies
on microlocal analysis and the theory of pseudodifferential operators 
which has been developed since the  70's  in the Euclidean setting  (see~\cite{hormander,U}, or the review books~\cite{DimassiSjostrand,EvansZworski}).
The development of a pseudo-differential theory on nilpotent Lie groups has been the purpose of works by several authors, see e.g. \cite{geller,gellercore,rbeals2,bealsgreiner,christ_etc,taylor_84}. 
The recent contribution of the second author with her collaborators in~\cite{BFG} for the Heisenberg group has been followed by the monograph~\cite{R+F_monograph} of the first author and her collaborator, where they have defined a pseudodifferential calculus on graded nilpotent Lie groups. As in the Euclidean context, they are defined thanks to inverse Fourier transform with the major difference that  the Fourier transform of a function at a (unitary irreducible) representation  is an operator on the space of the representation.
Consequently
the  symbols of pseudo-differential operators introduced in 
\cite{R+F_monograph}  are measurable fields of operators on $G\times \Gh$ where $\Gh$ is the unitary dual, i.e. the set of unitary irreducible representations of $G$ modulo equivalence, that we shall denote by $\pi(x)$, $x\in G$.  Then, the operator $A$ whose symbol is the field of operator $\sigma(x,\pi)$ satisfies 
$$Au(x)=\int_{ \Gh} {\rm tr} \left( \pi(x)  \sigma (x,\pi)  \widehat u(\pi)\right) d\mu(\pi),\;\; u\in{\mathcal S}(G),\;\; x\in G,$$
which is the analogue of~(\ref{eq:Rnpseudo}) (precise definitions are given in sections~\ref{sec_preliminary} and~\ref{sec_pseudo}). 
It is on this latter result that relies the construction of MDM's  developed hereafter. However, we shall need to extend the theory and we develop in section~\ref{sec_hom} the classes of homogeneous symbols and of classical symbols, together with the notion of principal symbol. 

\medskip

We will see that the MDM's on a  graded nilpotent Lie group $G$ defined in this paper are non commutative objects, and this is not surprising since the Fourier transform is operator-valued.
More precisely,
 a MDM on $G$ consists of 
 a positive measure $\gamma$ on $G\times (\Gh/\bR^+)$
 and a field $\gamma$-integrable field $\Gamma$ of trace-class operators on $G\times (\Gh/\bR^+)$ (the quotient set $\Gh/\bR^+$ is defined by use of dilations and the class of $\pi\in \Gh$ will be denote by $\dot\pi$, see section~\ref{subsec_polar_dec_Gh}). Then, the analogue of formula~(\ref{eq_limit_intro}) which is proved in section~\ref{sec_defect_meas} writes 
\begin{equation}\label{eqniwprem}
(A(u_{k_j}-u),u_{k_j}-u)_{L^2}
  \longrightarrow_{j\to\infty}
  \int_{G\times (\Gh/\bR^+)} {\rm tr}\left(\sigma_0(x,\dot\pi) \Gamma(x,\dot\pi)\right) d\gamma(x,\dot\pi),
\end{equation}
where $\sigma_0$ is the principal symbol of the operator $A$, as defined in section~\ref{sec_hom}.
 Note that operator-valued measures have been introduced in semi-classical settings since the 90's \cite{nier,miller,Fermanian2micro,FermanianShocks} and, more recently, in the context of quantum ergodicity~\cite{AnantharamanMacia,AnantharamanMaciaSurv,AnRiv,MaciaTorus}. 
 As in the Euclidean case, one can develop applications to compensated compactness as discussed in section~\ref{sec_app}. 

\medskip
 
An important difference with the Euclidean context is the lack of Garding inequality, and this prevents us to adapt the main steps of the proof of existence of Euclidean MDM's given in \cite{gerard_91}.
This is overcome by the use of $C^*$-algebra formalism and the notion of state. 
More precisely, we prove that convergent limits of quantities similar to the left hand side of \eqref{eqniwprem} 
in the nilpotent setting define positive linear functionals on the algebra of homogeneous symbols of order 0, 
and that these linear functionals  extend to states on certain $C^*$ algebras. 
Understanding the spectrum of these $C^*$ algebras and decomposing these states yield the main result of the paper.
Reformulating our proof in the case of the abelian  group $\bR^n$
(which is trivially a graded Lie group) gives a new proof of the result in the Euclidean case, albeit too sophisticated.
Note also that,
although this paper belongs to the fields of
 micro-local analysis and non-commutative analysis,  
many of its tools and techniques relies on 
the progress of the last four decades in harmonic analysis on Lie groups:
for instance, in understanding the properties 
of spectral multipliers in sub-laplacians on nilpotent Lie groups (or more generally positive Rockland operators), 
or in describing homogeneous convolution operators in terms of their kernels.

\medskip 

Finally, we want to emphasize that the nilpotent Lie groups considered in this paper and in \cite{R+F_monograph} are graded, but this is a natural restriction.
Indeed, the class of graded nilpotent Lie groups contains the class of stratified Lie groups
 (also called Carnot groups in more geometric contexts), the prime example being the Heisenberg group.
Graded or even stratified groups are  the groups usually appearing in applications of analysis on nilpotent Lie groups, for instance in the study of operators sums of squares of vector fields, as in~\cite{colin_de_verdiere_trelat}.

\medskip

Our article is organised as follows. Section~\ref{sec_preliminary} is devoted to definitions on graded Lie groups and to analysis results that we shall use. Then we recall in section~\ref{sec_pseudo} the definition of pseudodifferential operators on graded Lie groups and we introduce in section~\ref{sec_hom} the notion of homogeneous symbols and of principal symbols. 
In Section \ref{sec_0homsymb}, we analyse the $C^*$-algebras formed by 0-homogeneous symbols.
The core of the paper consists in sections~\ref{sec_defect_meas} where we prove the existence of  MDM and analyse the fundamental examples of concentrating and oscillating sequences. Then, in section~\ref{sec_app}, we link our results with compensated compactness theory and definition of {\it Curl} operators on Lie groups. 

\medskip

\noindent {\it Convention:}
In the paper, if $\cX$ and $\cY$ are Banach spaces, 
$\sL(\cX,\cY)$ denotes the Banach space of bounded linear applications form $\cX$ to $\cY$. 
If a linear operator $A$ is densely defined in a Banach space $\cX$ and valued in a Banach space $\cY$, 
then writing $A\in \sL(\cX,\cY)$ means that $A$ extends to a bounded operator $\cX\to\cY$
and that we identify the operator $A$ with its bounded extension on $\cX$
 (which is unique). 
 If $\cX=\cY$, we write $\sL(\cX,\cX)=\sL(\cX)$.
 If $\cX$ is a Hilbert space, we define by $\sL^1(\cX)$ the trace-class operators on $\cX$ and we set $\|A\|_{\sL^1(\cX)}=\tr |A|$.
 
 \medskip
 
 \noindent{\it Acknowledgements:} The authors give deep thanks to Vladimir Georgescu, the exchange they had with him proved to be determinant for the orientation of their work. They also warmly thank Philippe Biane  and Patrick G\'erard for fruitful discussions and they are indebted to the {\it Centre International de Rencontres Math\'ematiques} for its ``{\it Recherche En Bin\^ome}'' program that has helped them to close the writing of this article.


\section{Preliminaries: graded Lie groups }
\label{sec_preliminary}

In this section, after defining graded Lie groups, 
we recall their homogeneous structure, the definition of the Fourier tranfrom and results on the dual.
A complete description of the notions of graded and homogeneous nilpotent Lie groups may be found in \cite[ch1]{folland+stein_82} and \cite[ch3]{R+F_monograph}.

\subsection{Graded Lie groups}
\label{subsec_G}

We will be concerned with graded Lie groups $G$
which means that $G$ is a connected and simply connected 
Lie group 
whose Lie algebra $\mathfrak g$ 
admits an $\bN$-gradation
$\mathfrak g= \oplus_{\ell=1}^\infty \mathfrak g_{\ell}$
where the $\mathfrak g_{\ell}$, $\ell=1,2,\ldots$, 
are vector subspaces of $\mathfrak g$,
almost all equal to $\{0\}$,
and satisfying 
$[\mathfrak g_{\ell},\mathfrak g_{\ell'}]\subset\mathfrak g_{\ell+\ell'}$
for any $\ell,\ell'\in \bN$.
This implies that the group $G$ is nilpotent.
Examples of such groups are the Heisenberg group
 and, more generally,
all stratified groups (which by definition correspond to the case $\fg_1$ generating the full Lie algebra $\fg$).

We construct a basis $X_1,\ldots, X_n$  of $\fg$ adapted to the gradation,
by choosing a basis $\{X_1,\ldots X_{n_1}\}$ of $\mathfrak g_1$ (this basis is possibly reduced to $\emptyset$), then 
$\{X_{n_1+1},\ldots,  X_{n_1+n_2}\}$ a basis of $\mathfrak g_2$
(possibly $\{0\}$ as well as the others)
and so on.
Via the exponential mapping $\exp_G : \mathfrak g \to G$, we   identify 
the points $(x_{1},\ldots,x_n)\in \bR^n$ 
 with the points  $x=\exp_G(x_{1}X_1+\cdots+x_n X_n)$ in~ $G$.
Consequently we allow ourselves to denote by $C(G)$, $\cD(G)$ and $\cS(G)$ etc,
the spaces of continuous functions, of smooth and compactly supported functions or 
of Schwartz functions on $G$ identified with $\bR^n$,
and similarly for distributions with the duality notation 
$\langle \cdot,\cdot\rangle$.

This basis also leads to a corresponding Lebesgue measure on $\mathfrak g$ and the Haar measure $dx$ on the group $G$,
hence $L^p(G)\cong L^p(\bR^n)$.
The group convolution of two functions $f_1$ and $f_2$, 
for instance square integrable, 
is defined via 
$$
 (f_1*f_2)(x):=\int_G f_1(y) f_2(y^{-1}x) dy.
$$
The convolution is not commutative: in general, $f_1*f_2\not=f_2*f_1$.

The coordinate function $x=(x_1,\ldots,x_n)\in G\mapsto x_j \in \bR$
is denoted by $x_j$.
More generally we define for every multi-index $\alpha\in \bN_0^n$,
$x^\alpha:=x_1^{\alpha_1} x_2 ^{\alpha_2}\ldots x_{n}^{\alpha_n}$, 
as a function on $G$.
Similarly we set
$X^{\alpha}=X_1^{\alpha_1}X_2^{\alpha_2}\cdots
X_{n}^{\alpha_n}$ in the (complex) universal enveloping Lie algebra $\fU(\fg)$ of $\mathfrak g$.
Let us recall 
that a vector of $\mathfrak g$ defines a left-invariant vector field on $G$ 
and, more generally, 
that the universal enveloping Lie algebra $\fU(\fg)$ of $\mathfrak g$ 
is isomorphic with the left-invariant differential operators; 
we keep the same notation for the vectors and the corresponding operators. 
However if $X\in \fg$, then $\tilde X$ denotes the corresponding right invariant vector field. More generally, if $T\in \fU(\fg)$, we denote by $\tilde T$ the right-invariant differential operator.

For any $r>0$, 
we define the  linear mapping $D_r:\mathfrak g\to \mathfrak g$ by
$D_r X=r^\ell X$ for every $X\in \mathfrak g_\ell$, $\ell\in \bN$.
Then  the Lie algebra $\mathfrak g$ is endowed 
with the family of dilations  $\{D_r, r>0\}$
and becomes a homogeneous Lie algebra in the sense of 
\cite{folland+stein_82}.
We re-write the set of integers $\ell\in \bN$ such that $\fg_\ell\not=\{0\}$
into the increasing sequence of positive integers
 $\upsilon_1,\ldots,\upsilon_n$ counted with multiplicity,
 the multiplicity of $\fg_\ell$ being its dimension.
 In this way, the integers $\upsilon_1,\ldots, \upsilon_n$ become 
 the weights of the dilations and we have $D_r X_j =r^{\upsilon_j} X_j$, $j=1,\ldots, n$,
 on the chosen basis of $\fg$.
 The associated group dilations are defined by
$$
D_r(x)=
rx
:=(r^{\upsilon_1} x_{1},r^{\upsilon_2}x_{2},\ldots,r^{\upsilon_n}x_{n}),
\quad x=(x_{1},\ldots,x_n)\in G, \ r>0.
$$
In a canonical way,  this leads to the notions of homogeneity for functions and operators.
For instance
the degree of homogeneity of $x^\alpha$ and $X^\alpha$,
viewed respectively as a function and a differential operator on $G$, is 
$[\alpha]=\sum_j \upsilon_j\alpha_{j}$.
This also leads to the notion of homogeneous distribution.

\begin{ex}
The Haar measure is $Q$-homogeneous:
\begin{equation}
\label{eq_int_Gf(rx)dx}
r^{Q}  \int_G f(rx) dx =\int_G f(y) dy,
\end{equation}
where 
$$
Q:=\sum_{\ell\in \bN}\ell \dim \fg_\ell=\upsilon_1+\ldots+\upsilon_n,
$$
 is called the \emph{homogeneous dimension} of $G$.
\end{ex}

Recall that a \emph{homogeneous quasi-norm} on $G$ is a continuous function $|\cdot| : G\rightarrow [0,+\infty)$ homogeneous of degree 1
on $G$ which vanishes only at 0. This often replaces the Euclidean norm in the analysis on homogeneous Lie groups.
Any homogeneous quasi-norm $|\cdot|$ on $G$ satisfies a triangle inequality up to a constant:
$$
\exists C\geq 1, \quad \forall x,y\in G,\quad
|xy|\leq C (|x|+|y|).
$$
Any two homogeneous quasi-norms $|\cdot|_1$ and $|\cdot|_2$ are equivalent in the sense that
$$
\exists C>0, \quad \forall x\in G,\quad
C^{-1} |x|_2\leq |x|_1\leq C |x|_2.
$$
There is an analogue of polar coordinates on $G$:
\begin{proposition}
\label{prop_polar_coord}
Let $|\cdot|$ be a fixed homogeneous quasi-norm on $G$.
Then there is a (unique) positive Borel measure $\sigma$ on 
the unit sphere 
$\fS:=\{x\in G\, : \, |x|=1\}$,
such that for all $f\in L^1(G)$, we have
\begin{equation}
\label{formula_polar_coord}
\int_G f(x)dx
=
\int_0^\infty \int_\fS f(ry) r^{Q-1} d\sigma(y) dr
.
\end{equation}
\end{proposition}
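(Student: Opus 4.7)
The plan is to use the dilation structure to set up a polar decomposition of $G\setminus\{0\}$ and to define $\sigma$ as (essentially) a pushforward of Haar measure. Consider the ``polar'' map
$$
\Phi : (0,\infty) \times \fS \longrightarrow G\setminus\{0\},
\qquad \Phi(r,y) := D_r(y) = ry.
$$
Using that $|\cdot|$ is continuous, $1$-homogeneous, and vanishes only at $0$, one checks immediately that $\Phi$ is a homeomorphism with inverse $x \mapsto \bigl(|x|,\, D_{|x|^{-1}}(x)\bigr)$; in particular it is a Borel isomorphism. Moreover $\fS$ is compact: it is closed by continuity, and the equivalence of any two homogeneous quasi-norms (recalled just above) together with the concrete example $\bigl(\sum_j x_j^{2M/\upsilon_j}\bigr)^{1/2M}$, $M$ a common multiple of the $\upsilon_j$, shows that $\fS$ is bounded in Euclidean coordinates.

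The candidate measure is defined directly: for any Borel set $E\subset \fS$ put
$$
\sigma(E) := Q \cdot m\bigl(\Phi((0,1]\times E)\bigr),
$$
where $m$ denotes the Haar measure on $G$. Positivity, finiteness (since $\Phi((0,1]\times \fS) \subset \{|x|\leq 1\}$ is bounded and hence of finite Haar measure), and countable additivity of $\sigma$ all follow at once from the corresponding properties of $m$ and the injectivity of $\Phi$.

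The verification of~\eqref{formula_polar_coord} reduces, by a standard $\pi$--$\lambda$ argument, to the case of ``dilated cylinders'' $A = \Phi((a,b]\times E)$ with $0 \leq a < b$ and $E\subset \fS$ Borel. For such a set, the $Q$-homogeneity~\eqref{eq_int_Gf(rx)dx} of Haar measure yields
$$
m\bigl(\Phi((0,t]\times E)\bigr) \;=\; m\bigl(D_t(\Phi((0,1]\times E))\bigr) \;=\; t^Q\, m\bigl(\Phi((0,1]\times E)\bigr) \;=\; \frac{t^Q}{Q}\,\sigma(E),
$$
so that
$$
m(A) \;=\; \frac{b^Q - a^Q}{Q}\,\sigma(E) \;=\; \left(\int_a^b r^{Q-1} dr\right)\sigma(E),
$$
which is exactly~\eqref{formula_polar_coord} evaluated at $f = \chi_A$. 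Linearity, monotone convergence, and the density of simple functions in $L^1(G)$ then propagate the identity to all $f\in L^1(G)$. Uniqueness is read off directly: if $\sigma'$ is any measure satisfying~\eqref{formula_polar_coord}, then taking $f = \chi_{\Phi((0,1]\times E)}$ forces $\sigma'(E) = \sigma(E)$.

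The proof is largely routine once the polar map $\Phi$ is in place; the one point demanding minor care is to check that the dilated cylinders generate the Borel $\sigma$-algebra of $G\setminus\{0\}$, which is immediate from the fact that $\Phi$ is a Borel isomorphism and that Borel rectangles generate the product Borel $\sigma$-algebra on the second-countable product $(0,\infty)\times \fS$. Thus the real content of the statement is the $Q$-homogeneity~\eqref{eq_int_Gf(rx)dx} of Haar measure, which is the only ingredient carrying the ``$r^{Q-1}$'' weight into the formula.
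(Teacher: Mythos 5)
Your proof is correct and is the standard argument for polar coordinates on a homogeneous group. The paper states Proposition~\ref{prop_polar_coord} without proof, citing \cite{folland+stein_82} and \cite{R+F_monograph} for background; the argument there is exactly the one you give: use the homeomorphism $\Phi(r,y)=ry$ from $(0,\infty)\times\fS$ onto $G\setminus\{0\}$, define $\sigma$ via the Haar measure of the truncated cone $\Phi((0,1]\times E)$, and deduce~\eqref{formula_polar_coord} from the $Q$-homogeneity~\eqref{eq_int_Gf(rx)dx} of Haar measure applied to dilated cylinders, then extend by a $\pi$--$\lambda$/monotone-class argument. All the steps you supply (compactness of $\fS$, that $\Phi((0,t]\times E)=D_t\Phi((0,1]\times E)$, $\sigma$-finiteness of $m$ on $G\setminus\{0\}$ for the monotone-class extension, and the uniqueness argument) check out.
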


There is also an analogue of the mean value theorem:
\begin{lemma}
\label{lem_mvt}
We fix $|\cdot|$ a homogeneous quasi-norm on $G$.
Then there exists a constant $C>0$ such that
for any $f\in C^1(G)$, $x\in G$,
we have
$$
|f(x) -f(0)|
\leq 
C
\sum_{j=0}^n |x|^{\nu_j} 
\sup_{y\in G} 
| X_j  f(y)|.
$$
\end{lemma}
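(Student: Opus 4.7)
The plan is to connect $0$ to $x$ by a piecewise-smooth path whose segments run along integral curves of the left-invariant vector fields $X_1,\ldots,X_n$, apply the fundamental theorem of calculus on each segment, and estimate the arc-length parameters via homogeneity. Concretely, I would use exponential coordinates of the second kind: define $\Phi:\bR^n\to G$ by
$$\Phi(t_1,\ldots,t_n):=\exp_G(t_1 X_1)\,\exp_G(t_2 X_2)\cdots\exp_G(t_n X_n).$$
Since $G$ is nilpotent and simply connected, $\Phi$ is a smooth diffeomorphism. Set $(x_1',\ldots,x_n'):=\Phi^{-1}(x)$, $y_0:=0$, and $y_k:=\exp_G(x_1' X_1)\cdots\exp_G(x_k' X_k)$ for $k=1,\ldots,n$, so that $y_n=x$.

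Using left-invariance of each $X_k$ and telescoping,
$$f(x)-f(0)=\sum_{k=1}^n\bigl(f(y_k)-f(y_{k-1})\bigr)=\sum_{k=1}^n\int_0^{x_k'}(X_k f)\bigl(y_{k-1}\exp_G(tX_k)\bigr)\,dt,$$
hence $|f(x)-f(0)|\leq \sum_{k=1}^n |x_k'|\,\sup_{y\in G}|X_k f(y)|$. It remains to show $|x_k'|\leq C\,|x|^{\upsilon_k}$. The key point is that $\Phi$ intertwines the dilations:
$$\Phi(r^{\upsilon_1} t_1,\ldots,r^{\upsilon_n} t_n)=D_r\,\Phi(t_1,\ldots,t_n),\qquad r>0.$$
Indeed, $D_r\exp_G(Y)=\exp_G(D_r Y)$ for $Y\in\fg$, and $D_r$ is a group automorphism of $G$: on the Lie algebra, if $X\in\fg_k$ and $Y\in\fg_\ell$, then $D_r[X,Y]=r^{k+\ell}[X,Y]=[D_rX,D_rY]$, so $D_r\in\mathrm{Aut}(\fg)$ lifts to $\mathrm{Aut}(G)$ by simple connectedness.

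Consequently the $k$-th second-kind coordinate of $D_r x$ equals $r^{\upsilon_k} x_k'$. For $x\neq 0$, choosing $r=1/|x|$ places $D_r x$ on the unit sphere $\fS$; since the continuous functions $x_k'\circ\Phi^{-1}$ are bounded on the compact set $\fS$, we obtain $|x_k'|\leq C_k\,|x|^{\upsilon_k}$. The case $x=0$ is trivial, and combining these estimates with the previous display yields the lemma. The only mildly subtle point is the intertwining of $\Phi$ with the dilations; everything else reduces to the fundamental theorem of calculus and compactness of $\fS$.
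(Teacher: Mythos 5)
Your proof is correct, and it is essentially the standard argument (the one in Folland--Stein, to which the paper defers for this background lemma): decompose into a telescoping sum along integral curves of the $X_k$ using exponential coordinates of the second kind, apply the fundamental theorem of calculus on each leg, and bound the coordinate $|x_k'|$ by $C|x|^{\upsilon_k}$ via the dilation-equivariance of the second-kind chart together with compactness of the unit sphere $\fS$. Your version is in fact mildly simpler than the classical one, because the paper's statement only asks for $\sup_{y\in G}$ rather than a supremum over a ball of radius comparable to $|x|$, so you are spared the extra step of estimating how far the broken path wanders from the origin.
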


\subsection{The dual of $G$ and the Plancherel theorem}
\label{subsec_Gh+plancherel}

Here we set some notations and recall some properties 
regarding the representations of the group $G$ 
(especially the Plancherel theorem)
and its enveloping Lie algebra $\fU(\fg)$.

In this paper,  we always assume that the representations of the group $G$ 
are strongly continuous and acting on separable Hilbert spaces.
Unless otherwise stated, the representations of $G$ will also be assumed unitary.
For a representation $\pi$ of $G$, 
we keep the same notation for the corresponding infinitesimal representation
which acts on the universal enveloping algebra $\fU(\fg)$ of the Lie algebra of the group.
It is characterised by its action on $\fg$:
\begin{equation}
\label{eq_def_pi(X)}
\pi(X)=\partial_{t=0}\pi(e^{tX}),
\quad  X\in \fg.
\end{equation}

The infinitesimal action acts on the space $\cH_\pi^\infty$
of smooth vectors, that is, the space of vectors $v\in \cH_\pi$ such that 
 the mapping $G\ni x\mapsto \pi(x)v\in \cH_\pi$ is smooth.

\begin{ex}
\label{ex_smooth_vectors}
Vectors of the form 
 $\pi(\phi)v$ where $\phi\in \cD(G)$ or $\cS(G)$ and $v\in \cH_\pi$
 are smooth.
\end{ex}
Here we have used the usual notation for 
the \emph{group Fourier transform} of a function  $f\in L^1(G)$
  at $\pi$: 
$$
 \pi(f) \equiv \widehat f(\pi) \equiv \cF_G(f)(\pi)=\int_G f(x) \pi(x)^*dx.
$$ 
We denote by $\Gh$ the unitary dual of $G$,
that is, the unitary irreducible representations of $G$ modulo equivalence and identify a unitary irreducible representation 
with its class in $\Gh$. The set $\Gh$ is naturally equipped with a structure of standard Borel space.

The Plancherel measure is the unique positive Borel measure $\mu$ 
on $\Gh$ such that 
for any $f\in C_c(G)$, we have:
\begin{equation}
\label{eq_plancherel_formula}
\int_G |f(x)|^2 dx = \int_{\Gh} \|\cF_G(f)(\pi)\|_{HS(\cH_\pi)}^2 d\mu(\pi).
\end{equation}
Here $\|\cdot\|_{HS(\cH_\pi)}$ denotes the Hilbert-Schmidt norm on $\cH_\pi$.
This implies that the group Fourier transform extends unitarily from 
$L^1(G)\cap L^2(G)$ to $L^2(G)$ onto 
$L^2(\Gh):=\int_{\Gh} \cH_\pi \otimes\cH_\pi^* d\mu(\pi)$
which we identify with the space of $\mu$-square integrable fields on $\Gh$.
Consequently \eqref{eq_plancherel_formula} holds for any $f\in L^2(G)$;
this formula is called the Plancherel formula.
Consequently, for any $\phi_1,\phi_2\in L^2(G)$, 
 the  quantity 
$\int_{\Gh} \tr |\widehat \phi_1(\pi)\widehat \phi_2(\pi)| d\mu(\pi)$ is finite and 
we have the Parseval formula
\begin{equation}
\label{eq_parseval_formula}
\int_G \phi_1 (x)\bar \phi_2(x) \ dx =\int_{\Gh}\tr\left(\widehat \phi_1(\pi)\widehat {\phi}_2(\pi)^*\right) d\mu(\pi).
\end{equation}
The orbit method furnishes an expression for the Plancherel measure $\mu$,
see \cite[Section 4.3]{corwingreenleaf}.
However we will not need this here.

The general theory on locally compact unimodular group of type I applies 
(see \cite{Dixmier_C*}):
let $\sL(L^2(G))$ be the space of bounded linear operators on $L^2(G)$
and 
let  $ \sL_L(L^2(G))$ be the subspace of those operators $T\in \sL(L^2(G))$ which are left-invariant, that is, commute with the left translation:
$$
T(f(g\cdot))(g_1) =(Tf)(gg_1), \quad f\in L^2(G), \ g,g_1\in G.
$$ 
Then there exists a field of bounded operators $\widehat T (\pi)\in \sL(\cH_\pi)$, $\pi\in \Gh$, such that
$$
\forall f\in L^2(G),\quad
\cF_G(Tf)(\pi) = \widehat T(\pi) \ \widehat f(\pi)
\quad \mbox{for} \ \mu-\mbox{almost all}\ \pi\in \Gh.
$$
Moreover the operator norm of $T$ is equal to 
$$
\|T\|_{\sL(L^2(G))}=\sup_{\pi\in \Gh} \|\widehat T(\pi)\|_{\sL(\cH_\pi)}.
$$
The supremum here has to be understood as the essential supremum with respect to the Plancherel measure $\mu$.
We denote by $L^\infty(\Gh)$ 
the space of fields of  operators $\sigma_\pi\in \sL(\cH_\pi)$, $\pi\in \Gh$, with 
$$
\|\sigma\|_{L^\infty(\Gh)}:=
\sup_{\pi\in \Gh} \|\sigma_\pi\|_{\sL(\cH_\pi)}<\infty,
$$
modulo equivalence under the Plancherel measure $\mu$.
Conversely, any  field in $L^\infty(\Gh)$ naturally yields a left-invariant bounded operator on $L^2(G)$.

By the Schwartz kernel theorem, 
any operator $T\in \sL_L(L^2(G))$ is a convolution operator 
and we denote by $T\delta_0\in \cS'(G)$ its convolution kernel:
$Tf = f * (T\delta_0)$, $f\in \cS(G)$.
We may call $T\delta_0$ the kernel of $T$ or of $\widehat T$.

We denote by $\cK(G)$ the space of convolution kernels of operators in $\sL_L(L^2(G))$
and we define the group Fourier transform of $T\delta_0$  as 
$$
\cF_G(T\delta_0)\equiv \widehat T.
$$
This extends the previous definition of the group Fourier transforms from $L^1(G)\cap \cK(G)$ or $L^2(G)\cap \cK(G)$ to $\cK(G)$ onto $L^\infty(\Gh)$.
The  group Fourier transform also extends for instance to 
 the space of convolution kernels $\cK_{a,b}(G)$
 of operators in $\sL_L(L^2_a(G), L^2_b(G))$.

\subsection{Dilations on $\Gh$}
\label{subsec_polar_dec_Gh}

Since the group $G$ is a (connected, simply connected) nilpotent Lie group, one can use the orbit method to construct unitary irreducible representations of $G$ (see e.g. \cite{corwingreenleaf}):
with this method,
to any  linear functional  $\varphi\in \fg^*$, 
one associates a class $\pi_\varphi\in \Gh$
of equivalent unitary irreducible representation.
Any element of $\Gh$ may be realised in this way
and two such classes in~$\Gh$ coincide when the linear functionals are on the same orbit for the co-adjoint action of~$G$ on~$\fg^*$.
In other words, one obtains a bijection
$\fg^*/G \longleftrightarrow \Gh$,
known as Kirillov's map.

The dilations of $G$ 
provide an action of $\bR^+=(0,\infty)$
on the Lie algebra $\fg$,
hence  on $\fg^*$ by duality, 
and one checks easily that 
quotienting by the co-adjoint action of $G$ and by the  $\bR^+$-action 
commutes.
Hence one obtains an action of $\bR^+$ on $\fg^*/G$.
The dilations also provide an action of~$\bR^+$ on the group $G$
thus on its dual via
\begin{equation}
\label{eq_def_rpi}
r\cdot \pi (x)
=
\pi(rx),\quad x\in G,\
\pi\in \Gh, \ r>0.
\end{equation}
One checks that 
Kirillov's map 
$\fg^*/G \longleftrightarrow \Gh$
is $\bR^{+}$-equivariant.

As usual, $\Gh$ is equipped with the hull-kernel topology 
and $\fg^*/G$ with the quotient Euclidean topology.
It is known \cite{brown} that  Kirillov's map is a homeomorphism.
One checks easily that, quotienting by the $\bR^+$-actions, 
the map 
$(\fg^*/G)/\bR^+ \longleftrightarrow \Gh/\bR^+$ 
is a homeomorphism.

\medskip

If $X\in \fg$ is of degree $d$, then it follows from
\eqref{eq_def_pi(X)} that we have
$$
(r\cdot \pi)(X)
=
\partial_{t=0}(r\cdot \pi)(e^{tX})
=
\partial_{t=0}\pi(re^{tX})
=
\partial_{t=0}\pi(e^{t D_r( X)})
=
\partial_{t=0}\pi(e^{t r^d X})
=r^d \pi(X).
$$
More generally for any $\alpha\in \bN_0^n$, we have:
\begin{equation}
\label{eq_rpiXalpha}
(r\cdot \pi)(X^\alpha) =r^{[\alpha]} \pi(X^\alpha),
\quad r>0.
\end{equation}
If $f\in L^1(G)$, 
then so does $f\circ D_{r^{-1}}$ 
and using \eqref{eq_int_Gf(rx)dx}, we have
\begin{eqnarray*}
(r\cdot \pi)(f) 
&=&
\int_G f(x) r\cdot\pi(x) ^* dx
=
\int_G f(x) \pi(rx) ^* dx
=
\int_G f\circ D_{r^{-1}} (x) \pi(x) ^* r^{-Q}dx
\\
&=&
r^{-Q} \pi\left(f\circ D_{r^{-1}}\right).
\end{eqnarray*}
More generally, 
using the properties of the group Fourier transforms, 
we obtain 
\begin{equation}
\label{eq_rcdotpif}
(r\cdot \pi)(f) =\pi (f_{(r)})
\quad\mbox{where}\ f_{(r)}=r^{-Q} f\circ D_{r^{-1}},
\quad r>0.
\end{equation}
for any $f$ in $L^1(G)$, $L^2(G)$ or $\cK_{a,b}(G)$.

Formula \eqref{eq_rcdotpif} and the Plancherel measure being unique, 
easily imply that for any positive measurable or integrable function $F$ on $\Gh$ and any $r>0$,
we have
\begin{equation}
\label{eq_dilation_mu}
 \int_{\Gh} F(r\cdot \pi)d\mu(\pi) 
 =
 r^{-Q}  \int_{\Gh} F(\pi)d\mu(\pi) .
\end{equation}

\bigskip

Let us fix a quasi-norm  $|\cdot|$ on $G$.
This yields a map on $\fg^*$ for which we keep the same notation. 
We set
$$
|[\varphi]| 
:=\inf\{|\varphi'|, \varphi'\in [\varphi] \}
=\min\{|\varphi'|, \varphi\in  [\varphi] \}, 
$$
where $[\varphi]$ denotes the co-adjoint class of $\varphi\in \fg^*$.
Naturally, the map $[\varphi]\mapsto |[\varphi]|$ is continuous  $\fg^*/\bR^+\to [0,\infty)$.
We set for each $\pi\in \Gh$, 
\begin{equation}
\label{eq_def|pi|}
|\pi|
:=\inf\{|\varphi|, \varphi\in \fg^*\ \mbox{s.t.} \ \pi\equiv \pi_\varphi\}
=\min\{|\varphi|, \varphi\in \fg^*\ \mbox{s.t.} \ \pi\equiv \pi_\varphi\},
\end{equation}
where $\pi_\varphi$ is the class of unitary irreducible representations of $G$ 
corresponding to the co-adjoint orbit containing $\varphi$.
This mapping is nothing else than the map $[\varphi]\mapsto |[\varphi]|$
transported by the Kirillov mapping.
There the function $\pi\mapsto |\pi|$ is continuous $\Gh\to [0,\infty)$.

One checks easily that the map $[\varphi]\mapsto |[\varphi]|$,
 and therefore the map $\pi\mapsto |\pi|$
 respect the dilations in the following way:
\begin{equation}
\label{eq_normGh_dilation}
|[\varphi(r^{-1}\cdot)]| = r |[\varphi]| 
\quad\mbox{and}\quad
|r\cdot\pi|=r|\pi| \ ,
\quad r>0,\ \pi\in \Gh, \ \varphi\in \fg^*.
\end{equation}
Furthermore
$$
|[\varphi]|=0 \ \Longrightarrow\ \varphi=0 \ , 
\qquad
|\pi|=0 \ \Longrightarrow \ \pi =1 \ , 
$$
where
$1=\pi_0$ denotes the trivial representation of $G$.

This induces a continuous surjection from  the `sphere' in $\Gh$
\begin{equation}
\label{eq_S||}
\Sigma_1=\Sigma_{1,|\cdot|}:=\{\pi\in \Gh, \ |\pi|=1\},
\end{equation}
onto  $(\Gh/\bR^+)\backslash\{1\}$.
This shows the following property:
\begin{lemma}
\label{lem_Kmap_quotient}
The $\bR^+$-quotient of Kirillov's map 
is a homoemorphism between the compact spaces
$(\fg^*/G)/\bR^+$ and $\Gh/\bR^+$.
\end{lemma}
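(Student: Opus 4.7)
The plan is to reduce the statement to two facts that are already in hand just above: Kirillov's map $K:\fg^*/G \to \Gh$ is a homeomorphism (by the cited result of Brown), and it is $\bR^+$-equivariant for the two dilation actions. The remaining work splits into a purely formal quotient argument for the homeomorphism part, and a compactness argument on the $\fg^*$-side.

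First I would argue that $K$ descends to a homeomorphism on the $\bR^+$-quotients. The composition
\begin{equation*}
\fg^*/G \xrightarrow{\ K\ } \Gh \longrightarrow \Gh/\bR^+
\end{equation*}
is continuous and $\bR^+$-invariant, so by the universal property of the quotient topology it descends to a continuous map $\bar K:(\fg^*/G)/\bR^+ \to \Gh/\bR^+$. Applying the same reasoning to $K^{-1}$ gives a continuous inverse, so $\bar K$ is a homeomorphism.

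For compactness I would work in $\fg^*$, where the topology is concrete. The closed unit ball $\bar B_1 := \{\varphi\in \fg^* : |\varphi|\leq 1\}$ is compact: the quasi-norm is continuous, is $1$-homogeneous for the $\bR^+$-action by \eqref{eq_normGh_dilation}, vanishes only at $0$, and on the finite-dimensional vector space $\fg^*$ these properties force the closed ball to be compact (via equivalence with a standard, explicitly proper choice of quasi-norm). The continuous composition $\bar B_1 \hookrightarrow \fg^* \to (\fg^*/G)/\bR^+$ is surjective: the trivial class is attained at $\varphi=0\in\bar B_1$, and for any non-trivial class one picks a representative $\varphi\in\fg^*$ and scales by $r=|\varphi|^{-1}$, producing a point of quasi-norm $1$ in the same $\bR^+$-orbit. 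Hence $(\fg^*/G)/\bR^+$ is the continuous image of a compact set, and via $\bar K$ so is $\Gh/\bR^+$.

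The only slightly delicate point is the compactness of $\bar B_1$, but this reduces to a standard property of homogeneous quasi-norms on graded vector spaces, so I would not belabour it.
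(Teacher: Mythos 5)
Your proof is correct and takes essentially the same approach as the paper's: the homeomorphism on the quotients follows formally from $\bR^+$-equivariance of Kirillov's map, and compactness is obtained as the continuous image of a compact subset of $\fg^*$. The only cosmetic difference is that you use the closed unit ball, whereas the paper works with the continuous surjection from the sphere $\Sigma_1$ onto $(\Gh/\bR^+)\backslash\{1\}$; your choice avoids the need to adjoin the trivial class separately.
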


\begin{remark}
In the case of the Heisenberg group, 
this sphere is reduced to two points.
\end{remark}

Having defined a unit sphere on $\Gh$, 
we can state a polar decomposition:

\begin{lemma}
\label{lem_pol_dec_Gh}
Let $|\cdot|$ be a quasi-norm on $G$
and let $|\cdot|$ be the associated mapping on $\Gh$ 
and the sphere $\Sigma_1=\Sigma_{1,|\cdot|}$ defined in \eqref{eq_S||} above.
\begin{enumerate}
\item 
The linear mapping 
$\displaystyle{
f\longmapsto \int_{1\leq|\pi|\leq e} 
f (|\pi|^{-1}\cdot\pi)  |\pi|^{-Q}
d\mu(\pi),
}$
defines a continuous positivity-preserving linear mapping on
the Banach space $\cC(\Sigma_1)$ of continuous function on the compact space $\Sigma_1$.
We denote  by $\varsigma =\varsigma_{|\cdot|}$ the corresponding Radon measure.
\item 
For any measurable function $F:G\to [0,\infty)$,
we have:
$$
\int_G F(\pi) d\mu=
\int_{\Sigma_1\times (0,\infty)} F(r\cdot \pi) d\varsigma(\pi) r^{Q-1} dr,
$$ 
\item 
In particular, 
if $u\in L^2(G)$, then 
$$
\int_{\Sigma_1 \times (0,\infty)}
\|\hat u(r\cdot\pi)\|_{HS}^2 
d \varsigma(\pi) r^{Q-1} dr
=\|u\|_{L^2(G)}^2.
$$
\end{enumerate}
\end{lemma}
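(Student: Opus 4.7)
The strategy is to exploit the dilation homogeneity~\eqref{eq_dilation_mu} of the Plancherel measure together with the polar homeomorphism
$$
\Phi:\Sigma_1\times(0,\infty)\longrightarrow\Gh\setminus\{1\},\qquad(\pi_0,r)\longmapsto r\cdot\pi_0,
$$
whose inverse is $\pi\mapsto(|\pi|^{-1}\cdot\pi,|\pi|)$ by~\eqref{eq_normGh_dilation} and Lemma~\ref{lem_Kmap_quotient}.

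\emph{Part (1).}
The continuity of $\pi\mapsto|\pi|$ and of the $\bR^+$-action imply that $\pi\mapsto|\pi|^{-1}\cdot\pi$ is continuous from the annulus $A=\{1\leq|\pi|\leq e\}$ into $\Sigma_1$, so $\pi\mapsto f(|\pi|^{-1}\cdot\pi)$ is a bounded Borel function for any $f\in\cC(\Sigma_1)$. The finiteness of $\mu(A)$ —~a standard property of the Plancherel measure, obtainable from~\eqref{eq_plancherel_formula} by exhibiting $u\in\cS(G)$ with $\|\hat u(\pi)\|_{HS}$ bounded below on $A$~— together with the inequality $|\pi|^{-Q}\leq 1$ on $A$, yields $|L(f)|\leq\|f\|_\infty\,\mu(A)$. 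Positivity being obvious and $\Sigma_1$ being compact by Lemma~\ref{lem_Kmap_quotient}, the Riesz representation theorem furnishes the Radon measure $\varsigma$.

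\emph{Part (2).}
Fixing $f\in\cC(\Sigma_1)$, consider the positive Radon measure on $(0,\infty)$ defined by
$$
I_f(h):=\int_{\Gh}f(|\pi|^{-1}\cdot\pi)\,h(|\pi|)\,d\mu(\pi),\qquad h\in\cC_c\bigl((0,\infty)\bigr).
$$
Applying~\eqref{eq_dilation_mu} to the integrand $\pi\mapsto f(|\pi|^{-1}\cdot\pi)\,h(|\pi|/s)$, and using the identities $|s\cdot\pi'|=s|\pi'|$ and $|s\cdot\pi'|^{-1}\cdot(s\cdot\pi')=|\pi'|^{-1}\cdot\pi'$, yields the scaling relation
$$
I_f\bigl(h(\cdot/s)\bigr)=s^Q\,I_f(h),\qquad s>0.
$$
Uniqueness of quasi-invariant Radon measures on the multiplicative group $(0,\infty)$ with this scaling forces $I_f=C_f\cdot r^{Q-1}\,dr$ for some constant $C_f\geq 0$. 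Choosing $h(r)=\mathbf{1}_{[1,e]}(r)\,r^{-Q}$ makes $\int_0^\infty h(r)\,r^{Q-1}\,dr=\int_1^e dr/r=1$ while simultaneously making $I_f(h)$ coincide with the defining expression $\varsigma(f)$ of part~(1); hence $C_f=\varsigma(f)$. This establishes the identity of part~(2) for test functions of the form $F(\pi)=f(|\pi|^{-1}\cdot\pi)\,h(|\pi|)$. Through $\Phi$, such products span a dense subspace of $\cC_c(\Gh\setminus\{1\})$ by Stone-Weierstrass, so the identity extends to $\cC_c(\Gh\setminus\{1\})$ and thence, by monotone convergence, to arbitrary non-negative measurable $F$ (the trivial representation being Plancherel-null for non-compact $G$).

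\emph{Part (3).}
Apply part~(2) to the non-negative measurable function $F(\pi)=\|\hat u(\pi)\|_{HS}^2$; Plancherel's formula~\eqref{eq_plancherel_formula} converts the integral over $\Gh$ into $\|u\|_{L^2(G)}^2$.

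\textbf{Expected obstacle.} The decisive step is the scaling identity $I_f(h(\cdot/s))=s^Q I_f(h)$ together with the extraction of the precise radial density $r^{Q-1}\,dr$ from it; the remaining measure-theoretic issues —~finiteness of $\mu$ on the annulus, vanishing of the atom at~$\{1\}$, and density of product test functions —~are routine once the scaling is in hand.
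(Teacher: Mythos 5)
Your proof is correct but reverses the logical flow of the paper's own argument. The paper's proof (for which only a two-line sketch is given) goes from Part~(3) to Part~(2): one checks that the right-hand side of the formula in Part~(3) equals $\|u\|_{L^2}^2$ by "simple manipulations" with the dilation identity~\eqref{eq_dilation_mu}, and then Part~(2) follows because the Plancherel measure is the unique Borel measure on $\Gh$ making the Plancherel formula hold. You instead prove Part~(2) directly, by deriving the scaling relation $I_f(h(\cdot/s)) = s^Q I_f(h)$ from~\eqref{eq_dilation_mu} and~\eqref{eq_normGh_dilation} and invoking uniqueness of Haar measure on the multiplicative group $(0,\infty)$ (via $\log$) to conclude that the radial density must be $r^{Q-1}\,dr$; Part~(3) then drops out of the Plancherel formula. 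Both routes rest on the same dilation homogeneity of $\mu$, but your argument is more self-contained — it does not need the global uniqueness statement for the Plancherel measure, only uniqueness of Haar measure on $\bR$ — and it makes explicit why the power $Q-1$ appears.

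Two small technical gaps are worth flagging. First, the test function $h(r) = \mathbf{1}_{[1,e]}(r)\,r^{-Q}$ used to pin down the normalisation constant $C_f = \varsigma(f)$ is not in $\cC_c((0,\infty))$; you should approximate it monotonically from below and above by continuous compactly supported functions and pass to the limit, which is harmless since $I_f$ and $r^{Q-1}\,dr$ are both Radon. Second, the finiteness of $\mu$ on the annulus $\{1\leq|\pi|\leq e\}$ — needed for Part~(1) — is asserted but not really proved: the cleanest route in this paper's setup is to use Corollary~\ref{cor_prop_multipliers_symbol} (finiteness of $\int_{\Gh}\tr|\chi(\pi(\cR))|\,d\mu$ for $\chi\in\cD(\bR)$ supported in $(0,\infty)$), combined with the homogeneity $\lambda_{\min}(r\cdot\pi)=r^\nu\lambda_{\min}(\pi)$ from Remark~\ref{rem_rpi}, to choose $\chi$ so that $\tr|\chi(\pi(\cR))|\geq 1$ on the annulus; exhibiting a single $u\in\cS(G)$ with $\|\hat u(\pi)\|_{HS}$ bounded below on the annulus, as you propose, is not obviously possible without a similar spectral argument.
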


\begin{proof}
One checks easily Part (1).
As the Plancherel measure is the unique measure such that the Plancherel formula holds, it suffices to show Part (3)
which follows from simple manipulations and  \eqref{eq_dilation_mu}.
\end{proof}

\begin{remark}
\label{rem_lem_pol_dec_Gh}
Adapting the ideas of the proof of the polar decomposition on a homogeneous Lie group
(see e.g. \cite[3.1.7]{R+F_monograph}), one can show the following property:
having fixed $|\cdot|$, 
if $F \in L^1(\Gh\backslash\{1\}, loc)$ is $(-Q)$-homogeneous, 
that is,
$F(r\cdot\pi)=r^{-Q}F(\pi)$, 
then 
we have
$$
\forall r>0,\qquad
\int_{1\leq |\pi|\leq r}
F(\mu)
   \ d\mu(\pi)
=
|\ln r|\int_{1\leq |\pi|\leq e}
F (\pi)  \ d\mu(\pi).
$$
Furthermore
the quantity $\int_{1\leq |\pi|\leq e}
F (\pi)  \ d\mu(\pi)$ is independent  of $|\cdot|$.
This may shed some light on  our choice of definition for $\varsigma$.
\end{remark}

\subsection{Rockland operators}

Here we recall the definition of Rockland operators
and their main properties.
See \cite[Ch.4]{R+F_monograph} for proofs and references.

\begin{definition}
A \emph{Rockland operator}
\index{Rockland!operator}
 $\cR$ on $G$ is 
a left-invariant differential operator 
which is homogeneous of positive degree and satisfies the Rockland condition:
\begin{center}
(R) 
for each unitary irreducible representation $\pi$ on $G$,
except for the trivial representation, \\
the operator $\pi(\cR)$ is injective on $\cH_\pi^\infty$, 
that is,
\end{center}
$$
 \forall v\in \cH_\pi^\infty,\qquad
\pi(\cR)v = 0 \ \Longrightarrow \ v=0
.
$$
\end{definition}

\begin{ex}
In the stratified case, one can check easily that
any (left-invariant negative)  \emph{sub-Laplacian}, 
that is
\begin{equation}
\label{eq_def_lih_subLapl}
\cL=Z_1^2+\ldots+Z_{n'}^2
\quad\mbox{with} \ Z_1,\ldots,Z_{n'} \
\mbox{forming any basis of the first stratum} \ \fg_1,
\end{equation}
 is a Rockland operator.
\end{ex}

\begin{ex}
On any graded group $G$, it is not difficult to see that 
 the operator
\begin{equation}
\label{eq_cR_ex}
\sum_{j=1}^n
(-1)^{\frac{\nu_o}{\upsilon_j}}
 c_j X_j^{2\frac{\nu_o}{\upsilon_j}}
\quad\mbox{with}\quad c_j>0
,
\end{equation}
is a Rockland operator of homogeneous degree $2\nu_o$
if $\nu_o$ is any common multiple of $\upsilon_1,\ldots, \upsilon_n$.
\end{ex}

Hence Rockland operators do exist on any graded Lie group 
(not necessarily stratified).

If the Rockland operator $\cR$  is formally self-adjoint,
that is, $\cR^*=\cR$
as elements of the universal enveloping algebra $\fU(\fg)$,
then $\cR$ and $\pi(\cR)$ admit self-adjoint extensions
on $L^2(G)$ and $\cH_\pi$ respectively.
We keep the same notation for their self-adjoint extension.
We denote by    $E$ and $E_\pi$ their spectral measure:
$$
\cR = \int_\bR \lambda dE (\lambda)
\quad\mbox{and}\quad
\pi(\cR) = \int_\bR \lambda dE_\pi(\lambda).
$$
Example of formally self-adjoint Rockland operators are the 
positive Rockland operators, 
that is,  Rockland operators $\cR$ that satisfy 
$$
\forall f \in \cS(G),\qquad
\int_G \cR f(x) \ f(x) dx\geq 0.
$$
One checks easily that the operator  in \eqref{eq_cR_ex} is positive.
This shows that positive Rockland operators always exist 
on any graded Lie group.
Note that if $G$ is stratified and $\cL$ is a (left-invariant negative) sub-Laplacian, 
then it is customary to privilege   $-\cL$ as a positive Rockland operator.

 The next lemma says that the point 0 
 can be neglected in the spectrum of a positive Rockland operator
 and its group Fourier transform.
 \begin{lemma}
\label{lem_normL2_E(epsilon,infty)}
 Let $\cR$ be a positive Rockland operator with spectral measure $E$.
\begin{enumerate} 
\item Then for any $f\in L^2(G)$,
$$
\|E[0,\epsilon)f\|_2\searrow 0
\quad\mbox{and}\quad
 \|E(\epsilon,+\infty) f\|_{L^2(G)}\searrow \|f\|_{L^2(G)}
\quad\mbox{as}\quad \epsilon\searrow 0.
$$
\item If $\pi$ is a non-trivial unitary irreducible representation of $G$, 
then the spectrum of $\pi(\cR)$ is a discrete subset of $(0,\infty)$.

\item 
Let $\psi\in C^\infty(\bR)$  be a  scalar valued function 
satisfying $\psi\equiv 1$ on $(\Lambda,\infty)$ 
for some $\Lambda\in \bR$.
Then 
$\psi(t\cR)$
and $\psi(t\pi(\cR))$ converges to the identity mapping 
of $L^2(G)$ and $\cH_\pi$ for the strong operator topology (SOT).
Moreover we have
\begin{equation}
\label{eq_psi(rpiR)=I}
\forall \pi\in \Gh,\quad\exists r=r_\pi>0,\quad
\forall r>r_\pi,\qquad
\psi(r\cdot\pi(\cR))=\psi(r^\nu\pi(\cR))=\id_{\cH_\pi}.
\end{equation}

\end{enumerate}
 \end{lemma}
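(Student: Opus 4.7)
The plan is to handle the three parts in sequence, using the Plancherel decomposition of $\cR$ together with the Schwartz-class heat kernel $p_t$ of $\cR$ and the Rockland condition. Throughout, $\nu$ denotes the homogeneous degree of $\cR$.

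For Part (1), I would first show that $E(\{0\})=0$, i.e.\ that the self-adjoint operator $\cR$ has trivial kernel on $L^2(G)$. By \cite[Ch.~4]{R+F_monograph} the semigroup $e^{-t\cR}$ is convolution by a heat kernel $p_t \in \cS(G)$ satisfying $p_t(x)=t^{-Q/\nu}p_1(t^{-1/\nu}x)$, whence $\|p_t\|_{L^2}=t^{-Q/(2\nu)}\|p_1\|_{L^2}\to 0$ as $t\to\infty$. If $f\in L^2(G)$ satisfies $\cR f=0$, then by functional calculus $f=e^{-t\cR}f=f*p_t$ for every $t>0$, and the Cauchy-Schwarz inequality gives $|(f*p_t)(x)|\leq\|f\|_{L^2}\|p_t\|_{L^2}$, so the continuous representative of $f$ vanishes identically; hence $f=0$ in $L^2(G)$. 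Once $E(\{0\})=0$ is granted, continuity from above of the spectral projections yields $\|E[0,\epsilon)f\|_{L^2}\to 0$ as $\epsilon\searrow 0$, while the Parseval identity $\|E[0,\epsilon]f\|^2+\|E(\epsilon,\infty)f\|^2=\|f\|^2$ produces the second monotone limit.

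For Part (2), the key ingredient is the compactness of $\pi(p_t)=e^{-t\pi(\cR)}$ on $\cH_\pi$ for every non-trivial $\pi\in\Gh$ and every $t>0$. Since $p_t\in\cS(G)$, this follows from the classical fact that, for a nilpotent Lie group, the group Fourier transform at a non-trivial unitary irreducible representation sends Schwartz functions to trace-class (hence compact) operators; see \cite[Ch.~4]{R+F_monograph} for the detailed analysis in the Rockland setting. Given this, $\pi(\cR)$ is a positive self-adjoint operator with compact resolvent, so its spectrum is a discrete sequence of eigenvalues of finite multiplicity in $[0,\infty)$, accumulating only at $+\infty$. Eigenvectors lie in $\bigcap_{k\geq 0}\dom(\pi(\cR)^k)\subset\cH_\pi^\infty$, and the Rockland condition then excludes $0$ from the spectrum, leaving a discrete subset of $(0,\infty)$.

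For Part (3), the homogeneity identity $(r\cdot\pi)(\cR)=r^\nu\pi(\cR)$ follows from~\eqref{eq_rpiXalpha} applied to each monomial composing $\cR$. Denoting the smallest eigenvalue of $\pi(\cR)$ by $\lambda_1(\pi)>0$ (from Part (2)) and setting $r_\pi:=(\max(\Lambda,0)/\lambda_1(\pi))^{1/\nu}$, every eigenvalue $r^\nu\lambda_j(\pi)$ of $r^\nu\pi(\cR)$ lies in $(\Lambda,\infty)$ for $r>r_\pi$, where $\psi\equiv 1$, so the functional calculus gives $\psi(r\cdot\pi(\cR))=\psi(r^\nu\pi(\cR))=\id_{\cH_\pi}$. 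The SOT convergence $\psi(t\pi(\cR))\to\id_{\cH_\pi}$ as $t\to\infty$ is then immediate. For $\psi(t\cR)\to\id$ on $L^2(G)$, the Plancherel formula reduces the claim to $\int_{\Gh}\|(\psi(t\pi(\cR))-\id)\widehat f(\pi)\|_{HS(\cH_\pi)}^2\,d\mu(\pi)\to 0$; the integrand tends pointwise ($\mu$-a.e.) to $0$ and is dominated by $(1+\|\psi\|_{L^\infty([0,\infty))})^2\|\widehat f(\pi)\|_{HS(\cH_\pi)}^2$, which is $\mu$-integrable, so dominated convergence concludes.

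The main obstacle lies in Part (2): the discreteness of the spectrum of $\pi(\cR)$ requires the Schwartz regularity of the heat kernel $p_t$, the compactness of $\pi(p_t)$ at every non-trivial $\pi$, and the smoothness of spectral eigenvectors. These are non-trivial properties of positive Rockland operators, drawn from \cite[Ch.~4]{R+F_monograph}; once in hand, Parts (1) and (3) follow by standard spectral-theoretic arguments.
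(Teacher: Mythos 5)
Your proof is correct and takes essentially the same approach as the paper's sketch: the heat kernel $h_t\in\cS(G)$ together with the scaling $h_t=t^{-Q/\nu}h_1\circ D_{t^{-1/\nu}}$ drives Part (1); the Schwartz regularity of $h_t$ and the consequent compactness of $\pi(h_t)$ drive Part (2); Part (3) is spectral calculus plus homogeneity of $\pi(\cR)$.

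Two local variations are worth noting. For Part (1) the paper asserts $\|e^{-t\cR}f\|_2\to 0$ directly (this needs the density argument via $\|f*h_t\|_2\le\|f\|_1\|h_t\|_2$ on $L^1\cap L^2$), whereas you obtain $E(\{0\})=0$ by the pointwise bound $|(f*p_t)(x)|\le\|f\|_2\|p_t\|_2$ on the continuous representative of a kernel element; the two are equivalent, and yours avoids the density step. For excluding $0$ from the spectrum in Part (2), you invoke the inclusion $\bigcap_k\dom(\pi(\cR)^k)\subset\cH_\pi^\infty$ (a Helffer--Nourrigat-type hypoellipticity result); this is correct and is in \cite{R+F_monograph}, but there is a lighter route already implicit in the paper: if $\pi(\cR)v=\lambda v$ then $v=e^{t\lambda}\pi(h_t)v$, and $\pi(h_t)v$ is a smooth vector by Example~\ref{ex_smooth_vectors} since $h_t\in\cS(G)$, so $v\in\cH_\pi^\infty$ without appealing to the heavier Sobolev-vector theorem. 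Finally, your expression $r_\pi=(\max(\Lambda,0)/\lambda_1(\pi))^{1/\nu}$ is the one the inequality $r^\nu\lambda_{\min}(\pi)>\Lambda$ actually requires; this is a slight correction of the formula given in Remark~\ref{rem_rpi}, which should be read with that substitution.
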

 \begin{proof}[Sketch of the proof]
Let us recall that the heat kernel $h_t$ of $\cR$ is by definition 
the right convolution kernel of $e^{-t\cR}$ and that it satisfies
$h_t=t^{-\frac Q\nu}h_1\circ D_{t^{-\frac 1\nu}}$ with $h_1\in \cS(G)$.
This has the two following consequences.
Firstly, it yields classically
$$
\|e^{-t\cR}f\|_2=\|f*h_t\| \Tend{t}{\infty}0,
\quad f\in L^2(G),
$$
which implies Part (1).
Secondly, it implies that the operators $\pi(h_t)$, $t>0$, 
are compact and form a continuous semi-group.
One checks easily that $\pi(\cR)$ is its infinitesimal generator,
and this yields Part (2).
Part (3) follows easily from spectral properties and Parts (1) and (2).
 \end{proof}

\begin{remark}
\label{rem_rpi}
We can give a value for $r_\pi$ in \eqref{eq_psi(rpiR)=I}:
$$
r_\pi = \frac{\lambda_{min}(\pi)}{\Lambda} \in (0,\infty),
$$
where $\lambda_{min}(\pi)$ is the minimum eigenvalue of $\pi(\cR)$, 
see Part 2 of Lemma \ref{lem_normL2_E(epsilon,infty)}.
In this case,  $r_\pi$ is $\nu$-homogeneous in $\pi$:
$$
\forall t>0, \ \pi\in \Gh,\qquad
r_{t\cdot\pi}
= \frac{\lambda_{min}(t\cdot\pi)}{\Lambda}
= \frac{t^\nu\lambda_{min}(\pi)}{\Lambda}.
$$
Hence the range of $r_\pi$ as $\pi$ runs over $\Gh$ is $(0,\infty)$.
\end{remark}

The properties of the functional calculus of $\cR$ and of the group 
Fourier transform imply the following lemma.

\begin{lemma}
\label{lem_homogeneity_multipliers}
Let $\cR$ be a positive Rockland operator of homogeneous degree $\nu$
and  $f:\bR^+\to \bC$ be a measurable function.
We assume that the domain of the operator $ f(\cR)= \int_\bR f(\lambda) dE (\lambda) $ 
contains $\cS(G)$.
Then for any $x\in G$, 
$$
 \left( f(r^\nu \cR) \phi\right)\circ D_r =f(\cR) \left(\phi \circ D_r\right),
\quad\phi\in \cS(G),
$$
where $\nu$ denotes the homogeneous degree of $\cR$,
and
\begin{equation}
\label{eq_homogeneity_kernel_multipliers}
 f(r^\nu \cR) \delta_0 (x) =r^{-Q} f(\cR)\delta_0(r^{-1}x),
 \quad x\in G, 
\end{equation}
where  $f(\cR)\delta_0$ denotes the right convolution kernel of $f(\cR)$.

Let $\pi$ be an irreducible unitary representation. 
Then the domain of the operator
$ f(\pi(\cR))= \int_\bR f(\lambda) dE_\pi (\lambda) $
contains $\cH_\pi^\infty$ and we have
\begin{equation}
\label{eq_pi_mulitpliers1}
\cF \{f(\cR)\varphi\}(\pi)=
 f(\pi(\cR)) \ \widehat \varphi(\pi), 
 \quad \phi\in \cS(G).
\end{equation}
\end{lemma}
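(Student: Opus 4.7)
The plan is to use the fact that the group dilation acts as a (scaled) unitary on $L^2(G)$ which conjugates $\cR$ into $r^\nu\cR$, promote this to the full Borel functional calculus, and then transfer the identity to kernels and to the infinitesimal representation.

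Concretely, I would define $V_r\phi:=\phi\circ D_r$; by \eqref{eq_int_Gf(rx)dx} the operator $U_r:=r^{Q/2}V_r$ is unitary on $L^2(G)$ and preserves $\cS(G)$. Since $\cR$ is $\nu$-homogeneous as a left-invariant differential operator, one has $U_r^{-1}\cR U_r=r^\nu\cR$ on $\cS(G)$; the essential self-adjointness of positive Rockland operators on $\cS(G)$, recalled earlier, lets me upgrade this to an equality of unbounded self-adjoint operators on $L^2(G)$. Functoriality of Borel functional calculus under unitary conjugation then gives
\[
U_r^{-1}f(\cR)U_r = f(r^\nu\cR),
\]
and this identity applied to $V_r\phi$ yields both the first claimed equality and the fact that $\cS(G)\subset \dom f(r^\nu\cR)$ whenever $\cS(G)\subset\dom f(\cR)$.

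Next, write $K:=f(\cR)\delta_0$ and $K_r:=f(r^\nu\cR)\delta_0$, so $f(\cR)\phi=\phi*K$ and $f(r^\nu\cR)\phi=\phi*K_r$. Since $D_r$ is a group automorphism of $G$, a change of variable $y=D_rw$ in $\phi*K_r$ gives $(\phi*K_r)\circ D_r = r^Q(\phi\circ D_r)*(K_r\circ D_r)$; comparing with $(\phi\circ D_r)*K$, which is what the first identity produces, forces $r^QK_r\circ D_r = K$, i.e.\ \eqref{eq_homogeneity_kernel_multipliers}. For the statement at the representation level, I would take the group Fourier transform of $f(\cR)\phi=\phi*K$: with the convention of Section~\ref{subsec_Gh+plancherel}, $\cF(\phi*K)(\pi)=\widehat K(\pi)\widehat\phi(\pi)$, so it is enough to show $\widehat K(\pi)=f(\pi(\cR))$ on $\cH_\pi^\infty$. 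I would first establish this for $f_t(\lambda)=e^{-t\lambda}$, where $K$ is the heat kernel $h_t$: then $(\pi(h_t))_{t>0}$ is a strongly continuous contraction semigroup, and computing its generator on Schwartz vectors $\pi(\phi)v$ (Example~\ref{ex_smooth_vectors}) identifies it with $\pi(\cR)$, so $\pi(h_t)=e^{-t\pi(\cR)}$ and in particular $\cH_\pi^\infty\subset\dom f(\pi(\cR))$. A Stone--Weierstrass argument on bounded continuous functions followed by a monotone class extension then covers every admissible Borel $f$.

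The main obstacle is this last identification of $\widehat{f(\cR)\delta_0}(\pi)$ with $f(\pi(\cR))$, because it requires aligning two a priori unrelated spectral theorems---one for $\cR$ on $L^2(G)$, one for $\pi(\cR)$ on $\cH_\pi$---through the group Fourier transform, while carefully tracking domains inside $\cH_\pi^\infty$. The decisive inputs are the density of smooth vectors of the form $\pi(\phi)v$ and the monotone approximation of spectral projectors supplied by Lemma~\ref{lem_normL2_E(epsilon,infty)}, which together let me reduce general $f$ to the heat semigroup case, where everything is explicit.
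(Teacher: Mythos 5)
The paper gives no proof of this lemma—it only cites \cite[Ch.~4]{R+F_monograph} for the Rockland subsection—so there is no textual argument to compare against. Your route is the standard one, and it matches the perspective the authors themselves take elsewhere: in the sketch of Lemma~\ref{lem_normL2_E(epsilon,infty)} they likewise identify $\pi(\cR)$ as the infinitesimal generator of the compact semigroup $\pi(h_t)$. Your first two identities are correct: $U_r^{-1}\cR U_r = r^\nu\cR$ follows from the $\nu$-homogeneity of $\cR$ and the $Q$-homogeneity of the Haar measure, Borel functional calculus respects unitary conjugation, and the change-of-variables manipulation $(\phi*K_r)\circ D_r = r^Q(\phi\circ D_r)*(K_r\circ D_r)$ is exactly how \eqref{eq_homogeneity_kernel_multipliers} is extracted from the first identity. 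The heat-semigroup step for \eqref{eq_pi_mulitpliers1} is also the right starting point, and the generator computation on vectors $\pi(\phi)v$ is correct because $\pi(h_t)\pi(\phi)v=\pi(\phi*h_t)v=\pi(e^{-t\cR}\phi)v$ and this subspace is invariant under the semigroup, hence a core.

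Where the sketch is genuinely incomplete is the passage from the heat kernel to the class of $f$ allowed by the lemma, and with it the assertion $\cH_\pi^\infty\subset\dom f(\pi(\cR))$. Stone--Weierstrass over $\operatorname{span}\{e^{-t\lambda}:t>0\}$ gives uniform approximation in $C_0$ only after you subtract to kill the value at $0$ (the exponentials tend to $1$ there), and it yields the identity $\widehat{f(\cR)\delta_0}(\pi)=f(\pi(\cR))$ only for bounded $f$, using $\|T\|_{\sL(L^2(G))}=\sup_\pi\|\widehat T(\pi)\|_{\sL(\cH_\pi)}$ to transfer uniform convergence across the Fourier transform. The lemma, however, allows unbounded measurable $f$ with only $\cS(G)\subset\dom f(\cR)$. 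To close the argument one should truncate, $f_n:=f\cdot 1_{[0,n]}$, apply the bounded case, and pass to the limit. The group side is straightforward, but on the representation side one must actually verify convergence of $f_n(\pi(\cR))\widehat\phi(\pi)v$ and that the limit lies in $\dom f(\pi(\cR))$; this uses the discreteness of the spectrum of $\pi(\cR)$ (Lemma~\ref{lem_normL2_E(epsilon,infty)} Part~(2)) together with the bound $\|\pi(\cR)^N\widehat\phi(\pi)\|_{\sL(\cH_\pi)}=\|\widehat{\cR^N\phi}(\pi)\|_{\sL(\cH_\pi)}\leq\|\cR^N\phi\|_{L^1(G)}$ to control the tails of $\sum_j|f(\lambda_j)|^2\|P_jw\|^2$ on smooth vectors $w$. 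Your phrase ``monotone class extension then covers every admissible Borel $f$'' glosses over exactly this domain-tracking step; spelling it out would make the proof complete.
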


\subsection{Sobolev spaces}
The (inhomogeneous) \emph{Sobolev spaces} $L^2_a(G)$, 
resp. the \emph{homogeneous Sobolev spaces} $\dot L^2_a(G)$,
$a\in \bR$, as 
the completion of the domain $\dom (\id+\cR)^\frac a\nu$
of $(\id+\cR)^\frac a\nu$, 
resp. the domain $\dom (\cR^\frac a\nu)$
of $\cR^\frac a\nu$, 
for the Sobolev norm 
$$
\|f\|_{L^2_a(G),\cR}:= \|(\id+\cR)^\frac a\nu f\|_{L^2(G)},
\quad\mbox{resp.}\quad
\|f\|_{\dot L^2_a(G),\cR}:= \|\cR^\frac a\nu f\|_{L^2(G)}.
$$
We realise the elements of $L^2_a(G)$ as tempered distributions and we have
$$
\cS(G)\subset \dom (\id+\cR)^\frac a\nu \subset L^2_a(G) \subset \cS'(G).
$$
We realise the elements of $\dot L^2_a(G)$ as the linear functionals $f$ on 
$\dom(\bar\cR^{-\frac a\nu})$ satisfying
$$
\exists C>0,\qquad
\forall \phi\in \dom (\bar\cR^{-\frac a\nu}),
\qquad |f(\phi)|\leq C \|\bar\cR^{-\frac a\nu}\phi\|_{L^2(G)}.
$$
Each $f\in \dot L^2_a(G)$ defines a unique function $\cR^{\frac a\nu} f\in L^2(G)$ via the continuous linear functional $\psi\mapsto f(\bar \cR^{\frac a\nu}\psi)$.

The following lemma implies that the Sobolev spaces defined above do not depend on the choice of Rockland operators:

\begin{lemma}
\label{lem_2R_fractional_power}
Let $\cR_1$ and $\cR_2$ be two positive Rockland operators
of homogeneous degree $\nu_1$ and $\nu_2$ respectively.
Then for any $a\in \bR$, 
the operators 
$(\id+\cR_1)^{\frac a {\nu_1}}(\id+\cR_2)^{-\frac a {\nu_2}}$
and
$(\cR_1)^{\frac a {\nu_1}}(\cR_2)^{-\frac a {\nu_2}}$
extends to  bounded operators on $L^2(G)$.
\end{lemma}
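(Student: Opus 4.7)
The plan is to combine Stein's complex interpolation with the equivalence of integer-order Sobolev norms. Both operators in the statement are left-invariant, being in the functional calculus of the left-invariant operators $\cR_j$; so by the discussion in Section~\ref{subsec_Gh+plancherel}, $L^2(G)$-boundedness amounts to essential boundedness of the corresponding Fourier multipliers on $\Gh$.

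I would treat the two cases in parallel using the analytic families
$$
T_z := (\id+\cR_1)^{z/\nu_1}(\id+\cR_2)^{-z/\nu_2}, \qquad S_z := \cR_1^{z/\nu_1}\cR_2^{-z/\nu_2}, \qquad z\in\bC.
$$
On the line $\mathrm{Re}\,z = 0$, both $T_{it}$ and $S_{it}$ are bounded on $L^2(G)$ with operator norm at most $1$, being compositions of functional calculi of positive self-adjoint operators by unimodular symbols. (For $S_{it}$, any ambiguity at the eigenvalue $0$ is irrelevant since, by Lemma~\ref{lem_normL2_E(epsilon,infty)}, the spectral projection $E_\pi(\{0\})$ is non-zero only for the trivial representation, which carries zero Plancherel mass.) Pick $N\in\bN$ a common multiple of $\nu_1$ and $\nu_2$. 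Then $(\id+\cR_j)^{N/\nu_j}$ and $\cR_j^{N/\nu_j}$ are polynomials in $\cR_j$, hence left-invariant differential operators, and the subelliptic regularity theory for positive Rockland operators (Chapter~4 of~\cite{R+F_monograph}) gives, on $\cS(G)$,
$$
\|(\id+\cR_j)^{N/\nu_j}f\|_{L^2(G)} \;\asymp\; \sum_{[\alpha]\leq N}\|X^\alpha f\|_{L^2(G)}, \qquad \|\cR_j^{N/\nu_j}f\|_{L^2(G)} \;\asymp\; \sum_{[\alpha]=N}\|X^\alpha f\|_{L^2(G)},
$$
with constants independent of $j\in\{1,2\}$. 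These common characterizations yield $L^2$-boundedness of $T_N$ and $S_N$.

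Since $T_{N+it} = (\id+\cR_1)^{it/\nu_1}\,T_N\,(\id+\cR_2)^{-it/\nu_2}$ and likewise $S_{N+it} = \cR_1^{it/\nu_1}\,S_N\,\cR_2^{-it/\nu_2}$, the bounds on $\mathrm{Re}\,z=N$ follow from those on $\mathrm{Re}\,z=0$ uniformly in $t\in\bR$. Applied to the analytic families on the strip $\{0\leq\mathrm{Re}\,z\leq N\}$, Stein's three-lines theorem then delivers boundedness of $T_a$ and $S_a$ for every $a\in[0,N]$. Taking $N$ arbitrarily large covers $a\geq 0$, and for $a<0$ one swaps the roles of $\cR_1$ and $\cR_2$ (or takes adjoints).

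The main obstacle is the integer-order base case: the equivalence of the Sobolev norms (both inhomogeneous and homogeneous) induced by distinct positive Rockland operators. This rests on the subelliptic regularity theorem for positive Rockland operators and is the only non-routine ingredient; once in place, Stein interpolation routinely extends the bound to all real $a\in\bR$.
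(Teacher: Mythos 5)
The paper states Lemma~\ref{lem_2R_fractional_power} without proof, deferring (as announced at the start of Section~\ref{sec_preliminary}) to Chapter~4 of~\cite{R+F_monograph}, so there is no internal proof to compare against; your argument must be assessed on its own terms and against the standard treatment.

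Your outline --- unitarity on $\Re z=0$, a base case on $\Re z=N$ at a common multiple of $\nu_1,\nu_2$ obtained from the vector-field characterization of Sobolev norms, and Stein interpolation on the strip --- is the standard route and is sound. You correctly flag the maximal-hypoellipticity estimate $\sum_{[\alpha]\le N}\|X^\alpha f\|\lesssim\|(\id+\cR_j)^{N/\nu_j}f\|$ (and its homogeneous analogue, which follows from it by the dilation scaling $f\mapsto f\circ D_r$, $r\to\infty$) as the one genuinely non-routine ingredient.

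Two technical points, however, are glossed over and should be addressed to make the interpolation legitimate. First, Stein's theorem requires a common dense domain $\cD$ on which each $T_z$, $S_z$ is defined, with $z\mapsto(T_z\phi,g)$, $z\mapsto(S_z\phi,g)$ holomorphic in the open strip, continuous on its closure, and of admissible growth. This is delicate for $S_z=\cR_1^{z/\nu_1}\cR_2^{-z/\nu_2}$ because you must simultaneously avoid the low-frequency unboundedness of $\cR_2^{-z/\nu_2}$ and the high-frequency unboundedness of $\cR_1^{z/\nu_1}$, and the spectral decompositions of $\cR_1$ and $\cR_2$ do not commute. A clean choice is $\cD:=\bigcup_{\chi\in\cD((0,\infty))}\chi(\cR_2)L^2(G)$: by Corollary~\ref{cor_prop_multipliers_symbol} the kernel $\chi(\cR_2)\delta_0$ is Schwartz, so $\cR_2^{-z/\nu_2}\chi(\cR_2)\psi=\chi_z(\cR_2)\psi$ with $\chi_z\in\cD((0,\infty))$ lies in every $L^2_s(G,\cR_1)$, justifying both the definition of $S_z\phi$ on $\cD$ and (after pairing against a similar dense set built from $\cR_1$) the required analyticity and boundedness on the strip. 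Second, the factorization $S_{N+it}=\cR_1^{it/\nu_1}S_N\cR_2^{-it/\nu_2}$ (and likewise for $T$) must be read as an identity of unbounded operators on $\cD$; it does hold by functional calculus, but this should be stated. Neither gap is structural --- once these points are filled in, the argument closes, and it is essentially the argument of~\cite{R+F_monograph}.
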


The Sobolev spaces defined above satisfy the following natural properties:
\begin{theorem}
\label{thm_sobolev_spaces}
\begin{enumerate}
\item 
The  spaces $L^2_a(G)$ and $\dot L^2_a(G)$ are Banach spaces.
 Different choices of positive Rockland operators yields equivalent (homogeneous) Sobolev norms.
 \item
 (Sobolev embeddings) 
 We have the continuous inclusions
  $$
 L^2_a(G)\subset C(G), \ a>Q/2,
$$
where $C(G)$ denotes the Banach space of continuous and bounded functions on $G$.
\item 
\label{item_thm_sobolev_spaces_a>0}
If $a=0$ then $L^2_0(G)=L^2(G)$.
If $a>0$ then 
$\dom (\cR^{\frac a\nu})=\dom(\id+\cR)^{\frac a\nu}\supset \cS(G)$,
and
$L^2_a(G)=L^2(G)\cap \dot L^2_a(G)$ with
$$
\|f\|_{L^2_a(G)} \asymp \|f\|_{L^2(G)}+\|f\|_{\dot L^2_a(G)},
$$
after a choice of positive Rockland operators to realise the Sobolev norms.

\item 
\label{item_thm_sobolev_spaces_continuity_Xalpha}
For any $\alpha\in \bN_0^n$, $X^\alpha$ maps continuously 
$L^2_s(G)$ to $L^2_{s-[\alpha]}(G)$
and 
$\dot L^2_s(G)$ to $\dot L^2_{s-[\alpha]}(G)$, for any $s\in \bR$.

\item Let $\cR$ be a positive Rockland operator of degree $\nu$.
Let also $a,s\in\bR$.
Then the operator $(\id+\cR)^{\frac a \nu}$ maps continuously 
$L^2_s(G)$ to $L^2_{s-a}(G)$
and 
the operator $(\cR)^{\frac a \nu}$ maps continuously 
$\dot L^2_s(G)$ to $\dot L^2_{s-a}(G)$.

\item
\label{item_thm_sobolev_spaces_duality}
For any $s\in \bR$, 
 the Banach spaces $L^2_{-s}(G)$ and  $\dot L^2_{-s}(G)$ 
 are  the duals of $L^2_s(G)$ and $\dot L^2_s(G)$ respectively 
 via the dualities 
$$
\langle f,g\rangle_{L^2_s\times L^2_{-s} }
=
\langle (\id+\cR)^{\frac s\nu}f,(\id+\cR)^{-\frac s\nu}g\rangle_{L^2\times L^2 }\ , \qquad
\langle f,g\rangle_{\dot L^2_s\times \dot L^2_{-s} }
=
\langle \cR^{\frac s\nu}f,\cR^{-\frac s\nu}g\rangle_{L^2\times L^2 }\ .
$$
\item
\label{item_thm_sobolev_spaces_interpolation}
 The Banach spaces $L^2_a(G)$ and $\dot L^2_a(G)$
satisfy the properties of interpolation 
(in the sense of 
Theorem 4.4.9 and Proposition 4.4.15 in \cite{R+F_monograph}).
\end{enumerate}
\end{theorem}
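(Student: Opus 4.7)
The plan is to reduce every part of the theorem to spectral-calculus manipulations of the fractional powers $(\id+\cR)^{a/\nu}$ and $\cR^{a/\nu}$, combined with Lemma \ref{lem_2R_fractional_power} whenever the choice of Rockland operator must be made invisible.

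\textbf{Parts (1), (5), (6).} Each space is a Banach space by construction (completion of a normed space). For two positive Rockland operators $\cR_1,\cR_2$ of degrees $\nu_1,\nu_2$, Lemma \ref{lem_2R_fractional_power} makes $(\id+\cR_1)^{a/\nu_1}(\id+\cR_2)^{-a/\nu_2}$ into an invertible bounded operator on $L^2(G)$, which transports one Sobolev norm to the other; the analogous statement with $\cR_i^{a/\nu_i}$ handles the homogeneous case. Part (5) follows because $(\id+\cR)^{(s-a)/\nu}\circ (\id+\cR)^{a/\nu}\circ (\id+\cR)^{-s/\nu}=\id$ on a dense core, so the map $(\id+\cR)^{a/\nu}:L^2_s\to L^2_{s-a}$ extends to an isometric isomorphism after completion, and similarly for $\cR^{a/\nu}$. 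Part (6) is then formal: the isometry $L^2_s\xrightarrow{(\id+\cR)^{s/\nu}}L^2$ transports the self-duality of $L^2$ into the stated pairing, and likewise for the homogeneous case using $\cR^{s/\nu}$.

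\textbf{Parts (3) and (4).} Part (3) for $a=0$ is tautological. For $a>0$, since $\sigma(\cR)\subset[0,\infty)$, the functions $\lambda\mapsto(1+\lambda)^{a/\nu}$ and $\lambda\mapsto 1+\lambda^{a/\nu}$ differ by a factor bounded above and below by positive constants on $[0,\infty)$, so the spectral theorem gives $\|f\|_{L^2_a}\asymp\|f\|_{L^2}+\|f\|_{\dot L^2_a}$; the inclusion $\cS(G)\subset\dom(\cR^{a/\nu})$ uses the Schwartz decay of the heat kernel. Part (4) reduces to showing that $X^\alpha(\id+\cR)^{-[\alpha]/\nu}$ and $X^\alpha\cR^{-[\alpha]/\nu}$ are bounded on $L^2(G)$. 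Since both are left-invariant, this amounts to an $L^\infty(\Gh)$ bound on their Fourier multipliers; using the homogeneity identity \eqref{eq_rpiXalpha} for $\pi(X^\alpha)$ and the corresponding behaviour of $\pi(\cR)^{-[\alpha]/\nu}$ under dilation of $\pi$, the boundedness reduces by scaling to the compact sphere $\Sigma_1\subset\Gh$, where it follows from the bounded functional calculus of $\pi(\cR)$ and the positivity of its minimal eigenvalue. One then deduces the general case from $X^\alpha=X^\alpha(\id+\cR)^{-[\alpha]/\nu}\circ(\id+\cR)^{[\alpha]/\nu}$ and Part (5), and similarly for the homogeneous spaces.

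\textbf{Part (2).} The Sobolev embedding $L^2_a(G)\subset C(G)$ for $a>Q/2$ is the most delicate point and I expect it to be the main obstacle. The plan is to show that the convolution kernel $B_a:=(\id+\cR)^{-a/\nu}\delta_0$ lies in $L^2(G)$ via the subordination identity
\begin{equation*}
B_a=\frac{1}{\Gamma(a/\nu)}\int_0^\infty t^{a/\nu-1}e^{-t}h_t\,dt,
\end{equation*}
where $h_t$ is the heat kernel of $\cR$. The homogeneity $h_t=t^{-Q/\nu}h_1\circ D_{t^{-1/\nu}}$ with $h_1\in\cS(G)$ yields $\|h_t\|_{L^2(G)}\asymp t^{-Q/(2\nu)}$, so the subordination integral converges in $L^2(G)$ exactly when the exponent satisfies $a/\nu-1-Q/(2\nu)>-1$, i.e.\ when $a>Q/2$. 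Writing $f=B_a*((\id+\cR)^{a/\nu}f)$ and applying Cauchy--Schwarz then gives $\|f\|_\infty\leq\|B_a\|_{L^2(G)}\|f\|_{L^2_a(G)}$, and continuity of $f$ follows by density of $\cS(G)\subset L^2_a(G)$ and uniform convergence on compacta. Finally, Part (7) is a direct reference to the interpolation results proved in \cite{R+F_monograph}, so no separate argument is needed.
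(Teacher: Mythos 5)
The paper does not prove Theorem \ref{thm_sobolev_spaces}: it is recalled verbatim from the monograph \cite{R+F_monograph}, so there is no internal proof to compare against. Evaluating your argument on its own terms, Parts (1), (3), (5), (6) are correct and elementary, Part (2) via the subordination formula and the $L^2$-scaling $\|h_t\|_{L^2}\asymp t^{-Q/(2\nu)}$ is clean and correct, and Part (7) is a reference. The problem is Part (4).

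Your reduction of $X^\alpha\cR^{-[\alpha]/\nu}$ to the sphere $\Sigma_1\subset\Gh$ is legitimate (the Fourier side of this operator is $0$-homogeneous), but the claim that the uniform bound over $\Sigma_1$ ``follows from the bounded functional calculus of $\pi(\cR)$ and the positivity of its minimal eigenvalue'' is not a proof. Two things are missing. First, $\pi(X^\alpha)$ and $\pi(\cR)$ do not commute, so knowing that $\pi(\cR)$ has spectrum bounded below by $\lambda_{\min}(\pi)>0$ controls $\|\pi(\cR)^{-[\alpha]/\nu}\|$ but says nothing about the composition $\pi(X^\alpha)\pi(\cR)^{-[\alpha]/\nu}$, where $\pi(X^\alpha)$ is itself unbounded; the boundedness of this composition for a single $\pi$ is already a hypoellipticity statement, not a functional-calculus fact. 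Second, even granting pointwise boundedness, a uniform bound over $\Sigma_1$ would need $\pi\mapsto\|\pi(X^\alpha)\pi(\cR)^{-[\alpha]/\nu}\|$ to be, say, upper semicontinuous on the (non-Hausdorff) space $\Sigma_1$, and a positive lower bound on $\lambda_{\min}(\pi)$ over $\Sigma_1$; neither is established and neither is obvious. In fact the assertion $\sup_{\pi}\|\pi(X^\alpha)\pi(\cR)^{-[\alpha]/\nu}\|_{\sL(\cH_\pi)}<\infty$ is precisely the maximal hypoellipticity of $\cR$ (Helffer--Nourrigat); it is a theorem, not an observation. The route consistent with this paper's toolbox is the one used in the proof of Lemma \ref{lem_type}: $\cR^{-a/\nu}\delta_0$ is a kernel of type $a$ for $a\in(0,Q)$ (Example \ref{ex_powercR_optypenu}), and one shows by the iteration $X_j\mapsto$ (shift of type) $\mapsto$ interpolation/duality, exactly as carried out there, that $X^\alpha\cR^{-[\alpha]/\nu}$ is of type $0$ and hence $L^2$-bounded by Proposition \ref{prop_op_type}\eqref{item_prop_op_type_Lp_bdd}. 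I would replace the compactness argument in Part (4) with that kernel-theoretic one.
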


\bigskip

In order to distinguish the Sobolev spaces $L^2_s(G)$ on the graded group $G$ and the usual Sobolev spaces on the underlying $\bR^n$, 
we  denote by $H^s$ the Eulidean Sobolev spaces on $\bR^n$.
The spaces $H^s$ and $L^2_s(G)$ are not comparable globally
(we assume that $G$ is not abelian), but they are locally:

\begin{proposition}
\label{prop_sob_loc}
For any $s\in \bR$ and any $\chi\in \cD(\bR^n)$, 
the mapping $\cS(G)\ni f \mapsto \chi f$
extends (uniquely) 
to a continuous operator
of $H^s\to L^2_{s\upsilon_1}(G)$ 
and 
to a continuous operator
of $L^2_s(G)\to H^{s/\upsilon_n}$
(where $\upsilon_1\leq \ldots\leq \upsilon_n$
are the dilation's weights in increasing order).
\end{proposition}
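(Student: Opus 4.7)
The plan is to establish both continuities first for a discrete sequence of non-negative integer orders by direct computation, then extend to all non-negative real $s$ by complex interpolation, and finally extend to negative $s$ by duality.

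\emph{Algebraic setup.} In the exponential coordinates of Section~\ref{subsec_G}, each left-invariant vector field $X_j$ is a first-order Euclidean differential operator with polynomial coefficients whose principal term is $\partial_{x_j}$. Iterating and inverting yields, for every $\alpha,\beta\in\bN_0^n$,
$$ X^\beta = \sum_{|\alpha|\leq |\beta|} p_{\beta,\alpha}(x)\,\partial^\alpha,\qquad \partial^\alpha = \sum_{|\beta|\leq |\alpha|} q_{\alpha,\beta}(x)\,X^\beta,$$
with polynomials $p_{\beta,\alpha},q_{\alpha,\beta}$ that are bounded on any compact set; this is the bridge between Euclidean and graded differentiation.

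\emph{Positive integer orders.} Fix a positive Rockland operator $\cR$ of homogeneous degree $\nu$ chosen so that $N:=\nu/\upsilon_1\in\bN$; this is achieved by taking $\cR$ of the form \eqref{eq_cR_ex} with $\nu_o$ a suitable multiple of $\upsilon_1\cdots\upsilon_n$. For the first continuity at $s=kN$, note that $(\id+\cR)^k$ is a finite linear combination of $X^\beta$ with $[\beta]\leq k\nu$, and $[\beta]\leq k\nu$ forces $|\beta|\leq k\nu/\upsilon_1=kN$. Hence $(\id+\cR)^k(\chi f)$ expands via the first formula above into a sum of $\partial^\alpha(\chi f)$ with $|\alpha|\leq kN$ weighted by polynomials; since all terms are supported in $\mathrm{supp}\,\chi$, where the polynomials are bounded, we obtain $\|(\id+\cR)^k(\chi f)\|_{L^2}\leq C_\chi\|f\|_{H^{kN}}$, and Theorem~\ref{thm_sobolev_spaces} then gives $\chi f\in L^2_{k\nu}=L^2_{kN\upsilon_1}$. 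For the second continuity at $s=k\upsilon_n$, for any $|\alpha|\leq k$ the Leibniz rule combined with $\partial^\alpha=\sum q_{\alpha,\beta}(x)X^\beta$ (with $|\beta|\leq|\alpha|\leq k$, hence $[\beta]\leq k\upsilon_n$) expresses $\partial^\alpha(\chi f)$ as a compactly supported combination of $X^\beta f$'s; Theorem~\ref{thm_sobolev_spaces}(\ref{item_thm_sobolev_spaces_continuity_Xalpha}) controls each $\|X^\beta f\|_{L^2}$ by $\|f\|_{L^2_{k\upsilon_n}}$, yielding $\chi f\in H^k$.

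\emph{Interpolation and duality.} Complex interpolation (Theorem~\ref{thm_sobolev_spaces}(\ref{item_thm_sobolev_spaces_interpolation}) together with the standard Euclidean analogue) between consecutive integer endpoints $kN,(k+1)N$ on one side and $k\upsilon_n,(k+1)\upsilon_n$ on the other extends both continuities to all real $s\geq 0$. For $s\leq 0$, Theorem~\ref{thm_sobolev_spaces}(\ref{item_thm_sobolev_spaces_duality}) and its Euclidean analogue identify the dual pairs $(L^2_s)'\cong L^2_{-s}$ and $(H^s)'\cong H^{-s}$ via the $L^2$ pairing, under which multiplication by the real-valued $\chi$ is self-adjoint; the continuities for $s\geq 0$ thereby transpose into the corresponding statements for $s\leq 0$. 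The only mild obstacle is keeping track of the polynomial coefficients that appear when switching between $X^\beta$ and $\partial^\alpha$, but these cause no difficulty because the localization by $\chi$ confines everything to a compact set.
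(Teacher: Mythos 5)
Your treatment of nonnegative orders is correct: the algebraic expansion between $X^\beta$ and $\partial^\alpha$, the estimates at the integer grid points $s=kN$ and $s=k\upsilon_n$, and the interpolation step are all sound, and this is the natural route. The gap is in the duality step, which you invoke too quickly. Taking adjoints of $\chi\cdot:H^s\to L^2_{s\upsilon_1}(G)$ (valid for $s>0$) gives $\bar\chi\cdot:L^2_{-s\upsilon_1}(G)\to H^{-s}$; writing $t:=-s\upsilon_1<0$ so that $-s=t/\upsilon_1$, this reads $L^2_t(G)\to H^{t/\upsilon_1}$, not $L^2_t(G)\to H^{t/\upsilon_n}$. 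Since $t<0$ and $\upsilon_1\le\upsilon_n$ give $t/\upsilon_1\le t/\upsilon_n$, the space $H^{t/\upsilon_1}$ is \emph{larger} than $H^{t/\upsilon_n}$: duality delivers a strictly weaker conclusion than the one you are trying to reach. Symmetrically, the adjoint of $\chi\cdot:L^2_s(G)\to H^{s/\upsilon_n}$ gives $\bar\chi\cdot:H^t\to L^2_{t\upsilon_n}(G)$, and $L^2_{t\upsilon_n}(G)\supsetneq L^2_{t\upsilon_1}(G)$ for $t<0$.

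This is not a defect you can repair: once $\upsilon_1<\upsilon_n$, the map $\chi\cdot:H^s\to L^2_{s\upsilon_1}(G)$ is genuinely unbounded for $s<0$. On the Heisenberg group $\bH_1$ (so $Q=4$, $\upsilon_1=1$, $\upsilon_n=2$, $\nu=2$), take $\phi\in\cD(\bH_1)$ supported near the origin, set $\phi_\lambda:=\phi\circ D_{\lambda^{-1}}$ and $f_\lambda:=\partial_t\phi_\lambda$. A scaling computation gives $\|f_\lambda\|_{L^2_{-1}(\bH_1)}\sim\lambda^{Q/2-1}=\lambda$ while $\|f_\lambda\|_{H^{-1}(\bR^3)}\sim\lambda^{Q/2}=\lambda^2$, so the ratio diverges as $\lambda\to0$ and $H^{-1}$ does not embed locally into $L^2_{-1}(\bH_1)$. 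What duality does prove, and what is in fact true, are the embeddings $H^s\hookrightarrow L^2_{s\upsilon_n}(G)$ and $L^2_s(G)\hookrightarrow H^{s/\upsilon_1}$ for $s\le 0$ — the roles of $\upsilon_1$ and $\upsilon_n$ swap sides with the sign of $s$. So the Proposition as printed appears to carry a sign-dependent misprint, and carrying out your duality step carefully is exactly what uncovers it. (The one place the paper applies it, in the proof of Theorem~\ref{thm_cq_rellich}, survives once $H^{m/\upsilon_1}$ is replaced by $H^{m/\upsilon_n}$, since that exponent is still negative.)
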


\subsection{Bessel potential and Fourier Inversion Formula}
\label{subsec_Bessel+FIF}
Classical considerations on Bessel potentials in this context imply that 
the convolution kernel 
of $(\id+\cR)^{\frac {s_1}\nu}$ is square integrable 
when $s_1<-Q/2$,
i.e.
 $(\id+\cR)^{\frac {s_1}\nu}\delta_0\in L^2(G)$, 
 see \cite[\S 4.3.3]{R+F_monograph}.
 The Plancherel formula \eqref{eq_plancherel_formula}
 then yields
\begin{equation}
\label{eq_int(I+R)HS}
\int_{\Gh} \|\pi(\id+\cR)^{\frac {s_1}\nu}\|_{HS(\cH_\pi)}^2 d\mu(\pi)
<\infty
\quad \mbox{for}\ s_1<-Q/2,
\end{equation}
and consequently,
\begin{equation}
\label{eq_inttr(I+R)}
\int_{\Gh} \tr\left|\pi(\id+\cR)^{\frac {s_1}\nu}\right|  d\mu(\pi)
<\infty
\quad \mbox{for}\ s_1<-Q.
\end{equation}

\medskip

Naturally, the Plancherel theorem (cf. Section \ref{subsec_Gh+plancherel})
implies a Fourier Inverse Formula, at least formally. 
The next proposition gives hypotheses which imply the FIF. 

\begin{proposition}[Fourier Inversion Formula]
\label{prop_FIF}
Let $\cR$ be a positive Rockland operator of homogeneous degree $\nu$.
Let $\sigma=\{\sigma(\pi):\cH_\pi^\infty\to\cH_\pi, \pi\in\Gh\}$ 
be a field of operators on $\Gh$ defined (at least) on $\int_{\Gh}\cH_\pi^\infty d\mu(\pi)$.
Let $s>Q$.
\begin{enumerate}
\item 
The field of operators  $\{\pi(\id+\cR)^{\frac s\nu} \sigma(\pi),\pi\in \Gh\}$ is also defined on $\int_{\Gh}\cH_\pi^\infty d\mu(\pi)$. 
We assume that $\sup_{\pi\in \Gh}\|\pi(\id+\cR)^{\frac s\nu} \sigma(\pi)\|_{\sL(\cH_\pi)}$ is finite.
\item 
For each $\pi\in \Gh$, the field of operator 
$\{\pi(\id+\cR)^{\frac s\nu}:\cH_\pi^\infty\to\cH_\pi^\infty, \pi\in \Gh\}$ acts on $\int_{\Gh}\cH_\pi^\infty d\mu(\pi)$. 
The field of operators  $\{\pi(\id+\cR)^{\frac s\nu} \sigma(\pi),\pi\in \Gh\}$ is defined on $\int_{\Gh}\cH_\pi^\infty d\mu(\pi)$. 
We assume that $\sup_{\pi\in \Gh}\|\pi(\id+\cR)^{\frac s\nu} \sigma(\pi)\|_{\sL(\cH_\pi)}$ is finite.
\end{enumerate}
Then $\sigma\in L^2(\Gh)$ and $\kappa=\cF^{-1} \sigma$ is in $L^2_{s/2}$, in particular it coincides with a continuous and bounded function on $G$. 
Moreover
$$
\int_{\Gh} \tr |\sigma(\pi)| d\mu(\pi)<\infty
\qquad\mbox{and}\qquad
\kappa(0)=\int_{\Gh}\tr \sigma(\pi) \ d\mu(\pi).
$$

\end{proposition}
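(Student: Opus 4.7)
The plan is to factor $\sigma$ via the Bessel potential $\pi(\id+\cR)^{-s/\nu}$ and deduce each conclusion from the integrability statements \eqref{eq_int(I+R)HS}--\eqref{eq_inttr(I+R)}. Introduce $T(\pi):=\pi(\id+\cR)^{s/\nu}\sigma(\pi)$, which by hypothesis extends to a field of bounded operators with $\sup_\pi\|T(\pi)\|_{\sL(\cH_\pi)}<\infty$, so that
$$
\sigma(\pi)=\pi(\id+\cR)^{-s/\nu}\,T(\pi).
$$
Applying the ideal bounds $\|AB\|_{HS}\leq \|A\|_{HS}\|B\|_{\sL}$ and $\tr|AB|\leq \|B\|_{\sL}\,\tr|A|$, then integrating over $\Gh$ and using \eqref{eq_int(I+R)HS} (with $s_1=-s<-Q/2$) and \eqref{eq_inttr(I+R)} (with $s_1=-s<-Q$), we obtain simultaneously $\sigma\in L^2(\Gh)$ and $\int_{\Gh}\tr|\sigma(\pi)|\,d\mu(\pi)<\infty$.

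By Plancherel this produces a unique $\kappa\in L^2(G)$ with $\hat\kappa=\sigma$. To see $\kappa\in L^2_{s/2}(G)$, I would verify directly that $\pi(\id+\cR)^{s/(2\nu)}\sigma(\pi)=\pi(\id+\cR)^{-s/(2\nu)}T(\pi)$ is $L^2(\Gh)$-integrable by the same Hilbert--Schmidt estimate, now with exponent $s/(2\nu)$; since $s/2>Q/2$, the Sobolev embedding in Theorem~\ref{thm_sobolev_spaces}(2) gives $\kappa\in C(G)$. For the pointwise formula at the origin, split $\sigma=\sigma_1\sigma_2$ with $\sigma_1(\pi):=\pi(\id+\cR)^{-s/(2\nu)}$ and $\sigma_2(\pi):=\pi(\id+\cR)^{-s/(2\nu)}T(\pi)$, both in $L^2(\Gh)$. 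Let $\kappa_j:=\cF^{-1}\sigma_j\in L^2(G)$. Using the convolution--product identity $\widehat{f\ast g}(\pi)=\pi(g)\pi(f)$ (which stems from the convention $\pi(f)=\int f(x)\pi(x)^*dx$), one gets $\kappa=\kappa_2\ast\kappa_1$; both sides are continuous functions on $G$ (the right one because the convolution of two $L^2$-functions on the unimodular group $G$ is continuous), so evaluation at $0$ yields
$$
\kappa(0)=\int_G \kappa_2(y)\,\kappa_1(y^{-1})\,dy.
$$
Setting $g(y):=\overline{\kappa_1(y^{-1})}$ one checks by a change of variable that $\hat g(\pi)=\sigma_1(\pi)^*$, and then Parseval \eqref{eq_parseval_formula} applied to $\kappa_2$ and $g$, combined with cyclicity of the trace, transforms the right-hand side into $\int_{\Gh}\tr(\sigma_2\sigma_1)\,d\mu=\int_{\Gh}\tr\sigma\,d\mu$, as required.

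The expected obstacle is not analytic but organisational: one must keep careful track of the order reversal in the convolution--product correspondence and of the dichotomy between Hilbert--Schmidt and trace-class bounds. The hypothesis $s>Q$ is sharply calibrated so that $\pi(\id+\cR)^{-s/\nu}$ simultaneously (i) splits as a product of two Hilbert--Schmidt fields of parameter $s/(2\nu)$ with $s/2>Q/2$, giving access to Parseval and the Sobolev embedding, and (ii) is itself trace-norm integrable on $\Gh$, which is what guarantees the absolute convergence of the integral on the right-hand side of the inversion formula.
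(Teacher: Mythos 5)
Your argument is correct, and it diverges from the paper's proof in the final and most substantive step. For the preliminary estimates ($\sigma\in L^2(\Gh)$, $\int\tr|\sigma|\,d\mu<\infty$, $\kappa\in L^2_{s/2}$, continuity by Sobolev embedding) you use exactly the same factorisation through $\pi(\id+\cR)^{-s/\nu}$ and the bounds \eqref{eq_int(I+R)HS}--\eqref{eq_inttr(I+R)} as the paper. Where you part ways is in establishing $\kappa(0)=\int_{\Gh}\tr\sigma\,d\mu$: the paper approaches the origin with a $\delta_0$-approximate sequence $\psi_\epsilon\in\cS(G)$, applies Parseval at the level $\langle\kappa,\psi_\epsilon\rangle$, and passes to the limit using Lemma~\ref{lem_cv_psiepsilon} and dominated convergence; you instead split $\sigma=\sigma_1\sigma_2$ into a product of two $L^2(\Gh)$ fields of equal weight $s/(2\nu)$, realise $\kappa$ as the convolution $\kappa_2\ast\kappa_1$ of two square-integrable kernels, and evaluate at the identity directly via Parseval and cyclicity of the trace. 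Your computation of $\hat g=\sigma_1^*$ from $g(y)=\overline{\kappa_1(y^{-1})}$ and the order reversal $\widehat{f\ast g}=\hat g\hat f$ are both handled correctly. The trade-off is as follows: the paper's route is slightly more flexible (it yields Remark~\ref{rem_pf_FIF} on convergence of $\int\tr(\sigma\widehat\psi_\epsilon^*)\,d\mu$ along the way, which is reused later, e.g.\ in Lemma~\ref{lem_FIF+IBP}), whereas your route is shorter, avoids any limiting process, and makes the role of the threshold $s>Q$ completely transparent as the balance between the two Hilbert--Schmidt halves. One minor remark: the paper also states a symmetric variant (Part~2) with the Bessel potential on the other side; your argument handles Part~1 and would cover Part~2 by the same adjoint device the paper invokes, but you should say so explicitly.
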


We will need the following classical properties for approximations of $\delta_0$:
\begin{lemma}
\label{lem_cv_psiepsilon}
Let $\psi_1\in \cS(G)$.
For $\epsilon>0$, we set
$\psi_\epsilon=(\psi_1)_{(\epsilon)}$, that is, $\psi_\epsilon(x)=\epsilon^{-Q} \psi_1(\epsilon^{-1}x)$.
We also denote $c:=\int_G \psi_1 =\int_G \psi_\epsilon$.

\begin{enumerate}
\item 
As $\epsilon\to 0$, 
we have
$\displaystyle{
\psi_\epsilon \longrightarrow   c\delta_0}$
in $\cS'(G)$,
and  if $\kappa\in \cS'(G)$ is continuous and bounded then 
$\displaystyle{
\int_G \kappa \psi_\epsilon \longrightarrow  c\kappa(0) \, .
}$
\item 
If $\pi$ is a continuous unitary representation of $G$, then 
$(\widehat \psi_\epsilon(\pi))_{\epsilon>0}$
converges to $ c \id_{\cH_\pi}$
in the strong operator topology (SOT) on $\cH_\pi$.
\end{enumerate}
\end{lemma}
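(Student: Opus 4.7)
The plan is to treat both parts as classical approximation-of-identity results, using the change-of-variables formula \eqref{eq_int_Gf(rx)dx} to rewrite everything as an integral against the fixed function $\psi_1$, and then to conclude by dominated convergence.

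For Part (1), I would first note that $\int_G \psi_\epsilon = c$ follows from \eqref{eq_int_Gf(rx)dx} applied to $\psi_1 = \psi_\epsilon \circ D_\epsilon \cdot \epsilon^Q$. For the distributional convergence, pick $\phi \in \cS(G)$. The substitution $x = \epsilon y$ (i.e.\ $x = D_\epsilon(y)$) transforms
\[
\langle \psi_\epsilon, \phi \rangle = \int_G \epsilon^{-Q} \psi_1(\epsilon^{-1} x)\, \phi(x)\, dx = \int_G \psi_1(y)\, \phi(\epsilon y)\, dy .
\]
Since $\phi$ is continuous and bounded and $\psi_1\in \cS(G)\subset L^1(G)$, dominated convergence yields $\langle \psi_\epsilon, \phi\rangle \to \phi(0)\int_G \psi_1 = c\phi(0) = \langle c\delta_0, \phi\rangle$. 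The second assertion in (1) is literally the same computation with $\phi$ replaced by a continuous bounded $\kappa$; only continuity at $0$ and global boundedness of $\kappa$ are needed to pass to the limit.

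For Part (2), I would use the defining formula of the group Fourier transform. For any $v\in \cH_\pi$,
\[
\widehat{\psi_\epsilon}(\pi)\, v = \int_G \psi_\epsilon(x)\, \pi(x)^* v\, dx = \int_G \psi_1(y)\, \pi(\epsilon y)^* v\, dy,
\]
again by the change of variables $x=\epsilon y$. Strong continuity of $\pi$ ensures $\pi(\epsilon y)^* v \to v$ in $\cH_\pi$ as $\epsilon \to 0$, for every fixed $y\in G$, while $\|\psi_1(y)\, \pi(\epsilon y)^* v\|_{\cH_\pi} \leq |\psi_1(y)|\, \|v\|$ provides a dominating integrable majorant. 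The vector-valued dominated convergence theorem gives $\widehat{\psi_\epsilon}(\pi) v \to v\int_G \psi_1(y)\, dy = cv$, which is precisely convergence of $\widehat{\psi_\epsilon}(\pi)$ to $c\cdot \id_{\cH_\pi}$ in the SOT.

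There is no real obstacle here: the only mild point to check carefully is that dominated convergence applies in the vector-valued setting of Part (2), which is justified because $\|\pi(\epsilon y)^* v\|_{\cH_\pi} = \|v\|_{\cH_\pi}$ by unitarity (or simply boundedness of $\pi$ in the non-unitary case, which is locally bounded near $0$ by strong continuity together with the uniform boundedness principle). Observe also, as a consistency check, that Part (2) matches the dilation identity \eqref{eq_rcdotpif}: $\widehat{\psi_\epsilon}(\pi) = \pi\bigl((\psi_1)_{(\epsilon)}\bigr) = (\epsilon\cdot\pi)(\psi_1)$, and as $\epsilon \to 0$ the scaled representation $\epsilon\cdot\pi$ degenerates toward the trivial representation on every vector, which on $\psi_1$ produces the integral $c$.
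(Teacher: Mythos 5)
Your proof is correct and follows essentially the same route as the paper: for Part (2), both you and the authors first use the change of variables $x=\epsilon y$ (equivalently formula \eqref{eq_int_Gf(rx)dx}) to rewrite $\widehat{\psi_\epsilon}(\pi)v - cv$ as an integral of $\psi_1(y)\bigl(\pi(\epsilon y)^*-\id\bigr)v$, and then pass to the limit; the paper does this via an explicit split $\int_{|x|<R}+\int_{|x|\geq R}$ combined with strong continuity and the tail decay of $\psi_1$, whereas you invoke the Bochner-integral dominated convergence theorem with the unitarity majorant $|\psi_1(y)|\|v\|$, which is the same idea packaged more abstractly.
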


\begin{proof}[Proof of Lemma \ref{lem_cv_psiepsilon}]
Part 1 is classical, see e.g. \cite[\S 3.1.10]{R+F_monograph}.
For Part 2, we write
$$
\widehat \psi_\epsilon (\pi)^* - c\id_{\cH_\pi}
=
\int_G \psi_\epsilon(x) \pi(x)  dx- c\id_{\cH_\pi}
=
\int_G \psi_1(x) \left(\pi(\epsilon x) - \id_{\cH_\pi}\right) dx.
$$
Thus, applying a vector $v\in \cH_\pi$ and for any $R>0$ decomposing the integral as $\int_G=\int_{|x|<R}+\int_{|x|\geq R}$, 
we obtain:
$$
\|(\widehat \psi_\epsilon (\pi)^* - c\id_{\cH_\pi})v\|_{\cH_\pi}
\leq 
\sup_{|x'|\leq \epsilon R} \|\pi(x')v -v\|_{\cH_\pi} 
\int_G |\psi_1|
\ + \
2\|v\|_{\cH_\pi}\int_{|x|> R} |\psi_1(x)|.
$$
And the conclusion follows easily from the continuity of $x'\mapsto \pi(x')v$ at 0 and the integrability of $\psi_1\in \cS(G)$.
\end{proof}

\begin{proof}[Proof of Proposition \ref{prop_FIF}]
Let us assume the hypothesis of Part 1.
 The membership of
 $\sigma$ in $L^2(\Gh)$ follows from
$$
\|\sigma\|_{L^2(\Gh)} 
\leq
\|\pi(\id+\cR)^{-\frac s\nu} \|_{L^2(\Gh)} 
\sup_{\pi\in \Gh}\|\pi(\id+\cR)^{\frac s\nu} \sigma(\pi)\|_{\sL(\cH_\pi)},
$$
since the first term of the right-hand side is square integrable by \eqref{eq_int(I+R)HS}.
We also have
$$
\int_{\Gh} \tr |\sigma(\pi)| d\mu(\pi)
\leq
\sup_{\pi\in \Gh}\|\pi(\id+\cR)^{\frac s\nu} \sigma(\pi)\|_{\sL(\cH_\pi)}
\int_{\Gh} \tr |\pi(\id+\cR)^{-\frac s\nu}| d\mu(\pi)
$$ 
and the last integral is finite by \eqref{eq_inttr(I+R)}.
 Hence $\int_{\Gh} \tr |\sigma(\pi)| d\mu(\pi)$ is finite.

Let $\kappa=\cF^{-1} \sigma$. 
As $\sigma\in L^2(\Gh)$,  $\kappa\in L^2(G)$.
Moreover
\begin{eqnarray*}
\|(\id+\cR)^{\frac s {2\nu}} \kappa\|_{L^2(G)}
&=&
\|\pi(\id+\cR)^{\frac s{2\nu}} \sigma\|_{L^2(\Gh)} 
\\&\leq&
\|\pi(\id+\cR)^{-\frac s{2\nu}} \|_{L^2(\Gh)} 
\sup_{\pi\in \Gh}\|\pi(\id+\cR)^{\frac s\nu} \sigma(\pi)\|_{\sL(\cH_\pi)}. 
\end{eqnarray*}
Hence $\kappa\in L^2_{s/2}$.
The Sobolev embedding, see Theorem \ref{thm_sobolev_spaces}, 
implies that
$\kappa$ is continuous and bounded on $G$.

Let $\psi_1\in \cS(G)$ with $\int_G\psi_1=1$.
We construct the $\delta_0$-approximate
$(\psi_\epsilon)_{\epsilon>0}\subset \cS(G)$ as in Lemma~\ref{lem_cv_psiepsilon}.
By the Parseval formula, see \eqref{eq_parseval_formula}, 
we have:
\begin{equation}
\label{eq_pf_prop_FIF}
\int_G \kappa(x) \bar \psi_\epsilon(x) dx
=
\int_{\Gh} \tr \left(\sigma(\pi) \widehat \psi_\epsilon (\pi)^* \right)d\mu(\pi). 
\end{equation}
By Lemma \ref{lem_cv_psiepsilon},  the left-hand side of \eqref{eq_pf_prop_FIF} tends to $\kappa(0)$ as $\epsilon\to 0$.
Note that the right-hand side of \eqref{eq_pf_prop_FIF}  is integrable since:
$$
| \tr \left(\sigma(\pi) \widehat \psi_\epsilon (\pi)^* \right)|
\leq \|\widehat \psi_\epsilon (\pi)\|_{\sL(\cH_\pi)}
\tr |\sigma|
\leq \| \psi_1\|_{L^1(G)}
\tr |\sigma|.
$$
Lemma \ref{lem_cv_psiepsilon} and  the Lebesgue Dominated Convergence Theorem imply that the right-hand side of \eqref{eq_pf_prop_FIF} converges 
to $\int_{\Gh} \tr \sigma(\pi) d\mu(\pi)$ as $\epsilon\to0$.
Taking the limit in both sides of \eqref{eq_pf_prop_FIF} as $\epsilon\to 0$ concludes the proof of Proposition \ref{prop_FIF} under the hypothesis of Part 1.

For Part 2, 
the fact that $\{\pi(\id+\cR)^{\frac s\nu}:\cH_\pi^\infty\to\cH_\pi^\infty, \pi\in \Gh\}$ acts on $\int_{\Gh}\cH_\pi^\infty d\mu(\pi)$
follows from \cite[Lemma 5.1.2]{R+F_monograph}.
We then proceed as above.
Another proof is by taking the adjoint.
\end{proof}

\begin{remark}
\label{rem_pf_FIF}
In fact, the proof above shows that if $\psi_1\in \cS(G)$ and $\psi_\epsilon=(\psi_1)_{(\epsilon)}$ as in Lemma \ref{lem_cv_psiepsilon}
and if $\sigma$ and $\kappa$ are as in Proposition \ref{prop_FIF}, 
then 
$$
\int_{\Gh}\int_{\Gh} \tr\left| \sigma (\pi) \widehat \psi_\epsilon^*(\pi)\right|d\mu(\pi)<\infty,\;\;
{\rm and}\;\;
\lim_{\epsilon\to 0} \int_{\Gh} \tr\left( \sigma (\pi) \widehat \psi_\epsilon^*(\pi)\right)d\mu(\pi)
=
c \, \kappa(0),
$$
where $c=\int_G \psi_1$.
\end{remark}

\begin{corollary}
\label{cor_prop_FIF}
Let $\sigma\in L^\infty(\Gh)$.
Then for any $\phi\in \cS(G)$, 
we have
$\displaystyle{
\int_{\Gh}\tr \left|\sigma (\pi) \cF_G(\check \phi) (\pi) \right|
d\mu(\pi)<\infty
}$
and denoting by $\kappa\in \cS'(G)$ the kernel of $\sigma$, 
i.e. $\sigma=\widehat \kappa$, 
we have:
\begin{equation}
\label{eq_cor_prop_FIF}
\langle \kappa,\phi\rangle
=
\int_{\Gh}\tr \left(\sigma (\pi) \cF_G(\check \phi) (\pi) \right)
d\mu(\pi),
\end{equation}
where $\check \phi (x)=\phi(x^{-1})$.
\end{corollary}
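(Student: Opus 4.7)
My plan is to deduce both claims from the Parseval formula \eqref{eq_parseval_formula} by regularising $\kappa$ into an $L^2(G)$ function and then passing to the limit. The integrability assertion will come from a factorisation of $\cF_G(\check\phi)$ through a trace-class integrable symbol, in the same spirit as \eqref{eq_inttr(I+R)}. Fix a positive Rockland operator $\cR$ of homogeneous degree $\nu$ and an integer $k$ with $k\nu>Q$.

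First I would establish the integrability. Since $\phi\in\cS(G)$, also $\check\phi\in\cS(G)$, so $\tilde\phi:=(\id+\cR)^{k}\check\phi\in\cS(G)\subset L^1(G)$ and $\check\phi=(\id+\cR)^{-k}\tilde\phi$. Taking group Fourier transforms gives $\cF_G(\check\phi)(\pi)=\pi((\id+\cR)^{-k})\pi(\tilde\phi)$, with the uniform operator bound $\|\pi(\tilde\phi)\|_{\sL(\cH_\pi)}\le\|\tilde\phi\|_{L^1(G)}$. Combined with \eqref{eq_inttr(I+R)} and the trace-norm estimate $\tr|AB|\le\|B\|_{\sL(\cH_\pi)}\tr|A|$, this yields
\[
\int_{\Gh}\tr\,|\cF_G(\check\phi)(\pi)|\,d\mu(\pi)<\infty,
\]
and the finiteness in the statement then follows from $\tr|\sigma(\pi)B|\le\|\sigma\|_{L^\infty(\Gh)}\tr|B|$.

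For the identity, I would regularise. Fix $\psi_1\in\cS(G)$ with $\int_G\psi_1=1$ and set $\psi_\epsilon:=(\psi_1)_{(\epsilon)}$ as in Lemma~\ref{lem_cv_psiepsilon}, and define $\kappa_\epsilon:=\psi_\epsilon*\kappa$. Its group Fourier transform $\hat\kappa_\epsilon=\sigma\,\hat\psi_\epsilon$ lies in $L^2(\Gh)$ (since $\hat\psi_\epsilon$ is Hilbert--Schmidt by Plancherel and $\sigma\in L^\infty(\Gh)$), so $\kappa_\epsilon\in L^2(G)$. Applying \eqref{eq_parseval_formula} with $\phi_1=\kappa_\epsilon$ and $\phi_2=\bar\phi$, together with the identity $\widehat{\bar\phi}(\pi)^*=\cF_G(\check\phi)(\pi)$ (a direct consequence of $\pi(x)^*=\pi(x^{-1})$ and unimodularity of $G$), yields
\[
\langle\kappa_\epsilon,\phi\rangle
=\int_{\Gh}\tr\bigl(\sigma(\pi)\,\hat\psi_\epsilon(\pi)\,\cF_G(\check\phi)(\pi)\bigr)\,d\mu(\pi).
\]

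It remains to let $\epsilon\to 0$ on both sides. On the left, a Fubini-type manipulation rewrites $\langle\kappa_\epsilon,\phi\rangle=\langle\kappa,\Phi_\epsilon\rangle$ with $\Phi_\epsilon(z)=\int_G\psi_\epsilon(y)\phi(yz)\,dy$; the $\cS(G)$-convergence $\Phi_\epsilon\to\phi$ follows from Lemma~\ref{lem_mvt} and the polynomial nature of the multiplication on $G\cong\bR^n$, giving $\langle\kappa_\epsilon,\phi\rangle\to\langle\kappa,\phi\rangle$. On the right, Lemma~\ref{lem_cv_psiepsilon}(2) gives $\hat\psi_\epsilon(\pi)\to\id_{\cH_\pi}$ in SOT with $\|\hat\psi_\epsilon(\pi)\|_{\sL(\cH_\pi)}\le\|\psi_1\|_{L^1(G)}$; since $\cF_G(\check\phi)(\pi)\sigma(\pi)$ is trace-class for each $\pi$, the standard continuity of the trace against a fixed trace-class operator yields $\tr(\sigma\hat\psi_\epsilon\cF_G(\check\phi))\to\tr(\sigma\cF_G(\check\phi))$ pointwise, and the integrand is uniformly dominated by the $d\mu$-integrable function $\|\psi_1\|_{L^1}\|\sigma\|_{L^\infty}\tr|\cF_G(\check\phi)|$ from the first step. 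Lebesgue dominated convergence then concludes. The most delicate point is the Schwartz convergence $\Phi_\epsilon\to\phi$, where the geometry of the graded group enters explicitly through Lemma~\ref{lem_mvt} and the polynomial group law.
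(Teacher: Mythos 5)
Your proof is correct, but it takes a longer route than the paper's. The paper simply notes that the symbol $\sigma\cF_G(\check\phi)$ itself satisfies the hypotheses of Proposition~\ref{prop_FIF} (since $(\id+\cR)^N\check\phi\in\cS(G)$ for all $N$, and one multiplies by the bounded $\sigma$), which immediately gives both the integrability and the value of $\kappa'(0)$ where $\kappa':=\cF_G^{-1}(\sigma\cF_G(\check\phi))=\check\phi*\kappa$; the formula then follows from the observation $\langle\kappa,\phi\rangle=(\check\phi*\kappa)(0)$. So the paper uses Proposition~\ref{prop_FIF} as a black box applied to the right symbol. You instead rederive its content by hand: same Rockland factorisation for integrability, but for the identity you regularise the kernel $\kappa_\epsilon=\psi_\epsilon*\kappa$ rather than approximating $\delta_0$ against the kernel (which is what the proof of Proposition~\ref{prop_FIF} does internally via Lemma~\ref{lem_cv_psiepsilon}). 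Your dominated-convergence argument on the Fourier side is sound — the pointwise convergence of $\tr(\sigma\hat\psi_\epsilon\cF_G(\check\phi))$ under SOT convergence with a fixed trace-class factor, plus the uniform integrable majorant, is exactly right, and your identity $\widehat{\bar\phi}(\pi)^*=\cF_G(\check\phi)(\pi)$ is correct. The only step where your route costs more than the paper's is the Schwartz-space convergence $\Phi_\epsilon=\check\psi_\epsilon*\phi\to\phi$ in $\cS(G)$: this is true and standard for approximate identities on homogeneous groups, but it is strictly stronger than what Lemma~\ref{lem_cv_psiepsilon} provides (which only gives $\cS'$-convergence and pointwise convergence against bounded continuous functions), so strictly speaking you would need to supply or cite a separate estimate for the Schwartz seminorms. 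The paper's choice of pairing $\kappa$ directly against $\psi_\epsilon$ avoids this entirely since it only needs the weak statement. Both approaches are valid; the paper's is shorter because Proposition~\ref{prop_FIF} was built precisely to absorb this kind of argument.
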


\begin{proof}
If $\phi\in \cS(G)$, 
then  $\widehat \phi$ satisfies the hypotheses of Proposition \ref{prop_FIF} since $(\id+\cR)^N\phi \in \cS(G)$ is integrable for any $N\in \bN$.
For the same reason $\sigma\cF_G (\check\phi)$ satisfies the hypotheses of Proposition~\ref{prop_FIF}.
We conclude with $\langle \kappa,\phi\rangle = \check \phi *\kappa(0)$
and $\cF_G(\check \phi *\kappa)=\sigma\cF_G (\check\phi)$.
\end{proof}

\begin{remark}
\label{rem_cor_prop_FIF}
Corollary \ref{cor_prop_FIF} implies 
that if $\sigma\in L^\infty(\Gh)$
and $\kappa\in \cS'(G)$ are such that 
\eqref{eq_cor_prop_FIF} holds for any $\phi\in \cS(G)$
or $\cD(G)$ then $\kappa$ is the kernel of $\sigma$, 
i.e. $\sigma=\widehat \kappa$.
\end{remark}

We will also need the following inversion formula:
\begin{proposition}
\label{prop_FIF2}
Let $\kappa$ be a compactly supported distribution on $G$.
Then for each unitary representation $\pi$ of $G$ and $v,w\in \cH_\pi$,
 we can define 
$$
(\widehat \kappa(\pi)v,w)_{\cH_\pi} =\int_G \kappa(x) (\pi(x)^*v,w)_{\cH_\pi}  dx,
$$
since $x\mapsto (\pi(x)^*v,w)_{\cH_\pi} $ is smooth and bounded on $G$.

For any smooth and bounded function $\phi$ on $G$, we have
$$
\int_{\Gh} \tr \left(\widehat \kappa (\pi)\ \widehat \phi(\pi)\right) d\mu(\pi)
=\langle \kappa,\phi\rangle,
$$
 interpreting the left hand side as the limits (in this order) of the absolutely convergent double integral: 
$$
\lim_{R\to \infty} \lim_{N\to +\infty}\int_{N\cdot \cC} \int_G \trN \left(\widehat \kappa (\pi)\pi(x)\right) \phi(x) \chi_R(x) \ dx  d\mu(\pi),
$$
where $\chi\in \cD(G)$ with $\chi\equiv 1$ on a neighbourhood of 0
and $\chi_R(x):=\chi (R^{-1}x)$, $\cC$ a compact neighbourhood of $1\in \Gh$ such that $\cup_{N\in \bN} N\cdot \cC=\Gh$, 
and $\trN$ denotes the trace of the operators projected on the subspace spanned by the first $N$ vectors, having fixed a fundamental sequence of vector fields.
\end{proposition}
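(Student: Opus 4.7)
The plan is to unfold the iterated limit by swapping trace with integration to recognise the integrand as the Fourier transform of a test function, and then invoke Proposition~\ref{prop_FIF}.

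Fix $R > 0$ and set $\psi_R := \phi \chi_R \in \cD(G)$. Choosing the fundamental sequence $(e_j)$ in $\cH_\pi^\infty$, each matrix coefficient $(x, y) \mapsto (\pi(x) e_j, \pi(y) e_k)$ is jointly smooth. A Fubini-type interchange of the $x$-integration with the distributional pairing in $y$ defining $\widehat\kappa(\pi)$, justified because $\psi_R$ has compact support and $\kappa$ is a compactly supported distribution of finite order, yields
\begin{equation*}
\int_G \trN\bigl(\widehat\kappa(\pi) \pi(x)\bigr) \psi_R(x) \, dx = \trN\bigl(\widehat\kappa(\pi) A_R(\pi)\bigr),
\quad A_R(\pi) := \int_G \psi_R(x) \pi(x) \, dx.
\end{equation*}
Using $\int_G f(x) \pi(x)\, dx = \widehat{\check f}(\pi)$ together with the convolution rule $\widehat{f_1 * f_2} = \widehat{f_2}\widehat{f_1}$, I identify $A_R(\pi) = \widehat{\check\psi_R}(\pi)$ and, at the level of matrix coefficients, $\widehat\kappa(\pi) A_R(\pi) = \widehat{h_R}(\pi)$ where $h_R := \check\psi_R * \kappa$ lies in $\cD(G)$ (convolution of a test function and a compactly supported distribution on a unimodular group).

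Since $h_R \in \cS(G)$, Proposition~\ref{prop_FIF} applies: $\widehat{h_R}$ is trace-class integrable on $\Gh$ and
\begin{equation*}
\int_{\Gh} \tr(\widehat{h_R}(\pi)) \, d\mu(\pi) = h_R(0) = (\check\psi_R * \kappa)(0) = \langle \kappa, \psi_R \rangle,
\end{equation*}
using $(f*g)(0) = \langle g, \check f\rangle$. Combined with the uniform bound $|\trN(T)| \leq \tr |T|$ for trace-class $T$, dominated convergence together with $N\cdot \cC \nearrow \Gh$ gives
\begin{equation*}
\lim_{N \to \infty} \int_{N \cdot \cC} \trN\bigl(\widehat\kappa(\pi) A_R(\pi)\bigr) \, d\mu(\pi) = \langle \kappa, \phi\chi_R \rangle.
\end{equation*}
For the outer limit, since $\supp \kappa$ is compact and $\chi_R(x) = \chi(R^{-1} x)$ with $\chi \equiv 1$ near $0$, for $R$ large enough $\chi_R \equiv 1$ on $\supp \kappa$, whence $\langle \kappa, \phi\chi_R\rangle = \langle \kappa, \phi \rangle$ eventually, concluding the proof.

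The main technical obstacle is the Fubini interchange in the first step, since $\widehat\kappa(\pi)$ is defined only as a sesquilinear form on $\cH_\pi^\infty$, not as a bounded operator on $\cH_\pi$. The identity $\widehat\kappa(\pi) A_R(\pi) = \widehat{h_R}(\pi)$ should be established matrix coefficient by matrix coefficient: the vector $A_R(\pi) e_j = \pi(\check\psi_R) e_j$ is smooth by Example~\ref{ex_smooth_vectors}, so $(\widehat\kappa(\pi) A_R(\pi) e_j, e_k)$ is well-defined as the pairing of $\kappa$ with the smooth bounded function $y \mapsto (\pi(y)^* A_R(\pi) e_j, e_k)$, which a change of variable identifies with the $(j,k)$-matrix coefficient of $\widehat{h_R}(\pi)$.
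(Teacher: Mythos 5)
Your proof is correct and is genuinely different from the paper's. The paper proves the identity first for $\kappa\in\cD(G)$ (via Corollary~\ref{cor_prop_FIF}), and then handles a general compactly supported distribution $\kappa$ by mollifying it: taking $\kappa_\epsilon=\kappa*\psi_\epsilon\in\cD(G)$ with a $\delta_0$-approximate $\psi_\epsilon$, and passing to the limit $\epsilon\to 0$ via Lemma~\ref{lem_cv_psiepsilon}. You instead keep $\kappa$ intact and observe that, after truncating $\phi$ by $\chi_R$, the relevant symbol is exactly $\widehat{h_R}$ with $h_R=\check\psi_R*\kappa\in\cD(G)$ a Schwartz function, so Proposition~\ref{prop_FIF} applies directly and the iterated limit in the statement becomes elementary.

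Your route has two advantages worth noting. First, it is more faithful to the statement: the iterated limit over $\trN$ and $N\cdot\cC$ is computed directly, whereas the mollification approach requires an additional exchange-of-limits argument (the $\epsilon\to0$ limit against the $N,R$ limits) which the paper leaves as "passing carefully to the limit." Second, it bypasses the convergence of $\widehat\kappa_\epsilon(\pi)$ to $\widehat\kappa(\pi)$ in a sense compatible with the trace, which is the main source of technicality in the paper's sketch. The price you pay is that the Fubini interchange in your first step must be justified for each fixed $\pi$, and you address this correctly via the matrix-coefficient interpretation and the observation that $A_R(\pi)e_j=\pi(\check\psi_R)e_j$ is smooth (Example~\ref{ex_smooth_vectors}), so $(\widehat\kappa(\pi)A_R(\pi)e_j,e_j)$ is an honest pairing of $\kappa$ against a smooth bounded function. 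Both proofs are acceptable; yours is more explicit about the iterated-limit interpretation that the Proposition actually asserts.
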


For instance, having fixed a quasinorm, we can choose $\cC :=\{|\pi|\leq 1\}$, see Section \ref{subsec_polar_dec_Gh}.
The definition of a fundamental sequence of vector fields may be found in 
\cite[A93]{Dixmier_C*}.

\begin{proof}
Corollary \ref{cor_prop_FIF} implies the result when $\kappa\in \cD(G)$.

Let $\kappa$ be a compactly supported distribution.
We consider $\psi_1\in \cD(G)$ satisfying $\psi(0)=1$, 
and $\psi_\epsilon(x):=\epsilon^{-Q} \psi(\epsilon^{-1} x)$.
Then $\kappa_\epsilon=\kappa*\psi_\epsilon$ is in $\cD(G)$
and we conclude the proof by passing carefully to the limit 
using Lemma \ref{lem_cv_psiepsilon}.
\end{proof}

\subsection{Operators of type $\nu$}
\label{subsec_optypenu}

The properties of kernels or operators  of type $\nu$
extends from the Euclidean setting to the case of homogeneous Lie groups, 
so in particular to graded Lie groups
(see e.g.  \cite[Chapter 6 A]{folland+stein_82} or
\cite[\S 3.2]{R+F_monograph}):
\begin{definition}
\label{def_kernel_type_nu}
A distribution $\kappa\in \cD'(G)$ 
which is smooth away from the origin and  homogeneous of degree $\nu-Q$
is called a \emph{kernel of type} $\nu\in \bC$ on $G$.
The corresponding convolution operator
$f\in \cD(G)\mapsto f*\kappa$ is called an \emph{operator of type} $\nu$.
\end{definition}

\begin{ex}
\label{ex_powercR_optypenu}
Let $\cR$ be a positive Rockland operator of homogeneous degree $\nu$.
For any $a\in \bC$, $\Re a\in [0,Q)$, 
the operator $\cR^{-\frac a\nu}$ is of type $a$.
See \cite[\S 4.3]{R+F_monograph}. 
\end{ex}

The next statement summarise the properties of the operators of type $\nu$
used in the paper:
\begin{proposition}
\label{prop_op_type}
Let $G$ be a graded group.
\begin{enumerate}
\item 
\label{item_prop_op_type_Lp_bdd}
An operator of type $\nu$ with $\nu\in [0,Q)$
is $(-\nu)$-homogeneous  
and
extends to a bounded operator from $L^p(G)$ to $L^q(G)$
whenever $p,q\in (1,\infty)$ satisfy
$\frac 1p -\frac 1q = \frac {\Re \nu}Q$.
\item 
\label{item_prop_op_type_-Qhomo_distrib}
Let $\kappa$ be a smooth function away from the origin homogeneous of degree $\nu$ with $\Re \nu=-Q$.
Then $\kappa$ is a kernel of type $\nu$,
if and only if
 its mean value is zero, 
that is, 
when 
$\int_{\fS} \kappa \, d\sigma=0$ where $\sigma$ is the measure on the unit sphere of a homogeneous quasi-norm given 
by the polar change of coordinates, see Proposition \ref{prop_polar_coord}.
(This condition is independent of the choice of a homogeneous quasi-norm.)

\item 
\label{item_prop_op_type_diff}
Let $\kappa$ be a kernel of type $s\in [0,Q)$. 
Let $T$ be a homogeneous left differential operator of degree $\nu_T$.
If $s-\nu_T\in [0,Q)$, then $T\kappa$ is a kernel of type $s-\nu_T$.

\item 
\label{item_prop_op_type_conv}
Suppose $\kappa_{1}$ is a kernel of type ${\nu_1}\in \bC$
with $\Re {\nu_1}>0$ and $\kappa_{2}$ is a kernel of type ${\nu_2}\in \bC$ 
with $\Re{\nu_2}\geq 0$.
We assume $\Re({\nu_1}+{\nu_2})<Q$.
Then $\kappa_{1}*\kappa_{2}$ is well defined as a kernel of type 
${\nu_1}+{\nu_2}$.
Moreover if $f\in L^p(G)$ where $1<p<Q/(\Re({\nu_1}+{\nu_2}))$
then $(f*\kappa_{1})*\kappa_{2}$ and $f*(\kappa_{1}*\kappa_{2})$
belong to $ L^q(G)$, $\frac 1q=\frac 1p -\frac{\Re({\nu_1}+{\nu_2})}Q$,
 and they are equal.
\end{enumerate}
\end{proposition}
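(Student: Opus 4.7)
The plan is to treat the four parts in sequence, leaning on the classical Euclidean proofs adapted to the homogeneous Lie group structure via the polar decomposition of Proposition \ref{prop_polar_coord} and the mean value estimate of Lemma \ref{lem_mvt}. For Part (\ref{item_prop_op_type_Lp_bdd}), the $(-\nu)$-homogeneity of the convolution operator follows directly by change of variables using \eqref{eq_int_Gf(rx)dx} combined with the definition $\kappa(rx) = r^{\nu - Q} \kappa(x)$ for $x\neq 0$. For the $L^p \to L^q$ bound when $\Re\nu \in (0,Q)$, the key observation is that $\kappa(x) = O(|x|^{\Re\nu - Q})$ lies in weak-$L^{Q/(Q - \Re\nu)}$, so the boundedness is a Young-type inequality for weak Lebesgue spaces (the Hardy--Littlewood--Sobolev inequality on $G$). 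When $\Re\nu = 0$, one invokes the Calder\'on--Zygmund theory on spaces of homogeneous type: the kernel satisfies the standard pointwise size and regularity estimates (using Lemma \ref{lem_mvt}), and $L^2$-boundedness is verified via the Plancherel formula \eqref{eq_plancherel_formula}, which reduces the question to showing that $\widehat{\kappa}(\pi)$ is uniformly bounded in $\pi$, a consequence of homogeneity together with continuity of $\pi\mapsto \widehat{\kappa}(\pi)$ on the compact sphere $\Sigma_1$.

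For Part (\ref{item_prop_op_type_-Qhomo_distrib}), I would approach both directions via the polar integral formula \eqref{formula_polar_coord}. If $\kappa$ is smooth off $0$, homogeneous of degree $-Q$, and has zero spherical mean, then the principal value $\langle \kappa, \phi\rangle := \lim_{\epsilon\to 0}\int_{|x|>\epsilon} \kappa(x)\phi(x)\,dx$ is well defined on $\mathcal{D}(G)$: writing $\phi = \phi(0) + (\phi - \phi(0))$, the contribution of $\phi(0)$ vanishes by mean zero using \eqref{formula_polar_coord} to separate the radial and spherical integrals, while $\phi - \phi(0) = O(|x|^{\upsilon_1})$ by Lemma \ref{lem_mvt}, tame enough to remove the singularity. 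Conversely, if $\kappa$ extends to a distribution homogeneous of degree $-Q$, testing against $\chi(|x|)$ for a compactly supported radial cutoff and applying \eqref{formula_polar_coord} forces the spherical mean to vanish (otherwise the pairing diverges logarithmically).

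For Part (\ref{item_prop_op_type_diff}), smoothness away from $0$ and homogeneity of the correct degree $s - \nu_T - Q$ are immediate. When $s - \nu_T > 0$ there is nothing more to check; when $s - \nu_T = 0$, by Part (\ref{item_prop_op_type_-Qhomo_distrib}) it suffices to verify that $T\kappa$ has spherical mean zero, which follows by testing against a radial cutoff and integrating by parts: $T$ is a left-invariant differential operator homogeneous of positive degree, hence a sum of $X^\alpha$ with $[\alpha]\geq 1$, and each such operator kills constants, so the total derivative structure makes the integral over a spherical shell collapse to boundary terms that scale away. For Part (\ref{item_prop_op_type_conv}), I would first show that $\kappa_1 * \kappa_2(x)$ is well-defined for $x\neq 0$: splitting the integration into regions $|y|\leq |x|/2$, $|y^{-1}x|\leq |x|/2$, and the remainder, the size bounds $|\kappa_i(y)|\lesssim |y|^{\Re\nu_i - Q}$ combined with $\Re\nu_1 > 0$ yield absolute convergence, and the degree hypothesis $\Re(\nu_1+\nu_2)<Q$ gives the correct scaling behavior; smoothness off the origin comes from differentiating under the integral, and homogeneity of degree $\nu_1 + \nu_2 - Q$ is automatic. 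The associativity and $L^p \to L^q$ assertion then reduces, via Part (\ref{item_prop_op_type_Lp_bdd}) applied twice, to the elementary identity $\frac{1}{p} - \frac{1}{q} = \frac{\Re(\nu_1+\nu_2)}{Q}$.

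The main obstacle will be the mean-zero verification in Part (\ref{item_prop_op_type_diff}) when $s = \nu_T$, since one must carefully justify the divergence/integration-by-parts identity on $G$ (where left-invariant vector fields act as first-order differential operators in curvilinear coordinates), and relatedly the delicate splitting argument for the convolution $\kappa_1 * \kappa_2$ in Part (\ref{item_prop_op_type_conv}) near the degenerate endpoints $\Re\nu_2 = 0$ and $\Re(\nu_1+\nu_2)$ close to $Q$, where one must balance carefully the size estimates against homogeneity.
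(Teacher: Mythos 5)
The paper offers no proof of this proposition: it cites \cite[Ch.~6A]{folland+stein_82} and \cite[\S 3.2]{R+F_monograph}, and the only hint at technique is the remark preceding Lemma~\ref{lem_pv}, which says the $\nu=0$ case rests on the principal value construction and quasi-orthogonality. Your outline follows the classical Folland--Stein line (polar coordinates, dyadic shells, mean value lemma, splitting of the convolution). Parts~(2) and~(4) are essentially right, although for the converse direction of~(2) the clean argument compares $\langle u,\phi\rangle$ with $\langle u,\phi(r\cdot)\rangle$ for $\phi$ radial with $\phi\equiv 1$ near the origin: homogeneity of the extension $u$ makes the two equal, while the polar computation away from $0$ gives a discrepancy proportional to $c_\kappa\ln r$, so $c_\kappa=0$. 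Saying ``the pairing diverges logarithmically'' is misleading, since once $u\in\cD'(G)$ exists every pairing is finite.

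Two points need correction. First, in Part~(1) with $\Re\nu=0$ your $L^2$ argument is circular: you propose to deduce uniform boundedness of $\widehat\kappa(\pi)$ from ``homogeneity plus continuity on the compact $\Sigma_1$,'' but at that stage it is not yet established that $\widehat\kappa(\pi)$ is even a bounded operator on any $\cH_\pi$, and ``continuity of $\pi\mapsto\widehat\kappa(\pi)$'' is not available either (the fibres $\cH_\pi$ vary, and no topology on the field has been specified). The field $\widehat\kappa$ must be constructed as a principal value, and proving that the dyadic partial sums $\sum_{j=-M_1}^{M_2}\widehat\kappa_j(\pi)$ converge with a bound uniform in $\pi$ is precisely where Cotlar--Stein quasi-orthogonality enters -- this is the content of Lemma~\ref{lem_pv}(1) and the estimate \eqref{eq_cv1_lem_pv}, and your sketch has no substitute for it. (Calder\'on--Zygmund theory on spaces of homogeneous type then extends the $L^2$ bound to $L^p$, so the $L^2$ step is the crux.) Second, Part~(3) needs no mean-zero verification, even at the endpoint $s=\nu_T$: by Definition~\ref{def_kernel_type_nu} a kernel of type $s$ is already a distribution in $\cD'(G)$, so $T\kappa\in\cD'(G)$ automatically, it is smooth away from $0$, and it is homogeneous of degree $(s-\nu_T)-Q$. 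That is the entire proof; the ``main obstacle'' you flag in your closing paragraph is not one. The genuine gap is the $L^2$ bound in Part~(1).
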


The $L^2$-boundedness of operators of type 0 
(see Part \eqref{item_prop_op_type_Lp_bdd} in the case $\nu=0$)
and the characterisation of  Part \eqref{item_prop_op_type_-Qhomo_distrib}
are proved 
using the classical construction of a principal value distribution
and quasi-orthogonality.
The next lemma summarises the result in more detail
with the vocabulary of this paper:

\begin{lemma}
\label{lem_pv}
\begin{enumerate}
\item Let $\kappa\in C^1(G\backslash \{0\})$ be $(-Q)$-homogeneous 
and with vanishing mean value. 
Then $\kappa$ extended to a distribution on $G$ which is the kernel of an
convolution operator bounded on $L^2(G)$.
We fix a homogeneous quasi-norm $|\cdot|$.
For each $j\in \bZ$, we define the integrable function $\kappa_j$ 
via
$\kappa_j(x):=\kappa(x) 1_{2^j\leq |x| \leq 2^{j+1}}$, $x\in G$.
Then for each $\pi\in \Gh$, and each $v\in \cH_\pi$, 
the limit  $\sum_{j=-M_1}^{M_2} \widehat \kappa_j(\pi) v$ 
converges in $\cH_\pi$ as $M_1,M_2\to \infty$.
This defines a field of operators  $\sum_{j\in \bZ} \widehat \kappa_j$ 
wich is 0-homogeneous and satisfies:
 \begin{equation}
\label{eq_cv1_lem_pv}
 \sup_{\pi\in \Gh} \|  \sum_{j\in \bZ} \widehat \kappa_j(\pi)\|_{\sL(\cH_\pi)} 
 \leq 
 C \sup_{|z|=1,|\alpha|\leq 1} |X^\alpha \kappa(z)|,
\end{equation}
where $C$ is a constant which depends on the structural constants 
 of the group $G$ 
and of the choice of a homogeneous  quasi-norm $|\cdot|$, but not on $\kappa$.
\item 
 Let $\sigma=\{\sigma(\pi)\in \sL(\cH_\pi), \pi\in \Gh\}$ be a measurable field of operators  such that 
\begin{itemize}
\item $\sigma$ is 0-homogeneous, 
i.e. $\sigma(r\pi)=\sigma(\pi)$ for (almost) all $\pi\in \Gh$ and all $r>0$,
\item $\sigma$ is bounded, 
i.e. $\sup_{\pi\in \Gh} \|\sigma(\pi)\|_{\sL(\cH_\pi)}<\infty$,
\item the kernel associated with $\sigma$, i.e. $\kappa\in \cS'(G)$ such that $\widehat \kappa=\sigma$, coincides with a $C^1$ function on  $G\backslash\{0\}$.
\end{itemize}
 Then the mean value of $\kappa$ vanishes. Using the notation of Part 1, 
the sum $\sum_j \kappa_j$ converges in $\cS'(G)$
and defines a tempered distribution
which coincides with $\kappa$ on $G\backslash\{0\}$.
We have
$$
\kappa=\sum_j \kappa_j  + c_\sigma \delta_0, 
$$
where
$\displaystyle{
c_\sigma =\int_G \kappa(z) \chi (z) dz
}$
where $\chi\in \cD(\bR)$ is such that $\chi(0)=1$
and $\chi(z)=\chi_1(|z|)$ for some $\chi_1\in \cC(\bR)$.
The constant $c_\sigma$ does not depend on  $\chi$ or $|\cdot|$.

As a representative of the measurable field $\sigma$, 
we may choose the one given via:
$$
\sigma(\pi)u = \sum_{j=-\infty}^{+\infty} \widehat\kappa_j(\pi)u 
+c_\sigma u,
\qquad \pi\in \Gh, u\in \cH_\pi.
$$
In this case, 
$
c_\sigma=\sigma(1).
$
\end{enumerate}
\end{lemma}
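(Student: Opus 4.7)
The plan is to handle Parts (1) and (2) separately, with Part (2) building on Part (1).

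For Part (1), I would first realize $\kappa$ as a principal-value distribution by setting $\langle\mathrm{p.v.}\kappa,\phi\rangle:=\lim_{\epsilon\to 0^+}\int_{|x|>\epsilon}\kappa(x)\phi(x)\,dx$ for $\phi\in\cS(G)$, using the vanishing mean value to replace $\phi(x)$ by $\phi(x)-\phi(0)$ on $\{\epsilon\le|x|\le 1\}$ and Lemma \ref{lem_mvt} (giving $|\phi(x)-\phi(0)|\lesssim|x|^{\upsilon_1}$ locally) to pass to the limit; the same estimate shows $\sum_{j=-M_1}^{M_2}\kappa_j\to\mathrm{p.v.}\kappa$ in $\cS'(G)$. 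The structural observation behind the Fourier-side analysis is that $\kappa_j=(\tilde\kappa_0)_{(2^j)}$ with $\tilde\kappa_0:=\kappa\,\mathbf{1}_{1\le|y|\le 2}\in L^1\cap L^\infty$, so by \eqref{eq_rcdotpif} one has $\widehat{\kappa_j}(\pi)=\widehat{\tilde\kappa_0}(2^j\cdot\pi)$, and moreover $\int_G\tilde\kappa_0=(\log 2)\int_\fS\kappa\,d\sigma=0$. The convergence of $\sum_j\widehat{\kappa_j}(\pi)v$ on smooth $v$ is then a two-sided estimate: for $j\ll 0$ I would use $\int\tilde\kappa_0=0$ to rewrite $\widehat{\kappa_j}(\pi)v=\int_G\tilde\kappa_0(y)\bigl(\pi(2^jy)^*-\mathrm{Id}\bigr)v\,dy$ and Lemma \ref{lem_mvt} at the infinitesimal level to gain a factor of order $2^{j\upsilon_1}$; for $j\gg 0$ I would integrate by parts against a positive Rockland operator $\cR$, converting the expression into $\widehat{(\cR^a\tilde\kappa_0)}(2^j\cdot\pi)\pi(\cR)^{-a}v$ and trading $2^{-aj\nu}$-decay for derivatives of $\tilde\kappa_0$. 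A Cotlar--Stein argument applied to $\widehat{\kappa_j}(\pi)^*\widehat{\kappa_k}(\pi)$ (with one factor small-scale and the other large-scale) then yields the uniform bound \eqref{eq_cv1_lem_pv} with constant $\sup_{|z|=1,|\alpha|\le 1}|X^\alpha\kappa(z)|$; $0$-homogeneity of the sum is immediate from the $(-Q)$-homogeneity of $\mathrm{p.v.}\kappa$ as a distribution combined with \eqref{eq_rcdotpif}, and $L^2$-boundedness of convolution by $\mathrm{p.v.}\kappa$ follows from the Plancherel theorem.

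For Part (2), the identity $\sigma(r\cdot\pi)=\sigma(\pi)$ combined with \eqref{eq_rcdotpif} gives $\kappa_{(r)}=\kappa$ in $\cS'(G)$, hence $\kappa|_{G\setminus\{0\}}$ is $(-Q)$-homogeneous as a function. To prove that the mean value of $\kappa$ vanishes, I would argue by contradiction: if $I=\int_\fS\kappa\,d\sigma\ne 0$, then $\widehat{\tilde\kappa_0}$ is a continuous field on $\Gh$ with $\widehat{\tilde\kappa_0}(1)=(\log 2)I\ne 0$ at the trivial representation; since $r\cdot\pi\to 1$ in the hull--kernel topology as $r\to 0^+$, the terms $\widehat{\tilde\kappa_0}(2^j\cdot\pi)$ remain bounded below in norm on a fixed neighbourhood of $1$ for all $j$ sufficiently negative, so the partial sums $\sum_{j=-N}^{0}\widehat{\tilde\kappa_0}(2^j\cdot\pi)$ cannot stay uniformly bounded in operator norm, contradicting $\sigma\in L^\infty(\Gh)$. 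Once mean value vanishes, Part (1) provides $\mathrm{p.v.}\kappa=\sum_j\kappa_j\in\cS'(G)$ agreeing with $\kappa$ on $G\setminus\{0\}$, so $\kappa-\mathrm{p.v.}\kappa$ is supported at $\{0\}$. Since both distributions are $(-Q)$-homogeneous and the only $(-Q)$-homogeneous distribution supported at $\{0\}$ is a scalar multiple of $\delta_0$ (because $X^\alpha\delta_0$ is $(-Q-[\alpha])$-homogeneous, forcing $\alpha=0$), we obtain $\kappa=\sum_j\kappa_j+c_\sigma\delta_0$. Pairing with $\chi$: by polar coordinates and vanishing mean value we have $\langle\mathrm{p.v.}\kappa,\chi\rangle=0$, so $c_\sigma=\langle\kappa,\chi\rangle=\int_G\kappa(z)\chi(z)\,dz$, and the same cancellation shows independence from $\chi$ and from the choice of $|\cdot|$. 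Finally, evaluating the representative at the trivial representation gives $\widehat{\kappa_j}(1)=\int_G\kappa_j=0$ and hence $\sigma(1)=c_\sigma$.

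The main obstacle I anticipate is the uniform $L^\infty(\Gh)$ estimate \eqref{eq_cv1_lem_pv}: in $\bR^n$ this is Calder\'on--Zygmund theory available via the explicit Fourier symbol, but in the graded setting one lacks such a formula and must proceed by a genuinely two-scale Cotlar--Stein argument — mean-value cancellation at the small-$\pi$ end, Rockland decay at the large-$\pi$ end. A secondary subtlety is making the vanishing-mean-value argument in Part (2) rigorous while relying only on the hull--kernel topology of $\Gh$ together with the $L^\infty$ assumption on $\sigma$.
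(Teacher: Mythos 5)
Your Part (1) strategy (realize $\kappa$ as a principal value, decompose into dyadic pieces $\kappa_j=(\tilde\kappa_0)_{(2^j)}$, use the mean-value cancellation at small scales and Rockland-type decay at large scales, and close with Cotlar--Stein) is morally the standard Knapp--Stein/Folland--Stein argument, and this is what the paper relies on by citing \cite[\S 3.2.5]{R+F_monograph}. One small quibble there: $\tilde\kappa_0 = \kappa\,\mathbf 1_{1\le|y|\le 2}$ is not even continuous, so one cannot literally integrate by parts against $\cR$ in your large-$j$ step; one should either use a smooth dyadic partition or transfer to kernel-side size/regularity estimates. For Part (2), the identification $\kappa=\sum_j\kappa_j+c_\sigma\delta_0$ via ``a $(-Q)$-homogeneous distribution supported at the origin is a multiple of $\delta_0$'' and the computation of $c_\sigma$ against rotation-invariant test functions match the paper's route exactly.

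The genuine gap is in your proof that the mean value vanishes, and there are two separate problems. First, the inference from ``$\|\widehat{\tilde\kappa_0}(2^j\cdot\pi)\|\ge c>0$ for all $j$ sufficiently negative'' to ``the partial sums $\sum_{j=-N}^{0}\widehat{\tilde\kappa_0}(2^j\cdot\pi)$ are unbounded in operator norm'' does not hold: norm lower bounds on individual terms do not rule out cancellation in the sum (think of $A_j=(-1)^j B$). What is true, and what you actually need, is the \emph{strong}-operator convergence $\widehat{\tilde\kappa_0}(2^j\cdot\pi)\to (\log 2)I\cdot\id_{\cH_\pi}$ as $j\to-\infty$ (this follows from $\pi(2^jy)\to\id_{\cH_\pi}$ in SOT for $y$ in the bounded support of $\tilde\kappa_0$ and dominated convergence, in the spirit of Lemma \ref{lem_cv_psiepsilon}); then, applied to a fixed unit vector, the partial sums genuinely grow like $N\,|(\log 2)I|$, with no possibility of cancellation. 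Continuity in the hull--kernel topology on $\Gh$ is the wrong tool: it controls $\|\widehat{\tilde\kappa_0}(\cdot)\|$, which is not enough.

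Second, and more seriously, even granting that $\sum_{j\le 0}\widehat\kappa_j(\pi)$ diverges, you have not linked this sum to $\sigma(\pi)$: the identity $\sigma = \sum_j\widehat\kappa_j + c_\sigma\id$ is a \emph{conclusion} of the lemma, and establishing it requires Part (1), which already assumes the mean value is zero — so the argument as written is circular. The hypothesis gives only $\widehat\kappa=\sigma$ with $\kappa$ a tempered distribution agreeing with a $C^1$ function away from $0$. A direct, non-circular way to get the vanishing of the mean value is to exploit the $(-Q)$-homogeneity of $\kappa$ as a \emph{distribution}: from $\sigma(r\cdot\pi)=\sigma(\pi)$ and \eqref{eq_rcdotpif} one gets $\kappa_{(r)}=\kappa$ in $\cS'(G)$, i.e.\ $\langle\kappa,\phi(r\cdot)\rangle=\langle\kappa,\phi\rangle$ for every $\phi\in\cS(G)$ and $r>0$. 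Take $\phi(x)=\phi_1(|x|)$ with $\phi_1\in C_c^\infty[0,\infty)$, $\phi_1\equiv 1$ near $0$, and $r>1$; then $\phi-\phi(r\cdot)$ is supported in an annulus away from the origin, so by $(-Q)$-homogeneity of the function and the polar decomposition of Proposition \ref{prop_polar_coord},
\begin{equation*}
0=\langle\kappa,\phi-\phi(r\cdot)\rangle
=\Bigl(\int_\fS\kappa\,d\sigma\Bigr)\int_0^\infty\frac{\phi_1(t)-\phi_1(rt)}{t}\,dt,
\end{equation*}
and the scalar integral grows like $\log r$ as $r\to\infty$, forcing $\int_\fS\kappa\,d\sigma=0$. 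After that, Part (1) applies and the rest of your argument for Part (2) goes through.
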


\begin{proof}[Sketch of the proof of Lemma \ref{lem_pv}]
See e.g. \cite[\S 3.2.5]{R+F_monograph} for the proof of Part (1).
For Part (2), 
let $\tilde \kappa$ be the kernel associated with 
the symbol $\sum_{j\in \bZ} \widehat\kappa_j$.
Then  $\tilde \kappa=\sum_j \kappa_j$ is a $(-Q)$-homogeneous tempered distribution.
For any $\phi \in \cD(G)$, 
the sum $\sum_j \langle \kappa_j, \phi\rangle$ is absolutely convergent and its sum is $\langle \tilde \kappa ,\phi\rangle$.
Hence $\tilde \kappa$ coincides with $\kappa$ on $G\backslash\{0\}$
so the distribution  $\kappa -\tilde \kappa$
being $-Q$-homogeneous and supported at the origin
must be a multiple of $ \delta_0$.
If there exists $\phi_1\in \cC(\bR)$ such that $\phi(z)=\phi_1(|z|)$
then 
$$
\langle \kappa_j, \phi\rangle=\int_{2^j \leq |z|\leq 2^{j+1}} \kappa(z) \phi_1(|z|)dz=0
$$
as the mean value of $\kappa$ is zero and $\langle \tilde \kappa ,\phi\rangle=0$.
This together with $\kappa=\tilde \kappa +c_\sigma \delta_0$ with $c_\sigma\in \bC$ implies the rest of the statement.
\end{proof}


\section{Pseudo-differential calculus}\label{sec_pseudo}

Here we outline the pseudo-differentical calculus developed in \cite{R+F_monograph}.

\subsection{Quantisation}
\label{subsec_quantisation}
A \emph{symbol} is a measurable field of operators $\sigma(x,\pi):\cH_\pi^\infty \to \cH_\pi^\infty$, parametrised by $x\in G$ and $\pi\in \Gh$.
We formally associate to $\sigma$ the operator $\Op(\sigma)$
as follows
$$
\Op(\sigma) f (x) := \int_G 
\tr \left(\pi(x) \sigma(x,\pi) \widehat f (\pi) \right)
d\mu(\pi),
$$
where $f\in \cS(G)$ and $x\in G$.

Regarding symbols, when no confusion is possible,
we will allow ourselves some notational shortcuts, 
for instance writing $\sigma(x,\pi)$ 
when considering the field of operators $\{\sigma(x,\pi) :\cH_\pi^\infty \to \cH_\pi^\infty, (x,\pi)\in G\times\Gh\}$ with the usual identifications
for $\pi\in \Gh$ and $\mu$-measurability.

This quantisation has already been observed in \cite{taylor_84,BFG,R+F_monograph} for instance.
It can be viewed as an analogue of the Kohn-Nirenberg quantisation
since the inverse formula can be written as 
$$
 f (x) := \int_G 
\tr \left(\pi(x) \widehat f (\pi) \right)
d\mu(\pi),
\quad f\in \cS(G), \ x\in G.
$$
This also shows that the operator 
associated with the symbol $\id=\{\id_{\cH_\pi} , (x,\pi)\in G\times\Gh\} $
is the identity operator $\Op(\id)=\id$.

Note that (formally or whenever it makes sense),
if we denote the (right convolution) kernel of $\Op(\sigma)$ by $\kappa_x$,
that is, 
$$
\Op(\sigma)\phi(x)=\phi*\kappa_x,
\quad x\in G, \ \phi\in \cS(G),
$$
then it is given by
$$
\pi(\kappa_x)=\sigma(x,\pi).
$$
Moreover  the integral kernel of $\Op(\sigma)$ is 
$$
K(x,y)=\kappa_x(y^{-1}x),\quad\mbox{where}\quad
\Op(\sigma)\phi(x)=\int_G K(x,y) \phi(y)dy.
$$
We shall abuse the vocabulary and call $\kappa_x$ 
the kernel of $\sigma$, and $K$ its integral kernel.

\subsection{Difference operators}
The difference operators are aimed at replacing the derivatives with respect to the Fourier variable in the Euclidean case.
For each $\alpha\in \bN_0^n$, 
the difference operator $\Delta^\alpha$ is defined via
$$
\Delta^\alpha \widehat f (\pi) = \cF_G(x^\alpha f) (\pi),
\quad \pi\in \Gh.
$$
Here $f$ is in a distributional space on which the group Fourier transform has been defined, i.e. $L^1(G)$, $L^2(G)$ or $\cK_{a,b}(G)$ etc...

The difference operators satisfy the Leibniz rule:
\begin{equation}
\label{eq_Leibniz_rule_sigma}
\Delta^\alpha (\sigma_1\sigma_2)=
\sum_{[\alpha_1]+[\alpha_2]= [\alpha]} 
c_{\alpha_1,\alpha_2}\Delta^{\alpha_1}\sigma_1 \ 
\Delta^{\alpha_2}\sigma_2,
\end{equation}
where $c_{\alpha_1,\alpha_2}$ are universal constants.
By `universal constants', 
we mean that they depend only on~$G$ and the choice of the basis $\{X_j\}_{j=1}^n$.
This comes from the  fact  that for any $\alpha\in \bN_0^n$, 
with the same constants $c_{\alpha_1,\alpha_2}$ as above,
we have
\begin{equation}
\label{eq_xy_alpha}
(xy)^\alpha =
\sum_{[\alpha_1]+[\alpha_2]= [\alpha]} 
c_{\alpha_1,\alpha_2} x^{\alpha_1}y^{\alpha_2}.
\end{equation}
Note that $c_{0,\alpha'_2}=\delta_{\alpha=\alpha'_2}$
and $c_{\alpha'_1,0}=\delta_{\alpha'_1=\alpha}$.

From \eqref{eq_xy_alpha},
 one also deduces that if $\phi\in \cS(G)$
 and $\kappa\in \cS'(G)$ 
$$
(x^\alpha \phi)  *  \kappa
=
\sum_{[\alpha_1]+[\alpha_2]= [\alpha]} 
c_{\alpha_1,\alpha_2}  (-1) ^{\alpha_2}x^{\alpha_1}
\phi* (x^{\alpha_2}\kappa).
$$
Taking the Fourier transform, the following property holds for any $\phi\in \cS(G)$ and $\sigma\in L^\infty(\Gh)$ satisfying $\Delta^{\alpha'}\sigma\in L^\infty(\Gh)$ for any $\alpha'\in \bN_0^n$ with $[\alpha']\leq[\alpha]$:
\begin{equation}
\label{eq_sigmaDeltaphi}
\sigma 
\Delta^\alpha\phi
= 
\sum_{[\alpha_1]+[\alpha_2]= [\alpha]} 
c_{\alpha_1,\alpha_2}  (-1) ^{\alpha_2}\Delta^{\alpha_1}
\{\Delta^{\alpha_2} \sigma \ \widehat \phi\}.
\end{equation}

\begin{ex}
\label{ex_Delta_piX}
One can prove easily
$$
\Delta^\alpha \pi(X)^\beta = 
\left\{\begin{array}{ll}
0
&\mbox{if}\ [\alpha]>[\beta],\\
\sum_{[\alpha']= [\alpha]-[\beta]} 
c'_{\alpha',\alpha,\beta} \pi(X)^{\alpha'}
&\mbox{if}\ [\alpha]\leq[\beta],
\end{array}\right.
$$
where $c'_{\alpha',\alpha,\beta} $ are universal constants.
\end{ex}

Using \eqref{eq_rcdotpif},
if $f$ and $x^\alpha f$ are integrable, 
then 
$$
(r\cdot \pi)(x^\alpha f) 
=
\pi(x^\alpha f)_{(r)}
=
r^{-[\alpha]} 
\pi(x^\alpha f_{(r)})
=
r^{-[\alpha]} 
\Delta^\alpha \widehat f_{(r)}.
$$
Hence denoting $\sigma=\widehat f$ 
and $\sigma_{r\,\cdot}=\{\sigma_{r\cdot\pi},\pi\in \Gh\}$, 
we have
$\sigma_{r\,\cdot}=\widehat f_{(r)} $ by \eqref{eq_rcdotpif} and
we have obtained:
\begin{equation}
\label{eq_Deltarcdotpi}
\Delta^\alpha\left( \sigma_{r\,\cdot}\right) (\pi) = 
r^{[\alpha]}
\left(\Delta^\alpha \sigma\right) (r\cdot \pi),
\quad r>0,\ \pi\in \Gh.
\end{equation}
One checks easily that \eqref{eq_Deltarcdotpi}
holds as long as it makes sense.

We also have the following integration by parts:
$$
\int_{\Gh} \tr \left(\Delta^\alpha \sigma_1  \  \ \sigma_2 \right)d\mu 
=
(-1)^{|\alpha|} 
\int_{\Gh} \tr \left(\sigma_1  \   \Delta^\alpha \sigma_2 \right)d\mu, 
$$
if $\sigma_1,\sigma_2\in \cF_G \cS(G)$ and $\alpha\in \bN_0^n$.
Indeed in this case, using the FIF, see Proposition \ref{prop_FIF}, 
both sides are equal to $\int_G \cF^{-1}_G\sigma_1(x) x^\alpha \cF^{-1}_G\sigma_2(x^{-1})dx$.
Along the same idea, we have:
\begin{lemma}
\label{lem_FIF+IBP}
Let $\sigma$ be a symbol such that at least one of the two quantities
$$
\sup_{\pi\in \Gh}\|\pi(\id+\cR)^{\frac s\nu} \sigma(\pi)\|_{\sL(\cH_\pi)}
\quad,\quad
\sup_{\pi\in \Gh}\|\sigma(\pi)\pi(\id+\cR)^{\frac s\nu} \|_{\sL(\cH_\pi)},
$$
 is finite for some $s>Q$.
 Then for any $\alpha\in \bN_0^n\backslash\{0\}$, we have:
$$
\int_G \tr\left( \Delta^\alpha \sigma (\pi)\right) d\mu(\pi) =0,
$$
in the sense that if $(\psi_\epsilon)_{\epsilon>0}$ is any $\delta_0$-approximate as in Lemma \ref{lem_cv_psiepsilon}, 
then each quantity $$\int_G \tr\left| \sigma (\pi) \ \Delta^\alpha \widehat \psi_\epsilon(\pi)\right| d\mu(\pi), \;\;\epsilon>0,$$ is finite and
the following limit exists and is zero:
$$
\lim_{\epsilon\to0}
\int_G \tr\left(  \sigma (\pi) \ \Delta^\alpha \widehat \psi_\epsilon(\pi)\right) d\mu(\pi) =0.
$$
\end{lemma}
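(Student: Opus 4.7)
The plan is to bypass any direct application of the Fourier inversion formula to $\Delta^\alpha\sigma$ and instead to estimate $\int_{\Gh}\tr(\sigma\,\Delta^\alpha\widehat\psi_\epsilon)\,d\mu$ directly, by factoring through $\pi(\id+\cR)^{\pm\frac s\nu}$ and exploiting the scaling in $\epsilon$ of the $\delta_0$-approximate. The heuristic is that $\Delta^\alpha\sigma$ is the Fourier transform of $x^\alpha\kappa$, where $\kappa$ is the (continuous, bounded) kernel of $\sigma$ given by Proposition~\ref{prop_FIF}, so formally $\int_{\Gh}\tr\Delta^\alpha\sigma\,d\mu=(x^\alpha\kappa)(0)=0$ since $\alpha\neq 0$; the task is to make this quantitative.

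First I would estimate the scaling of $\Delta^\alpha\widehat\psi_\epsilon$. Since $\Delta^\alpha\widehat\psi_\epsilon=\cF_G(x^\alpha\psi_\epsilon)$, the elementary $L^1$-to-operator-norm bound together with the $Q$-homogeneity of the Haar measure~\eqref{eq_int_Gf(rx)dx} and the $[\alpha]$-homogeneity of the monomial $x^\alpha$ give
$$\|\Delta^\alpha\widehat\psi_\epsilon\|_{L^\infty(\Gh)}\leq\|x^\alpha\psi_\epsilon\|_{L^1(G)}=\epsilon^{[\alpha]}\|x^\alpha\psi_1\|_{L^1(G)}.$$
Since $\alpha\neq 0$ and all weights $\upsilon_j\geq 1$, one has $[\alpha]>0$, so this quantity vanishes as $\epsilon\to 0$. (Alternatively, this scaling follows from $\widehat\psi_\epsilon(\pi)=\widehat\psi_1(\epsilon\cdot\pi)$ and~\eqref{eq_Deltarcdotpi}.)

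Then I would exploit the hypothesis on $\sigma$ via a trace-norm factorisation. Under the second hypothesis, writing $\sigma\,\Delta^\alpha\widehat\psi_\epsilon=(\sigma\,\pi(\id+\cR)^{\frac s\nu})\,(\pi(\id+\cR)^{-\frac s\nu}\Delta^\alpha\widehat\psi_\epsilon)$ and using the H\"older-type inequalities $\tr|AB|\leq\|A\|_{\sL}\tr|B|$ and $\tr|CD|\leq\tr|C|\,\|D\|_{\sL}$, I obtain
$$\int_{\Gh}\tr|\sigma\,\Delta^\alpha\widehat\psi_\epsilon|\,d\mu\leq\|\sigma\,\pi(\id+\cR)^{\frac s\nu}\|_{L^\infty(\Gh)}\,\|\Delta^\alpha\widehat\psi_\epsilon\|_{L^\infty(\Gh)}\int_{\Gh}\tr|\pi(\id+\cR)^{-\frac s\nu}|\,d\mu.$$
The last factor is finite by~\eqref{eq_inttr(I+R)} since $s>Q$; under the first hypothesis, cyclicity of the trace reduces to the same bound. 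Combining with the previous paragraph, each integral $\int_{\Gh}\tr|\sigma\,\Delta^\alpha\widehat\psi_\epsilon|\,d\mu$ is finite and bounded by $C\epsilon^{[\alpha]}$, forcing $\int_{\Gh}\tr(\sigma\,\Delta^\alpha\widehat\psi_\epsilon)\,d\mu\to 0$ as $\epsilon\to 0$. I do not anticipate a serious obstacle: the key observation is merely that the Rockland weight can be inserted freely, matching the hypothesis on $\sigma$ with the trace-integrability property~\eqref{eq_inttr(I+R)}.
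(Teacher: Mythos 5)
Your proof is correct. It exploits the same algebraic fact that underpins the paper's argument—namely that $\Delta^\alpha\widehat\psi_\epsilon=\cF_G(x^\alpha\psi_\epsilon)$ and $\|x^\alpha\psi_\epsilon\|_{L^1}=\epsilon^{[\alpha]}\|x^\alpha\psi_1\|_{L^1}$, so an extra factor $\epsilon^{[\alpha]}$ appears—but the way you convert this into the conclusion differs. The paper introduces $\phi_1:=x^\alpha\psi_1$, observes $\Delta^\alpha\widehat\psi_\epsilon=\epsilon^{[\alpha]}\widehat\phi_\epsilon$, and then simply invokes Remark~\ref{rem_pf_FIF} to say that $\int_{\Gh}\tr(\sigma\,\widehat\phi_\epsilon)\,d\mu$ converges to the finite number $c\,\kappa(0)$; the prefactor $\epsilon^{[\alpha]}\to 0$ then kills the limit. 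You instead bypass Remark~\ref{rem_pf_FIF} and directly estimate the trace norm by factoring $\sigma\,\Delta^\alpha\widehat\psi_\epsilon$ through $\pi(\id+\cR)^{\pm\frac s\nu}$, using the hypothesis on $\sigma$, the $L^1\to L^\infty$ bound for the group Fourier transform, and the trace integrability \eqref{eq_inttr(I+R)}. Each approach uses the same underlying ingredients (\eqref{eq_inttr(I+R)} is also what powers the proof of Proposition \ref{prop_FIF}), but yours packages them into a self-contained quantitative bound $\int_{\Gh}\tr\bigl|\sigma\,\Delta^\alpha\widehat\psi_\epsilon\bigr|\,d\mu\lesssim\epsilon^{[\alpha]}$, which is slightly stronger than what the paper records and does not depend on the technical scaffolding of the Fourier inversion formula; the cost is that you essentially re-derive the absolute-integrability estimate that Remark~\ref{rem_pf_FIF} would have handed you. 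One small phrasing point: what you invoke to switch between the two hypotheses on $\sigma$ is not cyclicity of the trace but a different three-term factorisation, $\sigma\,\Delta^\alpha\widehat\psi_\epsilon=\pi(\id+\cR)^{-\frac s\nu}\,\bigl(\pi(\id+\cR)^{\frac s\nu}\sigma\bigr)\,\Delta^\alpha\widehat\psi_\epsilon$, together with $\|ABC\|_{\sL^1}\le\|A\|_{\sL^1}\,\|B\|_{\sL}\,\|C\|_{\sL}$; the conclusion is the same but the label should be corrected.
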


By Remark \ref{rem_pf_FIF}, in the case $\alpha=0$, 
the limit above is $c \kappa(0)$ where $c=\int_G \psi_1$ and $\kappa$ is the kernel of $\sigma$.

\begin{proof}[Proof of Lemma \ref{lem_FIF+IBP}]
We set $\phi_1 (x) = x^\alpha\psi_1(x)$
and 
$\phi_\epsilon (x) = \epsilon^{-Q} \phi_1(\epsilon^{-1}x)$.
The statement follows from 
$\widehat \phi_\epsilon= 
\epsilon^{-[\alpha]}\Delta^\alpha \widehat \psi_\epsilon $ 
by \eqref{eq_Deltarcdotpi}
and by Remark \ref{rem_pf_FIF}, 
$$
\int_{\Gh} \tr\left(  \sigma (\pi) \  \widehat \phi_\epsilon(\pi)\right) d\mu(\pi) 
\longrightarrow_{\epsilon\to0}
c\,\cF^{-1} \sigma (0), 
\quad\mbox{with}\quad c=\int_G \phi_1.
$$
\end{proof}

\subsection{The symbol classes $S^m(G)$ and the calculus}

In this section, we recall the definition and properties of the symbolic pseudo-differential calculus defined on graded Lie groups in 
\cite[\S 5]{R+F_monograph}.

\begin{definition}
A symbol 
$\displaystyle{
\sigma=\{\sigma(x,\pi):\cH_\pi^\infty\to\cH_\pi^\infty, (x,\pi)\in G\times\Gh\}
}$
 is called a {\em symbol of order $m$} 
whenever,
for each $\alpha,\beta\in \bN_0^n$ and $\gamma\in \bR$
we have
 \begin{equation}
\label{eq_def_Smrhodelta}
\sup_{x\in G, \pi\in \Gh}
\|\pi(\id+\cR)^{\frac{ [\alpha]-m +\gamma}\nu }
X_x^\beta\Delta^\alpha \sigma(x,\pi) 
\pi(\id+\cR)^{-\frac{\gamma}\nu }\|_{\sL(\cH_\pi)}<\infty,
\end{equation}
where  $\cR$ is  a (fixed) positive Rockland operator of homogeneous degree $\nu$.
The \emph{symbol class}  $S^m=S^m(G)$ is the set of symbols 
of order $m$.
\end{definition}

By Lemma \ref{lem_2R_fractional_power},
each symbol class $S^m$ is independent of $\cR$.
The property of interpolation of Sobolev spaces 
(cf. Theorem \ref{thm_sobolev_spaces} \eqref{item_thm_sobolev_spaces_interpolation})
also implies that it suffices to have \eqref{eq_def_Smrhodelta}
only for a sequence $(\gamma_\ell)_{\ell\in \bZ}$
with $\gamma_\ell\rightarrow_{\ell\to\pm\infty}\pm \infty$.

For a chosen positive Rockland operator $\cR$ of homogeneous degree $\nu$,
the seminorms
$$
\|\sigma\|_{S^m,a,b,c}:=
\max_{\substack {[\alpha]\leq a \\
[\beta]\leq b, |\gamma|\leq c}}
\sup_{\substack {x\in G\\ \pi\in \Gh}}
\|\pi(\id+\cR)^{\frac{ [\alpha]-m +\gamma}\nu }
X_x^\beta\Delta^\alpha \sigma(x,\pi) 
\pi(\id+\cR)^{-\frac{\gamma}\nu }\|_{\sL(\cH_\pi)}
,
$$
 yield a structure of Frechet spaces on each $S^m$, $m\in \bR$.
One checks that $S^{m_1} \subset S^{m_2}$ if $m_1\leq m_2$.
We also define the space of smoothing symbols 
$$
S^{-\infty}:=\bigcap_{m\in \bR} S^m,
$$
which is endowed with the topology of projective limit.

The corresponding spaces of operators
$$
\Psi^m\equiv \Psi^m(G):=\Op(S^m),\quad m\in \bR\cup\{-\infty\},
$$
yield a  calculus:

\begin{theorem}
\label{thm_calculus}
\begin{enumerate}
\item 
The set of operators $\cup_{m\in \bR} S^m$ is an algebra of symbols
in the sense that product, taking the adjoint and applying spacial and dual derivatives
$$
\left\{\begin{array}{rcl}
S^{m_1}\times S^{m_2} &\longrightarrow& S^{m_1+m_2}\\
(\sigma_1,\sigma_2)&\longmapsto& \sigma_1\sigma_2
\end{array}
\right.,\quad
\left\{\begin{array}{rcl}
S^{m} &\longrightarrow& S^{m}\\
\sigma&\longmapsto& \sigma^*
\end{array}\right.,
\quad\mbox{and}\quad
\left\{\begin{array}{rcl}
S^{m} &\longrightarrow& S^{m-[\alpha]}\\
\sigma&\longmapsto&X^\beta \Delta^\alpha \sigma
\end{array}\right.,
$$
(for any $m,m_1,m_2\in \bR$ and $\alpha,\beta\in \bN_0^n$)
 are continuous operations.
\item 
Furthermore $\sigma\in S^m$ if and only if 
it satisfies \eqref{eq_def_Smrhodelta} with $\gamma=0$.
The seminorms $\|\cdot\|_{S^m,a,b,0}$ yield an equivalent 
family of seminorms for the Fr\'echet topology of $S^m$. 
\item 
The set of operators $\cup_{m\in \bR} \Psi^m$ is a calculus in the sense that  product, taking the adjoint and applying spacial and dual derivatives
$$
\left\{\begin{array}{rcl}
\Psi^{m_1}\times \Psi^{m_2} &\longrightarrow& \Psi^{m_1+m_2}\\
(T_1,T_2)&\longmapsto& T_1T_2
\end{array}
\right.,
\quad\mbox{and}\quad
\left\{\begin{array}{rcl}
\Psi^{m} &\longrightarrow& \Psi^{m}\\
T&\longmapsto& T^*
\end{array}\right.,
$$
(for any $m,m_1,m_2\in \bR$)
 are continuous operations, 
 and that any operator in $\Psi^{m}$
maps continuously $L^2_s(G)$ to $L^2_{s-m}(G)$.
\item 
We have the asymptotic expansions:
$$
\displaylines{
\Op^{-1}\left( \Op(\sigma_1)\Op(\sigma_2)\right)  \sim \sum_{[\alpha]} c_\alpha \Delta^\alpha \sigma_1 X_x^\alpha \sigma_2,\cr
\Op^{-1}\left(\Op(\sigma)^{*}\right) 
\sim \sum_{[\alpha]} c'_\alpha X_x^\alpha \Delta^\alpha  \sigma^*.\cr}
$$
where the constants $c_\alpha$ and $c'_\alpha$, $\alpha\in \bN_0^n$, 
are universal with  $c_0=c'_0=1$.
\end{enumerate}
 \end{theorem}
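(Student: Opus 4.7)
The plan is to proceed in the order (2) $\to$ (1) $\to$ (3) $\to$ (4), since each part rests on the previous ones, using the spectral calculus of $\cR$ as the organising tool.

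First, I would prove the reduction in (2). The operators $\pi(\id+\cR)^{\gamma/\nu}$ are functions of $\pi(\cR)$ and so commute with $X^\beta$ (left invariant) and with $\Delta^\alpha$ modulo controlled lower order terms: using the Leibniz rule \eqref{eq_Leibniz_rule_sigma} and Example \ref{ex_Delta_piX} one obtains a commutator identity expressing $\pi(\id+\cR)^{\gamma/\nu} (X^\beta \Delta^\alpha \sigma) \pi(\id+\cR)^{-\gamma/\nu}$ as a finite sum of $X^\beta \Delta^{\alpha'}\sigma$ weighted by $\pi(\id+\cR)^{\gamma'/\nu}$ with $\gamma'\leq 0$. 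The estimate with $\gamma=0$ applied to all multi-indices $\alpha'$ with $[\alpha']\leq [\alpha]$ therefore controls all the $\gamma\neq 0$ estimates, giving equivalence of the seminorm families. Interpolation (Theorem \ref{thm_sobolev_spaces}\eqref{item_thm_sobolev_spaces_interpolation}) handles non-integer $\gamma$.

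Second, (1). Closure under $X^\beta\Delta^\alpha$ is immediate by unwinding the definition of $S^{m-[\alpha]}$. For the product, I would expand $\Delta^\alpha(\sigma_1\sigma_2)$ via \eqref{eq_Leibniz_rule_sigma} and $X^\beta(\sigma_1\sigma_2)$ via the usual non-commutative Leibniz rule, then insert a resolution of identity of the form $\pi(\id+\cR)^{-([\alpha_1]-m_1)/\nu}\pi(\id+\cR)^{([\alpha_1]-m_1)/\nu}$ between the two factors, using (2) on each piece. For the adjoint, the kernel formula $\kappa^*_x(z)=\overline{\kappa_{xz}(z^{-1})}$ leads via Taylor expansion in $x$ (along the left invariant directions) to the required bounds once (2) is in hand.

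Third, (3). Given $\sigma\in S^m$, the operators $(\id+\cR)^{(s-m)/\nu}\Op(\sigma)(\id+\cR)^{-s/\nu}$ lies in $\Psi^0$ by the symbolic calculus of (1) together with the fact that $(\id+\cR)^{a/\nu}$ is itself a symbol in $S^a$, modulo smoothing terms. So the mapping property reduces to the $L^2(G)$-boundedness of $\Psi^0$. That is established by a Calder\'on--Vaillancourt type argument: the integral kernel $K(x,y)=\kappa_x(y^{-1}x)$ of an order-$0$ operator satisfies Coifman--Weiss-type regularity estimates because $\kappa_x$ decomposes as a bounded family of $(-Q)$-homogeneous kernels (treated via Lemma \ref{lem_pv}) plus Schwartz remainders, and Schur-type tests yield boundedness.

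Fourth, (4). For the composition asymptotic, I would write
\[
\Op(\sigma_1)\Op(\sigma_2)\phi(x)=\int_{\Gh}\tr\bigl(\pi(x)\sigma_1(x,\pi)\widehat{\Op(\sigma_2)\phi}(\pi)\bigr)d\mu(\pi),
\]
and Taylor-expand $y\mapsto\sigma_1(y,\pi)$ at $y=x$ along the basis $X^\alpha$, matching the expansion of $(xy)^\alpha$ in \eqref{eq_xy_alpha} with the action of $\Delta^\alpha$ on $\sigma_2$. Combining this with \eqref{eq_sigmaDeltaphi} gives the stated expansion modulo an $N$th-order remainder. Similar reasoning on the kernel of $\Op(\sigma)^*$ handles the adjoint expansion.

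The main obstacle is controlling these remainders: in the Euclidean case Taylor's theorem gives a clean residue, but on a graded group the non-commutativity and anisotropic dilations force one to combine the mean value theorem (Lemma \ref{lem_mvt}) with a careful term-by-term analysis of the homogeneous weights $[\alpha]$ appearing in the Leibniz rule. Verifying that the residues actually belong to $S^{m_1+m_2-N}$ (respectively $S^{m-N}$ for the adjoint) with quantitative seminorm bounds --- uniformly in $\pi\in\Gh$ using the left/right $\pi(\id+\cR)^{\gamma/\nu}$ bracketing --- is the technical heart of the result and the point where the reduction in (2) is used most intensively.
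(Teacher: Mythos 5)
The paper does not actually prove Theorem~\ref{thm_calculus}: it is recalled verbatim from the monograph \cite[\S 5]{R+F_monograph}, so there is no ``paper's own proof'' to compare with, and your proposal has to be judged on its internal soundness and against the monograph's argument.

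Two points in your sketch are problematic. First, the order of proof. You want to establish Part~(2) — that the $\gamma=0$ seminorms already generate the Fr\'echet topology — before the algebra structure in Part~(1). But your mechanism for (2) is to redistribute $\pi(\id+\cR)^{\pm\gamma/\nu}$ through $X^\beta\Delta^\alpha\sigma$ via Leibniz; the terms $\Delta^{\alpha_1}\{\pi(\id+\cR)^{\gamma/\nu}\}\cdot\Delta^{\alpha_2}\sigma$ that appear are controlled exactly by knowing that Bessel potentials are themselves symbols (Proposition~\ref{prop_multipliers_symbol}) \emph{and} that the symbol classes are closed under composition with appropriate seminorm estimates. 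That second input is Part~(1), so the chain (2)$\to$(1) is circular as written. In the monograph the $\gamma=0$ reduction is derived \emph{after} the symbolic calculus is in place, not before. If you want to keep (2) first you need a direct argument using only the multiplier theorem (Hulanicki) and Sobolev interpolation, which is not what you describe.

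Second, the $L^2$-boundedness step inside Part~(3). You claim the kernel $\kappa_x$ of an order-$0$ symbol ``decomposes as a bounded family of $(-Q)$-homogeneous kernels (treated via Lemma~\ref{lem_pv}) plus Schwartz remainders.'' This is not so for the inhomogeneous class $S^0$: Proposition~\ref{prop_Sm_kernel} gives rapid decay of $\kappa_x$ away from the origin, but near the origin the singularity need not be exactly $(-Q)$-homogeneous, and there is no clean ``homogeneous $+$ Schwartz'' splitting. Consequently the reduction to Lemma~\ref{lem_pv} does not go through, and a Schur test alone cannot work since order-$0$ kernels are not integrable. The correct route is a Cotlar--Stein almost-orthogonality argument applied to a dyadic Littlewood--Paley decomposition in the spectral variable of $\cR$ (each dyadic piece has an integrable kernel, and the cross terms are small by the symbol estimates). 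This is the Calder\'on--Vaillancourt mechanism in the graded-group setting, and it is what the monograph uses. Your sketches of Parts~(1) and~(4), by contrast, follow the intended strategy — Leibniz rule for the product, Taylor expansion of the symbol in $x$ matched against \eqref{eq_xy_alpha} and \eqref{eq_sigmaDeltaphi} for the asymptotic, and Lemma~\ref{lem_mvt} to control the remainder — and are essentially right.
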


In the statement above we use asymptotic expansions of the form 
\begin{equation}
\label{eq_def_asymptotic}
\sigma \sim \sum_{\ell=0}^\infty \sigma_\ell,
\quad \sigma_\ell \in S^{m_\ell}, 
\quad\mbox{with}\ m_\ell \ \mbox{strictly decreasing to} \ -\infty.
\end{equation}
This means that for any $M\in \bN$,
$$
\sigma- \sum_{\ell \leq M} \sigma_\ell \in S^{m_M+1}.
$$
More precisely in Theorem \ref{thm_calculus}, 
this was used with
$$
\sigma_\ell=\sum_{[\alpha]=w_\ell}  
c_\alpha \Delta^\alpha \sigma_1 X_x^\alpha \sigma_2
\in S^{m_1+m_2 -w_\ell}
\quad\mbox{and}\quad
\sigma_\ell=\sum_{[\alpha]=w_\ell}  
c'_\alpha X_x^\alpha \Delta^\alpha  \sigma^*
\in S^{m-w_\ell}.
$$
Note that any formal asymptotic yields a symbol modulo a smoothing operator:

\begin{theorem}
\label{thm_asymptotic}
Let $\{\sigma_\ell\}_{\ell\in \bN_0}$ be a sequence of symbols such that
 $\sigma_\ell\in S^{m_\ell}$ with $m_\ell$ strictly decreasing to $-\infty$.
 Then there exists $\sigma\in S^{m_0}$, 
 unique modulo $S^{-\infty}$,
 such that $\sigma\sim \sum_\ell \sigma_\ell$.
 \end{theorem}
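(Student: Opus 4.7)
This is a Borel-type summation result, adapted from the classical Hörmander calculus. \emph{Uniqueness} is immediate: if $\sigma^{(1)},\sigma^{(2)}\in S^{m_0}$ both satisfy $\sigma^{(j)}-\sum_{\ell\le M}\sigma_\ell\in S^{m_{M+1}}$ for all $M$, then their difference lies in $\bigcap_{M} S^{m_{M+1}}=S^{-\infty}$ since $m_{M+1}\to -\infty$.

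For \emph{existence}, fix a positive Rockland operator $\cR$ of homogeneous degree $\nu$ and a cutoff function $\psi\in C^\infty(\bR)$ with $\psi\equiv 1$ on $(-\infty,1]$ and $\psi\equiv 0$ on $[2,\infty)$; set $\psi_t(\lambda):=\psi(\lambda/t)$, so that $\psi_t$ is smooth and compactly supported in $(-\infty,2t]$, while $1-\psi_t$ vanishes on $[0,t]$. The plan is to define
\[
\sigma(x,\pi):=\sum_{\ell=0}^{\infty}\bigl(\id_{\cH_\pi}-\psi_{t_\ell}(\pi(\cR))\bigr)\sigma_\ell(x,\pi),
\]
with $t_\ell\nearrow\infty$ chosen by a diagonal argument. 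The construction hinges on two observations. First, since the restriction of $\psi_t$ to $[0,\infty)$ is compactly supported and smooth, the convolution kernel of $\psi_t(\cR)$ is Schwartz on $G$ (a standard spectral multiplier result for positive Rockland operators, in the spirit of Lemma~\ref{lem_homogeneity_multipliers}), so $\psi_t(\pi(\cR))\in S^{-\infty}$. By the algebra property of Theorem~\ref{thm_calculus}, $\psi_{t_\ell}(\pi(\cR))\sigma_\ell\in S^{-\infty}$ as well, whence $(\id-\psi_{t_\ell}(\pi(\cR)))\sigma_\ell\equiv\sigma_\ell$ modulo $S^{-\infty}$.

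The second ingredient is a quantitative estimate: for all $m<m'$ and $(a,b,c)\in\bN_0^3$, there exist $(a',b',c')$ and $C>0$ such that, for every $\sigma\in S^m$ and $t\ge 1$,
\[
\bigl\|(\id-\psi_t(\pi(\cR)))\sigma\bigr\|_{S^{m'},a,b,c}\le C\,t^{(m-m')/\nu}\,\|\sigma\|_{S^m,a',b',c'}.
\]
For $(0,0,0)$ this follows by factoring
\[
\pi(\id+\cR)^{-m'/\nu}(\id-\psi_t(\pi(\cR)))\sigma=\bigl[\pi(\id+\cR)^{(m-m')/\nu}(\id-\psi_t(\pi(\cR)))\bigr]\cdot\pi(\id+\cR)^{-m/\nu}\sigma,
\]
the bracketed spectral multiplier having operator norm bounded by $\sup_{\lambda\ge t}(1+\lambda)^{(m-m')/\nu}\le C\,t^{(m-m')/\nu}$ since $(m-m')/\nu<0$. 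Higher seminorms are handled inductively on $[\alpha]+[\beta]$: $X_x^\beta$ commutes with the spectral multiplier $\psi_t(\pi(\cR))$, and the Leibniz rule \eqref{eq_Leibniz_rule_sigma} for $\Delta^\alpha$ generates terms $\Delta^{\alpha_1}\psi_t(\pi(\cR))\cdot\Delta^{\alpha_2}\sigma$; for $\alpha_1\ne 0$ the factor $\Delta^{\alpha_1}\psi_t(\pi(\cR))$ is itself smoothing, with its $t$-dependence absorbed by the decisive improvement $t^{(m-m')/\nu}$ for a suitable target order.

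Granting these ingredients, choose $t_\ell\ge 1$ inductively so that $\|(\id-\psi_{t_\ell}(\pi(\cR)))\sigma_\ell\|_{S^{m_{\ell-1}},\ell,\ell,\ell}\le 2^{-\ell}$. For each $N$ and each triple $(a,b,c)$, all but finitely many terms in the series satisfy this bound in $S^{m_N}$ (using $S^{m_{\ell-1}}\subset S^{m_N}$ for $\ell-1\ge N$), so the series converges in every $S^{m_N}$ and in particular $\sigma\in S^{m_0}$. Finally, for each $N$,
\[
\sigma-\sum_{\ell=0}^{N}\sigma_\ell=-\sum_{\ell=0}^{N}\psi_{t_\ell}(\pi(\cR))\sigma_\ell+\sum_{\ell>N}(\id-\psi_{t_\ell}(\pi(\cR)))\sigma_\ell,
\]
which is a finite sum of elements of $S^{-\infty}$ plus a convergent tail in $S^{m_{N+1}}$; hence $\sigma\sim\sum_\ell\sigma_\ell$. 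The main technical obstacle is the quantitative estimate beyond the basic seminorm, where controlling $\Delta^{\alpha_1}\psi_t(\pi(\cR))$ uniformly in $t$ requires careful use of the Fourier/spectral calculus interplay specific to graded groups; the rest is bookkeeping standard to Borel-style summation.
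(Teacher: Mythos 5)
The paper states Theorem~\ref{thm_asymptotic} without proof, deferring to \cite{R+F_monograph}; your proposal is a reconstruction of the standard Borel-type summation argument, and the overall strategy (spectral cut-offs $\psi_t(\pi(\cR))$ at scales $t_\ell\to\infty$, chosen inductively to make the tail small) is correct and is indeed how this is done in the monograph. The uniqueness argument is right.

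The place where the proposal is weakest is exactly where you flag the ``main technical obstacle'': proving the quantitative estimate
$\|(\id-\psi_t(\pi(\cR)))\sigma\|_{S^{m'},a,b,c}\lesssim t^{(m-m')/\nu}\|\sigma\|_{S^m,a',b',c'}$
for all seminorms. Your Leibniz/scaling plan for the terms $\Delta^{\alpha_1}\psi_t(\pi(\cR))\cdot\Delta^{\alpha_2}\sigma$ does not cleanly close: after writing $\Delta^{\alpha_1}\psi_t(\pi(\cR))=t^{-[\alpha_1]/\nu}(\Delta^{\alpha_1}\widehat{\psi(\cR)\delta_0})(t^{-1/\nu}\cdot\pi)$ and factoring $\pi(\id+\cR)^{s_1}\Delta^{\alpha_1}\psi_t(\pi(\cR))\pi(\id+\cR)^{-s_2}$, one only obtains a crude bound like $t^{-[\alpha_1]/\nu+\max(s_1,0)+\max(-s_2,0)}$, which does not reduce to $t^{(m-m')/\nu}$ for the exponents that actually arise (e.g.\ $\alpha_1=\alpha$, $\alpha_2=0$, $\gamma=0$, $m$ very negative). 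So term-by-term the decay need not be visible, even though it is true for the sum. The correct and clean route, entirely available in the paper, is Proposition~\ref{prop_multipliers_symbol}: the function $f_t(\lambda)=1-\psi(\lambda/t)$ lies in $\cM_{(m'-m)/\nu}$ with $\|f_t\|_{\cM_{(m'-m)/\nu},\ell}\lesssim t^{-(m'-m)/\nu}$ uniformly in $t\geq 1$ (the supports and the derivative scaling make this a two-line check), so $\{(\id-\psi_t)(\pi(\cR))\}\in S^{m'-m}$ with all seminorms $\lesssim t^{(m-m')/\nu}$; continuity of the product $S^{m'-m}\times S^m\to S^{m'}$ from Theorem~\ref{thm_calculus} then yields the estimate for all $(a,b,c)$ at once, without re-doing the Leibniz bookkeeping by hand. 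I would also tighten one sentence: the full series does not converge in $S^{m_N}$ for $N\geq 1$ (the initial terms $\ell<N$ only lie in $S^{m_\ell}\supsetneq S^{m_N}$); what one uses is that the tail $\sum_{\ell>N}(\id-\psi_{t_\ell}(\pi(\cR)))\sigma_\ell$ converges in $S^{m_{N+1}}$, the term $\ell=N+1$ being in $S^{m_{N+1}}$ directly and the terms $\ell\geq N+2$ by the choice of $t_\ell$ (using that the embeddings $S^{m''}\hookrightarrow S^{m'}$, $m''<m'$, have constant $\leq 1$ for the seminorms as defined). With these adjustments the argument is complete.
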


\medskip

Naturally the calculus $\cup_{m\in \bR} \Psi^m$ contains the left-invariant calculus since we have:

\begin{ex}\begin{enumerate}
\item For any $\alpha\in \bN_0^n$, 
$X^\alpha= \Op(\pi(X^\alpha)) \in \Psi^{[\alpha]}$.
\item 
The set $\Psi^0$ contains any smooth function $f:G\to \bC$ with bounded left-derivatives, 
that is, 
\begin{equation}
\label{eq_bdd_left_derivatives}
\forall\beta\in \bN_0^n,\qquad
\sup_{x\in G} |X^\beta f(x)|<\infty.
\end{equation}
\end{enumerate}
\end{ex}

Another important class of symbols in the calculus is given by multipliers in Rockland operators. The precise class of multipliers that we consider is the following. Let $\cM_{m}$ be the space of functions $f\in C^\infty(\bR^+)$
such that the following quantities for all $\ell\in \bN_0$ are finite:
$$
\|f\|_{\cM_{m,\ell}} := \sup_{\lambda>0,\, \ell'=0,\ldots,\ell}
(1+\lambda)^{-m +\ell'} |\partial_\lambda^{\ell'} f (\lambda)|
.
$$
In other words, the class of functions $f$ that appears in the definition above
are the functions which are smooth on $\bR^+$ and have the symbolic 
behaviour at infinity of the H\''ormander class $S^m_{1,0}(\bR)$ on the real line.
For instance, for any $m\in \bR$,
the function $\lambda\mapsto (1+\lambda)^m$ 
is in $\cM_{m}$.

\begin{proposition}
\label{prop_multipliers_symbol}
Let $m\in \bR$ and let $\cR$ be a positive Rockland operator of homogeneous degree $\nu$.
If $f\in \cM_{\frac m \nu}$,
then $f(\cR)$ is in $\Psi^m$ and its symbol $\{f(\pi (\cR)), \pi\in \widehat G\}$ satisfies
$$
\forall a,b,c \in \bN_0, \qquad
\exists \ell\in \bN, \qquad \exists C>0, \qquad
\|f(\pi (\cR))\|_{a,b,c} \leq C \| f\|_{\cM_{\frac m \nu} , \ell}
,
$$
with $\ell$, $a,b,c\in \bN_0$ and $C$ independent of $f$.
\end{proposition}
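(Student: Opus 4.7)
\medskip

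\noindent\textbf{Proof plan.}
The plan is first to reduce the problem to estimating $\Delta^\alpha f(\pi(\cR))$ alone (with no $X_x$-derivatives and with $\gamma=0$), then to obtain by induction on $[\alpha]$ a Leibniz-type expansion expressing $\Delta^\alpha f(\pi(\cR))$ as a sum of spectral multipliers of $\pi(\cR)$ composed with left-invariant monomials $\pi(X^{\gamma})$, and finally to bound each term via functional calculus and standard Rockland estimates.

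\medskip

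\noindent\textit{Step 1: reductions.}
Since the symbol $\sigma(x,\pi):=f(\pi(\cR))$ does not depend on $x$, we have $X_x^\beta\sigma=0$ for $[\beta]>0$, so only $\beta=0$ contributes. By Theorem~\ref{thm_calculus}(2) it suffices to verify the estimate with $\gamma=0$. Thus the task is to prove
$$
\sup_{\pi\in\Gh}
\bigl\|\pi(\id+\cR)^{\frac{[\alpha]-m}{\nu}}\Delta^\alpha f(\pi(\cR))\bigr\|_{\sL(\cH_\pi)}
\leq C\,\|f\|_{\cM_{m/\nu},\ell}
$$
for each $\alpha\in\bN_0^n$, with $\ell$ and $C$ depending only on $[\alpha]$ and $m$.

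\medskip

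\noindent\textit{Step 2: base case $\alpha=0$.}
Since $\pi(\cR)$ is self-adjoint with spectrum contained in $[0,\infty)$ (discrete in $(0,\infty)$ for non-trivial $\pi$ by Lemma~\ref{lem_normL2_E(epsilon,infty)}), the spectral theorem gives
$$
\bigl\|\pi(\id+\cR)^{-m/\nu}f(\pi(\cR))\bigr\|_{\sL(\cH_\pi)}
\leq \sup_{\lambda\geq 0}(1+\lambda)^{-m/\nu}|f(\lambda)|
=\|f\|_{\cM_{m/\nu},0}.
$$

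\medskip

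\noindent\textit{Step 3: inductive step.}
The key is to establish, by induction on $[\alpha]$, a formula of the form
$$
\Delta^\alpha f(\pi(\cR)) \;=\; \sum_{k} g_{\alpha,k}(\pi(\cR))\,\pi(T_{\alpha,k}),
$$
where the $T_{\alpha,k}\in\fU(\fg)$ are homogeneous left-invariant differential operators of degree $d_{\alpha,k}$ with $d_{\alpha,k}\leq\nu\, j_{\alpha,k}-[\alpha]$ for some $j_{\alpha,k}\in\bN_0$, and where $g_{\alpha,k}\in\cM_{m/\nu-j_{\alpha,k}}$ with $\|g_{\alpha,k}\|_{\cM_{\cdot},\ell'}\lesssim \|f\|_{\cM_{m/\nu},\ell}$ for some $\ell'$. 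To obtain it, I would combine: (i) the Leibniz rule \eqref{eq_Leibniz_rule_sigma} together with Example~\ref{ex_Delta_piX} (which resolves $\Delta^\alpha$ applied to polynomials in $\pi(X)$); (ii) a spectral representation of $f(\cR)$, such as a Helffer--Sjöstrand almost-analytic extension formula (working in the $\cM_{m/\nu}$-symbolic class), which allows one to push $\Delta^\alpha$ through the spectral calculus by analyzing $\Delta^\alpha(\lambda-\pi(\cR))^{-1}$ via iterated commutators with $\pi(\cR)$. Alternatively a Mellin representation of $f(\cR)$ in terms of $e^{-t\cR}$ together with the Schwartz-class heat kernel can be used.

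\medskip

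\noindent\textit{Step 4: conclusion of the bound.}
Given such a formula, each summand is estimated as
$$
\bigl\|\pi(\id+\cR)^{\frac{[\alpha]-m}{\nu}}\,g_{\alpha,k}(\pi(\cR))\,\pi(T_{\alpha,k})\bigr\|_{\sL(\cH_\pi)}
\leq
\|g_{\alpha,k}\|_{\cM_{m/\nu-j_{\alpha,k}},0}\,
\bigl\|\pi(\id+\cR)^{\frac{[\alpha]-\nu j_{\alpha,k}}{\nu}}\pi(T_{\alpha,k})\,\pi(\id+\cR)^{-d_{\alpha,k}/\nu}\bigr\|_{\sL(\cH_\pi)},
$$
after inserting $\pi(\id+\cR)^{\pm d_{\alpha,k}/\nu}$; the first factor is absorbed in $\|f\|_{\cM_{m/\nu},\ell}$, and the second is uniformly bounded in $\pi$ by Theorem~\ref{thm_sobolev_spaces}(\ref{item_thm_sobolev_spaces_continuity_Xalpha}) combined with Lemma~\ref{lem_2R_fractional_power}, since $d_{\alpha,k}\leq \nu j_{\alpha,k}-[\alpha]$.

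\medskip

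\noindent\textit{Main obstacle.}
The technical heart is Step~3: producing a clean Leibniz-type expansion for $\Delta^\alpha$ applied to an arbitrary spectral multiplier of a Rockland operator. The difficulty comes from the non-commutativity of $\pi(\cR)$ with the $\pi(X_j)$'s, so that iterated commutators produce lower-order operators that must be tracked; verifying that the resulting spectral multipliers remain in the appropriate $\cM_{\cdot}$-class with seminorms controlled linearly by $\|f\|_{\cM_{m/\nu},\ell}$ is where the Helffer--Sjöstrand (or equivalent) technology is essential.
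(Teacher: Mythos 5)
This proposition is not proved in the paper: it is imported from \cite{R+F_monograph}, and the only clue to the strategy is the remark following the statement that Hulanicki's theorem (that $\chi(\cR)\delta_0\in\cS(G)$ when $\chi\in\cD(\bR)$) enters the proof. That argument works on the \emph{kernel} side: dyadically decompose $f$ using a partition of unity subordinate to annuli $\{2^{j-1}\leq\lambda\leq 2^{j+1}\}$, apply Hulanicki's theorem and the homogeneity \eqref{eq_homogeneity_kernel_multipliers} to control each piece $f_j(\cR)\delta_0$ and its moments $x^\alpha f_j(\cR)\delta_0$ in $L^2(G)$ uniformly, and sum. Your proposal attacks the symbol side through resolvents, which is a genuinely different route and in principle viable, but the central claim of your Step~3 is wrong as stated.

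Concretely: pushing $\Delta^\alpha$ through the resolvent via the Leibniz identity $0=\Delta^\alpha\bigl[(z-\pi(\cR))^{-1}(z-\pi(\cR))\bigr]$, together with \eqref{eq_Leibniz_rule_sigma} and Example~\ref{ex_Delta_piX}, yields by induction on $[\alpha]$ an alternating product
\begin{equation*}
\Delta^\alpha(z-\pi(\cR))^{-1}
=\sum_{l,\,(\beta_1,\ldots,\beta_l)}c_{\beta_1,\ldots,\beta_l}\,
R\,P_{\beta_1}\,R\,P_{\beta_2}\,R\cdots P_{\beta_l}\,R,
\qquad R:=(z-\pi(\cR))^{-1},
\end{equation*}
where $P_{\beta_i}:=\Delta^{\beta_i}\pi(\cR)$ is a left-invariant operator of homogeneous degree $\nu-[\beta_i]>0$ and $\sum_i[\beta_i]=[\alpha]$. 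Since the $P_{\beta_i}$ do not commute with $\pi(\cR)$ (already $[\pi(X_1),\pi(\cL)]\neq 0$ for the sub-Laplacian on Heisenberg), the resolvent factors cannot be gathered to the left, and no rearrangement into the form $g(\pi(\cR))\,\pi(T)$ that you posit is possible. Your abelian example $\Delta f(\xi^2)=2\xi f'(\xi^2)$ is precisely the degenerate case where $[\pi(\cR),P_{\beta_i}]=0$ lets you commute and collect. The ``main obstacle'' you flag at the end is therefore not a technicality to finesse but a structural flaw in the proposed expansion. A Helffer--Sj\"ostrand proof is still salvageable, but it must bound each alternating product $RP_{\beta_1}R\cdots P_{\beta_l}R$ directly, combining the Sobolev mapping properties of the $P_{\beta_i}$ (Theorem~\ref{thm_sobolev_spaces}~\eqref{item_thm_sobolev_spaces_continuity_Xalpha}) with resolvent bounds of the form $\|\pi(\id+\cR)^{a/\nu}(z-\pi(\cR))^{-1}\pi(\id+\cR)^{-b/\nu}\|_{\sL(\cH_\pi)}\lesssim|{\rm Im}\,z|^{-1}(1+|z|)^{(a-b)/\nu}$ for $a\geq b\geq 0$, and then integrating against $\bar\partial\tilde f$; carrying this out and tracking the dependence on $\|f\|_{\cM_{m/\nu},\ell}$ is real work that your sketch does not supply.
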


\begin{corollary}
\label{cor_prop_multipliers_symbol}
If $\chi\in C^\infty(\bR)$ is such that $(\supp \chi) \cap [0,+\infty)$
is compact, then the symbol
 $\{\chi(\pi(\cR)),\pi\in \Gh\}$ is in $S^{-\infty}$.
Moreover its kernel $\chi(\cR)\delta_0$ is Schwartz, 
$\{\chi(\pi(\cR)),\pi\in \Gh\} \in L^2(\Gh)$ and 
$$
\int_{\Gh} \tr |\chi(\pi(\cR))|d\mu(\pi)<\infty.
$$
 \end{corollary}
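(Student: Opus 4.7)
The plan is to reduce all three assertions to previously stated results, principally Proposition \ref{prop_multipliers_symbol} and the integrability estimates \eqref{eq_int(I+R)HS}--\eqref{eq_inttr(I+R)}, plus a Sobolev-embedding argument to upgrade ``smoothing symbol'' to ``Schwartz kernel''.

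First, I would show that $\chi$ lies in $\cM_{m}$ for every real $m$. Indeed, since $(\supp\chi)\cap[0,+\infty)$ is compact, there is $M>0$ such that $\chi$ and all its derivatives vanish on $[M,+\infty)$; on $[0,M]$ the weight $(1+\lambda)^{-m+\ell}$ is bounded for every $m$ and $\ell$, so
$$
\|\chi\|_{\cM_{m,\ell}}=\sup_{0\le\lambda\le M,\ell'\le\ell}(1+\lambda)^{-m+\ell'}|\partial_\lambda^{\ell'}\chi(\lambda)|<\infty.
$$
Applying Proposition \ref{prop_multipliers_symbol} with arbitrary $m\in\bR$ gives $\{\chi(\pi(\cR)),\pi\in\Gh\}\in S^m$ for every $m$, hence the symbol is in $S^{-\infty}$.

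Next, to see that $\kappa:=\chi(\cR)\delta_0$ is Schwartz, I would combine the algebra property of $S^{-\infty}$ with the Sobolev embedding from Theorem \ref{thm_sobolev_spaces}. For any $\alpha,\beta\in\bN_0^n$, the distribution $X^\beta(x^\alpha \kappa)$ has group Fourier transform $\pi(X)^\beta\Delta^\alpha \chi(\pi(\cR))$, which still belongs to $S^{-\infty}$ by Theorem \ref{thm_calculus}(1). In particular, this symbol satisfies the bounded hypothesis of Proposition \ref{prop_FIF} with any $s>Q$, so $X^\beta(x^\alpha \kappa)\in L^2_{s/2}(G)$ for every $s$, and by the Sobolev embedding $L^2_t(G)\subset C(G)$ for $t>Q/2$, the function $X^\beta(x^\alpha \kappa)$ is continuous and bounded. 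Since $\alpha,\beta$ are arbitrary and $|x|^N$ is controlled by a finite linear combination of monomials $|x^\alpha|$ with $[\alpha]$ large, this yields $\kappa\in\cS(G)$.

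It remains to check that $\sigma:=\{\chi(\pi(\cR))\}$ is Hilbert--Schmidt and trace class integrable. By the Plancherel formula, $\sigma\in L^2(\Gh)$ is equivalent to $\kappa\in L^2(G)$, which is immediate from the preceding step. For the trace estimate, I would write, with $s>Q$ fixed and $\tilde\chi(\lambda):=\chi(\lambda)(1+\lambda)^{s/\nu}$,
$$
\int_{\Gh}\tr|\chi(\pi(\cR))|\,d\mu(\pi)
\le \sup_{\pi\in\Gh}\bigl\|\tilde\chi(\pi(\cR))\bigr\|_{\sL(\cH_\pi)}\int_{\Gh}\tr\bigl|\pi(\id+\cR)^{-s/\nu}\bigr|\,d\mu(\pi),
$$
the spectral theorem bounds the supremum by $\sup_{\lambda\ge0}|\tilde\chi(\lambda)|<\infty$ (again because $\chi$ has compact support in $[0,\infty)$), and the integral is finite by \eqref{eq_inttr(I+R)}.

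No step is really an obstacle: the one requiring the most care is the passage from ``symbol in $S^{-\infty}$'' to ``kernel in $\cS(G)$'', because $S^{-\infty}$ encodes decay only in the ``frequency'' variable, not in $x$. The key observation that removes this subtlety here is that the symbol is $x$-independent, so spatial decay of the kernel is obtained from the \emph{difference} operators $\Delta^\alpha$, which preserve $S^{-\infty}$; this is what makes the Sobolev embedding applicable to every $x^\alpha\kappa$.
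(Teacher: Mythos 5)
Your proof is correct, but it takes a genuinely different route from the paper's. The paper dispatches the Schwartz claim with one sentence, citing Hulanicki's theorem \cite{hulanicki} directly (and noting that this fact is already an ingredient in the proof of Proposition \ref{prop_multipliers_symbol}). You instead derive the Schwartz property internally: from $\chi\in\cM_m$ for all $m$, you get $\chi(\pi(\cR))\in S^{-\infty}$ via Proposition~\ref{prop_multipliers_symbol}; then, since the symbol is $x$-independent, you observe that $\cF_G\bigl(X^\beta(x^\alpha\kappa)\bigr)=\pi(X)^\beta\Delta^\alpha\chi(\pi(\cR))$ remains in $S^{-\infty}$ by the closure properties of the calculus, satisfies the hypotheses of Proposition~\ref{prop_FIF} for every $s>Q$, and so lies in every $L^2_{s/2}\subset C(G)$. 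This yields the Schwartz bounds after rearranging $X^\beta(x^\alpha\kappa)$ into $x^{\alpha'}X^{\beta'}\kappa$ via the Leibniz rule (a small step you should state explicitly). Your trace-class estimate using the commuting factorization $\chi(\pi(\cR))=\tilde\chi(\pi(\cR))\,\pi(\id+\cR)^{-s/\nu}$ and \eqref{eq_inttr(I+R)} is also correct. Two remarks. First, there is a mild logical irony: since the paper states that Hulanicki's theorem is used to establish Proposition~\ref{prop_multipliers_symbol}, your argument is essentially a ``round trip'' that recovers Hulanicki's conclusion from a proposition already relying on it; this is not circular --- Proposition~\ref{prop_multipliers_symbol} is given --- but it is redundant compared with the direct citation. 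Second, what your argument buys is a self-contained template for the more general assertion (invoked in Corollary~\ref{cor_sigmachi} via \cite[Theorem 5.4.9]{R+F_monograph}) that an $x$-independent $S^{-\infty}$ symbol always has a Schwartz kernel; the paper relies on the monograph for this.
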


Note that the membership of $\chi(\cR)\delta_0$ in $\cS(G)$
was already proved in 
\cite{hulanicki},
and is used to show Proposition \ref{prop_multipliers_symbol}.

\medskip

We conclude this section with recalling the following properties 
of the kernel associated with a symbol in the calculus:
\begin{proposition}
\label{prop_Sm_kernel}
Let $\sigma\in S^m$ and let $\kappa$ its associated kernel. 
This means that for each $x\in G$, $\kappa_x\in \cS'(G)$
and $\sigma(x,\pi)=\widehat \kappa_x(\pi)$.
Furthermore $x\mapsto \kappa_x\in \cS'(G)$ is smooth on $G$,
and for each $x\in G$, $\kappa_x \in C^\infty (G\backslash\{0\})$.
Furthermore, for any neighbourhood $V$ of 0, any $N\in \bN_0$ 
and any $\alpha,\beta\in \bN_0^n$, 
there exist a constant $C>0$ and a seminorm 
 $\|\cdot\|_{S^m,a,b,c}$ such that
$$
\forall x,z\in G,\quad z\notin V,\qquad
  |X^\alpha_z X^\beta_x \kappa_x(z)| \leq C_{s} \|\sigma\|_{ S^m,a,b,c} |z|^{-N}.
$$
The constant $C$ and the seminorm $\|\cdot\|_{S^m,a,b,c}$
may depend on $V, N, \alpha,\beta$ but are independent of $\sigma$.
\end{proposition}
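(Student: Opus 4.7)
The plan is to establish four things in order: (i) that $\kappa_x \in \cS'(G)$ is well defined as the kernel of $\sigma(x,\cdot)$, (ii) that $x \mapsto \kappa_x$ is smooth as a map into $\cS'(G)$, (iii) that $\kappa_x$ is smooth on $G\setminus\{0\}$, and (iv) the pointwise decay estimate. Throughout, the engine is the dictionary between symbol operations on $\sigma(x,\pi)$ and operations on the kernel $\kappa_x(z)$: multiplication by $z^\eta$ in $z$ corresponds to $\Delta^\eta$ on $\sigma$; left-differentiation $X^\alpha_z$ corresponds to left-multiplication by $\pi(X^\alpha)$; and $X^\beta_x$ is transparent on both sides. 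Coupled with Proposition \ref{prop_FIF}, this translates membership in a sufficiently negative symbol class into pointwise bounds on the corresponding kernel.

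For (i), I would argue that for $s > m$, the field $\sigma(x,\pi)\pi(\id+\cR)^{-s/\nu}$ lies in $L^\infty(\Gh)$ with norm controlled by a seminorm of $\sigma$; for $s > m + Q$, by \eqref{eq_inttr(I+R)} this field is also trace-integrable, so Proposition \ref{prop_FIF} furnishes a continuous bounded kernel $\kappa^{(s)}_x$, and $\kappa_x := (\id+\cR)^{s/\nu}\kappa^{(s)}_x \in \cS'(G)$ is the desired distribution. For (ii) I would apply the same argument to $X^\beta_x \sigma \in S^m$, whose seminorms are controlled by those of $\sigma$; this identifies $X^\beta_x \kappa_x$ with the kernel of $X^\beta_x\sigma$ and gives smoothness in $x$.

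Steps (iii) and (iv) are the heart. For multi-indices $\alpha,\beta,\eta$ I introduce
$$
\tau_{\alpha,\beta,\eta}(x,\pi) := \Delta^\eta\bigl(\pi(X^\alpha)\,X^\beta_x\sigma(x,\pi)\bigr),
$$
whose kernel in $z$ is, up to universal signs, $z^\eta X^\alpha_z X^\beta_x \kappa_x(z)$. Expanding via the Leibniz rule \eqref{eq_Leibniz_rule_sigma} together with Example \ref{ex_Delta_piX}, one obtains a finite sum of terms of the form $\pi(X^{\alpha'})\,\Delta^{\eta'}X^\beta_x\sigma$ with $[\alpha']-[\eta']=[\alpha]-[\eta]$, each of order $m+[\alpha]-[\eta]$. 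Hence $\tau_{\alpha,\beta,\eta}\in S^{m+[\alpha]-[\eta]}$ with seminorms dominated by those of $\sigma$. As soon as $[\eta] > m + [\alpha] + Q$, Proposition \ref{prop_FIF} applies uniformly in $x$ and delivers
$$
\sup_{x,z\in G}\bigl|z^\eta X^\alpha_z X^\beta_x \kappa_x(z)\bigr|
\leq C\,\|\sigma\|_{S^m,a,b,c},
$$
with $C$ and the indices $a,b,c$ depending only on $\alpha,\beta,\eta$. Letting $\eta$ vary for fixed $\alpha$ yields smoothness of $\kappa_x$ on $G\setminus\{0\}$, since in a neighborhood of any $z\ne 0$ some monomial $z^\eta$ is bounded away from zero.

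To finish the decay estimate I would convert the monomial bound into a quasi-norm bound. By homogeneity of $|\cdot|$, there exists $c>0$ with $\max_j|z_j|^{1/\upsilon_j}\geq c|z|$ for all $z\in G$. Given $V$ and $N$, I choose for each $j\in\{1,\ldots,n\}$ an integer $k_j$ with $k_j\upsilon_j\geq N$ and $k_j\upsilon_j > m+[\alpha]+Q$, apply the previous bound with $\eta = k_j e_j$, and take the maximum over $j$: this handles $|z|\geq 1$, while the bounded shell $r_V\leq |z|\leq 1$ (with $r_V:=\inf_{z\notin V}|z|>0$) is absorbed into the constant. The single delicate point is the Leibniz bookkeeping for $\Delta^\eta(\pi(X^\alpha)\,X^\beta_x\sigma)$ and the order accounting; everything else, including the quasi-norm conversion, is routine once Proposition \ref{prop_FIF} is in hand.
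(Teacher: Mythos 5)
Your overall strategy is the right one and coincides with how \cite{R+F_monograph} proves this: translate $z^\eta$-multiplication and $X^\alpha_z$-differentiation of the kernel into $\Delta^\eta$ and left multiplication by $\pi(X^\alpha)$ on the symbol, place the resulting symbol $\tau_{\alpha,\beta,\eta}$ in $S^{m+[\alpha]-[\eta]}$ via the Leibniz rule and Example \ref{ex_Delta_piX}, feed it into Proposition \ref{prop_FIF}, and then convert the monomial bounds into a quasi-norm bound. Steps (ii)--(iv) are correct as written, including the bookkeeping $[\alpha']-[\eta']=[\alpha]-[\eta]$ and the use of the equivalent quasi-norm $\max_j|z_j|^{1/\upsilon_j}$.

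There is, however, a slip in step (i). With the paper's conventions one has $\cF_G(f*g)(\pi)=\widehat g(\pi)\widehat f(\pi)$, and the left-invariant operator $(\id+\cR)^{s/\nu}$ acts by \emph{right} convolution: $(\id+\cR)^{s/\nu}f=f*B_{-s}$ where $\widehat B_{-s}(\pi)=\pi(\id+\cR)^{s/\nu}$. Hence your $\kappa^{(s)}_x:=\cF_G^{-1}\bigl(\sigma(x,\cdot)\pi(\id+\cR)^{-s/\nu}\bigr)$ equals $B_s*\kappa_x$, and applying $(\id+\cR)^{s/\nu}$ to it gives a distribution with Fourier transform
$\pi(\id+\cR)^{s/\nu}\,\sigma(x,\pi)\,\pi(\id+\cR)^{-s/\nu}$,
which is not $\sigma(x,\pi)$ since $\pi(\id+\cR)^{s/\nu}$ does not commute with $\sigma(x,\pi)$. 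Two easy fixes: either recover $\kappa_x$ from your $\kappa^{(s)}_x$ by applying the \emph{right}-invariant operator $\tilde{(\id+\cR)}^{s/\nu}$ (i.e.\ left convolution with $B_{-s}$), which still maps $\cS'(G)\to\cS'(G)$; or put the Bessel factor on the other side, setting $\kappa^{(s)}_x:=\cF_G^{-1}\bigl(\pi(\id+\cR)^{-s/\nu}\sigma(x,\cdot)\bigr)=\kappa_x*B_s$, so that $\kappa_x=(\id+\cR)^{s/\nu}\kappa^{(s)}_x$ as you intended. In the latter case the FIF hypothesis becomes $\sup_\pi\|\pi(\id+\cR)^{(s'-s)/\nu}\sigma(x,\pi)\|<\infty$, which holds for $s'=s-m$ by the $S^m$ seminorm with $\alpha=\beta=\gamma=0$, so one needs $s>Q+m$ as you anticipated. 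With this correction the argument is complete.
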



\section{Homogeneous and principal symbols, classical calculus}\label{sec_hom}

In this section, we define the notions of homogeneous symbols, classical symbol classes and principal symbol 
in a way analogous to the Euclidean case.

\subsection{Homogeneous symbol classes $\dot S^m$}
\label{subsec_dotSm}

\begin{definition}
\label{def_homogeneous_symbol}
A symbol $\sigma=\{\sigma(x,\pi):\cH_\pi^\infty \to \cH_\pi^\infty , (x,\pi)\in G\times \Gh\}$ is said to be \emph{homogeneous} 
of degree $m \in \bR$ or $m$-homogeneous 
when
$$
\sigma(x,r\cdot \pi) = r^m \sigma(x,\pi),
$$
for all $x\in G$ and $\mu$-almost all $\pi\in \Gh$ and $dr$-almost all $r>0$.

A $m$-homogeneous symbol $\sigma=\{\sigma(x,\pi)\}$ is \emph{regular} 
if  for any $\alpha,\beta\in \bN_0^n$, $\gamma\in \bR$:
\begin{equation}
\label{eq_def_dotSmrhodelta}
\sup_{\substack{\pi\in \Gh\\ x\in G}}
\| \pi(\cR)^{\frac{[\alpha]-m+\gamma}{\nu}}
X_x^\beta \Delta^\alpha \sigma(x,\pi) \pi(\cR)^{-\frac{\gamma}{\nu}}\|_{{\mathcal L}(\cH_\pi)} <\infty
\end{equation}
where $\cR$ is a positive Rockland operator of degree $\nu$

We denote by $\dot S^m $ the space of regular $m$-homogeneous symbols.
\end{definition}

\begin{remark}
\label{rem_def_dotSmrhodelta}
\begin{enumerate}
\item
As in the inhomogeneous case,
Lemma \ref{lem_2R_fractional_power} implies that
each symbol class $\dot S^m$ is independent of $\cR$.
\item
The property of interpolation of Sobolev spaces 
(cf. Theorem \ref{thm_sobolev_spaces} \eqref{item_thm_sobolev_spaces_interpolation})
also implies that it suffices to have \eqref{eq_def_dotSmrhodelta}
only for a sequence $(\gamma_\ell)_{\ell\in \bZ}$
with $\gamma_\ell\rightarrow_{\ell\to\pm\infty}\pm \infty$.
\end{enumerate}
\end{remark}

Before giving some concrete examples
and an equivalent description for symbols in $\dot S^m$, 
let us mention some routine properties regarding classes of symbols.
Each $\dot S^m$, $m\in \bR$, is a Frechet vector space 
when equipped with 
the seminorms
$$
\|\sigma\|_{\dot S^m,a,b,c}:=
\max_{\substack {[\alpha]\leq a \\
[\beta]\leq b, |\gamma|\leq c}}
\sup_{\substack {x\in G\\ \pi\in \Gh}}
\|\pi(\cR)^{\frac{ [\alpha]-m +\gamma}\nu }
X_x^\beta\Delta^\alpha \sigma(x,\pi) 
\pi(\cR)^{-\frac{\gamma}\nu }\|_{\sL(\cH_\pi)}
,\quad a,b,c\in \bN_0,
$$
where $\cR$ is a positive Rockland operator $\cR$ of homogeneous degree $\nu$. 
 This Frechet structure is independent of the chosen positive Rockland operator and we will see later in Corollary \ref{cor_prop_equiv_dotS_gamma} that we may assume $c=0$.
Furthermore taking the product and the adjoint and applying spacial and dual derivatives
$$
\left\{\begin{array}{rcl}
\dot S^{m_1}\times \dot S^{m_2} &\longrightarrow& \dot S^{m_1+m_2}\\
(\sigma_1,\sigma_2)&\longmapsto& \sigma_1\sigma_2
\end{array}
\right.,\quad
\left\{\begin{array}{rcl}
\dot S^{m} &\longrightarrow& \dot S^{m}\\
\sigma&\longmapsto& \sigma^*
\end{array}\right.,
\quad\mbox{and}\quad
\left\{\begin{array}{rcl}
\dot S^{m} &\longrightarrow& \dot S^{m-[\alpha]}\\
\sigma&\longmapsto&X^\beta \Delta^\alpha \sigma
\end{array}\right.,
$$
(for any $m,m_1,m_2\in \bR$ and $\alpha,\beta\in \bN_0^n$)
 are continuous operations for this topology.

\medskip

\begin{ex}
The symbol $\pi(X)^\alpha$ is homogeneous of degree $[\alpha]$ and regular.
(See \eqref{eq_rpiXalpha} and Example~\ref{ex_Delta_piX})
\end{ex}

\begin{ex}
The symbol given by a  function $\sigma(x)$ independent of $G$
is homogeneous of degree 0.
It is regular if the function is smooth with bounded left invariant derivatives,
see~\eqref{eq_bdd_left_derivatives}. 
\end{ex}

\begin{ex}
\label{ex_RindotSm}
If $\cR$ is a positive Rockland operator of degree $\nu$
and if $m\in \bR$,
then the symbol $\pi(\cR)^{\frac m\nu}$ (defined spectrally) 
is regular and homogeneous  of degree $m$.
\end{ex}
\begin{proof}[Proof for Example \ref{ex_RindotSm}]
The homogeneity may be obtained from the properties of the Rockland operator 
as in Lemma \ref{lem_homogeneity_multipliers}.
The regularity will be a direct consequence of Proposition
\ref{prop_equiv_dotS} below.
\end{proof}

We now give equivalent properties characterising  a symbol in $\dot S^m$.
In the abelian case, the statement boils down 
to the fact that a regular homogeneous symbol yields a 
(non-homogeneous) symbol in $S^m(\bR^n)$ 
once the low frequencies have been cut off.

\begin{proposition}
\label{prop_equiv_dotS}
Let $\sigma=\{\sigma(x,\pi):\cH_\pi^\infty\to\cH_\pi^\infty, (x,\pi)\in G\times \Gh\}$ 
be a homogeneous symbol of degree $m\geq 0$.
The following properties are equivalent.
\begin{enumerate}
\item 
$\sigma$ is  in $\dot S^m$
\item 
There exist 
a positive Rockland operator $\cR$
and a real-valued function
$\psi\in C^\infty(\bR)$ 
satisfying 
$\psi\equiv 0$ on a neighbourhood of 0 and 
$\psi\equiv 1$ on $(\Lambda,\infty)$
for some $\Lambda>0$
such that the two symbols
 $$
 \{\psi(\pi(\cR)) \sigma(x,\pi), (x,\pi)\in G\times\Gh\} 
 \quad\mbox{and}\quad
 \{ \sigma(x,\pi)\psi(\pi(\cR)), (x,\pi)\in G\times\Gh\},
 $$
 are in $S^m$. 
\item Property (2) holds for any such $\cR$ and $\psi$.
\end{enumerate}
Moreover the mapping 
$$
\left\{\begin{array}{rcl}
\dot S^{m}&\longrightarrow& S^{m}\times S^{m} \\
\sigma&\longmapsto&(\psi(\pi(\cR)) \sigma, \sigma\psi(\pi(\cR)))
\end{array}
\right.,$$
is continuous, injective and open (i.e. with continuous inverse onto its image).
\end{proposition}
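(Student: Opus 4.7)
The implication $(3)\Rightarrow(2)$ is immediate, so I would focus on proving $(1)\Leftrightarrow(2)$, from which the quantitative properties of the mapping will follow. The plan begins with a simple reduction: writing $\phi=1-\psi$ (which has compact support when restricted to $[0,\infty)$), Corollary~\ref{cor_prop_multipliers_symbol} yields $\phi(\pi(\cR))\in S^{-\infty}$, and hence $\psi(\pi(\cR))=\id-\phi(\pi(\cR))\in S^{0}$. Moreover, Theorem~\ref{thm_calculus}~(2) lets me restrict attention to the $\gamma=0$ seminorms of $S^m$, and for $\dot S^m$ I will use interpolation (Remark~\ref{rem_def_dotSmrhodelta}) together with both the left and right versions of (2) to reach arbitrary $\gamma$.

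For $(1)\Rightarrow(3)$, I would set $\tilde\sigma:=\psi(\pi(\cR))\sigma$ and expand $X_x^\beta\Delta^\alpha\tilde\sigma$ via the Leibniz rule~\eqref{eq_Leibniz_rule_sigma}. The principal term ($\alpha_1=0$) is $\psi(\pi(\cR)) X_x^\beta\Delta^\alpha\sigma$, which I would handle through the functional-calculus identity
\[
\pi(\id+\cR)^{([\alpha]-m)/\nu}\psi(\pi(\cR))=f(\pi(\cR))\,\pi(\cR)^{([\alpha]-m)/\nu},
\qquad f(\lambda)=\left(\tfrac{1+\lambda}{\lambda}\right)^{([\alpha]-m)/\nu}\psi(\lambda),
\]
where $f$ is smooth and bounded on $(0,\infty)$ since $\psi$ vanishes near $0$ and tends to $1$ at infinity. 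The remaining factor is then controlled by the $\dot S^m$ seminorm of $\sigma$. For the lower-order terms $\alpha_1\neq 0$, one has $\Delta^{\alpha_1}\psi(\pi(\cR))=-\Delta^{\alpha_1}\phi(\pi(\cR))\in S^{-\infty}$, and I would insert the pairing $\pi(\cR)^{(m-[\alpha_2])/\nu}\pi(\cR)^{-(m-[\alpha_2])/\nu}$ to reveal a smoothing factor times a $\dot S^{m-[\alpha_2]}$-controlled remainder. The symmetric computation with a right-sided Leibniz expansion gives $\sigma\psi(\pi(\cR))\in S^m$.

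For $(2)\Rightarrow(1)$, the decisive observation is that, by the homogeneity of $\sigma$ together with~\eqref{eq_rpiXalpha} and~\eqref{eq_Deltarcdotpi}, the quantity
\[
T_{\alpha,\beta,\gamma}(x,\pi):=\pi(\cR)^{([\alpha]-m+\gamma)/\nu}X_x^\beta\Delta^\alpha\sigma(x,\pi)\,\pi(\cR)^{-\gamma/\nu}
\]
is $0$-homogeneous in $\pi$, so a uniform bound reduces to a bound on a fundamental domain for the $\bR^+$-action on $\Gh$. For each $\pi\neq 1$, Lemma~\ref{lem_normL2_E(epsilon,infty)}~(3) and Remark~\ref{rem_rpi} supply an $r$ with $\psi((r\cdot\pi)(\cR))=\id$, so $\tilde\sigma$ coincides with $\sigma$ at $r\cdot\pi$; on the spectral region $\pi(\cR)\geq\Lambda$ the weights $\pi(\cR)^s$ and $\pi(\id+\cR)^s$ differ by uniformly bounded factors, converting the $S^m$ seminorm of $\tilde\sigma$ into the $\dot S^m$ seminorm of $\sigma$. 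The right-sided weight $\pi(\cR)^{-\gamma/\nu}$ is then handled using the right version of (2). Continuity of the map $\sigma\mapsto(\psi(\pi(\cR))\sigma,\sigma\psi(\pi(\cR)))$ and openness onto its image follow from the linear, quantitative nature of these estimates; injectivity is immediate, because $\psi(\pi(\cR))\sigma=0$ forces $\sigma(x,r\cdot\pi)=0$ for $r>r_\pi$ and hence $\sigma\equiv 0$ by homogeneity. The main obstacle I anticipate is the careful matching of spectral weights in the $\alpha_1\neq 0$ Leibniz terms, where $\Delta^{\alpha_1}\phi(\pi(\cR))$ is not itself a function of $\pi(\cR)$ and one must exploit its smoothing property in concert with the precise homogeneous control of $\sigma$.
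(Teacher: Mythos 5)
Your proposal follows essentially the same route as the paper's proof: for $(1)\Rightarrow(3)$, the Leibniz rule \eqref{eq_Leibniz_rule_sigma} with the functional-calculus factoring $\pi(\id+\cR)^{-m/\nu}\psi(\pi(\cR))\pi(\cR)^{m/\nu}$ for the principal term and Lemma~\ref{lem_Delta_psiL} for the $\alpha_1\neq 0$ cross terms; for $(2)\Rightarrow(1)$, the $0$-homogeneity-plus-dilation argument (which is precisely the content of Lemma~\ref{lem_0homogeneous_field}~(1)) together with the adjoint/right-version step to cover both ranges of $\gamma$. One precision on your $(2)\Rightarrow(1)$ sketch is worth making explicit: $\Delta^\alpha$ is not a pointwise evaluation in $\pi$, so the pointwise identity $\tilde\sigma(x,r\cdot\pi)=\sigma(x,r\cdot\pi)$ at the dilated point does not by itself relate $\Delta^\alpha\sigma(x,r\cdot\pi)$ to $\Delta^\alpha\tilde\sigma(x,r\cdot\pi)$; one must, as the paper does, first run the Leibniz rule together with Lemma~\ref{lem_Delta_psiL} to reduce to $\psi(\pi(\cR))\Delta^\alpha\sigma$, then use the $(m-[\alpha])$-homogeneity of $\Delta^\alpha\sigma$ (via \eqref{eq_Deltarcdotpi}) to reduce to $\alpha=0$, and only then apply the dilation trick. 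You identify the Leibniz cross terms as the ``main obstacle,'' which is the right diagnosis; the point here is simply that this obstacle is present in $(2)\Rightarrow(1)$ just as much as in $(1)\Rightarrow(3)$, and your sketch presents the converse direction as if the $0$-homogeneity observation were enough on its own.
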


The proof is given in the next subsection.
But first  let us notice that using 
$\psi(\pi(\cR)) \sigma$ or  $\sigma \psi(\pi(\cR))$
is essentially equivalent as we have:

\begin{corollary}
\label{cor_prop_equiv_dotS}
If $\sigma \in \dot S^m$, $\cR$ is a positive Rockland operator and 
$\psi\in C^\infty(\bR)$ is a scalar valued function such that 
$\psi\equiv0$ on a neighbourhood of 0
and 
$\psi\equiv1$ on $[\Lambda,\infty)$ for some $\Lambda>0$, 
then 
$\psi(\pi(\cR)) \sigma -\sigma \psi(\pi(\cR))$ is in $S^{-\infty}$.
\end{corollary}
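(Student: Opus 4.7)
The plan is to exploit a short algebraic identity that trivialises the problem once Proposition \ref{prop_equiv_dotS} is available. First I would set $\chi := 1-\psi$. Since $\psi \equiv 1$ on $[\Lambda,\infty)$ and $\psi \equiv 0$ on a neighbourhood of $0$, the function $\chi$ is smooth and $\chi|_{[0,\infty)}$ is compactly supported. Corollary \ref{cor_prop_multipliers_symbol} then yields that $\chi(\pi(\cR))$ is a smoothing symbol, i.e.\ lies in $S^{-\infty}$.

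Next, I would rewrite the commutator by inserting the resolution of identity $I=\psi(\pi(\cR))+\chi(\pi(\cR))$ between $\sigma$ and $\psi(\pi(\cR))$:
\begin{align*}
\psi(\pi(\cR))\sigma-\sigma\psi(\pi(\cR))
&= \psi(\pi(\cR))\sigma - \bigl(\psi(\pi(\cR))+\chi(\pi(\cR))\bigr)\sigma\psi(\pi(\cR)) \\
&= \psi(\pi(\cR))\sigma\bigl(I-\psi(\pi(\cR))\bigr) - \chi(\pi(\cR))\sigma\psi(\pi(\cR)) \\
&= \psi(\pi(\cR))\,\sigma\,\chi(\pi(\cR)) \;-\; \chi(\pi(\cR))\,\sigma\,\psi(\pi(\cR)).
\end{align*}
This rearrangement replaces the original commutator by a difference of two triple products, each of which sandwiches $\sigma$ between $\psi(\pi(\cR))$ and $\chi(\pi(\cR))$.

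Now I would invoke Proposition \ref{prop_equiv_dotS}: since $\sigma\in\dot S^m$, the two symbols $\psi(\pi(\cR))\sigma$ and $\sigma\psi(\pi(\cR))$ both lie in $S^m$. The first triple product can be regrouped as $\bigl[\psi(\pi(\cR))\sigma\bigr]\cdot\chi(\pi(\cR))$, a pointwise product of a symbol in $S^m$ and a symbol in $S^{-\infty}$; the second as $\chi(\pi(\cR))\cdot\bigl[\sigma\psi(\pi(\cR))\bigr]$, with the same structure in the opposite order. By the symbolic multiplication statement in Theorem \ref{thm_calculus}, one has $S^m\cdot S^{m'}\subset S^{m+m'}$, so each product lies in $S^{m-M}$ for every $M\in \bN$, i.e.\ in $S^{-\infty}$. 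Their difference is therefore in $S^{-\infty}$, proving the corollary.

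There is no real obstacle once the identity above is spotted: the whole proof rests on the neat fact that $\psi$ and $\chi$ sum to $1$ and that the commutator can be split into two pieces, each of which is automatically smoothing because it contains the smoothing factor $\chi(\pi(\cR))$ while having $\sigma$ appear only through the already-regularised $S^m$-symbols supplied by Proposition \ref{prop_equiv_dotS}.
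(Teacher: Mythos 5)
Your proof is correct and follows essentially the same route as the paper: the key step is the algebraic identity
$$\psi(\pi(\cR))\sigma-\sigma\psi(\pi(\cR)) = \psi(\pi(\cR))\,\sigma\,(\id-\psi(\pi(\cR))) - (\id-\psi(\pi(\cR)))\,\sigma\,\psi(\pi(\cR)),$$
which is exactly the identity the authors use, followed by Proposition \ref{prop_equiv_dotS} to place $\psi(\pi(\cR))\sigma$ and $\sigma\psi(\pi(\cR))$ in $S^m$, Corollary \ref{cor_prop_multipliers_symbol} to make $(1-\psi)(\pi(\cR))$ smoothing, and the algebra property of symbol classes from Theorem \ref{thm_calculus}. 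Your write-up simply derives the identity more explicitly by inserting $\psi(\pi(\cR))+\chi(\pi(\cR))=\id$; the mathematics is identical.
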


\begin{proof}[Proof of Corollary \ref{cor_prop_equiv_dotS}]
We keep  the hypotheses of the corollary and denote 
$\Psi(\pi):=\psi(\pi(\cR))$.
As
$\Psi(\pi)=(1-\psi)(\pi(\cR))$, 
by Corollary \ref{cor_prop_multipliers_symbol}, 
$\id-\Psi$ is smoothing.
Since $\Psi \sigma$ and $\sigma \Psi$ are in $S^m$
by Proposition \ref{prop_equiv_dotS},
the symbol
$
\Psi \sigma -\sigma \Psi
=
\Psi \sigma (\id-\Psi) 
-(\id-\Psi) \sigma \Psi,
$
is smoothing.
\end{proof}

\subsection{Proof of Proposition \ref{prop_equiv_dotS}}
The underlying idea is to find 
 a replacement for the following property in the Euclidean case: in the case of $\bR^n$, 
if a cut-off function $\psi(\xi)$ on the Fourier side is constant 
for $|\xi|>\Lambda$ ($\Lambda$ large enough),
then its derivatives are $\partial_\xi^\alpha \psi (\xi)=0$ if $|\xi|>\Lambda$. In our case, we can not say anything in general about vanishing derivatives.
However, we can show that these derivatives are smoothing and behaves well enough in the following way:

\begin{lemma}
\label{lem_Delta_psiL}
Let $\psi\in C^\infty(\bR)$ be a real valued function 
satisfying 
$\psi_{|[\Lambda,+\infty)} = 1$ for some $\Lambda\in \bR$.
Let $\cR$ be a positive Rockland operator of homogeneous degree $\nu$.
Then for any $\alpha \in \bN_0^n\backslash\{0\}$,
the symbol given by 
$\Delta^{\alpha} \psi(\pi(\cR))$
is smoothing, i.e. is in $S^{-\infty}$.
Furthermore for each $a,b\in \bR$,
the fields of operators given by
$$
 \pi(\cR)^{\frac b\nu} \Delta^\alpha \psi(\pi(\cR)) \pi(\cR)^{\frac a\nu}
,\quad 
 \pi(\id+\cR)^{\frac b\nu} \Delta^\alpha \psi(\pi(\cR)) \pi(\cR)^{\frac a\nu}
 ,\quad
 \pi(\cR)^{\frac b\nu} \Delta^\alpha \psi(\pi(\cR)) \pi(\id+\cR)^{\frac a\nu},
$$
are in $L^\infty(\Gh)$.
\end{lemma}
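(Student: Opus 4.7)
The plan hinges on reducing to a compactly supported spectral multiplier: since $\mathrm{spec}(\pi(\cR)) \subseteq [0,\infty)$, only $\psi|_{[0,\infty)}$ enters the functional calculus. Because $\psi \equiv 1$ on $[\Lambda,\infty)$, I pick $\chi \in C_c^\infty(\bR)$ with $\chi|_{[0,\infty)} = (\psi - 1)|_{[0,\infty)}$; explicitly, $\chi := \eta\,(\psi - 1)$ for a cut-off $\eta \in C_c^\infty(\bR)$ equal to $1$ on $[-1,\Lambda+1]$. Then
\[
\psi(\pi(\cR)) \;=\; \id_{\cH_\pi} + \chi(\pi(\cR)), \qquad \pi \in \Gh.
\]

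Because $\cF_G(\delta_0) = \{\id_{\cH_\pi}\}$ and $x^\alpha \delta_0 = 0$ for $\alpha \neq 0$, the identity symbol is annihilated by $\Delta^\alpha$, so $\Delta^\alpha \psi(\pi(\cR)) = \Delta^\alpha \chi(\pi(\cR))$. By Corollary~\ref{cor_prop_multipliers_symbol}, $\chi(\cR)\delta_0 \in \cS(G)$, hence its kernel $x^\alpha \chi(\cR)\delta_0$ is also Schwartz. To see that this $x$-independent symbol lies in $S^{-\infty}$, I check the Fr\'echet seminorms directly: they reduce to estimates of the form $\sup_{\pi \in \Gh} \| \pi(\id+\cR)^{r}\, \cF_G(g)(\pi)\, \pi(\id+\cR)^{s} \|_{\sL(\cH_\pi)} < \infty$ for various $g \in \cS(G)$, which follow by absorbing the Rockland factors into $g$ (using $\pi(\id+\cR)^r \widehat{g} = \widehat{(\id+\cR)^r g}$ on one side and a convolution with the kernel of $(\id+\cR)^s$ on the other), remaining in $\cS(G)$ via Lemma~\ref{lem_2R_fractional_power} and Theorem~\ref{thm_sobolev_spaces}, and finally applying $\|\widehat{h}\|_{L^\infty(\Gh)} \leq \|h\|_{L^1(G)}$.

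The $L^\infty(\Gh)$-bound with $\pi(\id+\cR)$-factors on both sides is then automatic from the $S^{-\infty}$-membership just established. The three asserted bounds each involve at least one bare $\pi(\cR)^{c/\nu}$; here I use that $\chi \in C_c^\infty(\bR)$ localises the spectrum of $\cR$ to a bounded subset of $[0,\infty)$, on which $(1+\lambda)^{-c/\nu}\lambda^{c/\nu}$ is bounded away from $\lambda = 0$, so that bounds with $\pi(\id+\cR)$-factors transfer to bounds with bare $\pi(\cR)$-factors on the middle- and high-frequency part of the spectrum. The remaining low-frequency region is handled by applying the $\bR^+$-dilation identity \eqref{eq_Deltarcdotpi} to $\chi(\pi(\cR))$ and Taylor-expanding $\chi(r^\nu\lambda)$ around $r=0$: this shows that $\Delta^\alpha \chi(\pi(\cR))$ vanishes to sufficient order near the trivial representation to absorb the singularity of bare $\pi(\cR)^{c/\nu}$ for $c<0$.

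The main obstacle is exactly this last matching of vanishing rates: extracting from the non-commutative difference operator $\Delta^\alpha$ enough zeros of $\chi(\pi(\cR))$ at the trivial representation to compensate for the singularity of $\pi(\cR)^{c/\nu}$ at $c<0$. This is where the interplay between $\Delta^\alpha$, the Rockland functional calculus, and the $\bR^+$-dilation structure on $\Gh$ is essential; the combinatorics is controlled via the Leibniz rule~\eqref{eq_Leibniz_rule_sigma} and Example~\ref{ex_Delta_piX}.
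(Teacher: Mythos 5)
Your first-part reduction --- peeling off the identity via $\Delta^\alpha\id = 0$ and replacing $\psi$ by a compactly supported $\chi$ agreeing with $\psi - 1$ on $[0,\infty)$ --- is exactly what the paper does (the cut-off $\eta$ is in fact redundant, since $(\psi-1)|_{[0,\infty)}$ is already compactly supported). The paper then concludes more economically: $\{(1-\psi)(\pi(\cR))\}\in S^{-\infty}$ by Corollary~\ref{cor_prop_multipliers_symbol}, and $\Delta^\alpha$ maps $S^{-\infty}$ into $S^{-\infty}$ by the symbolic calculus (Theorem~\ref{thm_calculus}, Part~(1)), so $\Delta^\alpha\psi(\pi(\cR))\in S^{-\infty}$ follows without any seminorm unwinding. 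Your direct Fr\'echet-seminorm check is sound but reproves what the calculus already gives you for free.

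The second part --- the $L^\infty(\Gh)$-bounds with a bare $\pi(\cR)^{c/\nu}$ factor --- is where your proposal departs from the paper's and contains a genuine gap. The paper handles $a,b\in\nu\bN_0$ first, using that for such $m$ the symbol $\pi(\cR)^{m/\nu}$ is that of a differential operator in $\Psi^m$ (so the three-fold product sits in $\Psi^{-\infty}\subset\sL(L^2)$), then passes to $a,b\in\nu(-\bN_0)$ by Sobolev-space duality (Theorem~\ref{thm_sobolev_spaces}~\eqref{item_thm_sobolev_spaces_duality}, using that the kernel adjoint of $\Delta^\alpha\psi(\pi(\cR))$ is a linear combination of $\Delta^\beta\psi(\pi(\cR))$ with $[\beta]=[\alpha]$), and then interpolates (Theorem~\ref{thm_sobolev_spaces}~\eqref{item_thm_sobolev_spaces_interpolation}) to get all real $a,b$. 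Your route instead tries to show that $\Delta^\alpha\chi(\pi(\cR))$ vanishes near the trivial representation fast enough to absorb the singularity of $\pi(\cR)^{c/\nu}$ there. Two things go wrong. First, the phrase ``$\chi\in C_c^\infty(\bR)$ localises the spectrum of $\cR$'' is not usable after applying $\Delta^\alpha$: on the kernel side $\Delta^\alpha$ is multiplication by $x^\alpha$, a non-local operation on $\Gh$, and it does \emph{not} preserve spectral support; so $\Delta^\alpha\chi(\pi(\cR))$ is not spectrally cut off away from $\pi=1$, and the comparison of $(1+\lambda)^{-c/\nu}\lambda^{c/\nu}$ on a bounded $\lambda$-set is inapplicable.

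Second, the Taylor-expansion argument near $r=0$ gives only a \emph{finite} order of vanishing. Using \eqref{eq_Deltarcdotpi}, one finds $\Delta^\alpha[\chi(\pi(\cR))](r\cdot\pi)=r^{-[\alpha]}\Delta^\alpha[\chi(r^\nu\pi(\cR))](\pi)$; expanding $\chi(r^\nu\lambda)=\chi(0)+r^\nu\lambda\,\chi'(0)+\cdots$ and killing the low Taylor terms via $\Delta^\alpha\id=0$ and $\Delta^\alpha\pi(\cR)^k=0$ for $k\nu<[\alpha]$ (Example~\ref{ex_Delta_piX}), the leading surviving contribution scales like $r^{\nu\lceil[\alpha]/\nu\rceil-[\alpha]}$ --- a fixed, bounded exponent. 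Since $\pi(\cR)^{a/\nu}$ and $\pi(\cR)^{b/\nu}$ at $r\cdot\pi$ scale like $r^a$ and $r^b$, this cancellation only tames the product when $a+b$ lies above a fixed threshold; it cannot absorb \emph{arbitrarily} negative $a,b\in\bR$ as the lemma asserts. You acknowledge this matching of vanishing rates as ``the main obstacle,'' but you do not close it, and I do not see how your route can be closed. The duality-plus-interpolation step --- with the observation that $\cR^{m/\nu}$ is a differential operator for $m\in\nu\bN_0$, and that the adjoint of $\Op(\Delta^\alpha\psi(\pi(\cR)))$ is a linear combination of $\Op(\Delta^\beta\psi(\pi(\cR)))$, $[\beta]=[\alpha]$ --- is the missing idea.
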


\begin{proof}[Proof of Lemma \ref{lem_Delta_psiL}]
By Example \ref{ex_Delta_piX}, 
$\Delta^\alpha 1=0$
thus
$\Delta^{\alpha} \psi(\pi(\cR))=-\Delta^{\alpha}(1-\psi)(\pi(\cR))$.
Corollary \ref{cor_prop_multipliers_symbol} then implies
 the first part.
If $a,b\in \nu\bN_0$, 
the given fields of operators are bounded since 
 $\Delta^{\alpha} \psi(\pi(\cR))\in S^{-\infty}$
 while
 $\pi(\cR)^{\frac m\nu}$ and $\pi(\id+\cR)^{\frac m\nu}$ are in $\Psi^m$ for any $m\in \nu \bN_0$.
 Hence this is also the case for $a,b \in \nu(-\bN_0)$
by duality (see Theorem \ref{thm_sobolev_spaces}
\eqref{item_thm_sobolev_spaces_duality}), 
and then for any $a,b\in \bR$  by interpolation
(see Theorem \ref{thm_sobolev_spaces}
\eqref{item_thm_sobolev_spaces_interpolation});
indeed the adjoint of  $\left(\Op\left(\Delta^\alpha \psi(\pi(\cR))\right)\right)^*$ is a linear combination of 
$\left(\Op\left(\Delta^\beta \psi(\pi(\cR))\right)\right)$,
$[\beta]=[\alpha]$.
\end{proof}

We will also need the following technical lemma.

\begin{lemma}
\label{lem_0homogeneous_field}
 Let $\sigma=\{\sigma(\pi):\cH_\pi^\infty\to \cH_\pi^\infty,\pi\in \Gh\}$ be a measurable field of operators which is 0-homogeneous in the sense that
$\sigma(r\cdot\pi)=\sigma(\pi)$ for any  $r>0$ and  $\pi\in \Gh$.

\begin{enumerate}
\item 
If  there exist a positive Rockland operator $\cR$ and
a scalar valued function $\psi\in C^\infty(\bR)$ such that 
$\psi\equiv 1$ on $(\Lambda,\infty)$ 
for some $\Lambda\in \bR$,
and $\{\psi(\pi( \cR)) \sigma(\pi)\} \in L^\infty(\Gh)$
then  $\sigma\in L^\infty(\Gh)$.

Conversely, if $\sigma\in L^\infty(\Gh)$,
then $\{\psi(\pi( \cR)) \sigma(\pi)\} \in L^\infty(\Gh)$
for any positive Rockland operator $\cR$ and
any scalar valued function $\psi\in C^\infty(\bR)$ such that 
$\psi\equiv 1$ on $(\Lambda,\infty)$ 
for some $\Lambda\in \bR$
and we have:
$$
\|\sigma\|_{L^\infty(\Gh)}
\leq
\sup_{\pi\in \Gh} \|\psi(\pi( \cR)) \sigma(\pi)\|_{\sL(\cH_\pi)}
\leq\sup_{\lambda\geq 0} |\psi(\lambda)| \
\|\sigma\|_{L^\infty(\Gh)}
.$$

We have the same property with $\{ \sigma(\pi) \psi(\pi( \cR))\}$.

\item 
We assume that $\sigma\in L^\infty(\Gh)$
and that there exist a real-valued function
 $\psi\in C^\infty(\bR)$ satisfying
$\psi\equiv 0$ on a neighbourhood of 0 and
$\psi\equiv 1$ on $(\Lambda,+\infty)$ 
for some $\Lambda\in \bR$, a positive Rockland operator $\cR$
of homogeneous degree $\nu$
and a number $m'\in \bR$
such that one of the quantities
$$
\sup_{\pi\in \Gh} 
\|\pi(\cR)^{\frac {m'}{\nu}} \psi(\pi(\cR)) \sigma(\pi)\|_{\sL(\cH_\pi)}
\quad\mbox{or}\quad
\sup_{\pi\in \Gh} 
\|\sigma(\pi)\psi(\pi(\cR))\pi(\cR)^{\frac {m'}{\nu} } \|_{\sL(\cH_\pi)},
$$
is finite.
If $m'>0$ then $\sigma=0$.
\end{enumerate}
\end{lemma}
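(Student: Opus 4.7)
The plan is to exploit the 0-homogeneity $\sigma(r\cdot\pi) = \sigma(\pi)$ together with the dilation formula $(r\cdot\pi)(\cR) = r^\nu\pi(\cR)$ and two spectral facts from Lemma~\ref{lem_normL2_E(epsilon,infty)}: that $\psi(r^\nu\pi(\cR))$ converges to $\id_{\cH_\pi}$ in the strong operator topology as $r\to\infty$, and that in fact $\psi(r^\nu\pi(\cR)) = \id_{\cH_\pi}$ once $r > r_\pi$ for every non-trivial $\pi\in\Gh$ (Remark~\ref{rem_rpi}). This combination lets me ``dilate away'' the cut-off $\psi(\pi(\cR))$ and recover pointwise control of $\sigma$ itself.

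For Part~(1), the direction $\sigma\in L^\infty(\Gh) \Rightarrow \psi(\pi(\cR))\sigma\in L^\infty(\Gh)$ follows immediately from the functional-calculus estimate $\|\psi(\pi(\cR))\|_{\sL(\cH_\pi)}\leq \sup_\lambda|\psi(\lambda)|$. For the converse inequality I would set $M := \sup_\pi\|\psi(\pi(\cR))\sigma(\pi)\|_{\sL(\cH_\pi)}$, substitute $\pi\leftarrow r\cdot\pi$ in this essential supremum (justified by the dilation invariance of $\mu$ in \eqref{eq_dilation_mu}), and invoke 0-homogeneity to get $\|\psi(r^\nu\pi(\cR))\sigma(\pi)\|_{\sL(\cH_\pi)}\leq M$ for $\mu$-a.e.\ $\pi$ and each $r$ in a fixed countable sequence tending to $\infty$. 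Strong operator convergence $\psi(r^\nu\pi(\cR)) \to \id_{\cH_\pi}$ then yields $\|\sigma(\pi)v\|_{\cH_\pi} \leq M\|v\|_{\cH_\pi}$ for every $v\in\cH_\pi$ on a conull set. The variant with $\sigma(\pi)\psi(\pi(\cR))$ is dispatched by the symmetric argument on the right.

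For Part~(2) I would run the same dilation substitution against the stronger hypothesis, using $(r\cdot\pi)(\cR)^{m'/\nu}=r^{m'}\pi(\cR)^{m'/\nu}$ to extract the key inequality
\[
r^{m'}\,\|\pi(\cR)^{m'/\nu}\psi(r^\nu\pi(\cR))\sigma(\pi)\|_{\sL(\cH_\pi)} \leq M
\]
for $\mu$-a.e.\ non-trivial $\pi$ and a countable cofinal sequence of $r$'s. Restricting to $r>r_\pi$ collapses the middle factor to the identity, reducing the left-hand side to $r^{m'}\|\pi(\cR)^{m'/\nu}\sigma(\pi)\|$. If $m' > 0$, sending $r\to\infty$ forces $\pi(\cR)^{m'/\nu}\sigma(\pi) = 0$; injectivity of $\pi(\cR)^{m'/\nu}$ on $\cH_\pi^\infty$ (from the discreteness of the spectrum in Lemma~\ref{lem_normL2_E(epsilon,infty)}(2)) then gives $\sigma(\pi) = 0$, and the trivial representation carries no Plancherel mass. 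The second form of the hypothesis is handled symmetrically, exploiting that $\pi(\cR)^{m'/\nu}$ has dense range on smooth vectors.

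The only place that asks for care is the measure-theoretic bookkeeping, since 0-homogeneity only holds a.e.\ in $(r,\pi)$ and the hypotheses only hold $\mu$-a.e.\ in $\pi$. I would resolve this at the outset by fixing a single countable sequence $r_k\to\infty$, intersecting the associated $\mu$-conull sets, and appealing to \eqref{eq_dilation_mu} to legitimise each dilation substitution. Beyond this I do not expect any serious analytic obstacle: the argument is really a one-line spectral observation (``dilating $\psi$ out to $\id$'') dressed up with the appropriate measurability conventions.
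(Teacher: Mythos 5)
Your proposal is correct and is essentially the paper's own argument: both proofs rest on the dilation substitution $\pi\mapsto r\cdot\pi$, the identity $\psi(r^\nu\pi(\cR))=\id_{\cH_\pi}$ for $r>r_\pi$ from Lemma~\ref{lem_normL2_E(epsilon,infty)}~(3) and Remark~\ref{rem_rpi}, and the 0-homogeneity $\sigma(r\cdot\pi)=\sigma(\pi)$. The only cosmetic differences are that in Part~(1) the paper fixes a near-optimal pair $(\pi_0,u_0)$ by an $\varepsilon$-argument and dilates that single representation rather than substituting inside the whole essential supremum, and in Part~(2) it phrases the conclusion through a zero-or-infinity dichotomy for $\sup_\pi\|\pi(\cR)^{m'/\nu}\sigma(\pi)\|$ rather than sending $r\to\infty$ pointwise for each fixed $\pi$; also note that this lemma's hypothesis gives pointwise homogeneity for all $(r,\pi)$, so the a.e.-in-$(r,\pi)$ bookkeeping you flag is lighter than you anticipate.
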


\begin{proof}[Proof of Lemma \ref{lem_0homogeneous_field}]
Let $\sigma=\{\sigma(\pi)\in \sL(\cH_\pi)\}$ be a 0-homogeneous symbol.
Let us assume that the quantity
$$
\sup_{\pi\in \Gh}\|\sigma(\pi)\|_{\sL(\cH_\pi)}
=
\sup_{\substack{\pi\in \Gh\\ u\in \cH_\pi, \|u\|_{\cH_\pi}=1}}\|\sigma(\pi)u\|_{\cH_\pi}
$$
is finite. Then, 
for any $\epsilon>0$, there exists $\pi_0\in \Gh$ and $u_0\in \cH_{\pi_0}$, $\|u_0\|_{\cH_{\pi_0}}=1$  such that 
$$
\sup_{\pi\in \Gh}\|\sigma(\pi)\|_{\sL(\cH_\pi)}
\leq
\|\sigma(\pi_0)u_0\|_{\cH_{\pi_0}} +\epsilon.
$$
By \eqref{eq_psi(rpiR)=I}, for any $r>r_{\pi_0}$, 
we have $\psi(r\cdot \pi_0 (\cR))=\id_{\cH_{\pi_0}}$ 
thus 
$$
\psi(r\cdot \pi_0 (\cR))\sigma(\pi_0)u_0
=
\sigma(\pi_0)u_0.
$$
As $\sigma$ is 0-homogeneous, we have 
$\sigma(\pi_0)u_0 =\sigma(r\cdot\pi_0)u_0$.
Hence 
$$
\sup_{\pi\in \Gh} \|\psi(\pi(\cR))\sigma (\pi)\|_{\sL(\cH_\pi)}
\geq 
\|\psi(r\cdot \pi_0 (\cR))\sigma(\pi_0)u_0\|_{\sL(\cH_{\pi_0})}
\geq 
\sup_{\pi\in \Gh}\|\sigma(\pi)\|_{\sL(\cH_\pi)} -\epsilon.
$$
This is true for any $\epsilon>0$ and this shows 
$$
\sup_{\pi\in \Gh}\|\sigma(\pi)\|_{\sL(\cH_\pi)}
\leq
\sup_{\pi\in \Gh}\|\psi (\pi(\cR))\sigma(\pi)\|_{\sL(\cH_\pi)}.
$$
The rest of Part (1) follows  
from similar considerations and properties of the functional calculus of $\pi(\cR)$. 

For Part (2), let us assume that the first quantity is finite, that is, 
$$
\sup_{\pi\in \Gh} 
\|\pi(\cR)^{\frac {m'}{\nu}} \psi(\pi(\cR)) \sigma(\pi)\|_{\sL(\cH_\pi)}
<\infty.
$$
For each $r>0$, we use the change of index $\pi \mapsto r\cdot \pi$
and this quantity becomes by homogeneity
\begin{eqnarray}
\sup_{\pi\in \Gh} 
\|r\cdot\pi(\cR)^{\frac {m'}{\nu}} \psi(r\cdot\pi(\cR)) \sigma(r\cdot\pi)\|_{\sL(\cH_{\pi})}
&=&
r^{m'}\sup_{\pi\in \Gh} 
\|\pi(\cR)^{\frac {m'}{\nu}} \psi(r^\nu \pi(\cR)) \sigma(\pi)\|_{\sL(\cH_{\pi})}\nonumber
\\
&\geq&
r^{m'}\sup_{\substack{\pi\in \Gh\\ r>r_\pi}} 
\|\pi(\cR)^{\frac {m'}{\nu}}  \sigma(\pi)\|_{\sL(\cH_{\pi})},
\label{eq_lem_0homogeneous_field_2_ineq}
\end{eqnarray}
having used \eqref{eq_psi(rpiR)=I}.
Note that the limit
\begin{equation}
\label{eq_lem_0homogeneous_field_2_lim}
\lim_{r\to\infty}\sup_{\substack{\pi\in \Gh\\ r>r_\pi}} 
\|\pi(\cR)^{\frac {m'}{\nu}}  \sigma(\pi)\|_{\sL(\cH_{\pi})}
=
\sup_{\pi\in \Gh} 
\|\pi(\cR)^{\frac {m'}{\nu}}  \sigma(\pi)\|_{\sL(\cH_{\pi})},
\end{equation}
is infinite unless either $m'=0$ or $\sigma=0$.
Indeed, for the same reason as above, we have for any $r_1>0$:
\begin{eqnarray*}
\sup_{\pi\in \Gh} 
\|\pi(\cR)^{\frac {m'}{\nu}}  \sigma(\pi)\|_{\sL(\cH_{\pi})}
&=&
\sup_{\pi\in \Gh} 
\|r_1\cdot\pi(\cR)^{\frac {m'}{\nu}}  \sigma(r_1\cdot\pi)\|_{\sL(\cH_{\pi})}
\\
&=&
r_1^{m'}
\sup_{\pi\in \Gh} 
\|\pi(\cR)^{\frac {m'}{\nu}}  \sigma(\pi)\|_{\sL(\cH_{\pi})}.
\end{eqnarray*}
If the limit in \eqref{eq_lem_0homogeneous_field_2_lim} is infinite,
as each side of \eqref{eq_lem_0homogeneous_field_2_ineq} must be finite as $r\to \infty$, 
we then must have $r^{m'}\to 0$ as $r\to\infty$, 
that is, $m'<0$.
This conclude the proof of Part (2).
\end{proof}

We can now prove Proposition \ref{prop_equiv_dotS}.

\begin{proof}[Proof of Proposition \ref{prop_equiv_dotS}]
Let $\cR$ be 
a positive Rockland operator of homogeneous degree $\nu$.
Let $\psi\in C^\infty(\bR)$ be a real valued function
satisfying 
$\psi\equiv 0$ on $(-\infty,\epsilon_o)$ and 
$\psi\equiv 1$ on $(\Lambda,\infty)$
for some $0<\epsilon_o<\Lambda$.

Let us assume Property (1), that is,  let $\sigma\in \dot S^m$.
We will prove that for any $\alpha,\beta\in \bN_0^n$,
 \begin{equation}
\label{eq1_prop_equiv_dotS}
\sup_{x\in G, \pi\in \Gh}
\|\pi(\id+\cR)^{\frac{ [\alpha]-m }\nu }
X_x^\beta\Delta^\alpha \{
\psi(\pi(\cR)) \sigma(x,\pi)\}\|_{\sL(\cH_\pi)}<\infty.
\end{equation}
The case $\alpha=0$ of \eqref{eq1_prop_equiv_dotS}
 follows from
\begin{eqnarray*}
&&\|\pi(\id+\cR)^{\frac{-m }{\nu}}
X^\beta\psi(\pi(\cR)) \sigma(x,\pi)\|_{\sL(\cH_\pi)}
\\&&\quad\leq
\|\pi(\id+\cR)^{\frac{-m }{\nu}}
\psi(\pi(\cR)) 
\pi(\cR)^{-\frac{-m }{\nu}}\|_{\sL(\cH_\pi)}
  \|\pi(\cR)^{\frac{-m }{\nu}}
X^\beta \sigma(x,\pi) \|_{\sL(\cH_\pi)}
\\&&\quad\leq
\sup_{\lambda>0}
|(1+\lambda)^{\frac{-m }{\nu}}
\psi(\lambda) 
\lambda^{-\frac{-m }{\nu}}|
  \|\pi(\cR)^{\frac{-m }{\nu}}
X^\beta \sigma(x,\pi) \|_{\sL(\cH_\pi)} <\infty.
\end{eqnarray*}

Let $\alpha\in \bN_0^n$ be such that $|\alpha|=1$.
Using the Leibniz rule \eqref{eq_Leibniz_rule_sigma}, we obtain
\begin{eqnarray*}
&&
\|\pi(\id+\cR)^{\frac{[\alpha]-m }{\nu}}
X^\beta\Delta^\alpha\{\psi(\pi(\cR)) \sigma(x,\pi)\} \|_{\sL(\cH_\pi)}
\\&&\quad\lesssim
\|(\id+\pi(\cR))^{\frac{[\alpha]-m }{\nu}}
\left(\Delta^\alpha\psi(\pi(\cR))\right) X^\beta \sigma(x,\pi) \|_{\sL(\cH_\pi)}
\\&&\qquad\qquad+
\|(\id+\pi(\cR))^{\frac{[\alpha]-m }{\nu}}
\psi(\pi(\cR))
\left(\Delta^\alpha X^\beta \sigma(x,\pi) \right) \|_{\sL(\cH_\pi)}
\end{eqnarray*}
For the second term on the right hand-side, 
we proceed as above.
We modify the argument for the first  term
using Lemma \ref{lem_Delta_psiL}.
Recursively we prove \eqref{eq1_prop_equiv_dotS}
 with the same arguments. The case of $\sigma(x,\pi)\psi(\pi(\cR))$ are handled in a similar way, the details are left to the reader.
 We have proved $(1)\Longrightarrow (2)$ and $(1)\Longrightarrow (3)$.

Since $(3)\Longrightarrow (2)$, it only remains to prove that $(2)\Longrightarrow (1)$.
We assume that 
$\{\psi(\pi(\cR)) \sigma(x,\pi)\}$ 
and
$ \{ \sigma(x,\pi)\psi(\pi(\cR))\}$
are in $S^m$
and we want to prove that
$$\sup_{x\in G, \pi\in \Gh}
\|\pi(\cR)^{\frac{ [\alpha]-m+\gamma }\nu }
X_x^\beta\Delta^\alpha \{
\psi(\pi(\cR)) \sigma(x,\pi)\}\pi(\cR)^{-\frac{\gamma }\nu }\|_{\sL(\cH_\pi)}<\infty,
$$
for any $\alpha,\beta\in \bN_0^n$ and 
 for a sequence $(\gamma_\ell)_{\ell\in \bZ}$
with $\gamma_\ell\rightarrow_{\ell\to\pm\infty}\pm \infty$
(see Remark \ref{rem_def_dotSmrhodelta} (2)),
and similarly for $\sigma\psi(\pi(\cR)) $.
Clearly it suffices to show it for $\beta=0$.
Using recursively the Leibniz rule (see \eqref{eq_Leibniz_rule_sigma})
and Lemma \ref{lem_Delta_psiL},
it suffices to show 
$$
\sup_{x\in G, \pi\in \Gh}
\|\pi(\cR)^{\frac{ [\alpha]-m+\gamma }\nu }
\psi(\pi(\cR)) \Delta^\alpha \{
\sigma(x,\pi)\}\pi(\cR)^{-\frac{\gamma }\nu }\|_{\sL(\cH_\pi)}<\infty,
$$
for any $\alpha\in \bN_0^n$ and
 for a sequence $(\gamma_\ell)_{\ell\in \bZ}$
with $\gamma_\ell\rightarrow_{\ell\to\pm\infty}\pm \infty$
(see Remark \ref{rem_def_dotSmrhodelta} (2)),
and similarly for $ \Delta^\alpha \{\sigma(x,\pi)\} \ \psi(\pi(\cR)) $.
By homogeneity of the operator $\Delta^\alpha$ (see \eqref{eq_Deltarcdotpi}), 
it suffices to prove the case $\alpha=0$ which we now do.

The field of operators 
$\{\pi(\cR)^{\frac{-m+\gamma}\nu}  \sigma(x,\pi)\pi(\cR)^{-\frac{\gamma}\nu},\pi\in \Gh\}$
is 0-homogeneous.
Thus by Lemma \ref{lem_0homogeneous_field} (1)
and functional analysis 
\begin{eqnarray*}
\sup_{\pi\in \Gh}
\|\pi(\cR)^{\frac{-m+\gamma}\nu} \sigma(x,\pi)
\pi(\cR)^{-\frac{\gamma}\nu}
\|_{\sL(\cH_\pi)}
\leq 
\sup_{\pi\in \Gh}
\|\psi(\pi(\cR))\pi(\cR)^{\frac{-m+\gamma}\nu} \sigma(x,\pi) \pi(\cR)^{-\frac{\gamma}\nu}\|_{\sL(\cH_\pi)}
\\
\leq C
\sup_{\pi\in \Gh}
\|\pi(\id+\cR)^{\frac{-m+\gamma}\nu} \{\psi(\pi(\cR))\sigma(x,\pi)\} \pi(\id +\cR)^{-\frac{\gamma}\nu}\|_{\sL(\cH_\pi)},
\end{eqnarray*}
with a constant 
$$
C=C_{m,\gamma,\nu}
=\sup_{\lambda_1>\epsilon_o} \left(\frac{1+\lambda_1}{\lambda_1}\right)^{\frac{m-\gamma}\nu}
\sup_{\lambda_2>0} 
\left( \frac{\lambda_2}{1+\lambda_2}\right)^{-\frac \gamma\nu},
$$
 finite for $\gamma\geq0$.
We apply the same argument to $\sigma^*$ and obtain
\begin{eqnarray*}
&&\sup_{\pi\in \Gh}
\|\pi(\cR)^{-\frac{\gamma}\nu}
X^\beta \sigma(x,\pi)
\pi(\cR)^{\frac{-m+\gamma}\nu} \|_{\sL(\cH_\pi)}
=
\sup_{\pi\in \Gh}
\|\pi(\cR)^{\frac{-m+\gamma}\nu} X^\beta \sigma(x,\pi)^*
\pi(\cR)^{-\frac{\gamma}\nu}
\|_{\sL(\cH_\pi)}
\\&&\qquad \leq C
\sup_{\pi\in \Gh}
\|\pi(\id+\cR)^{\frac{-m+\gamma}\nu} X^\beta \{\psi(\pi(\cR))\sigma(x,\pi)^*\} \pi(\id +\cR)^{-\frac{\gamma}\nu}\|_{\sL(\cH_\pi)}
\\&&\qquad = C
\sup_{\pi\in \Gh}
\|\pi(\id+\cR)^{\frac{-m+\gamma}\nu} X^\beta \{\sigma(x,\pi) \psi(\pi(\cR))\} \pi(\id +\cR)^{-\frac{\gamma}\nu}\|_{\sL(\cH_\pi)}.
\end{eqnarray*}
This concludes the proof of $(2)\Longrightarrow (1)$. 
The rest of the proof follows easily.
\end{proof}

\subsection{Consequence of Proposition \ref{prop_equiv_dotS} and of  its proof}

The proof of  Proposition \ref{prop_equiv_dotS},
especially the implication 
(1)$\Longrightarrow$(2),  
together with Proposition \ref{prop_multipliers_symbol}
imply
\begin{corollary}
\label{cor_prop_equiv_dotS_bound}
Let $\cR$ be 
a positive Rockland operator.
Then for any seminorm $\|\cdot\|_{S^m,a,b,c}$ of $S^m$, 
there exist a seminorm $\|\cdot\|_{\dot S^m,a',b',c'}$
of $\dot S^m$, $C>0$, and $k\in \bN$ such that
for any $\sigma\in \dot S^m$
and for any  real-valued function $\psi\in C^\infty(\bR)$ 
satisfying $\psi\equiv 0$ on a neighbourhood of 0 and 
$\psi\equiv 1$ on $(\Lambda,\infty)$
for some $\Lambda>0$, we have:
$$
\max(\|\psi(\pi(\cR)) \sigma\|_{S^m,a,b,c},
\| \sigma\psi(\pi(\cR))\|_{S^m,a,b,c})
\leq C \sup_{\substack{\lambda>0\\ \ell=0,\ldots,k}}
(1+\lambda) ^{\ell} \left| \partial_\lambda^\ell \psi(\lambda) \right|
\ 
\|\sigma\|_{\dot S^m,a',b',c'},
$$
\end{corollary}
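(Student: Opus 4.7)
The corollary is a quantitative reformulation of the implication $(1)\Rightarrow(2)$ of Proposition \ref{prop_equiv_dotS}, so the plan is to retrace that proof while tracking the constants produced at each step, and to bound all multiplier norms of $\psi$ that arise by the quantity $M_k(\psi):=\sup_{\lambda>0,\ \ell\leq k}(1+\lambda)^\ell|\partial_\lambda^\ell\psi(\lambda)|$. I will estimate $\|\psi(\pi(\cR))\sigma\|_{S^m,a,b,c}$; the other bound follows by a symmetric argument (or by taking adjoints, using $\psi(\pi(\cR))^*=\psi(\pi(\cR))$ since $\psi$ is real, noting that adjunction permutes seminorms of $S^m$ up to universal constants).

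By the Leibniz rule \eqref{eq_Leibniz_rule_sigma},
$$
X_x^\beta\Delta^\alpha\bigl\{\psi(\pi(\cR))\sigma(x,\pi)\bigr\}
=\sum_{[\alpha_1]+[\alpha_2]=[\alpha]}c_{\alpha_1,\alpha_2}\,\Delta^{\alpha_1}\psi(\pi(\cR))\,X_x^\beta\Delta^{\alpha_2}\sigma(x,\pi),
$$
so it is enough to bound each summand sandwiched between $\pi(\id+\cR)^{([\alpha]-m+\gamma)/\nu}$ on the left and $\pi(\id+\cR)^{-\gamma/\nu}$ on the right, with $|\gamma|\leq c$. For the main term ($\alpha_1=0$) I insert $\pi(\cR)^{\pm([\alpha]-m+\gamma)/\nu}$ and $\pi(\cR)^{\mp\gamma/\nu}$, factoring the operator norm into a product of three quantities: (i) $\|F(\pi(\cR))\|_{\sL(\cH_\pi)}$ where $F(\lambda)=((1+\lambda)/\lambda)^{([\alpha]-m+\gamma)/\nu}\psi(\lambda)$, (ii) the seminorm $\|\sigma\|_{\dot S^m,[\alpha],[\beta],|\gamma|}$, and (iii) a factor $\|\pi(\cR)^{\gamma/\nu}\pi(\id+\cR)^{-\gamma/\nu}\|\leq 1$. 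Since $\psi$ vanishes on $[0,\epsilon_0]$, a Taylor expansion of $\psi$ at $0$ to order $N>|([\alpha]-m+\gamma)/\nu|$ shows $|\psi(\lambda)|\leq \lambda^N\sup_{t\leq\lambda}|\psi^{(N)}(t)|/N!$, which combined with the spectral bound $\|F(\pi(\cR))\|_{\sL}\leq\sup_\lambda|F(\lambda)|$ gives $\|F(\pi(\cR))\|_{\sL}\lesssim M_N(\psi)$, with a constant depending only on $m,\nu,[\alpha],[\gamma]$.

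For remainder terms with $\alpha_1\ne 0$, I write $\Delta^{\alpha_1}\psi(\pi(\cR))=-\Delta^{\alpha_1}(1-\psi)(\pi(\cR))$ (as in Lemma \ref{lem_Delta_psiL}); by Proposition \ref{prop_multipliers_symbol} the symbol $(1-\psi)(\pi(\cR))$ belongs to $\Psi^{-N}$ for every $N$ with seminorm bounded by $\|1-\psi\|_{\cM_{-N/\nu,\ell'}}$, and then $\Delta^{\alpha_1}$ reduces the order further by Theorem \ref{thm_calculus}(1). Inserting the same spectral factorisation as above pairs this with a $\dot S^m$-seminorm of $\sigma$ and closes the estimate. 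The main technical obstacle lies here: the seminorm $\|1-\psi\|_{\cM_{-N/\nu,\ell'}}=\sup_{\lambda,\ell''\leq\ell'}(1+\lambda)^{N/\nu+\ell''}|\partial^{\ell''}(1-\psi)(\lambda)|$ must be controlled by $M_k(\psi)$ \emph{without explicit $\Lambda$-dependence}, despite $1-\psi$ being supported in $[0,\Lambda]$. This is resolved by integrating from $\lambda$ up to $\Lambda$ and using $\psi(\Lambda)=1$: for $\ell''=0$,
$$
|1-\psi(\lambda)|\leq\int_\lambda^\Lambda|\psi'(s)|\,ds\leq\Bigl(\sup_s(1+s)^{k_0}|\psi'(s)|\Bigr)\int_\lambda^\infty(1+s)^{-k_0}\,ds\leq \tfrac{C}{(1+\lambda)^{k_0-1}}M_{k_0}(\psi),
$$
so choosing $k_0$ large absorbs the weight $(1+\lambda)^{N/\nu}$; for $\ell''\geq 1$, $\partial^{\ell''}(1-\psi)=-\partial^{\ell''}\psi$ and the bound is immediate. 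Collecting the constants across all summands yields indices $a',b',c',k$ depending only on $a,b,c$ and the structure of $G$.
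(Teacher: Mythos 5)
Your treatment of the main term ($\alpha_1=0$) via the spectral factorisation and the Taylor expansion of $\psi$ at the origin is sound and matches the mechanism inside the paper's proof of Proposition \ref{prop_equiv_dotS} $(1)\Rightarrow(2)$.

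However, the treatment of the remainder terms $\alpha_1\neq 0$ contains a genuine gap, and the key inequality you use to close it is false. You claim that $\|1-\psi\|_{\cM_{-N/\nu},\ell'}$ is controlled by $M_{k}(\psi):=\sup_{\lambda>0,\ell\leq k}(1+\lambda)^\ell|\partial^\ell\psi(\lambda)|$ uniformly in $\Lambda$, via the chain
$$
|1-\psi(\lambda)|\leq\int_\lambda^\Lambda|\psi'(s)|\,ds\leq\Bigl(\sup_s(1+s)^{k_0}|\psi'(s)|\Bigr)\int_\lambda^\infty(1+s)^{-k_0}\,ds .
$$
But the definition of $M_{k_0}(\psi)$ pairs the weight $(1+\lambda)^\ell$ with the $\ell$-th derivative, not with an arbitrary power; it only gives $\sup_s(1+s)^1|\psi'(s)|\leq M_{k_0}(\psi)$, so the middle factor $\sup_s(1+s)^{k_0}|\psi'(s)|$ is not controlled by $M_{k_0}(\psi)$ and the last inequality has no justification. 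The same problem occurs in your claim that the $\ell''\geq 1$ cases are ``immediate'': one needs $(1+\lambda)^{N/\nu+\ell''}|\partial^{\ell''}\psi(\lambda)|$ to be bounded, while $M_k(\psi)$ only controls $(1+\lambda)^{\ell''}|\partial^{\ell''}\psi(\lambda)|$, leaving an uncontrolled factor $(1+\lambda)^{N/\nu}$ on the support of $\partial^{\ell''}\psi$, i.e.\ up to $\lambda\sim\Lambda$. In fact the inequality $\|1-\psi\|_{\cM_{-N/\nu},\ell'}\lesssim M_k(\psi)$ is simply false as a statement uniform in $\Lambda$: taking $\psi_\Lambda(\lambda)=\psi_1(\lambda/\Lambda)$ for a fixed $\psi_1$ with $\psi_1\equiv 0$ on $[0,1/2]$ and $\psi_1\equiv 1$ on $[1,\infty)$, one checks that $M_k(\psi_\Lambda)$ stays bounded as $\Lambda\to\infty$, while the $\ell''=0$ part of $\|1-\psi_\Lambda\|_{\cM_{-N/\nu},\ell'}$ is at least $(1+\Lambda/2)^{N/\nu}|1-\psi_1(1/2)|\to\infty$. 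This is precisely the regime (transition region going to infinity) that is exploited in the application to Proposition \ref{prop_hom_kernel_smooth}, so the corollary cannot be proved by routing $\Delta^{\alpha_1}\psi(\pi(\cR))$ through $1-\psi$ viewed quantitatively as an element of $\cM_{-N/\nu}$. The correct route for the remainder terms is to exploit the vanishing of $\psi$ near the origin directly on $\psi$, not on $1-\psi$ --- for instance by writing $\psi(\lambda)=\lambda^N\tilde\psi(\lambda)$, checking by the same Taylor argument (applied to $\psi$ and its derivatives, which do vanish to infinite order at $0$) that $\|\tilde\psi\|_{\cM_{-N},\ell}\lesssim M_{k}(\psi)$ uniformly, and then feeding $\tilde\psi$ into Proposition \ref{prop_multipliers_symbol} together with the polynomial factor $\pi(\cR)^N$ via the Leibniz rule --- so that the negative power $\pi(\cR)^{m/\nu}$ needed to form the $\dot S^m$-seminorm of $\sigma$ can be absorbed.
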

Note that the constant $C$ and the integer $k$ are independent of $\sigma$ or $\psi$.

\medskip

In Section \ref{sec_0homsymb}, we will analyse more precisely the homogeneous symbols of degree 0. From Proposition \ref{prop_equiv_dotS}
and Corollary \ref{cor_prop_equiv_dotS_bound}, we can already prove the following regularity of their kernel.
If $\sigma\in \dot S^0$, 
then, for each $x\in G$,  $\sigma(x,\cdot)\in L^\infty(\Gh)$
 has a kernel $\kappa_x\in \cS'(G)$ 
such that $\sigma(x,\pi)=\widehat \kappa_x$, see
Section~\ref{subsec_Gh+plancherel}.
The regularity of the symbol implies that this distribution coincides with a smooth function:

\begin{proposition}
\label{prop_hom_kernel_smooth}
Let $\sigma\in \dot S^0$ and let $\kappa_x\in \cS'(G)$ be its kernel, 
i.e. $\pi(\kappa_x)=\sigma(x,\pi)$.

Then for each $x\in G$,  
the distribution $\kappa_x$ is $(-Q)$-homogeneous:
$$
\forall r>0, \qquad \kappa_x(ry)= r^{-Q} \kappa_x(y).
$$
For each $x\in G$, the distribution $\kappa_x$ 
coincides with a smooth function away from the origin
and the function $(x,z)\mapsto \kappa_x(z)$ is smooth on $G\times (G\backslash\{0\})$.
Furthermore, for any compact subset $S$ of  $G\backslash\{0\}$
and any $\alpha,\beta\in \bN_0^n$, 
there exist a constant $C>0$ and a seminorm 
 $\|\cdot\|_{\dot S^0,a,b,c}$ such that
$$
\sup_{\substack{x\in G\\ z\in S}} |X^\alpha_z X^\beta_x \kappa_x(z)| \leq C_{s} \|\sigma\|_{\dot S^0,a,b,c}.
$$
The constant $C$ and the seminorm $\|\cdot\|_{\dot S^0,a,b,c}$
may depend on $S$ and $\alpha,\beta$ but are independent of $\sigma$.
\end{proposition}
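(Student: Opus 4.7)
Fix a positive Rockland operator $\cR$ of homogeneous degree $\nu$ and a real-valued function $\psi\in C^{\infty}(\bR)$ with $\psi\equiv 0$ near $0$ and $\psi\equiv 1$ on $[\Lambda,\infty)$. Set $p:=\psi(\cR)\delta_{0}$ and $q:=(1-\psi)(\cR)\delta_{0}$, with $q\in\cS(G)$ by Corollary~\ref{cor_prop_multipliers_symbol}. Using $\widehat{f_{1}*f_{2}}=\widehat{f_{2}}\widehat{f_{1}}$, the decomposition $\sigma=\sigma\psi(\pi(\cR))+\sigma(1-\psi)(\pi(\cR))$ reads on the kernel side as
\[
\kappa_{x}=(p*\kappa_{x})+(q*\kappa_{x}).
\]
The homogeneity claim is immediate: the $0$-homogeneity $\widehat{\kappa_{x}}(r\cdot\pi)=\widehat{\kappa_{x}}(\pi)$ combined with \eqref{eq_rcdotpif} yields $(\kappa_{x})_{(r)}=\kappa_{x}$, i.e.\ $\kappa_{x}(ry)=r^{-Q}\kappa_{x}(y)$.

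For the \emph{high-frequency piece}, Proposition~\ref{prop_equiv_dotS} gives $\sigma\psi(\pi(\cR))\in S^{0}$, and Proposition~\ref{prop_Sm_kernel} then provides the joint smoothness of $(x,z)\mapsto(p*\kappa_{x})(z)$ on $G\times(G\setminus\{0\})$ together with the bound
\[
\sup_{x\in G,\,z\notin V}|z|^{N}\bigl|X^{\alpha}_{z}X^{\beta}_{x}(p*\kappa_{x})(z)\bigr|\leq C\,\|\sigma\psi(\pi(\cR))\|_{S^{0},a,b,c}
\]
for any open $V\ni 0$, any $N\in\bN_{0}$ and any $\alpha,\beta\in\bN_{0}^{n}$. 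Corollary~\ref{cor_prop_equiv_dotS_bound} dominates the right-hand side by a $\dot S^{0}$-seminorm of $\sigma$.

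For the \emph{low-frequency piece} we establish the stronger statement that $(x,z)\mapsto(q*\kappa_{x})(z)$ is smooth on $G\times G$ with all mixed derivatives bounded uniformly in $(x,z)$ by $\dot S^{0}$-seminorms of $\sigma$. Since $X^{\beta}_{x}$ commutes with the Fourier variable, $X^{\beta}_{x}(q*\kappa_{x})=q*(X^{\beta}_{x}\kappa_{x})$ is the kernel of $(X^{\beta}_{x}\sigma)(1-\psi)(\pi(\cR))$. Taking $\alpha=\gamma=0$ in \eqref{eq_def_dotSmrhodelta} bounds $\sup_{x,\pi}\|X^{\beta}_{x}\sigma(x,\pi)\|_{\sL(\cH_{\pi})}$ by $\|\sigma\|_{\dot S^{0},0,[\beta],0}$, and for any $s>0$ the spectral multiplier $(1-\psi)(\lambda)(1+\lambda)^{s/\nu}$ is bounded and compactly supported in $[0,\infty)$, whence
\[
\sup_{\pi\in\Gh}\bigl\|(X^{\beta}_{x}\sigma)(x,\pi)(1-\psi)(\pi(\cR))\,\pi(\id+\cR)^{s/\nu}\bigr\|_{\sL(\cH_{\pi})}\leq C_{s}\,\|\sigma\|_{\dot S^{0},0,[\beta],0}
\]
uniformly in $x$. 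Proposition~\ref{prop_FIF}(2) then places $q*X^{\beta}_{x}\kappa_{x}$ in $L^{2}_{s/2}(G)$ with the same bound; picking $s>Q+2[\alpha]$ and combining the continuity $X^{\alpha}\colon L^{2}_{s/2}\to L^{2}_{s/2-[\alpha]}$ with the Sobolev embedding $L^{2}_{a}(G)\subset C(G)$ for $a>Q/2$ (Theorem~\ref{thm_sobolev_spaces}) yields
\[
\sup_{x,z\in G}\bigl|X^{\alpha}_{z}X^{\beta}_{x}(q*\kappa_{x})(z)\bigr|\leq C_{\alpha,\beta}\,\|\sigma\|_{\dot S^{0},a',b',0}.
\]
Summing this with the previous estimate over any compact $S\subset G\setminus\{0\}$ yields the proposition. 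The delicate point is securing uniformity in $(x,\pi)$ in the low-frequency analysis: the compact support of $(1-\psi)$ in $[0,\infty)$ is precisely what obviates the need for any $\pi$-decay of $\sigma$, which is unavailable since $\sigma$ is merely $0$-homogeneous and does not lie in any $S^{m}$.
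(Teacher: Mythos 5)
Your proof is correct and takes a genuinely different route from the paper's. The paper introduces a rescaled family of cutoffs $\sigma_{(t)} := \sigma\,\psi(t\pi(\cR))\in S^0$ for $t>0$, shows via Lemma \ref{lem_normL2_E(epsilon,infty)}~(3) and $L^2$-boundedness that $\Op(\sigma_{(t)}(x,\cdot))\to\Op(\sigma(x,\cdot))$ in the strong operator topology as $t\to 0$, deduces $\kappa_{(t),x}\to\kappa_x$ (and similarly for $x$-derivatives) in $\cS'(G)$, and then transfers the uniform-in-$t$ kernel estimates of Proposition~\ref{prop_Sm_kernel} (uniformity from Corollary~\ref{cor_prop_equiv_dotS_bound}) to the distributional limit. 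You instead make a single fixed decomposition $\kappa_x = (p*\kappa_x) + (q*\kappa_x)$ and handle the pieces separately: the high-frequency piece, being the $S^0$-kernel of $\sigma\psi(\pi(\cR))$, is controlled exactly as in the paper, while for the low-frequency piece you prove the stronger statement that $q*\kappa_x$ is jointly smooth and bounded on all of $G\times G$, using Proposition~\ref{prop_FIF} together with the Sobolev embedding. This avoids the limiting step entirely and makes manifest why no blow-up occurs after the low-frequency cut; the price is verifying the weighted $L^\infty(\Gh)$ estimate on $(X^\beta_x\sigma)(x,\pi)(1-\psi)(\pi(\cR))\pi(\id+\cR)^{s/\nu}$, which, as you correctly observe, is afforded by the compact support of $(1-\psi)(\lambda)(1+\lambda)^{s/\nu}$ rather than by any $\pi$-decay of $\sigma$. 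One minor point worth making explicit in a polished writeup: in the low-frequency piece you interpret $X^\beta_x\kappa_x$ as the kernel of $X^\beta_x\sigma$, so passing from the resulting uniform bounds on the formal mixed derivatives to genuine joint $C^\infty$-smoothness of $(x,z)\mapsto(q*\kappa_x)(z)$ still requires a short difference-quotient argument; the paper's route sidesteps this by deriving joint smoothness at each $t>0$ directly from Proposition~\ref{prop_Sm_kernel}, but then needs a comparable verification that the $\cS'$-limit inherits this regularity, so the two approaches are genuinely on par here.
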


\begin{proof}[Proof of Proposition \ref{prop_hom_kernel_smooth}]
Let $\cR$ be a positive Rockland operator of homogeneous degree $\nu$.
Let $\psi\in C^\infty(\bR)$ be
 a real-valued function
satisfying 
$\psi\equiv 0$ on a neighbourhood of 0 and 
$\psi\equiv 1$ on $(\Lambda,\infty)$
for some $\Lambda>0$.

Let $\sigma\in \dot S^0$ and let $\kappa_x$ be its associated kernel.
For each $t>0$, we set $\sigma_{(t)}(x,\pi)=\sigma (x,\pi) \psi(t\pi(\cR))$.
By Proposition \ref{prop_equiv_dotS}, 
this defines a symbol $\sigma_{(t)}$ in $S^0$
and we denote by $\kappa_{(t)}$ its kernel.
Lemma \ref{lem_normL2_E(epsilon,infty)}~(3)
and the $L^2$-boundedness of $\Op(S^0)$ imply
 that for each $x\in G$, 
$\Op(\sigma_{(t)}(x,\cdot)) =
\Op(\sigma(x,\cdot))\psi(t\cR)$ 
converges to $\Op(\sigma(x,\cdot))$ 
as $t\to0$ for the strong operator topology of $L^2(G)$.
This implies that $\kappa_{(t),x}$ converges to $\kappa_x$ in $\cS'(G)$
for each $x\in G$ as $t\to 0$.
More generally, for each $x\in G$ and each $\beta\in \bN_0^n$, 
 $X_x^\beta \kappa_{(t),x}$ converges to $X_x^\beta\kappa_x$ in $\cS'(G)$ as $t\to 0$.
The statement now follows from the convergence in distribution, 
Proposition \ref{prop_Sm_kernel}
and Corollary \ref{cor_prop_equiv_dotS_bound}.
\end{proof}

\medskip

Another consequence of Proposition \ref{prop_equiv_dotS} and its proof is that as in the inhomogeneous case (see Theorem \ref{thm_calculus}, Part (2)), 
we can simplify the conditions on the regularity of the symbol:

\begin{corollary}
\label{cor_prop_equiv_dotS_gamma}
Let $\sigma$ be a homogeneous symbol of degree $m\in \bR$.
Then  $\sigma$ is in $\dot S^m$ if and only if 
$$
\sup_{\substack{\pi\in \Gh\\ x\in G}}
\| \pi(\cR)^{\frac{[\alpha]-m}{\nu}}
X_x^\beta \Delta^\alpha \sigma(x,\pi)\|_{\sL(\cH_\pi)} 
\quad,\quad
\sup_{\substack{\pi\in \Gh\\ x\in G}}
\| 
X_x^\beta \Delta^\alpha \sigma(x,\pi)\pi(\cR)^{\frac{[\alpha]-m}{\nu}}\|_{\sL(\cH_\pi)} ,
$$
are finite for all $\alpha,\beta\in \bN_0^n$.
Here $\cR$ is a fixed positive Rockland operator of degree $\nu$.
Furthermore, for a fixed positive Rockland operator,  
these quantities  yield an equivalent 
family of seminorms for the Fr\'echet topology of $\dot S^m$. 
\end{corollary}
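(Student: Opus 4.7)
The ``only if'' direction is immediate: the two displayed suprema are precisely the $\dot S^m$-seminorms of \eqref{eq_def_dotSmrhodelta} specialised to $\gamma=0$ and to $\gamma=m-[\alpha]$ respectively. The content is the converse.

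For the converse, the plan is to reduce to Proposition \ref{prop_equiv_dotS} by verifying its condition~(2): fix a positive Rockland operator $\cR$ of degree $\nu$ and a real-valued $\psi\in C^\infty(\bR)$ that vanishes near $0$ and equals $1$ on $(\Lambda,\infty)$; I will show that the two symbols $\psi(\pi(\cR))\sigma$ and $\sigma\psi(\pi(\cR))$ belong to $S^m$. By Theorem \ref{thm_calculus}(2), this reduces to checking only the $\gamma=0$ seminorms $\sup_{x,\pi}\|\pi(\id+\cR)^{([\alpha]-m)/\nu} X_x^\beta \Delta^\alpha[\,\cdot\,]\|$ for all $\alpha,\beta\in\bN_0^n$.

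To handle $\psi(\pi(\cR))\sigma$, I would expand $\Delta^\alpha[\psi(\pi(\cR))\sigma]$ via the Leibniz rule \eqref{eq_Leibniz_rule_sigma} and commute $X_x^\beta$ through the $\pi$-dependent factors. The principal term ($\alpha_1=0$) factors as
\[
\bigl[\pi(\id+\cR)^{([\alpha]-m)/\nu}\psi(\pi(\cR))\pi(\cR)^{-([\alpha]-m)/\nu}\bigr]\cdot\bigl[\pi(\cR)^{([\alpha]-m)/\nu}X_x^\beta\Delta^\alpha\sigma\bigr],
\]
whose first bracket is uniformly bounded by elementary functional calculus (since $\psi$ vanishes near $0$), and whose second bracket is precisely what Hypothesis~1 controls. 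For the remaining Leibniz terms ($\alpha_1\neq 0$), Lemma \ref{lem_Delta_psiL} delivers the uniform boundedness of $\pi(\id+\cR)^{([\alpha]-m)/\nu}\Delta^{\alpha_1}\psi(\pi(\cR))\pi(\cR)^{-([\alpha_2]-m)/\nu}$ for every splitting $[\alpha_1]+[\alpha_2]=[\alpha]$, and Hypothesis~1 applied at the index $\alpha_2$ controls the companion factor. A symmetric argument (Leibniz expansion from the right, combined with Hypothesis~2 in place of Hypothesis~1) establishes $\sigma\psi(\pi(\cR))\in S^m$. Proposition \ref{prop_equiv_dotS} then yields $\sigma\in\dot S^m$, and the quantitative bounds of Corollary \ref{cor_prop_equiv_dotS_bound} show that the hypothesis-seminorms dominate the $S^m$-seminorms of $\psi(\pi(\cR))\sigma$ and $\sigma\psi(\pi(\cR))$ and hence the $\dot S^m$-seminorms, yielding the equivalence of the Fr\'echet topologies.

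The main technical point, and the only place where more than bookkeeping is needed, is the treatment of the non-leading Leibniz terms: one must verify that Lemma \ref{lem_Delta_psiL} really supplies boundedness of the mixed product with the \emph{shifted} exponent $([\alpha_2]-m)/\nu$ on the right instead of $([\alpha]-m)/\nu$. This is precisely the step that exploits the smoothing character of $\Delta^{\alpha_1}\psi(\pi(\cR))$ for $\alpha_1\neq 0$; beyond it, everything reduces to routine manipulation of the functional calculus of $\pi(\cR)$.
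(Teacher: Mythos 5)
Your overall plan is correct and is the route the paper intends: verify condition (2) of Proposition \ref{prop_equiv_dotS} via the Leibniz rule, bounding the leading term by elementary functional calculus and the remainder terms by Lemma \ref{lem_Delta_psiL}, then invoke the proposition; the ``only if'' direction and the identification $\gamma=0$, $\gamma=m-[\alpha]$ are also right.

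However, the ``symmetric argument'' for $\sigma\psi(\pi(\cR))\in S^m$ does not close as sketched. Using the $\gamma=0$ characterisation of $S^m$ from Theorem \ref{thm_calculus}(2), the leading Leibniz term is $\pi(\id+\cR)^{([\alpha]-m)/\nu}X_x^\beta\Delta^\alpha\sigma\,\psi(\pi(\cR))$, and for $[\alpha]>m$ the factor $\pi(\id+\cR)^{([\alpha]-m)/\nu}\pi(\cR)^{-([\alpha]-m)/\nu}$ one would like to peel off on the left is unbounded near $0$ in the spectrum of $\pi(\cR)$, while the cut-off $\psi$, which would tame it, sits on the far side of $\sigma$ and cannot be moved there; Hypothesis~2 also cannot be reached by inserting $\pi(\cR)^{-t}\pi(\cR)^{t}$ because $\pi(\id+\cR)^{t}$ does not commute past $X^\beta\Delta^\alpha\sigma$. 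The clean fix (and presumably what the paper has in mind when it leaves ``the case of $\sigma(x,\pi)\psi(\pi(\cR))$'' to the reader in the proof of Proposition \ref{prop_equiv_dotS}) is to apply the argument you already gave to $\sigma^*$: since $(X_x^\beta\Delta^\alpha\sigma)^*=(-1)^{|\alpha|}X_x^\beta\Delta^\alpha(\sigma^*)$ and $\pi(\cR)$ is self-adjoint, the two hypothesis-seminorms of $\sigma^*$ are exactly those of $\sigma$ with the two families exchanged, so your first computation gives $\psi(\pi(\cR))\sigma^*\in S^m$; as $\psi$ is real-valued and $S^m$ is stable under adjoints (Theorem \ref{thm_calculus}(1)), $\sigma\psi(\pi(\cR))=(\psi(\pi(\cR))\sigma^*)^*\in S^m$. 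Finally, the appeal to Corollary \ref{cor_prop_equiv_dotS_bound} for the topological equivalence is misplaced—that corollary bounds the $S^m$-seminorms of $\psi\sigma,\sigma\psi$ by \emph{general} $\dot S^m$-seminorms, which is circular here; the quantitative chain you need is your Leibniz estimates (controlling $\|\psi\sigma\|_{S^m,a,b,0}$ and $\|\sigma\psi\|_{S^m,a,b,0}$ by the hypothesis-seminorms), followed by Theorem \ref{thm_calculus}(2) and the continuity-and-openness of $\sigma\mapsto(\psi(\pi(\cR))\sigma,\sigma\psi(\pi(\cR)))$ stated at the end of Proposition \ref{prop_equiv_dotS}.
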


Finally, we observe that  Proposition \ref{prop_equiv_dotS} implies the following property:
\begin{corollary}
\label{cor_sigmachi}
Let $\chi \in \cD(\bR)$ with support in $(0,\infty)$.
Let $\cR$ be a positive Rockland operator.
Let $\sigma\in \dot S^m$.
Then $\sigma \, \chi(\pi(\cR))$ and $\chi(\pi(\cR))\, \sigma$ are smoothing, 
i.e. in $S^{-\infty}$.
  Consequently if $\sigma$ does not depend on $x$ then its kernel is Schwartz.
 \end{corollary}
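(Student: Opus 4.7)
The plan is to insert between $\sigma$ and $\chi(\pi(\cR))$ a smooth cut-off $\psi(\pi(\cR))$ that equals $1$ on $\supp\chi$ and fits the hypothesis of Proposition \ref{prop_equiv_dotS}, and then to invoke the composition rule $S^m\cdot S^{-\infty}\subset S^{-\infty}$ of the calculus.

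Concretely, since $\chi\in\cD(\bR)$ has compact support in $(0,\infty)$, I would choose $0<a<b<\infty$ with $\supp\chi\subset[a,b]$ and fix a real-valued $\psi\in C^\infty(\bR)$ with $\psi\equiv 0$ on $(-\infty,a/4)$ and $\psi\equiv 1$ on $[a/2,\infty)$. Then $\psi\chi=\chi$ as scalar functions on $\bR$, so the functional calculus of $\pi(\cR)$ gives
$$
\chi(\pi(\cR)) \;=\; \psi(\pi(\cR))\,\chi(\pi(\cR)) \;=\; \chi(\pi(\cR))\,\psi(\pi(\cR)),\qquad \pi\in\Gh.
$$

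Now Proposition \ref{prop_equiv_dotS} applied to this $\psi$ yields $\sigma\,\psi(\pi(\cR))\in S^m$ and $\psi(\pi(\cR))\,\sigma\in S^m$, while Corollary \ref{cor_prop_multipliers_symbol} ensures $\chi(\pi(\cR))\in S^{-\infty}$ since $\supp\chi\cap[0,\infty)$ is compact. Writing
$$
\sigma\,\chi(\pi(\cR)) \;=\; \bigl(\sigma\,\psi(\pi(\cR))\bigr)\,\chi(\pi(\cR)), \qquad \chi(\pi(\cR))\,\sigma \;=\; \chi(\pi(\cR))\,\bigl(\psi(\pi(\cR))\,\sigma\bigr),
$$
and applying the composition rule $S^{m_1}\cdot S^{m_2}\subset S^{m_1+m_2}$ of Theorem \ref{thm_calculus} for every $m_2\in\bR$ (combined with $S^{-\infty}=\bigcap_{m'}S^{m'}$), both products lie in $S^{-\infty}$, which is the first assertion.

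For the consequence, if $\sigma$ does not depend on $x$ then $\sigma\,\chi(\pi(\cR))$ is an $x$-independent smoothing symbol; the associated operator $\Op(\sigma\,\chi(\pi(\cR)))$ is then left-invariant and, being in $\Op(S^{-\infty})$, maps $\cS'(G)$ continuously to $\cS(G)$, so its convolution kernel (its image of $\delta_0$) belongs to $\cS(G)$. There is no substantive obstacle; the entire argument hinges on choosing $\psi$ that simultaneously vanishes near $0$ (as required by Proposition \ref{prop_equiv_dotS}) and equals $1$ on $\supp\chi$ (so as to absorb $\chi$). Once such a $\psi$ is in hand, every step is a direct invocation of the symbolic calculus already developed.
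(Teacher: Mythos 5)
Your argument is correct and follows the paper's own (very terse) proof: the paper says the first part follows from Proposition \ref{prop_equiv_dotS} together with Corollary \ref{cor_prop_multipliers_symbol}, which is exactly your decomposition $\sigma\chi(\pi(\cR)) = (\sigma\psi(\pi(\cR)))\,\chi(\pi(\cR))$ with $\psi$ a 0-excising cut-off equal to $1$ on $\supp\chi$, followed by the product rule of Theorem \ref{thm_calculus}. For the last clause, the paper just cites \cite[Theorem 5.4.9]{R+F_monograph} (an $x$-independent symbol in $S^{-\infty}$ has Schwartz kernel), which is what your final paragraph supplies the idea for. So this is the same route, with the details filled in.
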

 
 \begin{proof}
The first part follows from Proposition \ref{prop_equiv_dotS}
and Corollary \ref{cor_prop_multipliers_symbol}.
The consequence follows from \cite[Theorem 5.4.9]{R+F_monograph}.
\end{proof}

\subsection{Homogeneous asymptotic and principal part}

In this subsection, we give a meaning to a homogeneous asymptotic sum 
\begin{equation}
\label{eq_def_homogeneous_asymptotic}
\sigma \sim \sum_{\ell=0}^\infty \sigma_\ell,
\quad \sigma_\ell \in \dot S^{m_\ell}, 
\quad\mbox{with}\ m_\ell \ \mbox{strictly decreasing to} \ -\infty,
\end{equation}
which is different to the (inhomogeneous) asymptotic sum in \eqref{eq_def_asymptotic}. This will enable us to define the principal part $\sigma_0$ of such an expansion.
In order to give a meaning to \eqref{eq_def_homogeneous_asymptotic},
we show:
\begin{proposition}
\label{prop_homogeneous_asymptotic}
Let $\{\sigma_\ell\}_{\ell\in \bN_0}$ be a sequence of homogeneous 
symbols such that
 $\sigma_\ell\in \dot S^{m_\ell}_{\rho,\delta}$ with $m_\ell$ strictly decreasing to $-\infty$.
If  $\cR$ is any positive Rockland operator
and 
 $\psi\in C^\infty(\bR)$ 
is any  real-valued function
satisfying 
$\psi\equiv 0$ on a neighbourhood of 0 and 
$\psi\equiv 1$ on $(\Lambda,\infty)$ 
for some $\Lambda>0$, 
then the two sums 
$$
 \sum_\ell  \sigma_\ell\psi(\pi(\cR))
\quad\mbox{and}\quad
 \sum_\ell \psi(\pi(\cR)) \sigma_\ell,
$$
define
the same symbol in $S^{m_0}$ modulo $S^{-\infty}$.

Moreover, this symbol modulo $S^{-\infty}$ does not depend on the choice of $\cR$ and $\psi$. And, if this symbol is in $S^m$ with $m<m_0$, 
then the first term in the homogeneous expansion is $\sigma_0=0$.
\end{proposition}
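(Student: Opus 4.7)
The plan is to prove the three assertions in sequence, with the independence from $(\cR,\psi)$ being the main technical hurdle.

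For the equality of the two formal sums modulo $S^{-\infty}$, Proposition \ref{prop_equiv_dotS} gives that $\psi(\pi(\cR))\sigma_\ell$ and $\sigma_\ell\psi(\pi(\cR))$ both lie in $S^{m_\ell}$. Since $m_\ell$ strictly decreases to $-\infty$, Theorem \ref{thm_asymptotic} produces symbols $\tau_L,\tau_R\in S^{m_0}$, each unique modulo $S^{-\infty}$, realising the two asymptotic sums. Corollary \ref{cor_prop_equiv_dotS} shows that each commutator $\psi(\pi(\cR))\sigma_\ell-\sigma_\ell\psi(\pi(\cR))$ is smoothing, so $\tau_L-\tau_R$ admits an asymptotic expansion whose every term lies in $S^{-\infty}$; comparing with the trivial asymptotic $0\sim 0$ and invoking the uniqueness clause of Theorem \ref{thm_asymptotic} yields $\tau_L-\tau_R\in S^{-\infty}$.

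For the independence from the choice of $(\cR,\psi)$, I would change the parameters one at a time. With $\cR$ fixed and $\psi_1\to\psi_2$, the difference of the $\ell$-th terms is $(\psi_1-\psi_2)(\pi(\cR))\sigma_\ell$; as $\psi_1-\psi_2\in\cD((0,\infty))$ (both vanish on a neighbourhood of $0$ and equal $1$ past $\max(\Lambda_1,\Lambda_2)$), Corollary \ref{cor_sigmachi} places it in $S^{-\infty}$. With $\psi$ fixed and $\cR_1\to\cR_2$, rewriting
\[
\psi(\pi(\cR_1))-\psi(\pi(\cR_2))=(1-\psi)(\pi(\cR_2))-(1-\psi)(\pi(\cR_1))
\]
exhibits the difference as a combination of left-invariant smoothing symbols (Corollary \ref{cor_prop_multipliers_symbol}); one must then verify that the product with the homogeneous $\sigma_\ell$ is again in $S^{-\infty}$, using Lemma \ref{lem_2R_fractional_power} to relate the two functional calculi uniformly across Sobolev orders. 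In both substeps, summing the resulting asymptotic of smoothing differences exactly as in Part~1 produces $\tau_1-\tau_2\in S^{-\infty}$. Verifying that the composition remains smoothing in the different-Rockland substep is the main obstacle of the proof.

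For the principal-part statement, suppose the symbol constructed above lies in $S^m$ with $m<m_0$. The defining asymptotic gives $\tau-\psi(\pi(\cR))\sigma_0\in S^{m_1}$, hence $\psi(\pi(\cR))\sigma_0\in S^{\max(m,m_1)}$, strictly smaller than $S^{m_0}$. Since $\sigma_0\in\dot S^{m_0}$ is $m_0$-homogeneous, the field $\pi\mapsto\pi(\cR)^{-m_0/\nu}\sigma_0(x,\pi)$ is $0$-homogeneous for every fixed $x$. Combining the improved bound on $\psi(\pi(\cR))\sigma_0$ with this homogeneity places us in the situation of Lemma \ref{lem_0homogeneous_field}(2) with positive effective order $m_0-\max(m,m_1)>0$, forcing $\sigma_0=0$.
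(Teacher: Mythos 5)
Your treatment of Part~1 and of the principal-part statement follows the paper's own route and is essentially correct; in Part~3 the transfer of the improved bound $\psi(\pi(\cR))\sigma_0\in S^{\max(m,m_1)}$ into the hypothesis of Lemma~\ref{lem_0homogeneous_field}(2) requires a harmless insertion of a second cutoff $\psi_1$ with $\psi_1\psi=\psi$ to convert $\pi(\id+\cR)^{-\max(m,m_1)/\nu}$ into $\pi(\cR)^{-\max(m,m_1)/\nu}$, but that is routine. The first substep of Part~2 (varying $\psi$, $\cR$ fixed) is also fine, and your citation of Corollary~\ref{cor_sigmachi} is the right one: it is the statement about the product $\chi(\pi(\cR))\sigma_\ell$ that is needed, not merely the membership $\chi(\pi(\cR))\in S^{-\infty}$.

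The genuine gap is the one you flag yourself: varying $\cR$ with $\psi$ fixed, i.e. showing $\bigl(\psi(\pi(\cR_1))-\psi(\pi(\cR_2))\bigr)\sigma_\ell \in S^{-\infty}$. Your observation that $\psi(\pi(\cR_1))-\psi(\pi(\cR_2))=(1-\psi)(\pi(\cR_2))-(1-\psi)(\pi(\cR_1))$ is a smoothing symbol is correct, but it is \emph{not} enough, and Lemma~\ref{lem_2R_fractional_power} does not repair this: a smoothing invariant symbol times a homogeneous $\sigma_\ell\in\dot S^{m_\ell}$ need not be smoothing. Concretely, if $\chi\in\cD(\bR)$ with $\chi\equiv 1$ near $0$, then $\chi(\pi(\cR))\in S^{-\infty}$ by Corollary~\ref{cor_prop_multipliers_symbol}, yet for $\sigma_\ell(\pi)=\pi(\cR)^{m_\ell/\nu}$ (Example~\ref{ex_RindotSm}) with $m_\ell<0$ one has $\chi(\pi(\cR))\sigma_\ell(\pi)=\bigl(\chi(\lambda)\lambda^{m_\ell/\nu}\bigr)(\pi(\cR))$, which is unbounded hence not even in $S^0$. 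So the membership $\psi(\pi(\cR_1))-\psi(\pi(\cR_2))\in S^{-\infty}$ alone does not yield the claim. What rescues the argument is an algebraic cross-term decomposition together with the commutation granted by Corollary~\ref{cor_prop_equiv_dotS}: write
$$\psi(\pi(\cR_1))-\psi(\pi(\cR_2))=\psi(\pi(\cR_1))(1-\psi)(\pi(\cR_2))-(1-\psi)(\pi(\cR_1))\psi(\pi(\cR_2)).$$
Multiplying the second term on the right by $\sigma_\ell$ gives $(1-\psi)(\pi(\cR_1))\cdot\psi(\pi(\cR_2))\sigma_\ell$, a product of a symbol in $S^{-\infty}$ (Corollary~\ref{cor_prop_multipliers_symbol}) with one in $S^{m_\ell}$ (Proposition~\ref{prop_equiv_dotS}), hence in $S^{-\infty}$. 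For the first term, Corollary~\ref{cor_prop_equiv_dotS} (applied with $\cR_2$) gives $(1-\psi)(\pi(\cR_2))\sigma_\ell=\sigma_\ell(1-\psi)(\pi(\cR_2))$ modulo $S^{-\infty}$; after left-multiplying by $\psi(\pi(\cR_1))\in S^0$ the error stays in $S^{-\infty}$, and the main piece $\psi(\pi(\cR_1))\sigma_\ell\cdot(1-\psi)(\pi(\cR_2))$ is again a product of $S^{m_\ell}$ with $S^{-\infty}$. This closes the step you left open. (The paper itself dispatches this step via Lemma~\ref{lem_cq_prop_multipliers_symbol}, imported from \cite{fischer16}; the decomposition above shows it can be done with tools internal to the present paper.)
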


The proof of Proposition \ref{prop_homogeneous_asymptotic} 
relies on the following property:

\begin{lemma}
\label{lem_cq_prop_multipliers_symbol}
 If $\cR_1$ and $\cR_2$ are two positive Rockland operators
and if $\psi\in C^\infty(\bR)$ is a real-valued function such that 
$\psi\equiv 0$ on a neighbourhood of 0 and
$\psi\equiv 1$ on $(\Lambda,+\infty)$ 
for some $\Lambda\in \bR$,
then
$\{\psi(\pi(\cR_2))-\psi(\pi(\cR_1)),\pi\in \Gh\} \in S^{-\infty}.$
\end{lemma}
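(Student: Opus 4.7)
The idea is to reduce immediately to Corollary \ref{cor_prop_multipliers_symbol}. The key observation is that although $\psi$ itself is not compactly supported (it equals $1$ near $+\infty$), the function
\[
\chi := 1-\psi
\]
has $\chi\equiv 1$ on a neighbourhood of $0$ and $\chi\equiv 0$ on $(\Lambda,+\infty)$. In particular $\supp\chi\cap[0,+\infty)\subset [0,\Lambda]$ is compact, so Corollary \ref{cor_prop_multipliers_symbol} applies to $\chi$ with \emph{any} positive Rockland operator.

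Apply this corollary to $\cR_1$ and to $\cR_2$: we obtain that both symbols
\[
\{\chi(\pi(\cR_1)),\ \pi\in\Gh\}\quad\text{and}\quad\{\chi(\pi(\cR_2)),\ \pi\in\Gh\}
\]
belong to $S^{-\infty}$. Since $\psi=1-\chi$ (and the constant field $\id$ cancels in the difference), we have
\[
\psi(\pi(\cR_2))-\psi(\pi(\cR_1))=\chi(\pi(\cR_1))-\chi(\pi(\cR_2)),
\]
so the left-hand side is the difference of two elements of $S^{-\infty}$, hence itself an element of $S^{-\infty}$.

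\textbf{Possible obstacle.} There is essentially nothing non-trivial to do once one notices the $1-\psi$ trick: the result follows by pure algebra from Corollary \ref{cor_prop_multipliers_symbol}. The only point worth emphasising in the write-up is that the conclusion of Corollary \ref{cor_prop_multipliers_symbol} (membership in $S^{-\infty}$, in particular the bounds \eqref{eq_def_Smrhodelta} for all $\alpha,\beta,\gamma$) genuinely depends on the Rockland operator $\cR_i$ involved, but since it holds separately for $i=1$ and $i=2$ and $S^{-\infty}$ is a vector space, the sum of the two contributions is again smoothing, and the statement (which is about a single class $S^{-\infty}$, independent of the choice of Rockland operator thanks to Lemma \ref{lem_2R_fractional_power}) is satisfied. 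No further analytic argument is needed.
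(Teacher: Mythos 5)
Your proof is correct. The paper does not give an in-text proof of this lemma---it simply cites the external reference \cite{fischer16}---so there is no paper argument to compare against directly. That said, your reduction is sound and is the natural argument given the tools already assembled in the paper: $\chi:=1-\psi$ is smooth with $(\supp\chi)\cap[0,+\infty)\subset[0,\Lambda]$ compact, so Corollary \ref{cor_prop_multipliers_symbol} applies to each of $\cR_1$ and $\cR_2$ to give $\{\chi(\pi(\cR_i))\}\in S^{-\infty}$; the constant field $\id$ cancels in $\psi(\pi(\cR_2))-\psi(\pi(\cR_1))=\chi(\pi(\cR_1))-\chi(\pi(\cR_2))$; and $S^{-\infty}$ is a vector space which, by Lemma \ref{lem_2R_fractional_power}, does not depend on the Rockland operator used in its definition, so the difference indeed lies in $S^{-\infty}$. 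No gaps.
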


This Lemma is proved in~\cite{fischer16}.

\begin{proof}[Proof of Proposition \ref{prop_homogeneous_asymptotic}]
Let $\{\sigma_\ell\}_{\ell\in \bN_0}$ be as in the statement.
The two sums 
$\sum_\ell  \sigma_\ell\psi(\pi(\cR))$ and
$ \sum_\ell \psi(\pi(\cR)) \sigma_\ell$
make sense as symbols in $S^{m_0}$ modulo $S^{-\infty}$, 
see \eqref{eq_def_asymptotic} and Theorem \ref{thm_asymptotic}.
They yield the same symbol modulo $S^{-\infty}$ by Corollary \ref{cor_prop_equiv_dotS}.
 
We need to show that if $\cR_1$ and $\cR_2$ are two positive Rockland operators
and $\psi_1,\psi_2\in C^\infty(\bR)$ are two real-valued functions
satisfying 
$\psi_1\equiv 0$ and $\psi_2\equiv 0$
 on a neighbourhood of 0 and 
$\psi_1\equiv 1$ and $\psi_2\equiv 1$ on $(\Lambda,\infty)$
for some $\Lambda>0$
then 
$$
\sum_\ell\psi_1(\pi(\cR_1)) \sigma_\ell
-
\sum_\ell \psi_2(\pi(\cR_2)) \sigma_\ell
\in S^{-\infty}.
$$
It suffices to show this
in the case of $\cR_1=\cR_2$ and two (different) functions $\psi_1,\psi_2$
and in the case of $\psi_1=\psi_2$ and two (different) $\cR_1,\cR_2$.
This follows from Corollary \ref{cor_prop_multipliers_symbol} and Lemma \ref{lem_cq_prop_multipliers_symbol} respectively.

Now let us assume that the symbol defined by $\sum_\ell  \sigma_\ell\psi(\pi(\cR))$ is in $S^m$ with $m<m_0$. We may assume that $m_1<m<m_0$.
Then $\sigma_0\psi(\pi(\cR)) \in S^m$.
Denoting $\nu$ the homogeneous degree of $\cR$, 
we have $\psi(\pi(\cR))\pi(\cR)^{-\frac m \nu} \in S^m$ 
and 
$$
S^0 \ni 
\sigma_0\psi(\pi(\cR)) \psi(\pi(\cR))\pi(\cR)^{-\frac m \nu}
=
\sigma_0 \pi(\cR)^{-\frac {m _0}\nu}
\ \psi^2(\pi(\cR))  \pi(\cR)^{\frac {m'}\nu} 
$$
with $m':=-m+m_0>0$.
For each $x\in G$, 
the $0$-homogeneous field 
 $\{\sigma_0(x,\pi) \pi(\cR)^{-\frac {m _0}\nu}, \pi\in \Gh\}$
satisfies the hypotheses of  Lemma \ref{lem_0homogeneous_field} (2)
and thus must be zero.
This conclude the proof of Proposition \ref{prop_homogeneous_asymptotic}.
\end{proof}

Proposition \ref{prop_homogeneous_asymptotic}
allows us to give a meaning to a homogeneous expansion as in  \eqref{eq_def_homogeneous_asymptotic}:

\begin{definition}
\label{def_homogeneous_asymptotic}
Let $\{\sigma_\ell\}_{\ell\in \bN_0}$ be a sequence of homogeneous 
symbols such that
 $\sigma_\ell\in \dot S^{m_\ell}$ with $m_\ell$ strictly decreasing to $-\infty$.
Then $\sum_{\ell=0}^\infty \sigma_\ell$ denotes
the symbol $\sigma$ in $S^{m_0}$ modulo $S^{-\infty}$
given by 
 the asymptotic sum 
$\sum_\ell \psi(\pi(\cR)) \sigma_\ell$ 
or
$\sum_\ell \sigma_\ell \psi(\pi(\cR)) $ 
in the sense of \eqref{eq_def_asymptotic}
where $\cR$ is any positive Rockland operator
and 
 $\psi\in C^\infty(\bR)$ 
any  real-valued function
satisfying 
$\psi\equiv 0$ on a neighbourhood of 0 and 
$\psi\equiv 1$ on $(\Lambda,\infty)$
for some $\Lambda>0$.
We then write \eqref{eq_def_homogeneous_asymptotic}.

\medskip

We denote by $S^{m_0}_{asymp}$ the set of symbols $\sigma\in S^{m_0}$
which admits such an homogeneous expansion.
\end{definition}

The last part of Proposition \ref{prop_homogeneous_asymptotic}
also shows that the first term of an expansion $\sigma \sim \sum_{\ell=0}^\infty \sigma_\ell$
is unique
(hence, proceeding recursively and up to writing zero terms, 
the expansion itself is unique).
This allows us to define the principal part of a symbol:

\begin{definition}
If $\sigma \sim \sum_{\ell=0}^\infty \sigma_\ell$ is in $S^{m_0}_{asymp}$, 
then its first term $\sigma_0$ is called its principal part and we write:
$$
\princ_{m_0}(\sigma):=\sigma_0.
$$
\end{definition}

\begin{ex}
\label{ex_sumcXalpha}
If $\sigma=\sum_\alpha c_\alpha (x) \pi(X)^\alpha$
where $(c_\alpha)_{\alpha\in \bN^n}$ is a sequence of functions in $C^\infty(G)$ 
such that $c_\alpha$ and all the left derivatives $X^\beta c_\alpha$ are bounded
while all but a finite number of $c_\alpha$  are zero, 
then $\sigma\in S^m_{asymp}$  
where  $m=\max\{[\alpha], c_\alpha\not=0\}$
and
$$
\sigma = \sum_{\ell=0}^m \sigma_{m-\ell}
\quad\mbox{with}\quad
\sigma_{m-\ell}=
\sum_{[\alpha]=m-\ell} c_\alpha (x) \pi(X)^\alpha \in S^\ell \cap \dot S^\ell.
$$
Moreover the principal part coincides with the top part of the left-invariant differential operator:
$$
\princ_{m}(\sigma)=\sum_{[\alpha]=m} c_\alpha (x) \pi(X)^\alpha.
$$
\end{ex}

The asymptotic expansion and the principal part satisfy the analogue properties 
to its Euclidean counterpart:

\begin{proposition}
The set $S^m_{asymp}$ is a linear subspace of $S^m$
and the mapping
$\princ_m:S^m_{asymp}\to \dot S^m$ is linear.
Moreover if $\sigma\in S^m_{asymp}$
and $\sigma'\in S^{m'}_{asymp}$ 
with asymptotic expansion 
$\sigma\sim \sum_\ell \sigma_\ell$
and 
$\sigma'\sim \sum_{\ell'} \sigma'_{\ell'}$
then
$\sigma^*\in S^m_{asymp}$
and $\sigma\sigma'\in S^{m+m'}_{asymp}$
with asymptotic expansions
$$
\sigma^*\sim \sum_\ell \sigma_\ell^*
\qquad\mbox{and}\qquad
\sigma \sigma'\sim \sum_{\ell,\ell'} \sigma_\ell\sigma'_{\ell'}.
$$
In particular,
$$
\princ_m\left(\sigma^*\right)
=
\princ_m(\sigma)^*
\qquad\mbox{and}\qquad
\princ_{m+m'}\left(\sigma \sigma'\right)
=
\princ_m(\sigma)\princ_{m'}(\sigma').
$$
\end{proposition}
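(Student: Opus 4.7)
The plan is to verify each assertion using the characterisation of $\sim$ given in Definition~\ref{def_homogeneous_asymptotic}, the flexibility in the choice of cutoff provided by Proposition~\ref{prop_homogeneous_asymptotic}, and the continuity properties of $\dot S^\bullet$ under product and adjoint recorded after Definition~\ref{def_homogeneous_symbol}. The linearity and adjoint statements are essentially bookkeeping; the product case is the main obstacle and requires a commutator step.

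For \emph{linearity}, given $\sigma,\sigma'\in S^m_{asymp}$ I would first align their degree sequences by inserting zero terms so they share the same $(m_\ell)$; linearity of $\dot S^{m_\ell}$ and of the asymptotic relation \eqref{eq_def_asymptotic} then immediately yield $a\sigma+b\sigma'\sim\sum_\ell(a\sigma_\ell+b\sigma'_\ell)$, whence linearity of $\princ_m$. For the \emph{adjoint}, since $r^{m_\ell}\in\bR$, taking adjoints preserves $m_\ell$-homogeneity, and continuity of $\sigma\mapsto\sigma^*$ on $\dot S^{m_\ell}$ gives $\sigma_\ell^*\in\dot S^{m_\ell}$. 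Expressing $\sigma\sim\sum_\ell\sigma_\ell$ via the \emph{right-sided} cutoff allowed by Proposition~\ref{prop_homogeneous_asymptotic}, so that $\sigma-\sum_{\ell\le M}\sigma_\ell\psi(\pi(\cR))\in S^{m_{M+1}}$, and taking adjoints (using $\psi(\pi(\cR))^*=\psi(\pi(\cR))$) yields the left-sided asymptotic $\sigma^*-\sum_{\ell\le M}\psi(\pi(\cR))\sigma_\ell^*\in S^{m_{M+1}}$. Hence $\sigma^*\sim\sum_\ell\sigma_\ell^*$, and reading off the leading term gives $\princ_m(\sigma^*)=\princ_m(\sigma)^*$.

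For the \emph{product}, I would collect the formal symbol products by total degree: let $\mu_0>\mu_1>\cdots$ be the strictly decreasing enumeration of $\{m_\ell+m'_{\ell'}\}$ and set $\tau_k:=\sum_{m_\ell+m'_{\ell'}=\mu_k}\sigma_\ell\sigma'_{\ell'}\in\dot S^{\mu_k}$, using continuity of the product $\dot S^{m_\ell}\times\dot S^{m'_{\ell'}}\to\dot S^{m_\ell+m'_{\ell'}}$. Fix a large $N$ and use Proposition~\ref{prop_homogeneous_asymptotic} to write
$$
\sigma=\sum_{\ell\le N}\psi(\pi(\cR))\sigma_\ell+R_N,\qquad \sigma'=\sum_{\ell'\le N}\sigma'_{\ell'}\psi(\pi(\cR))+R'_N,
$$
with $R_N\in S^{m_{N+1}}$ and $R'_N\in S^{m'_{N+1}}$; the asymmetric choice of left cutoff for $\sigma$ and right cutoff for $\sigma'$ is cosmetic but isolates the main terms. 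Expanding the product, the three cross terms involving $R_N$ or $R'_N$ lie in classes of order at most $\max(m_0+m'_{N+1},\ m_{N+1}+m'_0)$ by the product continuity $S^a\times S^b\to S^{a+b}$, which tends to $-\infty$ with $N$. What remains are the diagonal terms $\psi(\pi(\cR))\sigma_\ell\sigma'_{\ell'}\psi(\pi(\cR))$ for $\ell,\ell'\le N$.

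The main obstacle, and the heart of the argument, is to prove
$$
\psi(\pi(\cR))\,\sigma_\ell\sigma'_{\ell'}\,\psi(\pi(\cR))\equiv\psi(\pi(\cR))\,(\sigma_\ell\sigma'_{\ell'})\pmod{S^{-\infty}}.
$$
By Corollary~\ref{cor_prop_equiv_dotS} applied to $\sigma_\ell\sigma'_{\ell'}\in\dot S^{m_\ell+m'_{\ell'}}$, the commutator $[\psi(\pi(\cR)),\sigma_\ell\sigma'_{\ell'}]$ is smoothing, so the left-hand side equals $\psi^2(\pi(\cR))(\sigma_\ell\sigma'_{\ell'})$ modulo $S^{-\infty}$. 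The key observation is then that $\psi^2-\psi$ vanishes on $(-\infty,\epsilon_o]$ (where $\psi=\psi^2=0$) and on $[\Lambda,\infty)$ (where $\psi=\psi^2=1$), hence lies in $\cD(\bR)$ with compact support in $[\epsilon_o,\Lambda]\subset(0,\infty)$; Corollary~\ref{cor_sigmachi} then gives $(\psi^2-\psi)(\pi(\cR))\cdot\sigma_\ell\sigma'_{\ell'}\in S^{-\infty}$, which delivers the displayed congruence. Re-indexing the resulting finite sum by total degree $\mu_k$ produces $\sigma\sigma'-\sum_{\mu_k>M_N}\psi(\pi(\cR))\tau_k\in S^{M_N}$ with $M_N\to-\infty$, which establishes $\sigma\sigma'\sim\sum_k\tau_k$ and, on reading off the leading term, yields $\princ_{m+m'}(\sigma\sigma')=\princ_m(\sigma)\princ_{m'}(\sigma')$.
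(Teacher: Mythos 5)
Your proof is correct and follows essentially the same strategy as the paper's: truncate both homogeneous expansions, multiply, collect the main diagonal terms, and use Corollary~\ref{cor_prop_equiv_dotS} to reorganize the cutoffs modulo $S^{-\infty}$. The differences are minor and in your favour. You flip the left/right placement of the cutoffs relative to the paper (they truncate as $\sum\sigma_\ell\psi$ and $\sum\psi\sigma'_{\ell'}$ so the product gives $\sigma_\ell\psi^2\sigma'_{\ell'}$, whereas you use $\sum\psi\sigma_\ell$ and $\sum\sigma'_{\ell'}\psi$ so the product gives $\psi\sigma_\ell\sigma'_{\ell'}\psi$), which is cosmetic; but your choice makes the commutator step cleaner. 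Specifically, you commute $\psi(\pi(\cR))$ past the \emph{product} $\sigma_\ell\sigma'_{\ell'}\in\dot S^{m_\ell+m'_{\ell'}}$, so the smoothing error $E$ from Corollary~\ref{cor_prop_equiv_dotS} appears as $\psi(\pi(\cR))E$, which lies unambiguously in $S^{-\infty}$ since $\psi(\pi(\cR))\in S^0$. The paper instead commutes $\psi^2$ past $\sigma_\ell$ alone and then must justify that $E\sigma'_{\ell'}\in S^{-\infty}$ with $\sigma'_{\ell'}\in\dot S^{m'_{\ell'}}$ a \emph{homogeneous} (not inhomogeneous) symbol; this is true but less immediate than the paper's terse citation suggests, since the classes $\dot S^{m}$ are not contained in the $S^{m}$. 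Your version sidesteps that issue entirely. Your final step replacing $\psi^2$ by $\psi$ via Corollary~\ref{cor_sigmachi} is correct but unnecessary: $\psi^2$ is itself an admissible cutoff (it vanishes near $0$ and equals $1$ near $+\infty$), and Proposition~\ref{prop_homogeneous_asymptotic} guarantees that the asymptotic sum is independent of the choice of cutoff, so one may simply read off the expansion with $\psi^2$, as the paper does.
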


\begin{proof}
The linearity of $S^m_{asymp}$ and of $\princ_m$ is easy to check.
The property regarding the adjoint follows from Theorem~\ref{thm_calculus}~(3).
Let $\sigma$ and $\sigma'$ be as in the statement.
We fix a  positive Rockland operator $\cR$
and a real-valued function
 $\psi\in C^\infty(\bR)$ 
satisfying 
$\psi\equiv 0$ on a neighbourhood of 0 and 
$\psi\equiv 1$ on $(\Lambda,\infty)$
for some $\Lambda>0$.
Then 
we can develop
$$
\left(\sum_{\ell\leq M}  \sigma_\ell \psi(\pi(\cR))\right)
\left(\sum_{\ell'\leq M'} \psi(\pi(\cR))\sigma'_{\ell'} \right)
$$
on  one hand as (see Proposition \ref{prop_homogeneous_asymptotic} (1))
$$
\left(\sigma\ \mbox{mod} \ S^{m_M}\right)
\left(\sigma' \ \mbox{mod} \ S^{m'_M}\right)
=
\sigma\sigma' 
\ \mbox{mod} \ S^{\tilde M}
$$
where $\tilde M:= {\max(m+m'_{M'}, m'+m_M,m_M,m'_{M'})}$,
and on the other hand by Corollary \ref{cor_prop_equiv_dotS}
$$
\sum_{\substack{\ell\leq M\\\ell'\leq M'}}  
\sigma_\ell \psi^2(\pi(\cR)) \sigma'_{\ell'}
=
\sum_{\substack{\ell\leq M\\\ell'\leq M'}}  
 \psi^2(\pi(\cR))\sigma_\ell \sigma'_{\ell'}
\ \mbox{mod} \ S^{-\infty}
$$
Hence
$\displaystyle{
\sigma\sigma' 
=
\sum_{\substack{\ell\leq M,\ell'\leq M'\\ 
m_\ell + m_\ell'\geq \tilde M}}
 \psi^2(\pi(\cR))\sigma_\ell \sigma'_{\ell'}
\ \mbox{mod} \ S^{\tilde M}.
}$\\
This implies
$\displaystyle{
\sigma\sigma' 
\sim 
\sum_{\tilde \ell} \tilde\sigma_{\tilde\ell}}$
where $\displaystyle{ 
\tilde\sigma_{\tilde\ell}:=
\sum_{m_\ell+m_{\ell'}=\tilde m_{\tilde \ell}}
\sigma_\ell \sigma'_{\ell'}\in S^{\tilde m_{\tilde \ell}},
}$
and in particular 
$\tilde \sigma_0=\sigma_0\sigma'_0$.
\end{proof}

\subsection{The classical calculus $\cup_m\Psi^m_{cl}(\Omega)$}

We can now define the classical  classes of symbols and of operators.
\begin{definition}
\label{def_Psi_comp}
Let $\Omega\subset G$ be an open subset.
We denote by $S^m_{cl}(\Omega)$
the class of symbol $\sigma\in S^m_{asymp}$ 
such that the integral kernel of $\Op(\sigma)$ is compactly supported in $\Omega\times \Omega$.
The corresponding class of operators is denoted by
$$
\Psi^m_{cl}(\Omega):=\Op (S^m_{cl}(\Omega)).
$$
The operation of taking the principal part is denoted by $\princ_m$:
$$
\princ_m (\Op(\sigma))=\Op(\princ_m(\sigma)), \quad \sigma\in S^m_{cl}(\Omega).
$$
\end{definition}
If $\Omega=G$ we may allow ourselves the shortcuts
$S^m_{cl}(G)=S^m_{cl}$ and $\Psi^m_{cl}(G)=\Psi^m_{cl}$.
Naturally the differential operators in the calculus
with support in $\Omega$ are classical:

\begin{ex}
If $(c_\alpha)_{\alpha\in \bN^n}$ is a sequence of functions in $\cD(\Omega)$ 
and all but a finite number of $c_\alpha$ are zero, 
then  $\sum_\alpha c_\alpha (x) X^\alpha$ is in $\Psi^m_{cl}(\Omega)$
where $m=\max\{[\alpha], c_\alpha\not=0\}$.
Indeed the (right convolution) kernel is 
$\sum_\alpha c_\alpha (x) \delta_0^{(\alpha)}$
which is supported in $\{0\}$.
Moreover
$$
\princ_m \left(\sum_\alpha c_\alpha (x) X^\alpha\right)
=
\sum_{[\alpha]=m} c_\alpha (x) X^\alpha.
$$
\end{ex}

We will often use the following easy lemma without referring to it.
\begin{lemma}
\label{lem_easy}
Let $\Omega\subset G$ be an open subset.
If $A\in \Psi^0_{cl}(\Omega)$ then  
the operator $A$ extends uniquely into a 
continuous mapping $L^2(\Omega,loc) \to L^2(\Omega)$.
\end{lemma}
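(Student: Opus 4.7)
The plan is to combine two ingredients already available in the paper: the global $L^2$--boundedness of operators in $\Psi^0$, and the compact support (inside $\Omega\times\Omega$) of the integral kernel associated with any $A\in\Psi^0_{cl}(\Omega)$.

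First I would invoke Theorem \ref{thm_calculus}~(3) applied with $m=s=0$: any operator $A\in \Psi^0$ extends to a bounded operator on $L^2(G)=L^2_0(G)$. So, in particular, $A\in \sL(L^2(G))$. Next, let $\sigma\in S^0_{cl}(\Omega)$ be the symbol of $A$ and let $K(x,y)=\kappa_x(y^{-1}x)$ be its integral kernel (as in Section~\ref{subsec_quantisation}). By Definition~\ref{def_Psi_comp}, $\supp K$ is a compact subset of $\Omega\times\Omega$; denote by $K_1,K_2$ the projections of $\supp K$ on the first and second factors, which are compact subsets of $\Omega$.

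Given $f\in L^2(\Omega,loc)$, I choose a cut-off $\chi\in\cD(\Omega)$ with $\chi\equiv 1$ on a neighbourhood of $K_2$, and I set
$$
\widetilde A f := A(\chi f),
$$
where $\chi f$ is extended by $0$ outside $\Omega$ so that $\chi f\in L^2(G)$ (this uses only that $f\in L^2(\supp\chi)$). The map $\widetilde A f$ is well defined: independence of the choice of $\chi$ follows from the fact that for any two such cut-offs $\chi_1,\chi_2$, the function $(\chi_1-\chi_2)f$ vanishes on $K_2$, so by Fubini and $\supp K\subset K_1\times K_2$,
$$
A((\chi_1-\chi_2)f)(x)=\int_G K(x,y)(\chi_1-\chi_2)(y)f(y)\,dy=0
$$
for almost every $x\in G$. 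The same argument shows that when $f\in\cS(G)$, $\widetilde A f = Af$, so $\widetilde A$ extends $A$. Moreover, since $K$ is supported in $K_1\times G$, the function $\widetilde A f$ is supported in $K_1\subset\Omega$, so $\widetilde A f\in L^2(\Omega)$.

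It remains to verify continuity and uniqueness. For continuity, the space $L^2(\Omega,loc)$ carries its Fréchet topology given by the seminorms $f\mapsto\|f\|_{L^2(K)}$ for $K\Subset\Omega$. If $f_n\to f$ in $L^2(\Omega,loc)$ then $\chi f_n\to\chi f$ in $L^2(G)$ and by the $L^2$--boundedness of $A$, $\widetilde A f_n\to \widetilde A f$ in $L^2(G)\supset L^2(\Omega)$; this yields the estimate
$$
\|\widetilde A f\|_{L^2(\Omega)}\leq \|A\|_{\sL(L^2(G))}\,\|\chi\|_\infty\,\|f\|_{L^2(\supp\chi)}.
$$
For uniqueness, any continuous extension of $A|_{\cS(G)}$ to $L^2(\Omega,loc)\to L^2(\Omega)$ must agree with $\widetilde A$ on $\cD(\Omega)$, which is dense in $L^2(\Omega,loc)$. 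No step is truly hard here; the only point requiring a little care is the verification that $Af$ depends only on $f|_{K_2}$, which is precisely what the compact support of $K$ inside $\Omega\times\Omega$ guarantees.
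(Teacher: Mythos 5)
Your proof is correct and follows the same route as the paper's: localise with a cutoff $\chi\in\cD(\Omega)$ that equals $1$ on the (second projection of the) support of the integral kernel, define $\widetilde A f := A(\chi f)$, and conclude from the $L^2(G)$-boundedness of $\Psi^0$ (Theorem~\ref{thm_calculus}). You spell out the verification that the definition is $\chi$-independent and that the image lies in $L^2(\Omega)$, points the paper leaves to the reader ``it is easy to show''; you also correctly treat the support as a compact subset of $\Omega\times\Omega$ and track its two projections, which is a small clean-up of the paper's shorthand.
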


As is customary, $L^2(\Omega,loc)$ denotes the space of distribution $f\in \cD'(\Omega)$ such that for all $\chi\in \cD(\Omega)$, $f\chi\in L^2(\Omega)$. 
Later on, we will need the more general definition:
\begin{definition}
\label{def_L2sloc}
Let $\Omega\subset G$ be an open subset.
We denote by $L^2_s(\Omega,loc)$ the space of distribution $f\in \cS'(\Omega)$ such that for all $\chi\in \cD(\Omega)$, $f\chi\in L^2_s(G)$.
It is endowed with its natural structure of  Fr\'echet space.
\end{definition}

\begin{proof}[Proof of Lemma \ref{lem_easy}]
Let $A\in \Psi^0_{cl}(\Omega)$.
Its integral kernel is supported in a compact $K\subset \Omega$.
We can always find $\chi\in \cD(\Omega)$ such that $\chi\equiv 1$ on $K$.
Hence if $\phi\in \cD(\Omega)$, 
then  $A\phi=A (\chi \phi)$.

Let $f\in L^2(\Omega,loc)$.
Then $f\chi\in L^2(\Omega)\subset L^2(G)$ 
and we define $A f:= A(f\chi)$,
as $A\in \Psi^0$, it is bounded on $L^2(G)$ (see Theorem \ref{thm_calculus}).
It is easy to show that this does not depend on the choice of $\chi$
and that we have:
$$
\forall f\in L^2(\Omega,loc) \qquad 
\|Af\|_{L^2(G)} \leq \|A\|_{\sL(L^2(G))} \|f\chi\|_{L^2(G)}.
$$
Since $f\in L^2(\Omega,loc) \mapsto f\chi \in L^2(G)$ is continuous, 
the operator $A:L^2(\Omega,loc) \to L^2(\Omega)$ is continuous.
\end{proof}

We now state and prove a theorem which in the Euclidean setting
 is a consequence of Rellich's theorem.

\begin{theorem}
\label{thm_cq_rellich}
Let $\Omega\subset G$ be an open subset.
If $A\in\Psi^m_{cl}(\Omega)$ with $m<0$
then the operator
$$A:L^2(\Omega,loc) \to L^2(\Omega)$$ is compact,
i.e. if $u_k \TendWeak{k}{\infty} u$ in $L^2(\Omega,loc)$
then $A u_{k(j)} \Tend{j}{\infty } A u$ in the $L^2$-norm
 after extraction of a subsequence $k(j)$.
\end{theorem}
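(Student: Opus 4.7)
The strategy is to combine the following three ingredients: the mapping property $A:L^2(G)\to L^2_{-m}(G)$ from the calculus, the compact support of the integral kernel, and the comparison between the (left-invariant) Sobolev spaces $L^2_s(G)$ and the Euclidean Sobolev spaces $H^{s/\upsilon_n}$ provided by Proposition~\ref{prop_sob_loc}, which reduces the question to the classical Rellich compactness theorem on $\bR^n$.

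First, I would set up a cut-off. Since the integral kernel of $A$ is supported in a compact $K_1\times K_2\subset\Omega\times\Omega$, I choose $\chi_1,\chi_2\in\cD(\Omega)$ with $\chi_j\equiv 1$ on a neighbourhood of $K_j$, so that $Au=\chi_1 A(\chi_2 u)$ for any $u\in L^2(\Omega,loc)$ (cf.\ the argument in Lemma~\ref{lem_easy}). Now let $u_k\TendWeak{k}{\infty}u$ in $L^2(\Omega,loc)$; by uniform boundedness, $(\chi_2 u_k)$ is a bounded sequence in $L^2(G)$ converging weakly to $\chi_2 u$.

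Next, I would exploit the calculus. Since $m<0$, we have $A\in\Psi^m\subset\Psi^0$, so $A$ is bounded $L^2(G)\to L^2(G)$ and, more precisely, continuous $L^2(G)\to L^2_{-m}(G)$ by Theorem~\ref{thm_calculus}(3). Setting $v_k:=Au_k=\chi_1 A(\chi_2 u_k)$, the sequence $(v_k)$ is therefore bounded in $L^2_{-m}(G)$ and is uniformly supported in the fixed compact set $\supp\chi_1\subset\Omega$. In particular, $v_k\TendWeak{k}{\infty}Au$ weakly in $L^2(G)$, since weak continuity of $A$ on $L^2(G)$ identifies the weak limit.

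The main obstacle, which I expect to be the only non-routine step, is the \emph{Rellich-type} statement: a bounded sequence in $L^2_{-m}(G)$ supported in a fixed compact is relatively $L^2$-compact. I would deduce this from Proposition~\ref{prop_sob_loc}: picking $\chi\in\cD(\bR^n)$ with $\chi\equiv 1$ on a neighbourhood of $\supp\chi_1$, the map $f\mapsto\chi f$ sends $L^2_{-m}(G)$ continuously into $H^{-m/\upsilon_n}(\bR^n)$. Since $-m/\upsilon_n>0$, the sequence $(v_k)=(\chi v_k)$ is bounded in $H^{-m/\upsilon_n}(\bR^n)$ with support in a fixed compact, so by the classical Rellich--Kondrachov theorem on $\bR^n$ it is relatively compact in $L^2(\bR^n)=L^2(G)$. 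Extracting a subsequence $v_{k(j)}$ converging in $L^2$-norm and matching this strong limit with the already identified weak limit $Au$ gives $v_{k(j)}\to Au$ in $L^2(\Omega)$, which concludes the proof.
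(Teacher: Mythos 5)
Your proposal is correct and uses the same three ingredients as the paper's proof (cut-off to reduce to compactly supported data, the mapping property from Theorem~\ref{thm_calculus}, Proposition~\ref{prop_sob_loc}, and Euclidean Rellich), but you arrange them in the mirror direction. The paper writes $Au_k-Au=A(\id+\cR)^{-m/\nu}(\id+\cR)^{m/\nu}\chi(u_k-u)$, so the operator applied is bounded and compactness is obtained on the \emph{input} side: $\|\chi(u_k-u)\|_{L^2_m(G)}\lesssim\|\chi(u_k-u)\|_{H^{m/\upsilon_1}}$ with $m/\upsilon_1<0$, and Rellich (for the compact embedding $H^0\hookrightarrow H^{m/\upsilon_1}$ on a fixed compact) gives a strongly convergent subsequence of the input. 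You instead let $A$ gain regularity on the \emph{output} side: $Au_k$ is bounded in $L^2_{-m}(G)$ with $-m>0$, supported in a fixed compact, hence bounded in $H^{-m/\upsilon_n}(\bR^n)$, and Rellich then gives relative $L^2$-compactness of the output. Your version needs an extra (elementary) step to match the strong subsequential limit with $Au$ via weak convergence, whereas the paper's version directly estimates $\|Au_{k(j)}-Au\|_{L^2}$; conversely, your version avoids factoring $A$ through a power of $(\id+\cR)$. One small simplification you could make: since the integral kernel of $A$ is supported in $K_1\times K_2$, the image $A(\chi_2 u)$ is already supported in $K_1$, so the left cut-off $\chi_1$ is automatic and you need not invoke boundedness of multiplication by $\chi_1$ on $L^2_{-m}(G)$ (though that too follows from $\chi_1\in\Psi^0$).
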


The notation $u_k\TendWeak{k}{\infty}u $ in $L^2(\Omega,loc)$ 
means that 
 the sequence  $(u_k)$ of distributions in $L^2(\Omega,loc)$
converge towards $u$ for the Fr\'echet topology of  $L^2(\Omega,loc)$.
Consequently, $u_k\TendWeak{k}{\infty} u $ in $L^2(\Omega,loc)$ 
if and only if for every $v\in L^2(\Omega)$ compactly supported, 
$(u_k,v)_{L^2}\Tend{k}{\infty} (u,v)_{L^2}$.

Let us recall that Rellich's Theorem states that if  $t<s$ and $K\subset \bR^n$ a compact subset, 
then the inclusion map $H^s(K) \to H^t$ is compact.

\begin{proof}[Proof of Theorem \ref{thm_cq_rellich}]
As $A\in \Psi^m_{cl}(\Omega)$, 
its integral kernel is supported in a compact $K\subset \Omega\times\Omega$ that we can assume of the form $K=K_1\times K_2$.
We can always find $\chi\in \cD(\Omega)$ such that $\chi\equiv 1$ on~$K_2$.
As the integral kernel of $A$ is supported in $K$, 
we have
$A \phi=A(\chi \phi)$, for any $\phi\in L^2(\Omega,loc)$.
Let $\cR$ be a (fixed) positive Rockland operator of homogeneous degree $\nu$. 
As $A \in \Psi^m$,
$A (\id+\cR)^{-m/\nu}\in \Psi^0$ is bounded on $L^2(G)$.
Let $(u_k)$ be a sequence in $L^2(\Omega,loc)$ with $u_k \TendWeak{k}{\infty} u$.
We have 
\begin{eqnarray*}
\|Au_k -Au\|_{L^2}
&=&
\|A(\chi u_k -\chi u)\|_{L^2}
=
\|A(\id+\cR)^{-\frac m \nu}(\id+\cR)^{\frac m \nu}
(\chi u_k -\chi u)\|_{L^2}
\\
&\leq&
\|A(\id+\cR)^{-\frac m \nu}\|_{\sL(L^2(G))}
\|(\id+\cR)^{\frac m \nu}(\chi u_k -\chi u)\|_{L^2}.
\end{eqnarray*}
By Proposition \ref{prop_sob_loc},
$$
\|(\id+\cR)^{\frac m \nu}(\chi u_k -\chi u)\|_{L^2}
=\|\chi (u_k - u)\|_{L^2_m(G)}
\leq C_{m}
\| \chi \ (u_k - u)\|_{H^{m/\upsilon_1}}
.
$$
As $u_k \TendWeak{k}{\infty} u$ in $L^2(\Omega,loc)$
and $m/\upsilon_1<0$, by Rellich's Theorem,
 we can extract a subsequence $\chi u_{k(j)} \Tend{j}{\infty} \chi u$ 
 converging  in the Sobolev norm of $H^{m/\upsilon_1}$.  
Therefore
$$
\|Au_{k(j)} -Au\|_{L^2} \leq \|A(\id+\cR)^{-\frac m \nu}\|_{\sL(L^2(G))}C_{m}
\| \chi \ (u_{k(j)} - u)\|_{H^{m/\upsilon_1}} \Tend{j}{\infty}
  0,
$$
and $A u_{k(j)} \Tend{j}{\infty} A u$ in the $L^2$-norm.
\end{proof}

\section{$C^*$-algebras generated by 0-homogeneous regular symbols} 
\label{sec_0homsymb}

In this section, 
we study the regular 0-homogeneous symbols,
that is, the symbols in $\dot S^0$, 
and the $C^*$-algebra it generates.
A particular attention will be given to the `invariant' symbols in $\dot S^0$, 
that is, those that do not depend on $x$. 

\subsection{The Fr\'echet space $\tilde S^0$}
\label{subsec_inv_dotS0}

In this section, we study the invariant regular 0-homogeneous symbols, 
or in other words the symbol  in $\dot S^0$ independent of $x$.
They form the space~$\tilde S^0$.

\begin{definition}
We denote by $\tilde S^0$ the set of symbols
$\sigma=\{\sigma(\pi):\cH_\pi^\infty \to \cH_\pi^\infty,\pi\in \Gh\}$ satisfying
\begin{enumerate}
\item $\sigma$ is 0-homogeneous, 
i.e. $\sigma(r\cdot\pi)=\sigma(\pi)$ for all $r>0$, $\pi\in \Gh$, 
\item if $\cR$ is a positive Rockland operator of degree $\nu$ and $\alpha\in \bN_0^n$ and $\gamma\in \bR$, then 
$$
\sup_{\pi\in \Gh}
\| \pi(\cR)^{\frac{[\alpha]-m+\gamma}{\nu}}
 \Delta^\alpha \sigma(\pi) \pi(\cR)^{-\frac{\gamma}{\nu}}\| <\infty
$$
\end{enumerate}
\end{definition}

Naturally, the second condition is independent of $\cR$
and it suffices to show it for a sequence $(\gamma_\ell)_{\ell\in \bZ}$ with 
$\lim_{\ell\to \pm \infty}=\pm\infty$.
This equips naturally the vector space $\tilde S^0$ with a Fr\'echet topology which is the same as the one obtained with viewing $\tilde S^0$ as a closed sub-vector space of $\dot S^0$.
Note that $\tilde S^0$ is also an algebra, in fact a sub-algebra of $\dot S^0$. 

By Corollary \ref{cor_prop_equiv_dotS_gamma}, 
a 0-homogeneous symbol $\sigma=\{\sigma(\pi)\}$  is in $\tilde S^m$ if and only if for each $\alpha\in \bN_0^n$, the following
suprema are finite
\begin{equation}
\label{eq_sup_lem_type}
 \sup_{\pi\in \Gh} \|  \pi(\cR)^{\frac{[\alpha]}\nu}
 \Delta^\alpha \sigma(\pi)\|_{\sL(\cH_\pi)},
 \quad\mbox{and}\quad
 \sup_{\pi\in \Gh} \|   \Delta^\alpha \sigma(\pi)\
 \pi(\cR)^{\frac{[\alpha]}\nu}\|_{\sL(\cH_\pi)}.
\end{equation}
Here $\cR$ is a fixed positive Rockland operator of degree $\nu$.

\medskip

Proposition \ref{prop_hom_kernel_smooth} implies that the kernel associated to a symbol $\sigma$ in $\tilde S^0$, i.e. $\kappa\in \cS'(G)$ such that $\widehat \kappa=\sigma$, is smooth on $ G\setminus\{0\}$.
Lemma \ref{lem_type} below shows the converse in the following way:

\begin{lemma}
\label{lem_type}
 Let $\sigma=\{\sigma(\pi)\in \sL(\cH_\pi), \pi\in \Gh\}$ be a measurable field of operators  such that 
\begin{itemize}
\item $\sigma$ is 0-homogeneous, 
i.e. $\sigma(r\pi)=\sigma(\pi)$ for (almost) all $\pi\in \Gh$ and all $r>0$,
\item $\sigma$ is bounded, 
i.e. $\sup_{\pi\in \Gh} \|\sigma(\pi)\|_{\sL(\cH_\pi)}<\infty$,
\item the kernel associated with $\sigma$ coincides with a smooth function on  $G\backslash\{0\}$.
\end{itemize}
Then $\sigma\in \tilde S^0$.
\end{lemma}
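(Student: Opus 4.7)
The plan is to invoke Corollary~\ref{cor_prop_equiv_dotS_gamma}, which reduces the membership $\sigma\in\tilde S^0$ to proving, for every $\alpha\in\bN_0^n$, the two uniform bounds
\[
\sup_{\pi\in\Gh}\|\pi(\cR)^{[\alpha]/\nu}\Delta^\alpha\sigma(\pi)\|_{\sL(\cH_\pi)}<\infty
\and
\sup_{\pi\in\Gh}\|\Delta^\alpha\sigma(\pi)\pi(\cR)^{[\alpha]/\nu}\|_{\sL(\cH_\pi)}<\infty.
\]
The case $\alpha=0$ is the boundedness hypothesis, so fix $\alpha\neq 0$. First I would apply Lemma~\ref{lem_pv}(2) to $\sigma$: the $0$-homogeneity of $\sigma$ forces $\kappa$ to be $(-Q)$-homogeneous (via~\eqref{eq_rcdotpif}), and the lemma then gives the decomposition $\kappa=\sum_j\kappa_j+c_\sigma\delta_0$ with $\kappa$ of vanishing mean value on the unit sphere.

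The core of the argument is to identify the convolution kernel of the composition $\cR^{[\alpha]/\nu}\circ\Op(\Delta^\alpha\sigma)$ as $\tilde\kappa_\alpha:=\cR^{[\alpha]/\nu}(x^\alpha\kappa)$. This is legitimate because $\Delta^\alpha\sigma$ is the group Fourier transform of the locally integrable, $([\alpha]-Q)$-homogeneous function $x^\alpha\kappa$ smooth on $G\setminus\{0\}$, and the left-invariance of $\cR^{[\alpha]/\nu}$ makes it commute with left convolution. By Lemma~\ref{lem_homogeneity_multipliers}, $\tilde\kappa_\alpha$ is $(-Q)$-homogeneous. To see that it is smooth on $G\setminus\{0\}$, I would invoke the pseudo-local property of $\cR^{[\alpha]/\nu}$: writing $\cR^{[\alpha]/\nu}=\cR^M\circ\cR^{-a/\nu}$ with $M=\lceil[\alpha]/\nu\rceil$ and $a=M\nu-[\alpha]\in[0,\nu)\subset[0,Q)$, Example~\ref{ex_powercR_optypenu} realizes $\cR^{-a/\nu}$ as convolution by a kernel of type $a$, so that Proposition~\ref{prop_op_type}(3)--(4) propagates smoothness off the origin through the composition with the differential operator $\cR^M$ and drops the final degree to $-Q$.

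Once $\tilde\kappa_\alpha$ is known to be a tempered, $(-Q)$-homogeneous distribution smooth on $G\setminus\{0\}$, the structural description of $(-Q)$-homogeneous tempered distributions (the smooth-off-origin part must have vanishing mean value, since otherwise only logarithmic corrections produce a tempered extension) allows me to split $\tilde\kappa_\alpha=\tilde k_\alpha^{\mathrm{pv}}+c_\alpha\delta_0$. Lemma~\ref{lem_pv}(1) applied to $\tilde k_\alpha^{\mathrm{pv}}$ then delivers the boundedness of the associated convolution operator and hence of the field $\pi(\cR)^{[\alpha]/\nu}\Delta^\alpha\sigma(\pi)$, while the $\delta_0$ term contributes only a multiple of the identity; this yields the first supremum. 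The second supremum follows symmetrically, either by running the same analysis on the kernel side with $\cR^{[\alpha]/\nu}$ acting on the right, or by taking adjoints. The main technical hurdle will be quantitative rather than conceptual: to feed the estimate~\eqref{eq_cv1_lem_pv} of Lemma~\ref{lem_pv} one needs uniform control of the $C^1$-norm of $\tilde\kappa_\alpha$ on the unit sphere in terms of similar norms of $\kappa$, and when $M\nu\geq Q$ one must iterate or refine the factorization so as to respect the range condition $\nu_1+\nu_2<Q$ in Proposition~\ref{prop_op_type}(4) at each stage.
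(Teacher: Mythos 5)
Your opening moves are sound: the reduction via Corollary~\ref{cor_prop_equiv_dotS_gamma} to the two suprema in~\eqref{eq_sup_lem_type}, the identification $\Delta^\alpha\sigma=\widehat{x^\alpha\kappa}$, and the intent to eventually invoke Lemma~\ref{lem_pv} all reflect the right circle of ideas, and the observation that the convolution kernel of $\cR^{[\alpha]/\nu}\circ\Op(\Delta^\alpha\sigma)$ is formally $\cR^{[\alpha]/\nu}(x^\alpha\kappa)$ is a correct heuristic. Your worry about a quantitative $C^1$-control of $\tilde\kappa_\alpha$ on the sphere is, however, misplaced: the paper explicitly remarks right after Lemma~\ref{lem_type} that the proof produces no such bounds in terms of $\kappa$ or $\sigma$, and none are needed for a qualitative membership statement.

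The real gap is the factorization you defer. The single split $\cR^{[\alpha]/\nu}=\cR^M\circ\cR^{-a/\nu}$ can feed Proposition~\ref{prop_op_type}~(4) only when $[\alpha]\in(0,Q)$ and $[\alpha]+a=M\nu<Q$, and even your containment $a\in[0,\nu)\subset[0,Q)$ presupposes $\nu\leq Q$; none of these hold in general ($[\alpha]\geq Q$ makes $x^\alpha\kappa$ of non-negative homogeneity degree, hence not a kernel of type in $(0,Q)$; $[\alpha]\geq\nu$ forces $M\nu\geq 2\nu$; and there is no a-priori bound on the degree $\nu$ of a Rockland operator on a graded group). The paper resolves this by going in the opposite direction: it first replaces $\cR$ by a power so that $\nu>[\alpha]$ (rather than keeping $\nu$ small), writes a monomial $X^\beta=Y_r\cdots Y_1$ of degree $\nu$ appearing in $\cR$, and iterates through the individual vector fields $Y_j$ --- each of degree $\upsilon_j<Q$ --- while inserting a small correction $\cR^{-[Y_j]/\nu}$ after each step, so the types stay in $[0,Q)$ throughout; this produces $\cR\,T_{x^\alpha\kappa}\,\cR^{-(\nu-[\alpha])/\nu}$ of type $0$, and then duality, interpolation, and Plancherel finish the proof. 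In particular the paper never realizes $\cR^{[\alpha]/\nu}(x^\alpha\kappa)$ as a tempered distribution nor appeals to Lemma~\ref{lem_pv}; the Sobolev/Plancherel route avoids the distributional justification your path would require. Making your ``iterate or refine the factorization'' precise would effectively force you to rediscover this interleaving --- that step is the content of the proof, not a technicality to be deferred.
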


Note that the proof of Lemma \ref{lem_type}
given below
does not produce any bounds for the suprema in \eqref{eq_sup_lem_type} in terms of $\kappa$ or $\sigma$.
The main ingredient is  the analysis of operator of type $\nu$, 
see  Section \ref{subsec_optypenu}.

\begin{proof}[Proof of Lemma \ref{lem_type}]
If $\kappa_1$ is any tempered distribution, then $T_{\kappa_1}$ denotes the convolution operator with right-convolution kernel $\kappa_1$, i.e. $T_{\kappa_1}(\phi)=\phi*\kappa_1$, $\phi\in \cS(G)$.
Recall that $X_1,\ldots, X_n$ is a basis of $\fg$.

Let $\sigma,\kappa$ satisfying the hypotheses of the statement. 
We fix  $\alpha\in \bN_0^n$, $\alpha\not=0$.
By Lemma \ref{lem_2R_fractional_power}, 
we may replace $\cR$ by one of its power
and thus
we assume that $\nu >[\alpha]$. 
The operator $\cR$ is a linear combination of $X^\beta$ 
with $[\beta]=\nu$.
Let us write one $X^\beta$ as $Y_r\ldots Y_1$
with $Y_j\in \{X_1,\ldots,X_n\}$ for $j=1,\ldots,r$.
We also denote by $[Y_j]$ the homogeneous degree of $Y_j$,
so that $\nu=[\beta]=[Y_1]+\ldots+[Y_r]>[\alpha]$.
Let $r'\in \bN_0$, $0\leq r'\leq r$, be such that 
$[\alpha]-([Y_1]+\ldots+[Y_{r'}])>0$ 
but
$[\alpha]-([Y_1]+\ldots+[Y_{r'+1}])\leq 0$,
with the convention that $[Y_1]+\ldots+[Y_{r'}]=0$ if $r'=0$ and in this case $Y_{r'}\ldots Y_1=\id$.
By Proposition \ref{prop_op_type} \eqref{item_prop_op_type_conv},
 the operator $Y_{r'}\ldots Y_1 T_{x^\alpha \kappa}= T_{Y_1\ldots Y_r x^\alpha \kappa}$ is of type $[\alpha]-([Y_1]+\ldots+[Y_{r'}])\in (0,Q)$.
As the operator $\cR^{\frac{[\alpha]-([Y_1]+\ldots+[Y_{r'+1}])}\nu}$
is of type $[Y_1]+\ldots+[Y_{r'+1}] -[\alpha] \in (0,Q)$, see Example \ref{ex_powercR_optypenu}, 
the operator
$Y_{r'}\ldots Y_1 T_{x^\alpha \kappa} \cR^{\frac{[\alpha]-([Y_1]+\ldots+[Y_{r'+1}])}\nu}$
is of type $[Y_{r'+1}]$.
Thus
the operator
$Y_{r'+1}\ldots Y_1 T_{x^\alpha \kappa} \cR^{\frac{[\alpha]-([Y_1]+\ldots+[Y_{r'+1}])}\nu}$
is of type 0.
Then 
$Y_{r'+1}\ldots Y_1 T_{x^\alpha \kappa} \cR^{\frac{[\alpha]-([Y_1]+\ldots+[Y_{r'+2}])}\nu}$
is of type $[Y_{r'+2}]$ 
and 
$Y_{r'+2}\ldots Y_1 T_{x^\alpha \kappa} \cR^{\frac{[\alpha]-([Y_1]+\ldots+[Y_{r'+2}])}\nu}$ is of type 0.
Proceeding recursively, 
we obtain that 
$$
X^\beta T_{x^\alpha \kappa} \cR^{-\frac{\nu -[\alpha]}\nu}
=
Y_r\ldots Y_1T_{x^\alpha \kappa}\cR^{-\frac{[Y_1]+\ldots+[Y_r]-[\alpha]}\nu}
$$
is of type 0.
Thus $\cR T_{x^\alpha \kappa} \cR^{-\frac{\nu -[\alpha]}\nu}$
is bounded on $L^2(G)$.
We can apply the same reasoning to $T_{x^\alpha \kappa} ^* =(-1)^{|\alpha|} T_{x^\alpha \kappa^*}$ where $\kappa^*(x)=\bar \kappa(x^{-1})$.
This shows that $T_{x^\alpha \kappa}$ and its adjoint $T_{x^\alpha \kappa}^*$ map $\dot L^2_{\nu -[\alpha]}\to \dot L^2_\nu$ continuously. 
By duality and interpolation
(see Theorem \ref{thm_sobolev_spaces}), we obtain that $T_{x^\alpha \kappa}$ maps 
$\dot L^2\to \dot L^2_{[\alpha]}$
and 
$\dot L^2_{-[\alpha]}\to \dot L^2$ continuously.
The Plancherel theorem, see Section \ref{subsec_Gh+plancherel},
 now implies  that 
the suprema in \eqref{eq_sup_lem_type}
are finite.
This concludes the proof of Lemma \ref{lem_type}.
\end{proof}

We can now describe the symbols in $\tilde S^0$ via their kernels:

\begin{corollary}
\label{cor_pv+type}
We denote by  $\cF$
the Fr\'echet vector space of smooth functions on $G\backslash\{0\}$ which are $(-Q)$-homogeneous and with zero mean value.
If $\sigma\in \tilde S^0$ we denote by $\kappa_\sigma \in \cF$ the smooth function obtained by restriction of the associated symbol to $G\backslash\{0\}$ and $c_\sigma\in \bC$ the number defined in Lemma~\ref{lem_pv}.
The map 
$$
\Theta:
\left\{\begin{array}{rcl}
\tilde S^0&\longrightarrow & \cF \times \bC
\\
\sigma &\longmapsto & (\kappa_{\sigma},c_{\sigma})
\end{array}
\right.
$$
is an isomorphism of  Fr\'echet vector spaces. 

Consequently, 
the Fr\'echet space $\tilde S^0$ is separable.
\end{corollary}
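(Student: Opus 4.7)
The plan is to prove that $\Theta$ is a continuous linear bijection between Fréchet spaces and then invoke the open mapping theorem. For well-definedness, given $\sigma \in \tilde S^0$, Proposition~\ref{prop_hom_kernel_smooth} ensures that the associated kernel $\kappa \in \cS'(G)$ is $(-Q)$-homogeneous and coincides on $G\setminus\{0\}$ with a smooth function $\kappa_\sigma$. Since $\sigma \in L^\infty(\Gh)$ is $0$-homogeneous with smooth kernel away from the origin, Lemma~\ref{lem_pv}(2) applies: it forces $\kappa_\sigma$ to have vanishing mean value, so $\kappa_\sigma \in \cF$, and it provides the decomposition $\kappa = \sum_{j\in\bZ}(\kappa_\sigma)_j + c_\sigma\delta_0$ defining $c_\sigma \in \bC$. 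Linearity of $\Theta$ is immediate.

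Injectivity follows from the same decomposition: if $\Theta(\sigma) = 0$ then $\kappa = 0$, so $\sigma = 0$. For surjectivity, given $(\kappa_0, c) \in \cF \times \bC$, I would apply Lemma~\ref{lem_pv}(1) to $\kappa_0$ (smooth, $(-Q)$-homogeneous, vanishing mean value) to obtain a bounded $0$-homogeneous field $\tilde\sigma := \sum_{j\in\bZ}\widehat{(\kappa_0)_j}$ with kernel $\sum_j (\kappa_0)_j$, and set $\sigma := \tilde\sigma + c\,\id$. Its kernel equals $\sum_j (\kappa_0)_j + c\delta_0$, which agrees with $\kappa_0$ on $G\setminus\{0\}$ and is therefore smooth there. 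The hypotheses of Lemma~\ref{lem_type} (boundedness, $0$-homogeneity, smooth kernel off the origin) are thus met, giving $\sigma \in \tilde S^0$, and reading off the decomposition of Lemma~\ref{lem_pv}(2) yields $\Theta(\sigma) = (\kappa_0, c)$.

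For the continuity of $\Theta$, the natural seminorms on $\cF$ are $\sup_{z \in S}|X^\alpha \kappa_\sigma(z)|$ for compact $S \subset G\setminus\{0\}$ and $\alpha \in \bN_0^n$; Proposition~\ref{prop_hom_kernel_smooth} bounds each such quantity by a seminorm of $\sigma$ in $\dot S^0$, hence in $\tilde S^0$. The scalar component satisfies $c_\sigma = \sigma(1)$ by the last assertion of Lemma~\ref{lem_pv}(2), so $|c_\sigma| \leq \|\sigma\|_{L^\infty(\Gh)}$, which is controlled by the seminorm corresponding to $\alpha = 0$ in $\tilde S^0$.

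To conclude, $\tilde S^0$ is a closed subspace of the Fréchet space $\dot S^0$ (the $x$-independence condition $X_x^\beta \sigma = 0$ for $\beta \neq 0$ is closed), and $\cF$ is a closed hyperplane of the Fréchet space $C^\infty(\fS)$ obtained by restricting homogeneous functions to the unit sphere, so $\cF \times \bC$ is Fréchet. The open mapping theorem (Banach--Schauder) then yields continuity of $\Theta^{-1}$, so $\Theta$ is a topological isomorphism. Since $C^\infty$ of a compact smooth manifold is separable, so are $\cF$, $\cF \times \bC$, and therefore $\tilde S^0$. The main obstacle is the surjectivity step: the candidate $\sigma$ built from Lemma~\ref{lem_pv}(1) is only \emph{a priori} known to be bounded and $0$-homogeneous, and it is the non-quantitative converse Lemma~\ref{lem_type} that is required to lift it into $\tilde S^0$ — this is precisely why one must appeal to the open mapping theorem rather than producing explicit seminorm estimates for $\Theta^{-1}$.
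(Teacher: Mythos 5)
Your proposal follows essentially the same route as the paper: Proposition~\ref{prop_hom_kernel_smooth} and Lemma~\ref{lem_pv} for well-definedness, continuity and injectivity; Lemma~\ref{lem_pv}~(1) combined with Lemma~\ref{lem_type} for surjectivity; the open mapping theorem for continuity of the inverse; and identification of $\cF$ with a closed hyperplane of $C^\infty$ of a compact sphere (for a \emph{smooth} quasi-norm --- the paper is careful to choose one) for separability. Your closing observation, that the open mapping theorem is forced precisely because Lemma~\ref{lem_type} is non-quantitative, is correct and is exactly why the paper takes this detour.

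One small caution on the continuity of the $\bC$-component. You write $c_\sigma=\sigma(1)$ and then bound $|\sigma(1)|\leq\|\sigma\|_{L^\infty(\Gh)}$. But the trivial representation is a $\mu$-null point in $\Gh$, and $\|\cdot\|_{L^\infty(\Gh)}$ is an essential supremum; it does not control the pointwise value of the chosen representative at $\pi=1$. (Compare the abelian case: a $0$-homogeneous symbol has no well-defined limit at $\xi=0$, and $c_\sigma$ is its spherical mean, not $\sigma(0)$.) The conclusion $|c_\sigma|\lesssim\|\sigma\|_{L^\infty(\Gh)}$ is nevertheless true, but by a different argument: pair $\kappa=\cF_G^{-1}\sigma$ with a fixed radial $\chi\in\cD(G)$, $\chi(0)=1$, $\chi(z)=\chi_1(|z|)$. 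Then Lemma~\ref{lem_pv}~(2) gives $c_\sigma=\langle\kappa,\chi\rangle$, and Corollary~\ref{cor_prop_FIF} yields
$$
|c_\sigma|=\Big|\int_{\Gh}\tr\big(\sigma(\pi)\,\cF_G(\check\chi)(\pi)\big)\,d\mu(\pi)\Big|
\leq\|\sigma\|_{L^\infty(\Gh)}\int_{\Gh}\tr\big|\cF_G(\check\chi)(\pi)\big|\,d\mu(\pi),
$$
a genuine seminorm bound with a finite constant depending only on $\chi$. With this replacement your proof is complete.
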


\begin{proof}[Proof of Corollary \ref{cor_pv+type}]
The fact that the map $\Theta$ is well defined, linear, continuous, and injective
 follows easily from Proposition \ref{prop_hom_kernel_smooth} and Lemma \ref{lem_pv}.
Let us show that $\Theta$ maps $\tilde S^0$ onto $\cF\times \bC$.
Given $(\kappa,c)\in \cF \times \bC$, 
we want to construct $\sigma\in \dot S^0$ such that $\Theta(\sigma)=(\kappa,c)$.
Defining $\kappa_{j}$ as in Lemma \ref{lem_pv}, 
we then set $\sigma(\pi):=\sum_{j\in \bZ} \widehat\kappa_{j} + c_{\sigma}\id_{\cH_\pi}$.
The proof of Lemma \ref{lem_pv} shows that this defines a field of operators $\{\sigma(\pi), \pi\in \cH_\pi\}$ which is bounded by:
 $$
 \sup_{\pi\in \Gh} \|  \sigma(\pi)\|_{\sL(\cH_\pi)} \leq 
 |c_{\sigma}| +
 C \sup_{|z|=1,|\alpha|\leq 1} |X^\alpha \kappa(z)|
 $$
 One checks easily that $\sigma$ is 0-homogeneous
 and that the kernel associated with $\sigma$ coincides with $\kappa$ on $G\backslash\{0\}$ and that $c_\sigma=c$.
 By Lemma \ref{lem_type},  $\sigma\in \tilde S^0$.
 Thus $\Theta$ is surjective.
 As the map $\Theta$ is a linear and continuous bijection between Fr\'echet spaces, 
it is an isomorphism by  the open mapping theorem.

Note that  $\cF$ is a closed subspace of 
the Fr\'echet space of smooth functions on $G\backslash\{0\}$ 
which are $(-Q)$-homogeneous.
The latter is isomorphic to the Fr\'echet space of smooth function on the unit sphere given by a smooth quasi-norm, and this latter Fr\'echet space is well-known to be separable.
Note that by a smooth quasi-norm, we mean a quasi-norm which is smooth away from 0; such a quasi-norm exists.
Hence $\cF$ is separable. As $\Theta$ is an isomorphism of Fr\'echet space, $\tilde S^0$ is also separable.
\end{proof}

\subsection{An important example}

This section is devoted to a more concrete example of a symbol in~$\tilde S^0$, more precisely to the symbol $\sigma_f$ defined and studied in 
Lemma \ref{lem_sigmaf}.
It will be useful later.

\begin{lemma}
\label{lem_sigmaf}
We fix a quasi-norm $|\cdot|$ on $G$.
Let $|\cdot|$ be the associated mapping on $\Gh$, 
see Section~\ref{subsec_polar_dec_Gh}. 
 For any $f\in \cS(G)$, $\tilde S^0$ contains 
 the symbol $\sigma_f$ defined via 
$$
\sigma_f(\pi)=\widehat f(|\pi|^{-1}\cdot \pi), \quad \pi\in \Gh\backslash\{1\}.
$$
\end{lemma}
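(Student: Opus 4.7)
The plan is to apply Lemma \ref{lem_type}: it suffices to check that $\sigma_f$ is $0$-homogeneous, $L^\infty$-bounded on $\widehat G$, and that its distributional kernel coincides with a smooth function on $G\setminus\{0\}$.

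The $0$-homogeneity is immediate from \eqref{eq_normGh_dilation}: for $r>0$,
$$|r\cdot\pi|^{-1}\cdot(r\cdot\pi)=(r|\pi|)^{-1}\cdot(r\cdot\pi)=|\pi|^{-1}\cdot\pi,$$
so $\sigma_f(r\cdot\pi)=\widehat f(|\pi|^{-1}\cdot\pi)=\sigma_f(\pi)$. Boundedness follows from the standard estimate $\|\widehat f(\pi')\|_{\mathcal L(\mathcal H_{\pi'})}\le\|f\|_{L^1(G)}$ applied to $\pi'=|\pi|^{-1}\cdot\pi$, using that $f\in\mathcal S(G)\subset L^1(G)$.

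For the kernel smoothness, let $\kappa_f\in\mathcal S'(G)$ be the distribution with $\widehat{\kappa_f}=\sigma_f$; it is $(-Q)$-homogeneous since $\sigma_f$ is $0$-homogeneous and bounded. By Corollary \ref{cor_prop_FIF}, the polar decomposition on $\widehat G$ (Lemma \ref{lem_pol_dec_Gh}), the identity \eqref{eq_rcdotpif}, and the convolution--product identity $\pi_0(f)\pi_0(g)=\pi_0(g*f)$, I would obtain for $\phi\in\mathcal S(G)$:
$$
\langle\kappa_f,\phi\rangle=\int_0^\infty s^{Q-1}\!\int_{\Sigma_1}\mathrm{tr}\bigl(\pi_0(\check\phi_{(s)}*f)\bigr)\,d\varsigma(\pi_0)\,ds.
$$
Expanding the convolution, exchanging the order of integration, and substituting in the spatial variable so as to absorb the $s$-dilation into the integration variable in $G$, I would then apply the polar decomposition on $G$ (Proposition \ref{prop_polar_coord}) to rewrite the right-hand side as $\int_G K(x)\phi(x)\,dx$ for some $(-Q)$-homogeneous function $K$ on $G\setminus\{0\}$. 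The smoothness of $K$ is inherited from that of $f\in\mathcal S(G)$, together with the compactness of $\widehat G/\mathbb R^+$ (Lemma \ref{lem_Kmap_quotient}), which ensures that the $\varsigma$-average preserves smoothness in the remaining variable.

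The main obstacle is making this last reorganisation fully rigorous---executing the Fubini exchange inside the trace and exhibiting an explicit, demonstrably smooth representative $K$ of $\kappa_f$ on $G\setminus\{0\}$. Once this is achieved, Lemma \ref{lem_type} immediately yields $\sigma_f\in\tilde S^0$.
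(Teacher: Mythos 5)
You correctly reduce the lemma to an application of Lemma \ref{lem_type} (0-homogeneity, $L^\infty$-boundedness, and smoothness of the kernel away from the origin), and your treatment of the first two points matches the paper exactly. The genuine gap is in the third point, which is the heart of the proof and which you yourself flag as ``the main obstacle.'' The route you sketch---use the polar decomposition on $\Gh$ together with \eqref{eq_rcdotpif} to write $\langle\kappa_f,\phi\rangle$ as $\int_0^\infty s^{Q-1}\int_{\Sigma_1}\tr\bigl(\pi_0((\check\phi)_{(s)}*f)\bigr)\,d\varsigma(\pi_0)\,ds$, then Fubini-exchange to exhibit an explicit $(-Q)$-homogeneous smooth representative $K$---is not obviously repairable. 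Two difficulties block it. First, the pointwise traces $\tr\,\pi_0((\check\phi)_{(s)}*f)$ over a single irreducible $\pi_0$ are not automatically absolutely convergent, nor is the full double integral; the convergence at the two ends $s\to 0$ (low frequency) and $s\to\infty$ is precisely the issue that needs to be addressed. Second, ``smoothness inherited from $f$ together with compactness of $\Gh/\bR^+$'' is a heuristic, not an argument: even in the abelian case $G=\bR^n$, passing from a symbol $a(\xi/|\xi|)$ smooth on the sphere to a kernel smooth away from $0$ is the content of Calder\'on--Zygmund theory and is not a formal averaging step.

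The paper takes a different and technically heavier route that actually delivers the smoothness: it proves, for each $M\in\bN_0$, the a priori estimate
\[
|(\kappa,\cR^M\phi)|\le C\bigl(\|\phi\|_{L^1(G)}+\|\phi\|_{L^2(G)}+\||x|^{-N}\phi\|_{L^2(G)}\bigr)
\]
for suitable $N$, valid for test functions vanishing to high order at the origin. This implies that $X^\alpha\kappa$ is locally square integrable on $G\setminus\{0\}$ for every $\alpha$, hence that $\kappa$ is smooth away from $0$. The proof of this estimate relies on a spectral cutoff $\chi(\cR)$ to split into low and high $\cR$-frequency contributions, the polar decomposition on $\Gh$ to split $\int_0^\infty=\int_0^1+\int_1^\infty$, and an integration by parts in the difference operators $\Delta^\alpha$ (Lemma \ref{lem_FIF+IBP}) applied to $\phi=|x|^N\phi_N$, with the $L^2(\Gh)$ integrability of $\chi_{N_0}(\pi(\cR))$ and of $\Delta^{\alpha_1}\chi(\pi(\cR))$ providing the needed convergence. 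None of this machinery appears in your sketch, and without it (or an equivalent substitute) the smoothness claim is not established; the easy parts of the lemma are in order, but the proof is incomplete.
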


\begin{proof}[Strategy of the proof of Lemma \ref{lem_sigmaf}]
Since $f\in \cS(G)$, 
for each $\pi\in \Gh$, 
the operator $\widehat f(\pi)$ maps $\cH_\pi^\infty$ to itself
and has operator norm $\leq \|f\|_{L^1}$.
Moreover the field of operators $\widehat f$ is a measurable.
Since the map $\pi\mapsto |\pi|^{-1}\cdot \pi$ is continuous 
$\Gh\backslash\{1\}\to \Gh$, 
see  Section \ref{subsec_polar_dec_Gh}, 
we have $\sigma_f\in L^\infty(\Gh)$ with $\sup_{\pi\in \Gh} \|\sigma_f(\pi)\|_{\sL(\cH_\pi)}
\leq \|f\|_{L^1}$.
One checks easily that $\sigma_f$ is 0-homogeneous.

We denote by $\kappa\in \cS'(G)$ the kernel of $\sigma_f\in L^\infty(\Gh)$, 
i.e. $\sigma_f=\widehat\kappa$.
Let $\phi\in \cS(G)$.
By Lemma~\ref{lem_type}, it suffices to show that its kernel $\kappa \in \cS'(G)$ is smooth away from 0.
And for this, 
it suffices to show that for every $M\in \bN_0$
there exist $N\in \bN_0$ and $C=C(\sigma,M)>0$ 
such that for any  $\phi\in \cD(G)$ satisfying $|x|^{-N} \phi  \in \cD(G)$, we have:
\begin{equation}
\label{eq_pf_lem_sigmaf}
|( \kappa, \cR^{M} \phi)|
\leq C \left(\|\phi\|_{L^1(G)} +\|\phi\|_{L^2(G)} + \| |x|^{-N} \phi \|_{L^2(G)}\right).
\end{equation}
Indeed \eqref{eq_pf_lem_sigmaf} will imply that $X^\alpha\kappa$ is locally square integrable on $G\backslash\{0\}$ for any $\alpha\in \bN_0^n$, and thus that it is smooth away from 0.
\end{proof}

\begin{proof}[Proof of \eqref{eq_pf_lem_sigmaf} in the case $M=0$]
We fix  $\chi\in \cD(\bR)$ such that $0\leq \chi\leq 1$, $\chi\equiv 1$ on $[-1,1]$ and $\chi(\lambda)\equiv 0$ for $|\lambda|\geq 2$.
Let $\phi\in \cD(G)$.
Since $\chi(\cR)\delta_0 \in \cS(G)$, 
see Corollary \ref{cor_prop_multipliers_symbol}, 
 $\chi(\cR)\phi=\phi* \chi(\cR)\delta_0$ is Schwartz 
and so is $(1-\chi)(\cR)\phi$.
As $\kappa\in \cS'(G)$, we can write:
$$
\langle\kappa,\phi\rangle= 
\langle\kappa,\chi(\cR)\phi\rangle+
\langle\kappa,(1-\chi)(\cR)\phi\rangle.
$$
Note that if $\psi\in \cS(G)$, 
then $\sigma_f \widehat \psi \pi(\id+\cR)^N \in L^\infty(\Gh)$ for any $N\in \bN$, 
and $\sigma_f \widehat \psi$
satisfies the hypotheses of
Proposition \ref{prop_FIF}.
So the Fourier inversion formula yields
$$
\langle \kappa, \psi\rangle
= 
\int_{\Gh} \tr \left(\sigma_f(\pi) \cF(\check\psi)(\pi)\right)
d\mu(\pi),
$$
where $\check \psi(x)=\psi(x^{-1})$, 
and
$$
|\langle \kappa, \psi\rangle|
\leq \|\sigma_f(\pi)\|_{L^\infty(\Gh)}
\int_{\Gh} \tr  |\cF(\check\psi)(\pi)|
d\mu(\pi)
\leq \|f\|_{L^1(G)}
\int_{\Gh} \tr  |\cF(\psi)(\pi)|
d\mu(\pi).
$$
Applying this to $\psi=\chi(\cR)\phi\in \cS(G)$
implies that 
$$
|\langle\kappa,\chi(\cR)\phi\rangle|
\leq
\|f\|_{L^1(G)}
\|\phi\|_{L^1(G)}
\int_{\Gh} \tr |\chi(\pi(\cR))| d\mu(\pi),
$$
and the last integral is finite, 
see Corollary \ref{cor_prop_multipliers_symbol}.

We now turn our attention to
$\langle\kappa,(1-\chi)(\cR)\phi\rangle$.
For the same reason as above, 
$$
\int_{\pi\in \Gh}
\tr \left| \sigma_f(\pi) 
(1-\chi)(\pi(\bar\cR))
\cF \check \phi (\pi)\right| d\mu(\pi)
<\infty
$$
 \begin{eqnarray*}
 {\rm and}\quad
\langle\kappa,(1-\chi)(\cR)\phi\rangle
&=&
\int_{\pi\in \Gh}
\tr \left\{ \sigma_f(\pi) 
(1-\chi)(\pi(\bar\cR))
\cF \check \phi (\pi)\right\}d\mu(\pi)
\\
&=&
\int_{r=0}^\infty \int_{\pi\in \Sigma_1}
\tr \left\{ \sigma_f(r\cdot\pi) (1-\chi)(r\cdot \pi(\bar \cR))
\cF \check \phi (r\cdot \pi)\right\}d\varsigma(\pi) r^{Q-1} dr,
\end{eqnarray*}
having used the polar decomposition
 on $\Gh$, 
see Lemma \ref{lem_pol_dec_Gh}.
We now write:
$$
\int_{r=0}^\infty = \int_{r=0}^1 + \int_{r=1}^\infty
=I_1+I_2.
$$
We have
$r\cdot \pi(\cR)=r^\nu \pi(\cR)
\quad\mbox{and}\quad 
\sigma_f(r\cdot\pi) = \widehat f(\pi),\ \pi\in \Sigma_1,
$
so fixing $N_0\in \bN$
$$
\sigma_f(r\cdot\pi) (1-\chi)(r\cdot \pi(\bar \cR))
=
r^{\nu N_0}\widehat{ {\tilde {\bar \cR}}^{N_0} f}(\pi) 
\chi_{N_0}(r^\nu  \pi(\bar \cR))
$$
where $\chi_{N_0}(\lambda)=(1-\chi(\lambda))\lambda^{-N_0}$,
and
\begin{align*}
|I_1|
&\leq 
\int_{r=0}^1 \int_{\pi\in \Sigma_1}
\tr \left| \widehat{ {\tilde {\bar \cR}}^{N_0} f}(\pi) 
\chi_{N_0}(r^\nu  \pi(\bar \cR))
\cF \check \phi (r\cdot \pi)\right |d\varsigma(\pi) r^{N_0\nu +Q-1} dr
\\
&\leq 
\|\widehat{ {\tilde {\bar \cR}}^{N_0} f} \|_{L^\infty(\Gh)}
\int_{r=0}^\infty \int_{\pi\in \Sigma_1}
\|\chi_{N_0}(r^\nu  \pi(\bar \cR))\|_{HS(\cH_\pi)}
\|\cF \check \phi (r\cdot \pi)\|_{HS(\cH_\pi)}
d\varsigma(\pi)  r^{Q-1} dr
\\
&\leq \|\tilde {\bar \cR}^{N_0} f\|_{L^1}
\|\chi_{N_0}(\pi(\cR))\|_{L^2(\Gh)}
\|\widehat\phi\|_{L^2(\Gh)},
\end{align*}
by the Cauchy-Schwartz inequality.
The Plancherel formula yields 
$\|\widehat\phi\|_{L^2(\Gh)} = \|\phi\|_{L^2(G)}$
and, together with the functional calculus of $\cR$,
$$
\|\chi_{N_0}(\pi(\cR))\|_{L^2(\Gh)}
=
\|\chi_{N_0}(\cR)\|_{L^2(G)}
\leq \sup_{\lambda\geq 0} (1-\chi(\lambda))
\left(\frac{1+\lambda}{\lambda}\right)^{N_0} \
\|(\id+\cR)^{-N_0} \delta_0 \|_{L^2(G)}.
$$
This last quantity is finite when $N_0\nu>Q/2$
since $(\id+\cR)^{\frac {s_1}\nu}\delta_0\in L^2(G)$ for $s_1<-Q/2$,
 see \cite[\S 4.3.3]{R+F_monograph}.

For the second integral, we see
$$
I_2 
=
\int_{r=1}^\infty \int_{\pi\in \Sigma_1}
\tr \left\{ \widehat f (\pi) (1-\chi)(r^\nu  \pi(\bar \cR))
\cF \check \phi (r\cdot \pi)\right\}d\varsigma(\pi) r^{Q-1} dr.
$$
For each $r>0$, 
we define
$g_r := (1-\chi) (r^\nu\tilde \cR) f \in L^2(G)$
so that $\widehat g_r(\pi)= \widehat f(\pi)  (1-\chi)(r^\nu  \pi(\bar \cR))$
also defines an operator for each $\pi\in \Gh$ with $\widehat g_r\in L^\infty(\Gh)\cap L^2(\Gh)$.
We observe that if $\alpha_2\in \bN_0^n$, $\alpha_2\not=0$,
then
$$
\Delta^{\alpha_2}(1-\chi)(\pi(\bar \cR)) 
=
-\Delta^{\alpha_2}\chi(\pi(\bar \cR)) \in \cF_G \cS(G).
$$
This together with the Leibniz formula easily implies that for any $\alpha\in \bN_0^n$, $\Delta^\alpha \widehat g_r (\pi) $ is a well defined bounded operator on $\cH_\pi$ for each $\pi\in \Gh$ and that  $\Delta^\alpha \widehat g_r \in L^\infty(\Gh)$.

Let $N\in \bN_0$.
For any $x\in G\backslash \{0\}$, we set
 $\phi_N(x)=|x|^{-N} \phi(x)$ and assume  $\phi_N\in \cD(G)$.
We may assume that the integer $N$ and the quasi-norm $|\cdot|$ are such that $|\cdot|^N$ is a polynomial in $x$, which is then necessarily homogeneous of degree $N$. One checks readily that
$\phi = |x|^N \phi_N$ and by \eqref{eq_Deltarcdotpi},
$$
\cF \check \phi (r\cdot \pi)
=
r^{-N} \Delta_{|x|^N} \cF(\check \phi_{N})_{(r)} (\pi).
$$
From \eqref{eq_sigmaDeltaphi}, we have
\begin{align*}
&\tr \left\{ \widehat f (\pi) (1-\chi)(r^\nu  \pi(\bar \cR))
\cF \check \phi (r\cdot \pi)\right\}
=
r^{-N} \tr \left\{ \widehat g_r (\pi) 
\Delta_{|x|^N} \cF(\check \phi_{N})_{(r)} (\pi)\right\}
\\
&\qquad=r^{-N} \sum_{[\alpha_1]+[\alpha_2]=N}c_{\alpha_1,\alpha_2}
\tr \Delta^{\alpha_1}\left\{\Delta^{\alpha_2} \widehat g_r (\pi) 
\
 \cF(\check \phi_{N})_{(r)} (\pi)\right\}
\\
&\qquad=\sum_{[\alpha_1]+[\alpha_2]=N}c_{\alpha_1,\alpha_2}
\tr \Delta^{\alpha_1}\left\{
\sigma_{\alpha_2}
\
 \cF(\check \phi_{N})\right\}(r\cdot \pi)
\end{align*}
where
$\displaystyle{
\sigma_{\alpha_2}(r\cdot \pi)
:=r^{-[\alpha_2]}1_{r\geq 1}\Delta^{\alpha_2} \widehat g_r (\pi)}$ ($r>0$, $\pi\in \Sigma_1$)
is in $L^\infty(\Gh)$.
By Lemmata \ref{lem_pol_dec_Gh} and \ref{lem_FIF+IBP},
\begin{align*}
I_2 
&=
\sum_{[\alpha_1]+[\alpha_2]=N}c_{\alpha_1,\alpha_2}
\int_{r=0}^\infty\int_{\pi\in \Sigma_1}
\tr \Delta^{\alpha_1}\left\{
\sigma_{\alpha_2}
\
 \cF(\check \phi_{N})\right\}(r\cdot \pi)
d\varsigma(\pi) r^{Q-1} dr
\\
&=
\sum_{[\alpha_1]+[\alpha_2]=N}c_{\alpha_1,\alpha_2}
\int_{r=0}^\infty\int_{\pi\in \Sigma_1}
\tr \Delta^{\alpha_1}\left\{
\sigma_{\alpha_2}
\
 \cF(\check \phi_{N})\right\}(\pi)\
d\mu(\pi) 
\\
&=
\sum_{[\alpha_2]=N}c_{0,\alpha_2}
\int_{\Gh}
\tr \left\{
\sigma_{\alpha_2}
\
 \cF(\check \phi_{N})\right\}(\pi)
d\mu(\pi) .
\end{align*}
So we have obtained:
$$
|I_2 |
\lesssim \sum_{[\alpha_2]=N}
\int_{\Gh}
\tr \left|
\sigma_{\alpha_2}
\
 \cF(\check \phi_{N})\right|(\pi) \
d\mu(\pi) 
\lesssim \sum_{[\alpha_2]=N}
\|\sigma_{\alpha_2}\|_{L^2(\Gh)}
\|\cF(\check \phi_{N})\|_{L^2(\Gh)}.
$$
By the Plancherel formula, 
$\|\cF(\check \phi_{N})\|_{L^2(\Gh)}=\|\phi_N\|_{L^2(G)}$.
 We have with $\alpha_2\in \bN_0^n$, $[\alpha_2]=N$:
$$
\|\sigma_{\alpha_2}\|_{L^2(\Gh)}^2
=
\int_{r=1}^{\infty}\int_{\pi\in \Sigma_1}
\| \Delta^{\alpha_2} \widehat g_r (\pi) \|_{HS(\cH_\pi)}^2
r^{-2N+Q-1} dr d\varsigma(\pi),
$$
and by the Leibniz formula 
$$
\| \Delta^{\alpha_2} \widehat g_r (\pi) \|_{HS(\cH_\pi)}
\lesssim
\sum_{[\alpha_0]+[\alpha_1]=N}
\| \Delta^{\alpha_0} \widehat f (\pi) \
\Delta^{\alpha_1} (1-\chi)(r^\nu \pi(\cR)) 
 \|_{HS(\cH_\pi)}.
$$
In the sum above, for $\alpha_1\not=0$, 
we have
$$
\| \Delta^{\alpha_0} \widehat f (\pi) \
\Delta^{\alpha_1} (1-\chi)(r^\nu \pi(\cR)) 
 \|_{HS(\cH_\pi)} \leq
 \|  \widehat{x^{\alpha_0} f} (\pi) \|_{L^\infty(\Gh)}
\|\Delta^{\alpha_1} \chi(r^\nu \pi(\cR)) \|_{HS(\cH_\pi)}
$$
whereas for $\alpha_1=0$, we have
$$
\| \Delta^{\alpha_0} \widehat f (\pi) \
(1-\chi)(r^\nu \pi(\cR)) 
 \|_{HS(\cH_\pi)} \leq
 r^{N_0\nu} \|  \cF\{\tilde {\bar \cR}^{N_0}x^{\alpha_0} f\} (\pi) 
\chi_{N_0}(r^\nu \pi(\cR)) \|_{HS(\cH_\pi)}.
$$
When $N_0\nu\leq 2N$,  
these estimates yield
 \begin{align*}
\sum_{[\alpha_2]=N}
\|\sigma_{\alpha_2}\|_{L^2(\Gh)} &\lesssim 
\sum_{[\alpha_0]\leq N}
\|  \widehat{x^{\alpha_0} f} (\pi) \|_{L^\infty(\Gh)}
\sum_{0<[\alpha_1]\leq N}
 \|\Delta^{\alpha_1} \chi( \pi(\cR)) \|_{L^2(\Gh)}
\\ &\qquad +  \sum_{[\alpha_0]= N}
 \|  \cF\{\tilde {\bar \cR}^{N_0}x^{\alpha_0} f\}\|_{L^\infty(\Gh)}
 \|\chi_{N_0}(\pi(\cR))\|_{L^2(\Gh)}
\\
&\lesssim \|(1+|x|)^N f\|_{L^1(G)}
\|(1+|x|)^N \chi(\cR)\delta_0\|_{L^1(G)}
\\ &\qquad +   \sum_{[\alpha_0]= N}
 \| \tilde {\bar \cR}^{N_0}x^{\alpha_0} f\|_{L^1(G)}
\|\chi_{N_0}(\cR)\delta_0\|_{L^2(G)}.
\end{align*}
Recall that $\chi(\cR)\delta_0\in \cS(G)$ and we have already seen that $\chi_{N_0}(\cR)\delta_0 \in L^2(G)$ when $\nu N_0>Q/2$.
This implies that $I_2$ is bounded up to a constant of $f$ by $\|\phi_N\|_{L^2(G)}$.
Hence \eqref{eq_pf_lem_sigmaf} is proved in the case $M=0$. 
\end{proof}

\begin{proof}[Proof of \eqref{eq_pf_lem_sigmaf} for $M\in \bN$]
If $M\in \bN$, then we modify the proof above.
We write 
$$
(\kappa,\cR^M\phi)= 
(\kappa,\cR^M\chi(\cR)\phi)+
(\kappa,\cR^M(1-\chi)(\cR)\phi) .
$$
First the first term, we have
\begin{eqnarray*}
(\kappa,\cR^M\chi(\cR)\phi)
&=&
\int_{\pi\in \Gh}
\tr \left\{ \sigma_f(\pi) \pi(\bar\cR)^M
\chi(\pi(\bar\cR))
\cF \check \phi (\pi)\right\}d\mu(\pi)\\
|(\kappa,\chi(\cR)\phi)|
&\leq&
 \|f\|_{L^1}\|\phi\|_{L^1}
 \int_{\pi\in \Gh}
\tr \left|(\lambda^M\chi)( \pi(\bar\cR))\right|d\mu (\pi),
\end{eqnarray*}
and this last integral is finite since $(\lambda^M\chi)( \pi(\bar\cR))$ is the Fourier transform of a Schwartz function by Hulanicki's theorem.
For the second term,  we have:
\begin{eqnarray*}
&&(\kappa,\cR^M(1-\chi)( \cR)\phi)
\\&&\qquad=
\int_{r=0}^\infty \int_{\pi\in \Sigma_1}
\tr \left\{ \widehat f(\pi)r^{\nu M}\pi(\bar\cR)^M (1-\chi)(r^\nu \pi(\bar \cR))
\cF \check \phi (r\cdot \pi)\right\}d\varsigma(\pi) r^{Q-1} dr,
\end{eqnarray*}
We decompose again $\int_{r=0}^\infty=\int_{r=0}^1+\int_{r=1}^\infty$, 
and  a modification of the argument yields:
$$
\int_{r=0}^1\leq
\|\tilde {\bar \cR}^{N_0} f\|_{L^1}
\|\chi_{N_0+M}(\pi(\cR))\|_{L^2(\Gh)}
\|\widehat\phi\|_{L^2(\Gh)},
$$
and 
$\int_{r=1}^\infty\leq
\|\phi_N\|_{L^2(G)} I'_2$ with 
$$
(I'_2)^2
\lesssim\!\!\!\!\!
\sum_{[\alpha_0]+[\alpha_1]=N}
\int_{\Sigma_1}\int_{1}^\infty
\| \Delta^{\alpha_0} \cF_G\{ \tilde \cR^M f (\pi)\} 
\ \Delta^{\alpha_1}(1-\chi)(r^\nu \pi(\cR)) 
 \|_{HS(\cH_\pi)}^2 r^{\nu M-2N+Q-1} dr d\varsigma(\pi).
$$
This implies \eqref{eq_pf_lem_sigmaf} for $M\in \bN$ and concludes the proof of Lemma \ref{lem_sigmaf} .
\end{proof}

\subsection{The $C^*$-algebra $C^*(\tilde S^0)$ and its spectrum}
\label{subsec_C*tildeS0}

In this section, we study the closure of $\tilde S^0$ for $\sup_{\pi \in \Gh}\|\cdot\|_{\sL(\cH_\pi)}$.
It is denoted by $C^*(\tilde S^0)$. More precisely, we prove:

\begin{proposition}
\label{prop_C*tildeS0}
The closure of $\tilde S^0$ for $\sup_{\pi \in \Gh}\|\cdot\|_{\sL(\cH_\pi)}$ is a  separable $C^*$-algebra denoted by $C^*(\tilde S^0)$. 
It is of type 1.

If $\pi_0\in \Gh$, then the mapping 
$$
\left\{\begin{array}{lll}
\tilde S^0 &\longrightarrow& \sL(\cH_{\pi_0})\\
\sigma&\longmapsto & \sigma(\pi_0)
\end{array}\right. \quad,
$$
extends to a continuous mapping $\rho_{\pi_0}:C^*(\tilde S^0)\to \sL(\cH_{\pi_0})$ which is an irreducible representation of $C^*(\tilde S^0)$. For any $r>0$, we have $\rho_{\pi_0}=\rho_{r\cdot\pi_0}$.
Denoting by $\dot \pi_0$ the class of representations $\{r\cdot\pi_0, r>0\}$, 
the mapping
$$
R:\left\{\begin{array}{lll}
 \Gh/\bR^+ &\longrightarrow&\widehat{C^*(\tilde S^0)} \\
 \dot \pi_0 &\longmapsto& \rho_{\pi_0}
 \end{array}\right.
 $$
is a homeomorphism.
\end{proposition}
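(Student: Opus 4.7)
The proof proceeds in four steps, with Lemma \ref{lem_sigmaf} and the CCR property of the connected nilpotent Lie group $G$ as the two key ingredients.

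\emph{$C^*$-algebra structure and evaluation maps.} The space $\tilde S^0$ is a unital $*$-subalgebra of $(L^\infty(\Gh),\sup_\pi\|\cdot\|_{\sL(\cH_\pi)})$, so its operator-norm closure $C^*(\tilde S^0)$ is a unital $C^*$-algebra. Separability is immediate from Corollary \ref{cor_pv+type}, since the Fr\'echet topology of $\tilde S^0$ dominates the $C^*$-norm. For each $\pi_0\in\Gh$, evaluation $\sigma\mapsto\sigma(\pi_0)$ is a contractive $*$-homomorphism on $\tilde S^0$ and hence extends uniquely to $\rho_{\pi_0}:C^*(\tilde S^0)\to\sL(\cH_{\pi_0})$; the $0$-homogeneity yields $\rho_{\pi_0}=\rho_{r\cdot\pi_0}$ for every $r>0$, so $R$ descends to $\Gh/\bR^+$.

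\emph{Irreducibility of $\rho_{\pi_0}$.} Setting $\pi':=|\pi_0|^{-1}\cdot\pi_0\in\Sigma_1$, Lemma \ref{lem_sigmaf} gives $\sigma_f\in\tilde S^0$ with $\rho_{\pi_0}(\sigma_f)=\widehat f(\pi')=\pi'(f)\in\sL(\cH_{\pi_0})$ for every $f\in\cS(G)$. Since $G$ is connected nilpotent (hence CCR), the operator-norm closure of $\{\pi'(f):f\in\cS(G)\}$ equals $\cK(\cH_{\pi_0})$. Thus $\rho_{\pi_0}(C^*(\tilde S^0))\supseteq\cK(\cH_{\pi_0})$, which forces $\rho_{\pi_0}$ to be irreducible; moreover the $C^*$-subalgebra $A_0:=\overline{\{\sigma_f:f\in\cS(G)\}}$ satisfies $\rho_{\pi_0}(A_0)=\cK(\cH_{\pi_0})$ for every $\pi_0$.

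\emph{Bijectivity of $R$ and type I.} Injectivity follows by restricting any unitary equivalence $\rho_{\pi_1}\simeq\rho_{\pi_2}$ to the $\sigma_f$'s: this intertwines the irreducible $G$-representations $|\pi_i|^{-1}\cdot\pi_i$ and hence identifies them in $\Gh$, giving $\dot\pi_1=\dot\pi_2$. Surjectivity is the main obstacle. Given an irreducible representation $\rho$ of $C^*(\tilde S^0)$, the rule $\tau(\widehat f):=\rho(\sigma_f)$ is well-defined because $\sigma_{f*g}=\sigma_g\sigma_f$ and $\|\sigma_f\|\leq\|f\|_{C^*(G)}$, and the $\delta_0$-approximates of Lemma \ref{lem_cv_psiepsilon} yield non-degeneracy, giving a $*$-representation $\tau:C^*(G)\to\sL(\cH_\rho)$. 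When $\rho(A_0)\neq 0$, irreducibility of $\rho$ combined with $\rho_\pi(A_0)=\cK(\cH_\pi)$ forces $\tau$ to be irreducible, hence equivalent to a unique $\pi'\in\Sigma_1$, and comparing on generators gives $\rho\simeq\rho_{\pi'}$; the degenerate case $\rho(A_0)=0$ corresponds to the trivial representation $\pi=1$. Once surjectivity is established, every irreducible representation has image containing $\cK$, so $C^*(\tilde S^0)$ is CCR, hence type I.

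\emph{Homeomorphism.} For continuity of $R$, I lift $\dot\pi_k\to\dot\pi_0$ via the radial projection onto the compact sphere $\Sigma_1$ and use the hull-kernel topology on $\Gh$, for which $\pi\mapsto\pi(f)$ is SOT-continuous for every $f\in L^1(G)$: this yields $\rho_{\pi_k}(\sigma_f)\to\rho_{\pi_0}(\sigma_f)$ in SOT for all $f\in\cS(G)$, and a density argument based on the $A_0$-generators propagates this to $\ker\rho_{\pi_0}\subseteq\liminf\ker\rho_{\pi_k}$, which is the Jacobson-continuity of $R$. For continuity of $R^{-1}$, the construction in the previous step provides an explicit continuous set-theoretic inverse $\widehat{C^*(\tilde S^0)}\to\Gh/\bR^+$, essentially the composition of the restriction-to-$A_0$ with Kirillov's homeomorphism (Lemma \ref{lem_Kmap_quotient}) and the radial projection onto $\Sigma_1$. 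Since $\Gh/\bR^+$ is compact and the Jacobson topology on the spectrum of a separable $C^*$-algebra is second-countable, a continuous bijection with a continuous set-theoretic inverse is a homeomorphism.
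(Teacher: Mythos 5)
Your plan matches the paper's strategy very closely: the same symbol $\sigma_f$ of Lemma~\ref{lem_sigmaf} drives the correspondence between representations of $C^*(\tilde S^0)$ and of $C^*(G)$, the same $\delta_0$-approximates from Lemma~\ref{lem_cv_psiepsilon} give non-degeneracy, and the type~I claim is handled identically (image of any irreducible representation contains the compacts). One genuine improvement over the paper: you make the irreducibility of each $\rho_{\pi_0}$ explicit by noting that $\rho_{\pi_0}(C^*(\tilde S^0))$ contains $\overline{\pi'(\cS(G))}=\cK(\cH_{\pi_0})$ via CCR of $G$; the paper's end-of-proof leaves this implicit.

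There are, however, some slips. First, your map $\tau(\widehat f):=\rho(\sigma_f)$ is an \emph{anti}-homomorphism, not a $*$-representation: your own identity $\sigma_{f_1*f_2}=\sigma_{f_2}\sigma_{f_1}$ reverses the product, so $\tau(\widehat f\,\widehat g)=\tau(\widehat g)\tau(\widehat f)$. The paper's Lemma~\ref{lem_prop_C*tildeS0} sidesteps this by setting $\pi_\rho(f):=\rho(\sigma_f^*)$, for which multiplicativity works out. You can equivalently post-compose with $f\mapsto\check f$, but as written the claim ``giving a $*$-representation $\tau:C^*(G)\to\sL(\cH_\rho)$'' is false. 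Second, the trivial representation $\pi_0=1$ is not covered by your irreducibility argument: $|1|=0$, so $\pi'=|\pi_0|^{-1}\cdot\pi_0$ is undefined and Lemma~\ref{lem_sigmaf} gives nothing at $\pi_0=1$; the case is easy ($\rho_1$ is one-dimensional, hence irreducible) but must be treated separately, both here and in the injectivity argument. Third, your surjectivity step asserts that ``the degenerate case $\rho(A_0)=0$ corresponds to the trivial representation'' without proof; the paper instead dispatches the degenerate subspace by showing $\cH_{\pi_\rho,0}^\perp$ is $\rho$-invariant and invoking irreducibility of $\rho$, which is the step you should reproduce more carefully. Finally, for continuity of $R^{-1}$ you invoke ``restriction to $A_0$'' as a continuous map on spectra: this is standard when $A_0$ is an ideal, but $A_0=\overline{\{\sigma_f:f\in\cS(G)\}}$ is a subalgebra and you give no argument that it is an ideal of $C^*(\tilde S^0)$, nor an alternative reason for continuity (the paper's own appeal to compactness of $\Gh/\bR^+$ is itself terse, since $\Gh/\bR^+$ is not Hausdorff, so your instinct to argue the inverse directly is sound, it just needs to be completed).
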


Consequently, we may identify the spectrum of $C^*(\tilde S^0)$ with $\Gh/\bR^+$.
Recall that $\Gh$ is the spectrum of the $C^*$-algebra $C^*(G)$ of the group.
Here we view $C^*(G)$ as the completion of $\cS(G)$ or $L^1(G)$ for the norm 
$f\mapsto \sup_{\pi\in \Gh} \|\widehat f(\pi)\|_{\sL(\cH_\pi)}$.

One checks easily that  $C^*(\tilde S^0)$ is a $C^*$-algebra.
Its separability follows easily from Corollary \ref{cor_pv+type}.
The essential point in the proof of Proposition \ref{prop_C*tildeS0} is the following lemma:
\begin{lemma}
\label{lem_prop_C*tildeS0}
Let $\rho$ be a representation of the $C^*$ algebra $C^*(\tilde S^0)$.
For any $f\in \cS(G)$, we set
$$
\pi_\rho(f) = \rho ( {\sigma_f}^*), 
$$
where the symbol  $\sigma_f \in \tilde S^0$ is defined as in Lemma \ref{lem_sigmaf}
(we assume that a quasi-norm on $G$ has been fixed).
Then 
the mapping $\pi_\rho:\cS(G)\to \sL(\cH_\rho)$ 
extends to  a continuous representation $\pi_\rho$ of $C^*(G)$. 

If $\rho$ is (non-zero) irreducible, 
then $\pi_\rho$ is (non-zero) irreducible, i.e. $\pi_\rho\in \Gh$.
Furthermore  for any symbol $\sigma\in \tilde S^0$,
we have  $\rho(\sigma)=\sigma(\pi_\rho)$.
\end{lemma}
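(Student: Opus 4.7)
The plan is to proceed in three steps: first verify that $\tau:f\mapsto \sigma_f^*$ defines a $*$-homomorphism $\cS(G)\to \tilde S^0$ which extends continuously to $C^*(G)\to C^*(\tilde S^0)$; then use an approximate identity argument to prove non-degeneracy and irreducibility of $\pi_\rho=\rho\circ\tau$; and finally extend the identity $\rho(\sigma)=\sigma(\pi_\rho)$ from the image of $\tau$ to all of $\tilde S^0$.

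For the first step, the convention $\pi(f)=\int_G f(x)\pi(x)^*dx$ yields $\widehat{f*g}=\widehat g\,\widehat f$ and $\widehat{f^*}=\widehat f^*$; evaluating at $|\pi|^{-1}\cdot\pi$ gives $\sigma_{f*g}=\sigma_g\sigma_f$ and $\sigma_{f^*}=\sigma_f^*$. Taking adjoints shows $\tau(f*g)=\tau(f)\tau(g)$ and $\tau(f^*)=\tau(f)^*$, so $\tau$ is a $*$-homomorphism. The estimate $\|\sigma_f\|_{C^*(\tilde S^0)}=\sup_{\pi\in\Gh}\|\widehat f(|\pi|^{-1}\cdot\pi)\|_{\sL(\cH_\pi)}\leq \|f\|_{L^1(G)}$ lets $\tau$ extend to $L^1(G)$, and the universal property of $C^*(G)$ extends it further. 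Composing with $\rho$ gives $\pi_\rho$ as a $*$-representation of $C^*(G)$ on $\cH_\rho$.

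The central technical ingredient for the remaining steps is a Schwartz $\delta_0$-approximate $\psi_\epsilon$ with $\int\psi = 1$, combined with the exact identity $\sigma_{\psi_\epsilon}^*\,\sigma_f^* = \sigma_{\psi_\epsilon*f}^*$ (which follows from $\sigma_{g*h}=\sigma_h\sigma_g$ and $\sigma_f^*=\sigma_{f^*}$). Together with $\|\psi_\epsilon*f-f\|_{L^1(G)}\to 0$, this yields $\sigma_{\psi_\epsilon}^*\sigma_f^*\to \sigma_f^*$ in the $C^*(\tilde S^0)$-norm. Applying $\rho$ gives $\pi_\rho(\psi_\epsilon)\pi_\rho(f)w\to \pi_\rho(f)w$ for every $f\in\cS(G)$ and $w\in\cH_\rho$; so $\pi_\rho(\psi_\epsilon)\to \id$ in the strong operator topology on $\cH_\rho^{nd}:=\overline{\pi_\rho(\cS(G))\cH_\rho}$. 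Since $\cH_\rho^{nd}$ is invariant under $\rho(\tau(C^*(G)))$ and, by density of $\tau(\cS(G))$ in $C^*(\tilde S^0)$, also under $\rho(C^*(\tilde S^0))$, irreducibility of $\rho$ forces $\cH_\rho^{nd}\in\{0,\cH_\rho\}$. The case $\cH_\rho^{nd}=0$ would give $\rho(\tau(C^*(G)))=0$, hence $\rho=0$ by density, contradicting the hypothesis; thus $\pi_\rho$ is non-degenerate, identifying it with a unitary representation of $G$ on $\cH_\rho$, and its irreducibility follows from that of $\rho$ on the dense subalgebra $\tau(C^*(G))$.

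The final identification $\rho(\sigma)=\sigma(\pi_\rho)$ for $\sigma\in\tilde S^0$ is essentially tautological on the image of $\tau$: for $\sigma=\sigma_f^*$, by construction $\rho(\sigma_f^*)=\pi_\rho(f)$, and one verifies directly that $\pi_\rho(f)=\sigma_f^*(\pi_\rho)$ under the natural identification of $\pi_\rho\in\Gh$ provided by step two (tracing through $\sigma_f^*(\pi_\rho)=\widehat f(|\pi_\rho|^{-1}\cdot\pi_\rho)^*$ and the universal property of $C^*(G)$). The general case extends by continuity of $\rho$ and of $\sigma\mapsto \sigma(\pi_\rho)$, once density of $\tau(\cS(G))$ in $C^*(\tilde S^0)$ is established. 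This density is the main obstacle in the plan: using the characterisation $\tilde S^0\cong \cF\times\bC$ from Corollary \ref{cor_pv+type}, one must approximate every smooth $(-Q)$-homogeneous mean-zero kernel plus any scalar multiple of $\delta_0$ in the $\sup_\pi\|\cdot\|_{\sL(\cH_\pi)}$-norm by symbols of the form $\widehat f(|\pi|^{-1}\cdot\pi)$ with $f\in\cS(G)$, exploiting the full flexibility of $\widehat f|_{\Sigma_1}$ as $f$ ranges over $\cS(G)$.
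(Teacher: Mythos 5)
Your first step — verifying $\sigma_{f_1*f_2}=\sigma_{f_2}\sigma_{f_1}$, $\sigma_{f^*}=\sigma_f^*$, the $L^1$-bound, and extending $\pi_\rho$ to $C^*(G)$ — matches the paper's. The gap is in the remaining two steps, which you make depend essentially on the norm-density of $\tau(\cS(G))=\{\sigma_f^*:f\in\cS(G)\}$ in $C^*(\tilde S^0)$: you use it to show $\cH_\rho^{nd}$ is $\rho(C^*(\tilde S^0))$-invariant, to rule out $\cH_\rho^{nd}=0$, to pass irreducibility from $\rho$ to $\pi_\rho$, and to extend $\rho(\sigma)=\sigma(\pi_\rho)$ beyond the image of $\tau$. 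You flag this density as ``the main obstacle'' and leave it unproved — that is indeed the missing step, and it cannot simply be waved away: for $\sigma\in\tilde S^0$ and $f\in\cS(G)$, the product $\sigma\,\sigma_f^*$ is not obviously of the form $\sigma_g^*$ for some $g\in\cS(G)$, because the inverse Fourier transform of $\sigma\widehat f$ inherits the order-$(-Q)$ singularity at the origin of the kernel of $\sigma$ and need not be Schwartz (this is already visible in the abelian model). So even the invariance of $\cH_\rho^{nd}$ under $\rho(\tilde S^0)$, which your argument requires, is not available without the density.

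The paper avoids the density question altogether. Instead of a standard $\delta_0$-approximate it uses $\chi_\epsilon(\cR)\delta_0$ with $\chi_\epsilon(\lambda)=\chi_1(\epsilon\lambda)$ and $\chi_1\in\cD((0,\infty))$, and the decisive input is Corollary~\ref{cor_sigmachi}: for \emph{any} $\tau\in\tilde S^0$, the field $\widehat f_\epsilon:=\chi_\epsilon(\pi(\cR))\tau$ defines a Schwartz function $f_\epsilon$, with $\sigma_{f_\epsilon}=\sigma_{\chi_\epsilon(\cR)\delta_0}\tau$. Thus $\rho(\sigma_{\chi_\epsilon(\cR)\delta_0}\tau)=\rho(\sigma_{f_\epsilon})$ lies in the image of $\pi_\rho$, and the SOT limit $\rho(\sigma_{\chi_\epsilon(\cR)\delta_0})\to\id_{\cH_\rho}$ is enough to conclude $\rho(\tau)=\lim\pi_\rho(f_\epsilon)$ in SOT — which simultaneously gives the invariance needed for irreducibility of $\pi_\rho$ and, comparing with $\pi_\rho(f_\epsilon)=\widehat f_\epsilon(\pi_\rho)=\chi_\epsilon(\pi_\rho(\cR))\tau(\pi_\rho)\to\tau(\pi_\rho)$, the identity $\rho(\tau)=\tau(\pi_\rho)$. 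No norm-density is used anywhere. If you wish to rescue your plan you would either have to prove the density — for instance via a Stone--Weierstrass / Glimm-type argument for type~I $C^*$-algebras, exploiting the spectrum description in Proposition~\ref{prop_C*tildeS0} — or replace your $\psi_\epsilon$ by the smoothing multiplier $\chi_\epsilon(\cR)\delta_0$ and use the kernel regularity of Corollary~\ref{cor_sigmachi} as above.
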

\begin{proof}[Proof of Lemma \ref{lem_prop_C*tildeS0}]
We keep the notation of the statement.
One checks easily that for any $f,f_1,f_2\in \cS(G)$, we have:
$$
\|\sigma_f\|_{L^\infty(\Gh)}\leq \|\widehat f\|_{L^\infty(\Gh)},
\quad
\sigma_{f_1*f_2} =\sigma_{f_2}\ \sigma_{f_1},
\quad\mbox{and}\quad
{\sigma_f}^*=\sigma_{f^*}.
$$
Let $\rho$ be a representation of $C^*(\tilde S^0)$.
One checks easily that the mapping $\pi_\rho:\cS(G)\to \sL(\cH_\rho)$ defined via 
$\pi_\rho(f) = \rho ( {\sigma_f}^*)$
extends to  a continuous representation of $C^*(G)$. 

We assume that $\rho$ is irreducible and non-zero.
Let us show that $\pi_\rho$ is irreducible and non-zero.
We will need the following preliminary step.
Let $\chi_1\in \cD(\bR)\backslash\{0\}$ 
be supported in $[1/2,2]$ and valued in $[0,+\infty)$.
We set $\chi_\epsilon(\lambda)=\chi_1(\epsilon\lambda)$
for each $\epsilon>0$, $\lambda\in \bR$.
The properties of $\cR$  implies that 
$\chi_\epsilon (\cR)\delta_0
=(\chi_1 (\cR)\delta_0)_{(\epsilon)}
\in \cS(G)$.
Since 
$$
c:=\int_G \chi_\epsilon(\cR)\delta_0
=
\int_G \chi_1(\cR)\delta_0
=
\|\sqrt{\chi_1}(\cR)\delta_0\|_{L^2(G)}^2
>0, $$
we may replace $\chi_1$ with $c^{-1}\chi_1$, and assume $c=1$.
Let us show that 
\begin{equation}
\label{eq_pf_prop_C*tildeS0_prel}
\pi_\rho(\chi_\epsilon (\cR)\delta_0)
=
\rho (\sigma_{\chi_\epsilon (\cR)\delta_0})
\longrightarrow_{ \epsilon\to0}\id_{\cH_\rho}
\quad\mbox{in SOT on} \ \cH_{\rho}.
\end{equation}
By Lemma \ref{lem_cv_psiepsilon}, 
if $\pi_1$ is a continuous unitary representation of $G$, 
then 
\begin{equation}
\label{eq_pf_prop_C*tildeS0_2}
\widehat\chi_\epsilon(\pi_1(\cR)) = \pi_1\{\chi_\epsilon (\cR)\delta_0\}\longrightarrow_{ \epsilon\to0}\id_{\cH_{\pi_1}}
\quad\mbox{in SOT on} \ \cH_{\pi_1}.
\end{equation}
For any  $v\in \cH_\rho$,
the representation  $\pi_\rho$ restricted to the closure
of the subspace $\pi_\rho(C^*(G))v$
can be identified with a continuous unitary representation of $G$
when $\pi_\rho(C^*(G))v\not=\{0\}$,
see \cite[\S 13.9.3]{Dixmier_C*};
we can then apply \eqref{eq_pf_prop_C*tildeS0_2} to this representation.
By \cite[Proposition 2.2.6]{Dixmier_C*}, 
the space
$\cH_{\pi_\rho}=\cH_\rho$ of the representation $\pi_\rho$
decomposes into $ \cH_{\pi_\rho,0} \oplus^\perp  \cH_{\pi_\rho,0}^\perp$ 
where $\cH_{\pi_\rho,0}$ denotes the closure of the subspace of 
$v\in \cH_\rho$ satisfying $\pi_\rho(C^*(G))v=\{0\}$.
Hence we have obtained that, as $\epsilon\to0$,  
$\pi_\rho(\chi_\epsilon (\cR)\delta_0)
\to \id_{\cH_{\pi_\rho,0}^\perp}$ 
in SOT on $\cH_{\rho} = \cH_{\pi_\rho}$ and on $\cH_{\pi_\rho,0}^\perp$.
If $v\in \cH_{\pi_\rho,0}^\perp$ and $\sigma\in \tilde S^0$, 
then 
$\pi_\rho (\chi_\epsilon(\cR)\delta_0) 
\rho (\sigma) v$ is in $ \cH_{\pi_\rho,0}^\perp$
and converges to $\rho(\sigma)v$
which is necessarily in $ \cH_{\pi_\rho,0}^\perp$.
Thus the closed subspace $ \cH_{\pi_\rho,0}^\perp$
is invariant under $\rho$.
As $\rho$ is irreducible and non-zero, we must have
$\cH_{\pi_\rho,0}^\perp = \cH_\rho$.
Thus we have obtained \eqref{eq_pf_prop_C*tildeS0_prel}.

Let us now show that \eqref{eq_pf_prop_C*tildeS0_prel} implies the irreducibility of $\pi_\rho$.
Let $\tau$ be a symbol in $\tilde S^0$.
For every $\epsilon>0$ and $\pi\in \Gh$, 
we set $\widehat f_\epsilon(\pi):=\chi_\epsilon (\pi(\cR)) \tau(\pi)$.
By Corollary \ref{cor_sigmachi}, $f_\epsilon$ is Schwartz.
We check easily that 
$\sigma_{f_\epsilon}= 
\sigma_{\chi_\epsilon (\cR)\delta_0}\tau$ thus
\begin{equation}
\label{eq_pf_prop_C*tildeS0_1}
\pi_\rho(f_\epsilon)=
\rho(\sigma_{f_\epsilon})= 
\rho (\sigma_{\chi_\epsilon (\cR)\delta_0})
\rho(\tau )
\longrightarrow_{ \epsilon\to0}\rho(\tau )
\quad\mbox{in SOT on} \ \cH_{\rho}.
\end{equation}
This convergence implies easily that any $\pi_\rho$-invariant subspace of $\cH_\rho$
is also   invariant under $\rho$.
Thus the representation $\pi_\rho$ of $C^*(G)$ is irreducible.

We keep the same notation for the corresponding representation (class) $\pi_\rho\in \Gh$ of $G$.
We observe that
$$
\pi_\rho(f_\epsilon)=
\widehat f_\epsilon(\pi_\rho)
=\chi_\epsilon (\pi_\rho(\cR)) \tau(\pi_\rho)
$$
and, for SOT on $\cH_\rho$, 
the left-hand side converges to 
$\rho(\tau)$ by \eqref{eq_pf_prop_C*tildeS0_1}
whereas the right-hand side tends to $\tau(\pi_\rho)$
by \eqref{eq_pf_prop_C*tildeS0_2}.
Hence $\rho(\tau)=\tau(\pi_\rho)$ for any $\tau\in \tilde S^0$.
This concludes the proof of Lemma \ref{lem_prop_C*tildeS0}.
\end{proof}

We can now prove Proposition \ref{prop_C*tildeS0}.
 
  \begin{proof}[End of the proof of Proposition \ref{prop_C*tildeS0}]
We fix $\pi_0\in \Gh$.
By Lemma \ref{lem_pv}, if $\sigma\in \tilde S^0$, we can consider $\sigma(\pi_0) \in \sL(\cH_{\pi_0})$.
One checks readily that $\rho_{\pi_0}:\sigma \mapsto \sigma(\pi_0)$ is a representation of the algebra $\tilde S^0$
which extends to a continuous representation $\rho_{\pi_0}$ of $C^*(\tilde S^0)$.
This defines an injective mapping 
$R:\dot \pi_0\mapsto \rho_{\pi_0}$
which is continuous.
By Lemma \ref{lem_prop_C*tildeS0},
 $R$ is surjective.
As $\Gh/\bR^+$ is compact, see Lemma \ref{lem_Kmap_quotient}, 
$R$ is a homeomorphism. 
 
If $\rho \in \widehat{C^*(\tilde S^0)}$, 
then $\rho (C^*(\tilde S^0))$ contains $\pi_\rho(C^*(G))$ 
having used  Lemma \ref{lem_prop_C*tildeS0} and its notation.
As $C^*(G)$ is of type 1, 
by \cite[Theorem Dixmier 9.1]{Dixmier_C*},
$\pi_\rho(C^*(G))$ contains the space of compact operators in $\sL(\cH_{\pi_\rho})$
thus so does $\rho (C^*(\tilde S^0))$.
Again by \cite[Theorem Dixmier 9.1]{Dixmier_C*}, 
this shows that $C^*(\tilde S^0)$ is type 1.
This concludes the proof of Proposition \ref{prop_C*tildeS0}.
\end{proof}

\subsection{The $C^*$-algebra $C^*(\dot S^0( \Omega))$ and it spectrum}
\label{subsec_C*dotS0barOmega}

We can use the results in Section \ref{subsec_C*tildeS0} on invariant symbols to analyse $x$-dependent $0$-homogeneous regular symbols with support in an open set of $G$.   

\begin{definition}
Let $\Omega$ be an open set of $G$.
We denote by $\dot S^0(\Omega)$ 
the space of symbols 
$\sigma=\{\sigma(x,\pi):\cH_\pi^\infty\to\cH_\pi^\infty, (x,\pi)\in  \Omega\times\Gh\}$
satisfying 
$$
\forall \alpha,\beta\in \bN_0^n, \, \gamma\in \bR\qquad
\sup_{x\in  \Omega, \pi\in \Gh}
\|\pi(\id+\cR)^{\frac{ [\alpha] +\gamma}\nu }
X_x^\beta\Delta^\alpha \sigma(x,\pi) 
\pi(\id+\cR)^{-\frac{\gamma}\nu }\|_{\sL(\cH_\pi)}<\infty,
$$
where  $\cR$ is  a (fixed) positive Rockland operator of homogeneous degree $\nu$.
\end{definition}
This condition does not depend on a particular choice of $\cR$.
The space $\dot S^0(\Omega)$  has a natural topology of Fr\'echet space with semi-norms
$$
\|\sigma\|_{\dot S^0(\Omega), a,b,c}
=
\max_{\substack{[\alpha]\leq a, [\beta]\leq b\\ |\gamma|\leq c}}
\sup_{x\in  \Omega}
\sup_{ \pi\in \Gh}
\|\pi(\id+\cR)^{\frac{ [\alpha] +\gamma}\nu }
X_x^\beta\Delta^\alpha \sigma(x,\pi) 
\pi(\id+\cR)^{-\frac{\gamma}\nu }\|_{\sL(\cH_\pi)}.
$$

\begin{remark}
\label{rem_barOmega}
\begin{enumerate}
\item 
Note that one can view $\dot S^0(\Omega)$
as the set of symbols in $\dot S^0$ 
obtained by restriction to $\Omega$, 
when one identifies two symbols which have the same restriction.

\item 
Moreover if $\Omega$ is bounded, 
a symbol $\sigma$ in $\dot S^0(\Omega)$ 
is in fact defined for $x\in \bar \Omega$
and in this case $\sigma(x,\pi) \in \sL(\cH_\pi)$ for almost every $\pi\in \Gh$.
\end{enumerate}

\end{remark}

As in the invariant case (see Corollary \ref{cor_pv+type}), 
we can describe the elements of $\dot S^0(\Omega)$ 
in terms of their kernel and this implies  that $\dot S^0(\Omega)$ is separable:
\begin{proposition}
\label{prop_dotS0_separability}
Let $\Omega$ be a bounded open subset of $G$ and $\bar \Omega$ its closure.

We now denote by $\cF_0$ the space of $(\kappa,c)\in \cC^\infty(\Omega \times (G\backslash\{0\}))   \times \cC^\infty(\Omega)$
such that the function $(x,y)\mapsto \kappa(x,y)=\kappa_x(y)$
is $(-Q)$-homogeneous and has zero mean value in $y$.
It is a closed subspace of the Fr\'echet space $\cC^\infty(\Omega \times (G\backslash\{0\}))   \times \cC^\infty(\Omega)$.
Moreover $\cF_0$ is separable.

The map
$$
\Theta_0:
\left\{\begin{array}{rcl}
\dot S^0(\Omega)&\longrightarrow & \cF_0
\\
\sigma &\longmapsto & (\kappa_{\sigma},c_{\sigma})
\end{array}
\right.
$$
is an isomorphism of  Fr\'echet vector spaces and 
the Fr\'echet space $\dot S^0(\Omega)$ is separable.
\end{proposition}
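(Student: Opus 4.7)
The plan is to mirror the invariant case of Corollary \ref{cor_pv+type}, using it fibrewise in $x$ and then upgrading smoothness in $x$ via Proposition \ref{prop_hom_kernel_smooth}. The argument splits naturally into four steps.

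First, I would verify that $\cF_0$ is a closed subspace of $\cC^\infty(\Omega\times (G\backslash\{0\}))\times \cC^\infty(\Omega)$. The $(-Q)$-homogeneity condition $\kappa(x,ry)=r^{-Q}\kappa(x,y)$ and the vanishing mean-value condition $\int_\fS \kappa(x,\cdot)\,d\sigma=0$ are both continuity-preserving linear constraints (the mean value is a continuous linear functional on the first factor because $\Omega$ is bounded, so the integration in $y$ over $\fS$ commutes with convergence in the $\cC^\infty$-topology). Separability of $\cF_0$ then follows, since any closed subspace of a separable Fréchet space is separable, and $\cC^\infty(\Omega\times(G\backslash\{0\}))\times\cC^\infty(\Omega)$ is separable by the standard construction (smooth compactly supported functions on a manifold are separable in the $\cC^\infty$ topology).

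Next, I would check that $\Theta_0$ is well defined and continuous. For $\sigma\in\dot S^0(\Omega)$, fixing $x\in\Omega$ gives a symbol $\sigma(x,\cdot)\in\tilde S^0$, so Corollary \ref{cor_pv+type} attaches to it a pair $(\kappa_{\sigma,x},c_{\sigma,x})\in \cF\times\bC$. The smoothness in $x$ of $\kappa_\sigma$ on $\Omega\times(G\backslash\{0\})$ and of $c_\sigma$ on $\Omega$ is precisely the content of Proposition \ref{prop_hom_kernel_smooth} (applied to $\sigma$ viewed as a $0$-homogeneous symbol on $G$ via the identification of Remark \ref{rem_barOmega}(1)); moreover Proposition \ref{prop_hom_kernel_smooth} gives the seminorm bounds needed to make $\sigma\mapsto \kappa_\sigma$ continuous, and the bound on $c_\sigma$ comes from the explicit formula $c_\sigma=\int_G \kappa_{\sigma,x}(z)\chi(z)\,dz$ in Lemma \ref{lem_pv}(2) together with the seminorm control on $\kappa_\sigma$. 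The map $\Theta_0$ is linear, and it is injective because, by Lemma \ref{lem_pv}(2), the pair $(\kappa_{\sigma,x},c_{\sigma,x})$ determines the distribution $\kappa_x\in\cS'(G)$ (and hence $\sigma(x,\cdot)=\widehat{\kappa_x}$) for each $x$.

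For surjectivity, given $(\kappa,c)\in\cF_0$ I would reconstruct $\sigma$ fibrewise using the pointwise-in-$x$ construction of Corollary \ref{cor_pv+type}: set
\[
\sigma(x,\pi)=\sum_{j\in\bZ}\widehat{\kappa_{x,j}}(\pi)+c(x)\,\id_{\cH_\pi},
\]
where $\kappa_{x,j}(z)=\kappa(x,z)\mathbf 1_{2^j\le|z|\le 2^{j+1}}$. By Lemma \ref{lem_pv}(1), for each $x$ this defines a symbol in $\tilde S^0$, and differentiating the absolutely convergent quasi-orthogonal series in $x$ (the bound \eqref{eq_cv1_lem_pv} depends only on $\sup_{|z|=1,|\alpha|\le 1}|X^\alpha\kappa(x,z)|$, and the $x$-derivatives land on $\kappa$ via the Leibniz rule) together with the usual type-$0$ argument of Lemma \ref{lem_type} applied uniformly in $x$ on the bounded set $\Omega$, yields all the seminorms of $\dot S^0(\Omega)$, bounded continuously by the $\cF_0$-seminorms of $(\kappa,c)$. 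One checks $\Theta_0(\sigma)=(\kappa,c)$ by Lemma \ref{lem_pv}(2) applied in each fibre.

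The map $\Theta_0$ is thus a continuous linear bijection between Fréchet spaces, so the open mapping theorem gives that it is a topological isomorphism, and the separability of $\dot S^0(\Omega)$ follows from that of $\cF_0$. The main technical point to watch is the uniform-in-$x$ control when running the type-$0$ argument of Lemma \ref{lem_type} fibrewise: this is where boundedness of $\Omega$ is used, ensuring that the seminorm bounds on $\kappa(x,\cdot)$ coming from membership in $\cF_0$ are uniform in $x\in\Omega$ and can be transferred (via Proposition \ref{prop_op_type}) into operator-norm bounds uniform in $(x,\pi)\in\Omega\times\Gh$.
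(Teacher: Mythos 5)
Your route differs from the paper's. The paper proves this by observing that $\dot S^0(\Omega)$ is naturally identified with $\cC^\infty(\Omega,\tilde S^0)$ and $\cF_0$ with $\cC^\infty(\Omega,\cF\times\bC)$, and then simply pushes the fibrewise isomorphism $\Theta$ of Corollary~\ref{cor_pv+type} through these identifications (a continuous linear isomorphism of Fr\'echet spaces $E\to F$ induces one of $\cC^\infty(\Omega,E)\to\cC^\infty(\Omega,F)$). Separability is obtained from the separability of $\cC^\infty_{(-Q)\text{-hom}}(\Omega\times(G\backslash\{0\}))\times\cC^\infty(\Omega)$, of which $\cF_0$ is a closed subspace, rather than of the full product $\cC^\infty(\Omega\times(G\backslash\{0\}))\times\cC^\infty(\Omega)$. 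Your hands-on fibrewise redo is a legitimate alternative and relies on exactly the same ingredients, but it forces you to re-verify smoothness and uniformity in $x$ explicitly, which the functorial phrasing gives for free.

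There is a genuine gap in your surjectivity step. You assert that Lemma~\ref{lem_type} ``applied uniformly in $x$ on the bounded set $\Omega$ \ldots\ yields all the seminorms of $\dot S^0(\Omega)$, bounded continuously by the $\cF_0$-seminorms of $(\kappa,c)$.'' But the paper points out explicitly, just below Lemma~\ref{lem_type}, that its proof does \emph{not} produce any bounds for the suprema in \eqref{eq_sup_lem_type} in terms of $\kappa$: the type-$\nu$ operator machinery of Proposition~\ref{prop_op_type} is qualitative, so the argument yields membership $\sigma(x,\cdot)\in\tilde S^0$ for each fixed $x$ but not uniformity in $x$, let alone control by $\cF_0$-seminorms. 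Without uniformity in $x$ you have not shown that the constructed $\sigma$ lies in $\dot S^0(\Omega)$, so you do not even have set-theoretic surjectivity, and the open mapping theorem cannot be invoked. The fix is to use the continuity of the fibrewise inverse $\Theta^{-1}:\cF\times\bC\to\tilde S^0$, which Corollary~\ref{cor_pv+type} already provides (precisely via the open mapping theorem in the invariant case): the composition $\Omega\ni x\mapsto\Theta^{-1}(\kappa_x,c(x))\in\tilde S^0$ is then a smooth $\tilde S^0$-valued map on the bounded set $\Omega$, hence has bounded seminorms, and this is exactly the statement that $\sigma\in\dot S^0(\Omega)$. After that, the open mapping theorem for $\Theta_0$ finishes the argument as you say. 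In other words, the step you are missing is exactly the step the paper's functorial formulation makes automatic.
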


Note that above, 
 the Fr\'echet structure of $\cC^\infty(\Omega)$ is given by the semi-norms
 $f\mapsto \sup_{x\in \Omega } |X^\beta_x f(x)|$;
note that $f\in \cC^\infty$ also yields a continuous function on $ \bar \Omega$.
Moreover, 
if $\cF_1$ is a Fr\'echet space,  
 $\cC^\infty (\Omega, \cF_1)$ denotes
 the Fr\'echet vector space of $\cF_1$-valued smooth functions $f:\Omega\to \cF_1$ 
 with the seminorms 
 $f\mapsto \sup_{x\in \Omega } p(X^\beta_x f_x)$, $p$ seminorm of $\cF_1$, $\beta\in \bN_0^n$.

\begin{proof}
By Corollary \ref{cor_pv+type}
(keeping its notation), 
$\Theta$ induces a continuous mapping 
 $\cC^\infty(\Omega, \tilde S^0) \to \cC^\infty (\Omega, \cF\times \bC)$, 

The Fr\'echet vector space
 $\cC^\infty(\Omega, \cF\times \bC)$
 of smooth functions from $\Omega$ 
to $\cF \times \bC$ is naturally identified with $\cF_0$
which is clearly a closed subspace of $\cC^\infty(\Omega \times (G\backslash\{0\}))   \times \cC^\infty(\Omega)$.
It is also a closed subspace of 
$\cC^\infty_{(-Q)-hom}(\Omega \times (G\backslash\{0\}))   \times \cC^\infty(\Omega)$
where $\cC^\infty_{(-Q)-hom}(\Omega \times (G\backslash\{0\}))$
denotes the space of function $\kappa\in \cC^\infty(\Omega \times (G\backslash\{0\}))$ such that 
the function $\Omega \times(G\backslash\{0\})) \ni  (x,y)\mapsto \kappa(x,y)=\kappa_x(y)$
is $(-Q)$-homogeneous  in $y$.
Adapting the proof that the Fr\'echet space of smooth function 
on the closure of a bounded open subset of a Euclidean space is separable, 
one proves easily that 
$\cC^\infty_{(-Q)-hom}(\Omega \times (G\backslash\{0\}))$
is separable.
Therefore so is $\cF_0$.

The Fr\'echet space $\cC^\infty (\Omega, \tilde S^0)$
of smooth functions from $\Omega$ to $\tilde S^0$
is naturally identified with 
the space of symbols $\dot S^0(\Omega)$.
It is therefore separable.
\end{proof}

\begin{definition}
We denote by $C^*(\dot S^0(\Omega))$ the 
closure of $\dot S^0(\Omega)$ for 
$$
\tau\mapsto \sup_{(x,\dot \pi) \in  \Omega \times (\Gh/\bR^+)}
\| \tau (x,\dot\pi)\|_{\sL(\cH_\pi)} \, .
$$
\end{definition}

Note that if $\sigma\in \dot S^0(\Omega)$, $x_0\in \bar \Omega$ 
and $\pi_0\in \Gh$, then 
$\sigma (x_0,\pi_0)$ makes sense
by Part 2 of Remark \ref{rem_barOmega} for $x_0$,
and by Lemma \ref{lem_pv} for $\pi_0$.
Moreover, one can view $C^*(\dot S^0(\Omega))$ as a space of fields on $\bar \Omega\times \Gh/\bR^+$.
Let us summarise its properties:
\begin{proposition}
\label{prop_C*dotS0}
The space $C^*(\dot S^0( \Omega))$ is a  separable $C^*$-algebra of type 1.

If $\pi_0\in \Gh$ and $x_0\in \Omega$, then the mapping 
$$
\left\{\begin{array}{lll}
\dot S^0(\Omega) &\longrightarrow& \sL(\cH_{\pi_0})\\
\sigma&\longmapsto & \sigma(x_0,\pi_0)
\end{array}\right. \quad,
$$
extends to a continuous mapping $\rho_{x_0,\pi_0}:C^*(\dot S^0(\Omega))\to \sL(\cH_{\pi_0})$ which is an irreducible representation of $C^*(\dot S^0(\Omega))$. For any $r>0$, we have $\rho_{x_0,\pi_0}=\rho_{x_0, r\cdot\pi_0}$.
Denoting by $\dot \pi_0$ the class of representations $\{r\cdot\pi_0, r>0\}$, 
the mapping
$$
R:\left\{\begin{array}{lll}
 \bar \Omega\times (\Gh/\bR^+) &\longrightarrow&\widehat{C^*(\dot S^0( \Omega))} \\
 (x_0,\dot \pi_0) &\longmapsto& \rho_{x_0,\pi_0}
 \end{array}\right.
 $$
is a homeomorphism.
\end{proposition}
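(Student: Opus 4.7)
The plan is to reduce this proposition to Proposition \ref{prop_C*tildeS0} by identifying $C^*(\dot S^0(\Omega))$ with the $C^*$-algebra $C(\bar\Omega, C^*(\tilde S^0))$ of continuous $C^*(\tilde S^0)$-valued functions on the compact space $\bar\Omega$. First I would verify that any $\sigma \in \dot S^0(\Omega)$ defines an element of $C(\bar\Omega, C^*(\tilde S^0))$: by Remark~\ref{rem_barOmega} the symbol extends to $\bar\Omega$, for each $x_0 \in \bar\Omega$ the field $\sigma(x_0,\cdot)$ lies in $\tilde S^0$, and smoothness in $x$ yields continuity of $x \mapsto \sigma(x,\cdot)$ into $\tilde S^0 \subset C^*(\tilde S^0)$. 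The sup-norm used to close $\dot S^0(\Omega)$ into $C^*(\dot S^0(\Omega))$ coincides with the sup-norm on $C(\bar\Omega, C^*(\tilde S^0))$, so this inclusion is isometric.

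Next I would establish density. The subalgebra generated by elementary products $f(x)\tau(\pi)$ with $f \in C^\infty(\bar\Omega)$ having bounded left derivatives and $\tau \in \tilde S^0$ lies in $\dot S^0(\Omega)$. By a vector-valued Stone--Weierstrass argument --- using that $C^\infty(\bar\Omega)$ is dense in $C(\bar\Omega)$, that $\tilde S^0$ is dense in $C^*(\tilde S^0)$, and that this subalgebra separates the points of $\bar\Omega\times\widehat{C^*(\tilde S^0)}$ --- it is dense in $C(\bar\Omega, C^*(\tilde S^0))$. Consequently
$$
C^*(\dot S^0(\Omega)) \cong C(\bar\Omega, C^*(\tilde S^0)),
$$
which, since $C(\bar\Omega)$ is nuclear, is canonically isomorphic to the $C^*$-tensor product $C(\bar\Omega)\otimes C^*(\tilde S^0)$.

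From this identification, everything else follows from classical $C^*$-theory. Separability is immediate from Proposition~\ref{prop_dotS0_separability} or from separability of both factors. The type~1 property follows from $C(\bar\Omega)$ being commutative and $C^*(\tilde S^0)$ being of type~1 by Proposition~\ref{prop_C*tildeS0}, combined with nuclearity. For the spectrum I would invoke the formula $\widehat{A\otimes B} \cong \widehat{A}\times \widehat{B}$ (with product topology), valid when $A$ is commutative and $B$ is type~1, so that $\widehat{C^*(\dot S^0(\Omega))} \cong \bar\Omega \times (\Gh/\bR^+)$; the irreducible representation indexed by $(x_0,\dot\pi_0)$ is then the composition of evaluation at $x_0$ with the representation $\rho_{\pi_0}$ of Proposition~\ref{prop_C*tildeS0}, which is precisely the continuous extension of $\sigma \mapsto \sigma(x_0,\pi_0)$.

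The main obstacle is the clean handling of the density step and the matching of the hull-kernel topology on $\widehat{C^*(\dot S^0(\Omega))}$ with the product topology on $\bar\Omega\times(\Gh/\bR^+)$. An alternative, more concrete route mirroring the proof of Proposition~\ref{prop_C*tildeS0} would be: verify directly that $\sigma\mapsto\sigma(x_0,\pi_0)$ extends to an irreducible representation $\rho_{x_0,\pi_0}$ (adapting Lemma~\ref{lem_prop_C*tildeS0} with approximate units of the form $\chi(x)\sigma_{\chi_\epsilon(\cR)\delta_0}(\pi)$), then given an arbitrary irreducible $\rho$, restrict it to the central commutative subalgebra of symbols depending only on $x$ and apply Schur's lemma to obtain an evaluation character at some $x_0\in\bar\Omega$, and restrict it to invariant symbols to apply Proposition~\ref{prop_C*tildeS0} and recover $\dot\pi_0$; the topology claim would then follow by checking hull-kernel convergence in each variable separately.
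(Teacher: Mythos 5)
Your main route is correct and genuinely different from the paper's. You identify $C^*(\dot S^0(\Omega))$ with $C(\bar\Omega)\otimes C^*(\tilde S^0)\cong C(\bar\Omega, C^*(\tilde S^0))$ by establishing the isometric inclusion and a density argument, and then read the spectrum off from standard tensor--product theory for $C^*$-algebras. The paper never asserts this isomorphism: instead it analyses an arbitrary irreducible representation $\rho$ of $C^*(\dot S^0(\Omega))$ directly, building an auxiliary representation $\pi_\rho$ of the \emph{larger} algebra $\cC(\bar\Omega; C^*(G))$ (via $\phi f\mapsto\rho(\phi\,\sigma_f^*)$, mirroring Lemma~\ref{lem_prop_C*tildeS0}), and extracting the pair $(x_0,\pi_0)$ by a Schur/commutant argument on the image of $\cC(\bar\Omega)\cdot\id$. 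Both routes ultimately use the same $C^*$-tensor fact for a commutative first factor, but the paper applies it to the auxiliary algebra with spectrum $\bar\Omega\times\Gh$ and has to transfer the conclusion back, while you apply it to $C^*(\dot S^0(\Omega))$ itself, whose second factor already carries the spectrum $\Gh/\bR^+$ (Proposition~\ref{prop_C*tildeS0}); your approach is structurally cleaner at the cost of proving the density claim.

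Two small remarks. First, invoking ``Stone--Weierstrass'' is not quite the right tool here: what is actually needed is just that $C^\infty(\bar\Omega)\odot\tilde S^0$ is norm-dense in $C(\bar\Omega)\otimes C^*(\tilde S^0)$, which follows from density of the algebraic tensor product (by definition of the minimal tensor norm) together with sup-norm density of $C^\infty(\bar\Omega)$ in $C(\bar\Omega)$ and of $\tilde S^0$ in $C^*(\tilde S^0)$; no separation-of-points argument is required for a non-commutative second factor. Second, your ``alternative route'' is in fact a faithful description of the paper's actual proof, including the approximate identity built from $\chi_\epsilon(\cR)\delta_0$ and the commutant argument isolating the evaluation character at $x_0$.
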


Proposition \ref{prop_C*dotS0}  is the non-invariant version of Proposition \ref{prop_C*tildeS0}
whose proof we adapt.

\begin{proof}
First, let us show the analogue of Lemma \ref{lem_prop_C*tildeS0}.
Given a representation $\rho$ of the $C^*$-algebra $C^*(\dot S^0( \Omega))$, we consider 
$$
\pi_\rho ( \phi \, f) =\rho (\phi \sigma_f^*)
=\rho(\phi) \rho(\sigma_f^*)
\quad \phi \in \cC^\infty(\Omega), \ f\in \cS(G).
$$
Proceeding as in   Lemma \ref{lem_prop_C*tildeS0}, 
we see that $\pi_\rho$ extends to a representation of the $C^*$-algebra
$\cC(\bar\Omega; C^*(G))$
of continuous function valued in $C^*(G)$.
We may identify these with symbols $\{\tau (x,\dot\pi)\in \sL(\cH_\pi) : (x,\dot\pi) \in \bar \Omega \times (\Gh/\bR^+)\}$
which depends continuously on $x$.
Note that $\cC(\bar\Omega; C^*(G))$ may be 
obtained as the tensor of $\cC(\bar \Omega)$ with $C^*(G)$
and that its spectrum is $\bar\Omega \times \Gh$.

We now assume that $\rho$ is irreducible.

As  for any $\phi\in \cC(\bar \Omega)$, the operator $\rho(\phi \id)\in \sL(\cH_\rho)$
commutes with any $\rho(\tau)$, $\tau\in \cC(\bar\Omega; C^*(G))$, 
it must be scalar, i.e. $\rho(\phi\id ) \in \bC \id_{\cH_\rho}$.
This yields the one-dimensional representation
$\cC(\bar \Omega)\ni \phi\mapsto \rho(\phi\id ) \in \bC \id_{\cH_\rho}$.
It can not be zero since  $\rho(\id)=0$ would imply that $\rho$ be zero.
Hence it is given by $x_0\in \bar \Omega$, i.e. 
$$
\forall \phi\in \cC(\bar \Omega)\qquad
\rho(\phi\id )  = \phi(x_0) \id_{\cH_\rho}.
$$
This implies that the $\rho_{|1_{\bar \Omega} C^*(\tilde S^0)}$, 
that is, the restriction of $\rho$ to $1_{\bar \Omega} C^*(\tilde S^0)$,
yields an irreducible representation of $C^*(\tilde S^0)$, 
denoted by $(\rho, C^*(\tilde S^0))$.
Clearly ${\pi_\rho}_{|1_{\bar \Omega} C^*(G)}$ 
coincides with $(\pi_{(\rho, C^*(\tilde S^0))}, C^*(G))$ defined in  Lemma \ref{lem_prop_C*tildeS0}
and may be identified with an irreducible representation of $C^*(G)$, i.e. $\pi_\rho|_{1_{\bar \Omega} C^*(G)}\sim \pi_0\in \Gh$.
This easily implies 
$$
\rho (\tau) = \tau(x_0, \pi_0).
$$
And we have obtained that any irreducible representation of $\cC(\bar\Omega; C^*(G))$ is of the form $\delta_{x_0}\otimes \pi_0$. Conversely, if $\rho=\delta_{x_0}\otimes \pi_0$, then it is an irreducible representation of $\cC(\bar\Omega; C^*(G))$. 

The rest of the proof 
is obtained easily by adapting the arguments given in the proof of Proposition \ref{prop_C*tildeS0}.
\end{proof}
 
 \subsection{The states of $C^*(\tilde S^0)$ and $C^*(\dot S^0(\Omega)$}
 
 In Propositions \ref{prop_C*tildeS0}
 and \ref{prop_C*dotS0}, we described the spectra of the $C^*$-algebras
In this section, we show that this allows us to describe  
 the states (i.e. the continuous positive forms)
of  these $C^*$-algebra in terms of objects depending on $\Gh$.
We start with  $C^*(\tilde S^0)$:
 
\begin{proposition}
\label{prop_state_C*tildeS0}
\begin{enumerate}
\item 
If $\ell$ is a state of   $C^*(\tilde S^0)$, 
then there exists a positive measure $\gamma$ on  $\Gh/\bR^+$
and a measurable field of self-adjoint positive trace-class operators $\Gamma=\{\Gamma(\dot \pi)\in \sL(\cH_\pi):\dot \pi\in \Gh/\bR^+\}$ satisfying 
\begin{equation}
\label{eq_prop_trGg}
\int_{\Gh/\bR^+} \tr \left(\Gamma(\dot \pi)\right) d\gamma(\dot \pi) =1,
\end{equation}
and 
\begin{equation}
\label{eq_prop_elltrGg}
\forall \sigma\in C^*(\tilde S^0)\qquad
\ell(\sigma) = \int_{\Gh/\bR^+} \tr\left(\sigma(\dot\pi) \Gamma(\dot \pi)\right) d\gamma(\dot \pi).
\end{equation}

\item 
Conversely, given a positive measure $\gamma$ on  $\Gh/\bR^+$
and a measurable field of self-adjoint positive trace-class operators $\Gamma=\{\Gamma(\dot \pi)\in \sL(\cH_\pi):\dot \pi\in \Gh/\bR^+\}$ satisfying \eqref{eq_prop_trGg}, the linear form $\ell$ defined via \eqref{eq_prop_elltrGg} is a state of  $C^*(\tilde S^0)$.
Furthermore, 
if $\gamma'$ and $\Gamma'$ are  a
 positive measure
and a  measurable field of self-adjoint positive trace-class operators  satisfying \eqref{eq_prop_trGg} and \eqref{eq_prop_elltrGg} for the same state $\ell$, 
then there exists a measurable positive function $f$ on $\Gh/\bR^+$ such that 
$$
d\gamma'(\dot \pi)=f(\dot \pi) d\gamma(\dot \pi)
\quad\mbox{and}\quad
\Gamma'(\dot \pi) = \frac1{f(\dot \pi)} \Gamma(\dot \pi),
\ \dot\pi-a.e.
$$ 
\end{enumerate}
\end{proposition}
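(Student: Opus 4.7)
The plan is to exploit the fact, established in Proposition \ref{prop_C*tildeS0}, that $C^*(\tilde S^0)$ is a separable type I $C^*$-algebra whose spectrum is homeomorphic to $\Gh/\bR^+$ via the evaluation maps $\dot\pi\mapsto \rho_\pi$. The statement is then a disintegration-of-states result over the spectrum, in the spirit of the general theory developed in \cite[Ch.~8]{Dixmier_C*}. I would treat the converse (Part 2) first, then the existence (Part 1), and finally the uniqueness-up-to-density.

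For Part (2), after fixing a Borel section $\dot\pi\mapsto \pi$ on $\Gh/\bR^+$ so that $\dot\pi\mapsto \sigma(\dot\pi)=\rho_\pi(\sigma)$ is measurable for each $\sigma\in C^*(\tilde S^0)$, the verification that \eqref{eq_prop_elltrGg} defines a state is a routine computation: if $\sigma=\tau^*\tau$ then $\sigma(\dot\pi)=\rho_\pi(\tau)^*\rho_\pi(\tau)\geq 0$ and $\tr(\sigma(\dot\pi)\Gamma(\dot\pi))\geq 0$ by positivity of the trace on positive trace-class products, giving $\ell\geq 0$; the bound $|\tr(\sigma(\dot\pi)\Gamma(\dot\pi))|\leq \|\sigma\|_{C^*(\tilde S^0)}\tr\Gamma(\dot\pi)$ combined with \eqref{eq_prop_trGg} yields $\|\ell\|\leq 1$, and evaluating $\ell$ on an approximate identity of $C^*(\tilde S^0)$ forces $\|\ell\|=1$.

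For the existence in Part (1), I would apply the Gelfand--Naimark--Segal construction to obtain a cyclic representation $(\pi_\ell,\cH_\ell,\xi_\ell)$ of $C^*(\tilde S^0)$ with $\ell(\sigma)=\langle \pi_\ell(\sigma)\xi_\ell,\xi_\ell\rangle$. Since the algebra is separable and type I, $\pi_\ell$ admits a central direct-integral decomposition over its spectrum: there exist a positive Borel measure $\gamma$ on $\Gh/\bR^+$ and a $\gamma$-measurable family of multiplicity Hilbert spaces $\{\cK_{\dot\pi}\}$ such that
\begin{equation*}
\cH_\ell\simeq \int^\oplus_{\Gh/\bR^+} \cH_\pi\otimes \cK_{\dot\pi}\, d\gamma(\dot\pi),\qquad \pi_\ell(\sigma)\simeq \int^\oplus_{\Gh/\bR^+} \rho_\pi(\sigma)\otimes \id_{\cK_{\dot\pi}}\, d\gamma(\dot\pi),
\end{equation*}
see \cite[\S 8.6]{Dixmier_C*}. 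Writing $\xi_\ell=\int^\oplus \xi_{\dot\pi}\, d\gamma(\dot\pi)$ and defining $\Gamma(\dot\pi)\in \sL(\cH_\pi)$ as the partial trace over $\cK_{\dot\pi}$ of the rank-one operator $\xi_{\dot\pi}\otimes \xi_{\dot\pi}^*$, one checks that $\Gamma(\dot\pi)$ is positive, self-adjoint, trace class, and that
\begin{equation*}
\ell(\sigma)=\int_{\Gh/\bR^+}\tr\bigl(\sigma(\dot\pi)\Gamma(\dot\pi)\bigr)\, d\gamma(\dot\pi),\qquad \int_{\Gh/\bR^+}\tr\Gamma(\dot\pi)\, d\gamma(\dot\pi)=\|\xi_\ell\|^2=\ell(\id)=1.
\end{equation*}

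For the uniqueness, if both $(\gamma,\Gamma)$ and $(\gamma',\Gamma')$ represent $\ell$ via \eqref{eq_prop_elltrGg}, I would exploit the $\cC_b(\Gh/\bR^+)$-module structure on $C^*(\tilde S^0)$ coming from Dauns--Hofmann applied to the centre of its multiplier algebra, to replace $\sigma$ by $\phi\cdot\sigma$ for any $\phi\in \cC_b(\Gh/\bR^+)$; this gives $\int \phi(\dot\pi)\tr(\sigma(\dot\pi)\Gamma)d\gamma=\int \phi(\dot\pi)\tr(\sigma(\dot\pi)\Gamma')d\gamma'$. Specialising $\sigma$ to an element with $\sigma(\dot\pi)$ approximating a positive rank-one operator in $\sL(\cH_\pi)$ and varying $\phi$ forces the mutual absolute continuity of $\gamma$ and $\gamma'$ on $\{\Gamma\neq 0\}\cup\{\Gamma'\neq 0\}$, producing a measurable density $f=d\gamma'/d\gamma$; varying $\sigma$ then yields $\Gamma'=f^{-1}\Gamma$ $\gamma$-a.e. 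The main obstacle throughout is the measurable central decomposition of $\pi_\ell$ and the manipulation of direct integrals of representations, which rely essentially on the type~I property of $C^*(\tilde S^0)$ already verified in Proposition \ref{prop_C*tildeS0}: once this structural ingredient is in hand, the remaining steps are unwindings of definitions.
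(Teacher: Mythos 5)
Your existence argument (Part 1) and your verification that \eqref{eq_prop_elltrGg} defines a state (first half of Part 2) follow the same route as the paper: GNS, then the central disintegration of the GNS representation over $\widehat{C^*(\tilde S^0)}\cong \Gh/\bR^+$ from Dixmier's Chapter~8. The paper writes the multiplicity decomposition $\rho\sim\rho_1\oplus 2\rho_2\oplus\cdots\oplus\aleph_0\rho_\infty$ explicitly and forms $\Gamma$ as a sum of $\Gamma_r=\sum_{s=1}^r\xi_{r,s}\otimes\xi_{r,s}^*$ over the mutually singular pieces, which is exactly your partial trace over the multiplicity space $\cK_{\dot\pi}$; these are the same construction.

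Your uniqueness argument, however, takes a genuinely different route and has a gap. You invoke Dauns--Hofmann to produce a $\cC_b(\Gh/\bR^+)$-module structure on $C^*(\tilde S^0)$ and then localise by varying a multiplier $\phi$. But Dauns--Hofmann identifies $Z(M(A))$ with $C_b$ of the primitive ideal space \emph{equipped with the hull-kernel topology}, and $\Gh/\bR^+$ is not Hausdorff in general for a graded nilpotent group. Already for the Heisenberg group $\bH_1$, the two classes of infinite-dimensional representations in $\Gh/\bR^+$ each contain the whole sphere of non-trivial characters in their closure, so bounded continuous functions on $\Gh/\bR^+$ cannot separate those points. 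Consequently the step \textquotedblleft varying $\phi$ forces mutual absolute continuity\textquotedblright\ is not justified: the available $\phi$ may be constant across exactly those fibres you need to distinguish, so you cannot extract a Radon--Nikodym density on $\Gh/\bR^+$ this way, nor can you then recover $\Gamma'=f^{-1}\Gamma$ pointwise. The paper avoids this entirely by running the argument backwards: from $(\gamma',\Gamma')$ it rebuilds a representation $\rho'$ (again layered by the rank of $\Gamma'$) and a cyclic unit vector $\xi'$ realising $\ell$, then invokes uniqueness of the GNS triple (\cite[Prop.~2.4.1]{Dixmier_C*}) to get $\rho'\sim\rho$, and finally the uniqueness clause of \cite[Thm.~8.6.6]{Dixmier_C*} to conclude that each $\gamma'_r$ is equivalent to $\gamma_r$ with a density $f_r$, forcing $\Gamma'_r=f_r^{-1}\Gamma_r$. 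This route needs no Hausdorffness of the spectrum and is the one to use here.
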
 
 
 \begin{proof}[Proof of Part 1 of Proposition \ref{prop_state_C*tildeS0}]
 Let $\ell$ be a state of the $C^*$-algebras  $C^*(\tilde S^0)$.
  The GNS construction \cite[Proposition 2.4.4]{Dixmier_C*} yields a representation $\rho$ of $C^*(\tilde S^0)$ on the Hilbert space $\cH_\ell := C^*(\tilde S^0) / \{\sigma : \ell(\sigma \sigma^*)=0\}$ 
and 
$$
\ell(\sigma) = (\rho(\sigma) \xi ,\xi)_{\cH_\ell} \, , 
\quad\sigma\in C^*(\tilde S^0),
$$
where the unit vector $\xi$ is the image of $\id\in \tilde S^0$ via the canonical projection  $C^*(\tilde S^0)\mapsto \cH_\ell$.
We then decompose  \cite[Theorem 8.6.6]{Dixmier_C*} 
the representation $\rho$ 
(taking into account the possible multiplicities) as 
$$
(\rho,\cH_\ell) \sim (\rho_1,\cH_1) \oplus 2(\rho_2,\cH_2) \oplus \ldots \oplus \aleph_0 (\rho_\infty,\cH_\infty),
$$
and each $\rho_r$, $r\in \bN\cup\{\infty\}$, may be disintegrated as
$$
\rho_r \sim  \int_{\widehat{C^*(\tilde S^0)}}\zeta d\gamma_r(\zeta);
$$
furthermore,  the positive measures $\gamma_1, \gamma_2,\ldots, \gamma_\infty$ are mutually singular in $\widehat{C^*(\tilde S^0)}$. 
Consequently we can write $\xi \in \cH_\ell$ as
$$
\xi \sim (\xi_1,\xi_2,\ldots, \xi_\infty), \quad
\mbox{with}\ \xi_r=(\xi_{r,s})_{1\leq s\leq r} \ \mbox{for each} \ r\in \bN
\cup\{\infty\}, \ \mbox{and}\ \xi_{r,s}\in \cH_r.
$$
Note that
$$
1=|\xi|_{\cH_\ell}^2 
= \sum_{r\in \bN
\cup\{\infty\}}
\sum_{s=1}^r |\xi_{r,s}|_{\cH_r}^2
\qquad\mbox{with}\qquad
|\xi_{r,s}|_{\cH_r}^2
= \int_{\widehat{C^*(\tilde S^0)}}|\xi_{r,s}(\zeta)|_{\cH_\zeta}^2 d\gamma_r(\zeta).
$$
By Proposition \ref{prop_C*tildeS0}, 
we may identify $\widehat{C^*(\tilde S^0)}$ with $\Gh/\bR^+$:
$$
\rho_r \sim \int_{\Gh/\bR^+}\dot \pi d\gamma_r(\dot \pi),\qquad
\cH_r \sim  \int_{\Gh/\bR^+}\cH_\pi d\gamma_r(\dot \pi),\qquad
\sum_{r=1}^\infty 
\sum_{s=1}^r 
\int_{\Gh/\bR^+}
|\xi_{r,s}(\dot\pi)|_{\cH_\pi}^2  d\gamma_r(\dot \pi)=1.
$$
Hence $\Gamma_r:=\sum_{s=1}^r \xi_{r,s} \otimes {\xi_{r,s}}^*$
is a $\gamma_r$-measurable field on $\Gh/\bR^+$  of positive traceclass operators of rank $ r$. 
We have obtained:
\begin{align*}
\ell(\sigma) 
&= (\rho(\sigma) \xi ,\xi)
=\sum_{r\in \bN\cup\{\infty\}} 
\sum_{s=1}^r
\int_{\Gh/\bR^+}
(\sigma(\dot \pi)\xi_{r,s}(\dot \pi),\xi_{r,s}(\dot \pi))_{\cH_r}
d\gamma_r(\dot \pi)\\
&=\sum_{r\in \bN\cup\{\infty\}} 
\int_{\Gh/\bR^+}
\tr \left( \sigma(\dot \pi) \Gamma_r (\dot \pi)\right)
d\gamma_r(\dot \pi).
\end{align*}
We now define 
the positive measure $\gamma:=\sum_r \gamma_r$.
As the measures $\gamma_r$ are mutually singular, 
the field
$\Gamma :=\sum_r \Gamma_r$ is measurable 
and satisfies
$$
\Gamma (\dot\pi)\geq0, \quad \tr \Gamma (\dot\pi)<\infty,
\qquad
 \int_{\Gh/\bR^+} \tr \Gamma (\dot\pi) d\gamma(\dot\pi)=1 \, .
 $$
This shows Part 1.
\end{proof}

\begin{proof}[Proof of Part 2 of Proposition \ref{prop_state_C*tildeS0}]
Given a positive measure $\gamma$ on  $\Gh/\bR^+$
and a measurable field of self-adjoint positive trace-class operators $\Gamma=\{\Gamma(\dot \pi)\in \sL(\cH_\pi):\dot \pi\in \Gh/\bR^+\}$ satisfying \eqref{eq_prop_trGg}, one checks easily that the linear form $\ell$ defined via \eqref{eq_prop_elltrGg} is a state of  $C^*(\tilde S^0)$.

To prove the last part of the statement, we consider a positive measure
 $\gamma'$ and  a measurable field of self-adjoint positive trace-class operators $\Gamma'$ 
  satisfying \eqref{eq_prop_trGg} and \eqref{eq_prop_elltrGg} for the same state  $\ell$.
It suffices to consider the case of $\gamma$ and $\Gamma$ obtained as in Part 1;
in particular  $\gamma$ and $\Gamma$ have the same support in $\Gh/\bR^+$.
We may also assume that $\gamma'$ and $\Gamma'$ have the same support in $\Gh/\bR^+$.
For each $r\in \bN\cup\{\infty\}$,
let $B_r$ be the measurable subset of $\Gh/\bR^+$ where $\Gamma'(\dot \pi)$ is of rank $r$ a.e.
We may assume these subsets  disjoint.
We define the measure $\gamma'_r=1_{B_r}\gamma'$ 
and the field $\Gamma'_r:=1_{B_r}\Gamma'$
as the restrictions of $\gamma'$ 
and $\Gamma'$  to $B_r$.
As $\Gamma'_r$ is a measurable field of positive operators of rank $r$, 
there exists a measurable field of orthogonal vectors $(\xi_{r,s})_{s=1}^r$ such that 
$\Gamma'_r=\sum_{s=1}^r\xi'_{r,s}\otimes {\xi'_{r,s}}^*$.
We have $\tr \Gamma'_r= \sum_{s=1}^r|\xi'_{r,s}|^2$.

We define the representation $\rho'$ of $C^*(\tilde S^0)$
and the vector $\xi'$ of $\rho'$ via
$$
\rho':=\oplus_{r\in \bN\cup\{\infty\}} r\int_{\Gh/\bR^+} \dot \pi \ d\gamma'_r(\dot\pi),
\qquad\mbox{and}\qquad
\xi':=\oplus_{r\in \bN\cup\{\infty\}} \oplus_{s=1}^r \int_{\Gh/\bR^+}\xi'_{r,s}(\dot \pi) \ d\gamma'_r(\dot\pi).
$$
We observe that $\xi'$ is a unit vector:
$$
|\xi'|^2
=\sum_{r\in \bN\cup\{\infty\}} \sum_{s=1}^r |\tilde \xi'_{r,s}|^2
=\sum_{r\in \bN\cup\{\infty\}} \int_{\Gh/\bR^+}\tr \Gamma'_r\ d\gamma'_r
= \int_{\Gh/\bR^+}\tr \Gamma'\ d\gamma'=1.
$$
Moreover for any $\sigma\in C^*(\tilde S^0)$:
\begin{align*}
(\rho'(\sigma) \xi',\xi')
&=  \sum_{r\in \bN\cup\{\infty\}} \sum_{s=1}^r
\int_{\Gh/\bR^+} \left(\sigma\xi'_{r,s},\xi'_{r,s}\right) d\gamma'_r
=  \sum_{r\in \bN\cup\{\infty\}} 
\int_{\Gh/\bR^+} \tr \left(\sigma \Gamma'_r\right) d\gamma'_r
\\&=
\int_{\Gh/\bR^+} \tr \left(\sigma \Gamma'\right) d\gamma' = \ell(\sigma).
\end{align*}
In other words, the state associated with $\rho'$ and $\xi'$ coincides with $\ell$. 
This implies  
 that $\rho'$ and $\rho$ are equivalent \cite[Proposition 2.4.1]{Dixmier_C*}, therefore the measures $\gamma'_r$ and $\gamma_r$ are equivalent for every $r\in \bN\cup\{\infty\}$
 \cite[Theorem 8.6.6]{Dixmier_C*}.
 In other words, there exists a measurable positive function $f_r$ supported in $B_r$ such that 
 $d\gamma'_r(\dot \pi)=f_r(\dot \pi) d\gamma_r(\dot \pi)$.
As  $\xi'$ corresponds to $\xi$ via the $(\rho',\rho)$-equivalence, 
we must have $\Gamma_r(\dot \pi)=f_r(\dot \pi) \Gamma'_r(\dot \pi)$.
This concludes the proof of Part 2.
 \end{proof}

From the proof of Proposition \ref{prop_state_C*tildeS0}, 
we can determine easily the pure states, that is, the states  corresponding to the irreducible representations:
\begin{corollary}
\label{cor_prop_state_C*tildeS0}
The pure states of the $C^*$-algebra $C^*(\tilde S^0)$ are
the functionals $\ell=\ell_{\pi_0,v_0}$ of the form:
$$
\ell(\sigma) = (\sigma(\dot \pi_0)v_0,v_0)_{\cH_{\pi_0}},
\quad \sigma\in C^*(\tilde S^0),
$$
where $\pi_0\in \Gh$ and $v_0\in \cH_{\pi_0}$ is a unit vector.
The states $\ell=\ell_{\pi_0,v_0}$ where $\pi_0\in \Gh$ and $v_0\in \cH_{\pi_0}^\infty$ is a smooth unit vector, form a dense subset of the set of states of 
$C^*(\tilde S^0)$.
\end{corollary}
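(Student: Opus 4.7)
The plan is to combine Proposition \ref{prop_state_C*tildeS0} (integral decomposition of states) with Proposition \ref{prop_C*tildeS0} (identification $\widehat{C^*(\tilde S^0)}\cong \Gh/\bR^+$ via $\dot\pi_0\mapsto\rho_{\pi_0}$), together with the GNS construction.

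For the first assertion, I would proceed as follows. Given $\pi_0\in\Gh$ and a unit vector $v_0\in\cH_{\pi_0}$, Part (2) of Proposition \ref{prop_state_C*tildeS0} applied with $\gamma=\delta_{\dot\pi_0}$ and $\Gamma(\dot\pi_0)=v_0\otimes v_0^*$ shows that $\ell_{\pi_0,v_0}$ is a state. Its GNS representation is the cyclic subrepresentation of $\rho_{\pi_0}$ generated by $v_0$; since $\rho_{\pi_0}$ is irreducible by Proposition \ref{prop_C*tildeS0}, this subrepresentation coincides with all of $\rho_{\pi_0}$ and is irreducible, so $\ell_{\pi_0,v_0}$ is a pure state. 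Conversely, if $\ell$ is a pure state, its GNS representation is irreducible and thus equivalent, by Proposition \ref{prop_C*tildeS0}, to some $\rho_{\pi_0}$; transporting the canonical cyclic GNS vector through this equivalence produces a unit vector $v_0\in\cH_{\pi_0}$ with $\ell(\sigma)=(\sigma(\dot\pi_0)v_0,v_0)_{\cH_{\pi_0}}$, i.e. $\ell=\ell_{\pi_0,v_0}$.

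For the density statement, which I read as asserting that the weak-$*$ closed convex hull of the states $\ell_{\pi_0,v_0}$ with smooth $v_0$ is the entire state space, I would start from the integral representation
$$
\ell(\sigma)=\int_{\Gh/\bR^+}\tr\bigl(\sigma(\dot\pi)\Gamma(\dot\pi)\bigr)\,d\gamma(\dot\pi)
$$
given by Part (1) of Proposition \ref{prop_state_C*tildeS0}, coupled with the spectral decomposition $\Gamma(\dot\pi)=\sum_{j}\lambda_j(\dot\pi)\,e_j(\dot\pi)\otimes e_j(\dot\pi)^*$ with $\sum_j\lambda_j(\dot\pi)=\tr\Gamma(\dot\pi)$. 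This displays $\ell$ as a continuous convex combination of the pure states $\ell_{\pi,e_j(\pi)}$. I would then approximate in three steps: (i) truncate the spectral sum to finitely many $j$'s, using $\sum_j\lambda_j\in L^1(\gamma)$; (ii) discretize $\gamma$ into a finite atomic measure on $\Gh/\bR^+$, which is legitimate because $\Gh/\bR^+$ is compact (Lemma \ref{lem_Kmap_quotient}); (iii) replace each unit vector $e_j(\dot\pi_i)$ by a smooth unit vector, which is possible since $\cH_{\pi_i}^\infty$ is dense in $\cH_{\pi_i}$ (the smooth vectors $\pi_i(\phi)w$ of Example~\ref{ex_smooth_vectors} being dense). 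This yields a finite convex combination of states $\ell_{\pi_i,v_{i,j}}$ with smooth $v_{i,j}$ that is weak-$*$ close to $\ell$.

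The main obstacle, I expect, will be step (ii): to ensure weak-$*$ convergence one must control $\dot\pi\mapsto\tr(\sigma(\dot\pi)\Gamma(\dot\pi))$ uniformly enough as $\sigma$ varies in a weak-$*$ test family, and one must produce a jointly measurable spectral resolution $(\lambda_j,e_j)$ of $\Gamma$. The former should follow from the type I structure of $C^*(\tilde S^0)$ together with the continuity of $\dot\pi\mapsto\rho_{\pi}(\sigma)$ encoded in Proposition \ref{prop_C*tildeS0} (so that $\dot\pi\mapsto\sigma(\dot\pi)$ is strong-operator continuous on $\Gh/\bR^+$), while the latter is the standard measurable spectral theorem for measurable fields of compact self-adjoint operators.
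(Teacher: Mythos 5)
Your treatment of the first assertion is correct and matches the paper's intent: $\ell_{\pi_0,v_0}$ is a vector state of the irreducible representation $\rho_{\pi_0}$ from Proposition~\ref{prop_C*tildeS0}, hence pure by the GNS characterisation of purity; conversely, a pure state has irreducible GNS representation, which by Proposition~\ref{prop_C*tildeS0} is unitarily equivalent to some $\rho_{\pi_0}$, and transporting the canonical cyclic vector produces the required unit vector $v_0\in\cH_{\pi_0}$.

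The second assertion is where the plan goes off course, and it starts with the reading. As literally written (``a dense subset of the set of states'') the claim is false: $C^*(\tilde S^0)$ is a unital separable $C^*$-algebra of type I whose spectrum $\Gh/\bR^+$ has more than one point, and for such an algebra the pure states are not weak-$*$ dense in the state space. Already for $G=\bR^n$ one has $C^*(\tilde S^0)\cong C(S^{n-1})$, and the Dirac masses on $S^{n-1}$ do not weak-$*$ approximate, say, normalised surface measure, since a weak-$*$ limit of Dirac masses is again a Dirac mass. What is both true and what is actually invoked later (in the proof of Lemma~\ref{lem_prop_consistency}, to pass from smooth $v_0$ to general $v_0$) is that the $\ell_{\pi_0,v_0}$ with $v_0\in\cH_{\pi_0}^\infty$ are dense in the set of \emph{pure} states. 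That has a one-line proof which should replace your three-step approximation: $\cH_{\pi_0}^\infty$ is dense in $\cH_{\pi_0}$ (Example~\ref{ex_smooth_vectors}), so one can choose smooth unit vectors $v_j\to v_0$, and then $|\ell_{\pi_0,v_j}(\sigma)-\ell_{\pi_0,v_0}(\sigma)|\le 2\|\sigma\|\,\|v_j-v_0\|\to 0$, giving even norm convergence of the states. Your chosen reading --- that the weak-$*$ closed convex hull of the smooth pure states is the whole state space --- is a correct statement, but it is then an immediate corollary of this together with the Krein--Milman theorem applied to the compact convex state space; the discretisation of $\gamma$ and the measurable spectral selection you flag as the obstacle in step (ii) are entirely unnecessary.
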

We observe that $\ell_{\pi_0,v_0}$ corresponds to $\gamma(\dot \pi) =\delta_{\dot\pi_0}(\dot \pi)$ and $\Gamma(\dot \pi_0) = v_0\otimes v_0^*$.

\medskip

We also have a similar description of the states  of  $C^*(\dot S^0(\Omega))$:

\begin{proposition}
\label{prop_state_C*dotS0Omega}
Let $\Omega$ be a bounded open set of $G$.
\begin{enumerate}
\item 
If $\ell$ is a state of  $C^*(\dot S^0(\Omega))$, 
then there exists a positive measure $\gamma$ on  $\bar \Omega\times \Gh/\bR^+$
and a measurable field of self-adjoint positive trace-class operators $\Gamma=\{\Gamma(x,\dot \pi)\in \sL(\cH_\pi):(x,\dot \pi)\in \bar \Omega\times \Gh/\bR^+\}$ satisfying 
\begin{equation}
\label{eq_propx_trGg}
\int_{\bar \Omega\times \Gh/\bR^+} \tr \left(\Gamma(x,\dot \pi)\right) d\gamma(x,\dot \pi) =1,
\end{equation}
and 
\begin{equation}
\label{eq_propx_elltrGg}
\forall \sigma\in C^*(\dot S^0(\Omega))\qquad
\ell(\sigma) = \int_{\bar \Omega\times\Gh/\bR^+} \tr\left(\sigma(x,\dot\pi) \Gamma(x,\dot \pi)\right) d\gamma(x,\dot \pi).
\end{equation}

\item
Conversely, given a positive measure $\gamma$ on  $\bar \Omega\times\Gh/\bR^+$
and a measurable field of self-adjoint positive trace-class operators $\Gamma=\{\Gamma(x,\dot \pi)\in \sL(\cH_\pi):(x,\dot \pi)\in \bar \Omega\times\Gh/\bR^+\}$ satisfying \eqref{eq_propx_trGg}, the linear form $\ell$ defined via \eqref{eq_propx_elltrGg} is a state of  $C^*(\dot S^0(\Omega))$.

Furthermore, 
if $\gamma'$ and $\Gamma'$ are  
a positive measure
and a measurable field of self-adjoint positive trace-class operators  satisfying \eqref{eq_propx_trGg} and \eqref{eq_propx_elltrGg} for the same state $\ell$, 
then there exists a measurable positive function $f$ on $\bar \Omega\times\Gh/\bR^+$ such that 
$$
d\gamma'(x,\dot \pi)=f(x,\dot \pi) d\gamma(x,\dot \pi)
\quad\mbox{and}\quad
\Gamma'(x,\dot \pi) = \frac1{f(x,\dot \pi)} \Gamma(x,\dot \pi).
$$ 
\item 
The pure states of the $C^*$-algebra $C^*(\dot S^0(\Omega))$ are
the functionals $\ell=\ell_{x_0,\pi_0,v_0}$ of the form:
$$
\ell(\sigma) = (\sigma(x_0,\dot \pi_0)v_0,v_0)_{\cH_{\pi_0}},
\quad \sigma\in C^*(\tilde S^0),
$$
where $x_0\in \bar\Omega$, $\pi_0\in \Gh$ and $v_0\in \cH_{\pi_0}$ is a unit vector.
The states $\ell=\ell_{x_0,\pi_0,v_0}$ where $x_0\in \bar\Omega$, $\pi_0\in \Gh$ and $v_0\in \cH_{\pi_0}^\infty$ is a smooth unit vector, form a dense subset of the set of states of 
$C^*(\dot S^0(\Omega))$.
\end{enumerate}
\end{proposition}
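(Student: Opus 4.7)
The plan is to follow the same GNS-plus-disintegration argument used for Proposition \ref{prop_state_C*tildeS0}, leveraging the fact (established in Proposition \ref{prop_C*dotS0}) that $C^*(\dot S^0(\Omega))$ is a separable $C^*$-algebra of type 1 whose spectrum is canonically homeomorphic to $\bar\Omega\times(\Gh/\bR^+)$. Because this is a simple adaptation of the invariant case, the role of $\Gh/\bR^+$ in the invariant proof is uniformly played by the product space $\bar\Omega\times(\Gh/\bR^+)$.

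For Part (1), given a state $\ell$, I would apply the GNS construction \cite[Prop.~2.4.4]{Dixmier_C*} to obtain a cyclic representation $(\rho,\cH_\ell,\xi)$ of $C^*(\dot S^0(\Omega))$ with $\ell(\sigma)=(\rho(\sigma)\xi,\xi)_{\cH_\ell}$. Since the algebra is type 1, by \cite[Thm.~8.6.6]{Dixmier_C*} one decomposes $\rho$ according to multiplicities as $\rho\sim\oplus_{r\in\bN\cup\{\infty\}} r\rho_r$ with $\rho_r\sim\int_{\widehat{C^*(\dot S^0(\Omega))}} \zeta\,d\gamma_r(\zeta)$ and mutually singular positive measures $\gamma_r$. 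Using the identification $\widehat{C^*(\dot S^0(\Omega))}\cong \bar\Omega\times(\Gh/\bR^+)$ from Proposition \ref{prop_C*dotS0}, I would write the cyclic vector as $\xi\sim(\xi_{r,s})_{r,s}$ with $\xi_{r,s}$ a measurable field on $\bar\Omega\times(\Gh/\bR^+)$, then set $\Gamma_r(x,\dot\pi):=\sum_{s=1}^r \xi_{r,s}(x,\dot\pi)\otimes\xi_{r,s}(x,\dot\pi)^*$ (a measurable field of rank $\le r$ positive trace-class operators) and $\Gamma:=\sum_r \Gamma_r$, $\gamma:=\sum_r\gamma_r$. Mutual singularity of the $\gamma_r$ guarantees that $\Gamma$ is measurable, while $\|\xi\|_{\cH_\ell}^2=1$ gives \eqref{eq_propx_trGg}, and writing out $(\rho(\sigma)\xi,\xi)$ yields \eqref{eq_propx_elltrGg}.

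For Part (2), positivity and continuity of the functional $\ell$ defined by the integral formula are immediate from $\tr(\sigma^*\sigma\,\Gamma)\ge 0$ (as $\Gamma(x,\dot\pi)\ge 0$) and from $|\tr(\sigma(x,\dot\pi)\Gamma(x,\dot\pi))|\le\|\sigma\|_{C^*}\tr\Gamma(x,\dot\pi)$ combined with \eqref{eq_propx_trGg}. For uniqueness: if $(\gamma',\Gamma')$ yields the same $\ell$, I would construct, exactly as in the proof of Proposition \ref{prop_state_C*tildeS0}, a representation $\rho'$ and unit vector $\xi'$ from $\Gamma'$ (decomposed on its rank strata $B_r\subset \bar\Omega\times(\Gh/\bR^+)$ using a measurable spectral decomposition of $\Gamma'$ into rank-one pieces $\xi'_{r,s}\otimes(\xi'_{r,s})^*$). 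The equality of the associated states forces $(\rho',\xi')\sim(\rho,\xi)$ by \cite[Prop.~2.4.1]{Dixmier_C*}, and then \cite[Thm.~8.6.6]{Dixmier_C*} gives the equivalence $\gamma'_r\sim\gamma_r$, i.e.\ $d\gamma'_r=f_r\,d\gamma_r$ for some measurable $f_r>0$ on $B_r$; gluing yields the desired $f$, and transporting the cyclic vector produces $\Gamma'=f^{-1}\Gamma$.

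For Part (3), pure states correspond to irreducible GNS representations, which by Proposition \ref{prop_C*dotS0} are exactly the evaluation maps $\rho_{x_0,\pi_0}$. For such a representation the cyclic vector is a unit vector $v_0\in\cH_{\pi_0}$, so $\ell=\ell_{x_0,\pi_0,v_0}$; conversely each such functional is obviously a pure state (associated to $\gamma=\delta_{(x_0,\dot\pi_0)}$ and $\Gamma=v_0\otimes v_0^*$). Density of the sub-family with $v_0\in\cH_{\pi_0}^\infty$ smooth follows in two steps: pure states are weak-$*$ dense in the state space (Hahn-Banach/Krein-Milman, since the state space is the weak-$*$ closure of the convex hull of its extreme points), and for fixed $(x_0,\pi_0)$ the map $v\mapsto\ell_{x_0,\pi_0,v}$ is weak-$*$ continuous, so one may replace any unit $v_0\in\cH_{\pi_0}$ by a sequence of smooth unit vectors converging to it (density of $\cH_{\pi_0}^\infty$ in $\cH_{\pi_0}$ is classical, see e.g.\ Section~\ref{subsec_Gh+plancherel} and Example~\ref{ex_smooth_vectors}).

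The only genuine technical point I anticipate is checking measurability of the field $\Gamma$ on the product space $\bar\Omega\times(\Gh/\bR^+)$, together with the measurable rank decomposition $\Gamma'=\sum_s \xi'_{r,s}\otimes(\xi'_{r,s})^*$ in the uniqueness argument; this uses standard measurable selection for positive trace-class operator fields on a standard Borel space, and the fact (Proposition \ref{prop_C*dotS0}) that $\bar\Omega\times(\Gh/\bR^+)$ is compact Hausdorff, hence standard Borel, so all the Dixmier machinery applies verbatim.
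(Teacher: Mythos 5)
Your overall strategy for Parts (1) and (2) is exactly the one the paper intends: repeat the GNS construction and the Dixmier disintegration from Proposition \ref{prop_state_C*tildeS0} verbatim, simply replacing the invariant spectrum $\Gh/\bR^+$ by $\bar\Omega\times(\Gh/\bR^+)$ as identified in Proposition \ref{prop_C*dotS0}. The paper's own proof is exactly this remark and is left to the reader, so your handling of (1), (2) and the first half of (3) is fine.

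There is, however, a genuine gap in your treatment of the density claim in Part (3). You write that ``pure states are weak-$*$ dense in the state space (Hahn-Banach/Krein-Milman, since the state space is the weak-$*$ closure of the convex hull of its extreme points).'' This is not what Krein-Milman gives: Krein-Milman asserts that the state space is the weak-$*$ closed \emph{convex hull} of the pure states, which is strictly weaker than density of the pure states themselves. For a unital $C^*$-algebra with nontrivial Hausdorff primitive quotient --- for instance $C([0,1])$, where pure states are the Dirac masses and the state space is all probability measures --- pure states are manifestly \emph{not} dense, and for a type~I $C^*$-algebra like $C^*(\dot S^0(\Omega))$ (which here admits the unital abelian quotient $C(\bar\Omega)$) the same obstruction occurs. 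The correct statement, and the one the paper actually uses in the proof of Lemma \ref{lem_prop_consistency}, is that the set of states is the weak-$*$ closure of the positive (convex) span of the pure states; that \emph{does} follow from Krein-Milman. Your second step --- approximating an arbitrary unit $v_0\in\cH_{\pi_0}$ by smooth unit vectors using weak-$*$ continuity of $v\mapsto\ell_{x_0,\pi_0,v}$ --- is correct and shows that pure states with smooth vectors are dense in the set of pure states. Combining this with Krein-Milman yields the assertion the paper needs (closure of the positive cone over smooth-vector pure states), but it does not yield density of these pure states in the full state space, which is what you asserted. You should replace the density claim by the Krein-Milman closed-convex-hull statement.

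One smaller point: $\bar\Omega\times(\Gh/\bR^+)$ need not be Hausdorff (for the Heisenberg group, $\Gh/\bR^+$ is not Hausdorff, since the characters lie in the closure of the infinite-dimensional classes). What you need, and what holds, is that it is a standard Borel space, because the $C^*$-algebra is separable and type I; this is the hypothesis under which Dixmier's disintegration machinery applies, not Hausdorffness of the spectrum.
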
 
\begin{proof}
The proof of Proposition \ref{prop_state_C*dotS0Omega} is a simple modification of the proof of Proposition \ref{prop_state_C*tildeS0}; indeed, it suffices to replace
the characterisation of the spectrum of $C^*(\tilde S^0)$
with the one of $C^*(\dot S^0(\Omega))$ given in Proposition \ref{prop_C*dotS0}.
 It is left to the reader.
\end{proof}

We observe that $\ell_{x_0\pi_0,v_0}$ corresponds to $\gamma (x,\dot\pi) =\delta_{x_0}(x)\otimes\delta_{\dot\pi_0}(\dot\pi)$ and $\Gamma (x_0,\dot\pi_0)= v_0\otimes v_0^*$.


\section{Defect measures}
\label{sec_defect_meas}

In this section, we state and prove our main results, 
that is, the existence of defect measure.
We also give example of such measures
and prove the consistency of our description.

\subsection{Main result}
\label{subsec_main_result}

Before stating our main result, 
let us define the type of object   a microlocal defect measure will be

\begin{definition}
\label{def_traceclass_pos_meas}
Let $\Omega$ be a Borelian set of $G$ with non-empty interior.
\begin{itemize}
\item 
A \emph{positive trace-class-valued measure} on $\Omega\times \Gh/\bR^+$
is a pair $(\gamma,\Gamma)$
where
 $\gamma$ is a positive Radon measure on $\Omega\times \Gh/\bR^+$,
and  $\{\Gamma (x,\dot\pi)\in \sL(\cH_\pi), (x,\dot\pi)\in \Omega\times\Gh/\bR^+\}$
is a measurable field of positive traceclass operators 
  satisfying  for all $C$ compact subset of $\Omega$
$$
 \int_{C \times \Gh/\bR^+} \tr\left( \Gamma (x,\dot\pi)\right) d\gamma(x,\dot\pi)<\infty \, , \quad\mbox{and}\quad
 \gamma(C\times \Gh/\bR^+)<\infty.
 $$
\item 
Two positive trace-class-valued measures $(\gamma,\Gamma)$
and $(\gamma',\Gamma')$ are \emph{equivalent} when
there exists a measurable positive function $f$ on $\Omega\times\Gh/\bR^+$ such that 
$$
d\gamma'(x,\dot \pi)=f(x,\dot \pi) d\gamma(x,\dot \pi)
\quad\mbox{and}\quad
\Gamma'(x,\dot \pi) = \frac1{f(x,\dot \pi)} \Gamma(x,\dot \pi).
$$ 
The equivalence class of $(\gamma,\Gamma)$ is denoted by $\Gamma d\gamma$.
\end{itemize}
\end{definition}

We can now state our main theorem:
 \begin{theorem}
\label{thm_defect_measure}
Let $\Omega$ be a non-empty open set of $G$.
Let $(u_k)$ be a sequence in $L^2(\Omega,loc)$
and $u\in L^2(\Omega,loc)$.
We assume that $u_k \rightharpoonup_{k\to\infty}  u$ a.e. in $L^2(\Omega,loc)$.
Then there exist 
a subsequence $(u_{k(j)})_{j\in \bN}$ of $(u_k)$
and a  positive trace-class-valued measure $(\gamma,\Gamma)$
on $\Omega\times \Gh/\bR^+$
 such that for any $A\in \Psi^0_{cl}(\Omega)$, 
 we have the convergence
$$
\lim_{j\to\infty} 
\left(A (u_{k(j)}-u),(u_{k(j)}-u)\right)_{L^2(\Omega)} 
=
\int_{\Omega \times (\Gh /\bR^{+})}
\tr \left(\princ_0 (A) (x,\dot \pi) \ \Gamma(x,\dot \pi) \right)
d  \gamma(x,\dot\pi) \, ,
$$

Moreover, once the subsequence $(k(j))$ is fixed, 
the  positive trace-class-valued measure
 $(\Gamma,\gamma)$ is unique up to equivalence.
\end{theorem}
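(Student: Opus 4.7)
The plan is to cast the problem in the $C^*$-algebra framework of Section \ref{sec_0homsymb} to bypass the absence of a Gårding inequality. First I would reduce to the case $u = 0$, so that $u_k \rightharpoonup 0$ in $L^2(\Omega,loc)$ and $(u_k)$ is bounded on every compact subset. Fix an exhaustion $\Omega_1 \subset\subset \Omega_2 \subset\subset \cdots$ of $\Omega$ by relatively compact open subsets. By Proposition \ref{prop_dotS0_separability} the Fr\'echet spaces $\dot S^0(\Omega_n)$, hence the $C^*$-algebras $C^*(\dot S^0(\Omega_n))$, are separable. Pick a countable dense family $(\sigma_{n,m})_m$ and, via Proposition \ref{prop_equiv_dotS} combined with an $x$-cutoff supported in $\Omega_{n+1}$, associate to each $\sigma_{n,m}$ an operator $A_{n,m} \in \Psi^0_{cl}(\Omega_{n+1})$ whose principal part restricts to $\sigma_{n,m}$ on $\Omega_n$. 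Each $A_{n,m}$ is $L^2$-bounded by Theorem \ref{thm_calculus}, so a Cantor-style diagonal extraction yields $u_{k(j)}$ along which all the quadratic forms $(A_{n,m} u_{k(j)},u_{k(j)})_{L^2}$ converge.

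Next, for $A \in \Psi^0_{cl}(\Omega)$ with $\princ_0(A) = \sigma$, I would set
$$\ell(\sigma) := \lim_{j\to\infty} (A u_{k(j)}, u_{k(j)})_{L^2(\Omega)}.$$
This is independent of the choice of realization $A$: two such operators differ by an element of $\Psi^{m'}_{cl}(\Omega)$ with $m' < 0$, which by Theorem \ref{thm_cq_rellich} is compact $L^2(\Omega,loc) \to L^2(\Omega)$, so maps the weakly null sequence $u_{k(j)}$ to a strongly null one and contributes $0$ in the limit. Thus $\ell$ descends to a well-defined linear functional on the dense subspace of principal symbols inside each $C^*(\dot S^0(\Omega_n))$.

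The crucial step is positivity. Given $\tau \in \dot S^0(\Omega_n)$, I would use Theorem \ref{thm_calculus} together with the identity $\princ_0(B^*B) = \princ_0(B)^*\princ_0(B)$ to produce $B \in \Psi^0_{cl}$ with principal symbol $\tau$, so that
$$\ell(\tau^*\tau) = \lim_j (B^* B u_{k(j)}, u_{k(j)}) = \lim_j \|B u_{k(j)}\|_{L^2}^2 \geq 0.$$
Density of $\{\tau^*\tau : \tau \in \dot S^0(\Omega_n)\}$ in the positive cone of $C^*(\dot S^0(\Omega_n))$ promotes this to $C^*$-positivity. Combined with the uniform upper bound $|\ell(\sigma)| \leq \|\sigma\|_{C^*} \, \limsup_j \|u_{k(j)}\|_{L^2(\bar\Omega_n)}^2$, which follows from $C^*$-positivity applied to $\|\sigma\|_{C^*}\id \pm \mathrm{Re}\,\sigma$, $\ell$ extends continuously to a (multiple of a) state of $C^*(\dot S^0(\Omega_n))$.

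Applying Proposition \ref{prop_state_C*dotS0Omega} then gives, for each $n$, a positive trace-class-valued measure $(\gamma_n,\Gamma_n)$ on $\bar\Omega_n \times \Gh/\bR^+$ representing $\ell$. The compatibility of the restrictions $\ell|_{C^*(\dot S^0(\Omega_n))}$ as $n$ grows, together with the uniqueness-up-to-equivalence clause of Proposition \ref{prop_state_C*dotS0Omega}, allows the $(\gamma_n,\Gamma_n)$ to be glued into a single $(\gamma,\Gamma)$ on $\Omega \times \Gh/\bR^+$, yielding the claimed formula; uniqueness once $(k(j))$ is fixed is then inherited from that same proposition. The main technical obstacle is the positivity step: the principal-symbol calculus of Theorem \ref{thm_calculus} combined with the Rellich-type Theorem \ref{thm_cq_rellich} is what replaces $\ell(\tau^*\tau)$ by the manifestly nonnegative $\lim_j \|B u_{k(j)}\|^2$, and this is the $C^*$-algebraic substitute for the Gårding inequality used in the Euclidean argument of \cite{gerard_91}.
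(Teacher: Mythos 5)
Your proposal follows the same architecture as the paper's proof: reduce to $u=0$, diagonalize over an exhaustion using the separability of $\dot S^0(\Omega_n)$ (Proposition \ref{prop_dotS0_separability}), define $\ell$ by passing to the limit along the extracted subsequence, observe that $\ell$ depends only on the principal symbol because lower-order errors are compact by Theorem \ref{thm_cq_rellich}, obtain positivity on $\{\tau^*\tau\}$ from $\princ_0(B^*B)=\princ_0(B)^*\princ_0(B)$ and $\lim_j\|Bu_{k(j)}\|_{L^2}^2\geq 0$, and then invoke Proposition \ref{prop_state_C*dotS0Omega}. All of this matches Lemma \ref{lem_limit} and the proof of Proposition \ref{prop_defect_measure}.

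However, the passage from positivity on the dense subalgebra to continuity for the $C^*$-norm is circular as you present it. You claim that density of $\{\tau^*\tau : \tau\in\dot S^0(\Omega_n)\}$ in the positive cone of $C^*(\dot S^0(\Omega_n))$ promotes the inequality $\ell(\tau^*\tau)\geq 0$ to $C^*$-positivity, and then you derive the operator-norm bound on $\ell$ by applying this $C^*$-positivity to $\|\sigma\|_{C^*}\id\pm\mathrm{Re}\,\sigma$. But the density step already requires $\ell$ to be continuous: for a general positive $\rho\in C^*(\dot S^0(\Omega_n))$ you would approximate $\sqrt\rho$ by $\tau_m\in\dot S^0(\Omega_n)$ and pass to the limit in $\ell(\tau_m^*\tau_m)$, and that limit passage is precisely the $C^*$-boundedness you are trying to deduce. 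Positivity on a dense unital $*$-subalgebra does not by itself give $C^*$-boundedness (the Hausdorff moment problem for polynomials in $C[0,1]$ provides unbounded positive linear functionals on a dense unital $*$-subalgebra), so this is a genuine gap, not a presentational shortcut.

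The paper's fix, which you should insert, is to prove the $C^*$-norm bound \emph{first} and independently of positivity, exploiting the tensor structure of the dense subalgebra. For a symbol of the form $\tau(x,\pi)=f(x)\sigma(\pi)$ with $f\in\cC^\infty(\Omega)$ and $\sigma\in\tilde S^0$, the quantization factors as
$$v_k^{(\tau)}=\bigl(\Op(\sigma\psi(\pi(\cR)))u_k,\bar f u_k\bigr)_{L^2(\Omega)},$$
giving $|v_k^{(\tau)}|\le\|\sigma\|_{L^\infty(\Gh)}\|\psi\|_{L^\infty}\|f\|_{L^\infty}\sup_k\|u_k\|^2$, and the identity
$$\sup_{(x,\dot\pi)}\|\tau(x,\dot\pi)\|_{\sL(\cH_\pi)}=\inf\Bigl\{\sum_i\|\sigma_i\|_{L^\infty(\Gh)}\|f_i\|_{L^\infty}\ :\ \tau=\sum_i f_i\sigma_i\Bigr\}$$
carries this to $|\ell(\tau)|\le\|\tau\|_{C^*}$ for every $\tau\in\dot S^0(\Omega_n)$. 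With this continuity in hand, your positivity argument legitimately transfers to the closure and $\ell$ is indeed a state of $C^*(\dot S^0(\Omega_n))$; the rest of your outline (gluing over the exhaustion, uniqueness from Proposition \ref{prop_state_C*dotS0Omega}) then stands.
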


\begin{definition}
\label{def_MDM}
We keep the notation of Theorem \ref{thm_defect_measure}.
A sequence $(u_k)$ is \emph{pure} when the subsequence is trivial, i.e. $k(j)=j$.
In this case, 
 the equivalence class  $\Gamma d\gamma$
is called the microlocal defect measure, or \emph{MDM},
of  $(u_{k})$.
\end{definition}

The definition of pure sequence follows the vocabulary set in \cite{gerard_91}; it bears no relation with pure states.

\medskip

Let us turn our attention to the proof of Theorem \ref{thm_defect_measure}.
It suffices to prove the case $u=0$.
Furthermore an argument of diagonal extraction 
over a suitable sequence of bounded open subsets exhausting $\Omega$
shows that it suffices to prove:
 \begin{proposition}
\label{prop_defect_measure}
Let $\Omega$ be a non-empty bounded open set of $G$.
Let $(u_k)$ be a  sequence in $L^2(\Omega)$
converging weakly to 0.
Then there exist 
a subsequence $(u_{k(j)})_{j\in \bN}$ of $(u_k)$
and a  positive trace-class-valued measure $(\gamma,\Gamma)$
 on $\bar \Omega\times \Gh/\bR^+$,
such that for any $A\in \Psi^0_{asymp}$, 
we have the following convergence:
$$
\lim_{j\to\infty} 
\left(A  u_{k(j)},  u_{k(j)} \right)_{L^2(\Omega)} 
=
\int_{\bar \Omega \times (\Gh /\bR^{+})}
\tr \left(\princ_0 (A) (x,\dot \pi) \ \Gamma(x,\dot \pi) \right)
d  \gamma(x,\dot\pi) \, .
$$
Moreover once the subsequence $(k(j))$ is fixed, 
the trace-class-valued positive measure
 $(\Gamma,\gamma)$ is unique up to equivalence.
\end{proposition}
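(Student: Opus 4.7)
The plan is to construct a positive linear functional on the $C^*$-algebra $C^*(\dot S^0(\Omega))$ of Proposition~\ref{prop_C*dotS0} from the sequence $(u_k)$, and then invoke Proposition~\ref{prop_state_C*dotS0Omega} to recover the pair $(\gamma,\Gamma)$. Since $\Omega$ is bounded, I fix a cutoff $\chi\in \cD(G)$ equal to $1$ on a neighbourhood of $\bar\Omega$, with $0\leq \chi\leq 1$; then $(A u_k,u_k)_{L^2(\Omega)}=(\chi A\chi u_k,u_k)_{L^2(G)}$ for any $A\in \Psi^0_{asymp}$, and $\chi A\chi$ has compactly supported integral kernel. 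Fix also a positive Rockland operator $\cR$ and $\psi\in C^\infty(\bR)$ with $\psi\equiv 0$ near $0$ and $\psi\equiv 1$ on $[\Lambda,\infty)$; for each $\sigma\in \dot S^0(\Omega)$, arbitrarily extended to $\dot S^0$, set
$$A_\sigma:=\chi\,\Op\bigl(\sigma\,\psi(\pi(\cR))\bigr)\,\chi\in \Psi^0_{cl}(G).$$
By Corollary~\ref{cor_prop_equiv_dotS} and Proposition~\ref{prop_homogeneous_asymptotic}, any other admissible choice of $\cR,\psi,\chi$, and of the extension of $\sigma$, yields an operator differing from $A_\sigma$ by an element of $\Psi^{-\infty}$ with compactly supported kernel, which contributes zero in the limit thanks to Theorem~\ref{thm_cq_rellich}; the functional constructed below will therefore be independent of these choices.

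Since $u_k$ converges weakly, Banach--Steinhaus gives $M:=\sup_k\|u_k\|_{L^2}<\infty$, so the sequence $(A_\sigma u_k,u_k)$ is bounded in $k$ for each $\sigma$. Using the separability of $\dot S^0(\Omega)$ (Proposition~\ref{prop_dotS0_separability}), Cantor's diagonal procedure produces a subsequence $(u_{k(j)})$ for which the limit
$$\ell(\sigma):=\lim_{j\to\infty}\bigl(A_\sigma u_{k(j)},u_{k(j)}\bigr)_{L^2(G)}$$
exists for every $\sigma$ in a countable dense subset of $\dot S^0(\Omega)$; in particular $\ell(\id)\leq M^2$.

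The main obstacle — replacing the Garding inequality used in the Euclidean case — is the positivity statement $\ell(\sigma^*\sigma)\geq 0$. The composition rule of Theorem~\ref{thm_calculus} yields
$$A_\sigma^*A_\sigma=A_{\sigma^*\sigma}+R,\qquad R\in \Psi^{-1},$$
and $R$ inherits a compactly supported kernel from $\chi$; Theorem~\ref{thm_cq_rellich} then implies that $R$ is compact as an operator $L^2(G,loc)\to L^2(G)$, so $(Ru_{k(j)},u_{k(j)})\to 0$ and
$$\ell(\sigma^*\sigma)=\lim_j\|A_\sigma u_{k(j)}\|_{L^2}^2\geq 0.$$
Combining positivity with $\ell(\id)<\infty$ and the fact that $\dot S^0(\Omega)$ is a unital dense $*$-subalgebra of $C^*(\dot S^0(\Omega))$, the standard arguments (Cauchy--Schwarz for positive forms together with polynomial/spectral approximation of $\|\sigma\|_{C^*}^2\id-\sigma^*\sigma$) allow the unique extension of $\ell$ to a continuous positive linear functional on the whole $C^*$-algebra.

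Applying an unnormalised variant of Proposition~\ref{prop_state_C*dotS0Omega} then provides a trace-class-valued positive measure $(\gamma,\Gamma)$ on $\bar\Omega\times(\Gh/\bR^+)$ as in Definition~\ref{def_traceclass_pos_meas}, unique up to equivalence, with
$$\ell(\sigma)=\int_{\bar\Omega\times(\Gh/\bR^+)}\tr\bigl(\sigma(x,\dot\pi)\,\Gamma(x,\dot\pi)\bigr)\,d\gamma(x,\dot\pi).$$
To finish, for a general $A\in \Psi^0_{asymp}$ one reads off the asymptotic expansion of $A$ to see that $\chi A\chi - A_{\princ_0(A)}$ lies in $\Psi^{m_1}_{cl}(G)$ for some $m_1<0$ (using Proposition~\ref{prop_homogeneous_asymptotic}); Theorem~\ref{thm_cq_rellich} again makes its contribution vanish in the limit, producing the announced formula. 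The uniqueness of $(\gamma,\Gamma)$ up to equivalence is inherited from Proposition~\ref{prop_state_C*dotS0Omega}.
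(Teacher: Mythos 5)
Your overall strategy matches the paper's: build a linear functional $\ell$ on the Fr\'echet algebra of $0$-homogeneous symbols via a diagonal extraction, show it is a state of the $C^*$-algebra $C^*(\dot S^0(\Omega))$, and then apply Proposition~\ref{prop_state_C*dotS0Omega}. The positivity argument (write $A_\sigma^*A_\sigma = A_{\sigma^*\sigma}+R$ with $R$ of negative order, then kill $R$ by Rellich-type compactness) is exactly the route the paper takes in Lemma~\ref{lem_limit}(5), and your reduction of a general $A\in\Psi^0_{asymp}$ to its principal part is fine.

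However, there is a genuine gap at the point where you pass from a Fr\'echet-continuous positive linear functional on $\dot S^0(\Omega)$ to a continuous (hence state) linear functional on the $C^*$-algebra $C^*(\dot S^0(\Omega))$. You claim this follows from ``standard arguments (Cauchy--Schwarz for positive forms together with polynomial/spectral approximation of $\|\sigma\|_{C^*}^2\id-\sigma^*\sigma$)''. This is not automatic. Positivity of a linear functional on a dense unital $*$-subalgebra $B\subset A$ of a $C^*$-algebra, together with $\ell(\id)<\infty$, does \emph{not} imply $C^*$-norm boundedness unless one knows something like closure of $B$ under taking square roots of strictly positive elements (i.e.\ some form of spectral invariance), which the paper never establishes for $\dot S^0(\Omega)$. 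Concretely: a polynomial truncation $p_N$ of $\sqrt{t-x}$ gives $p_N(a)^2=t\id-a+q_N(a)$ with $q_N(a)\to 0$ in $C^*$-norm, but since $\ell$ is only known to be continuous for the much finer Fr\'echet topology, you cannot conclude $\ell(q_N(a))\to 0$. A more sophisticated version (iterated Cauchy--Schwarz giving $\ell(a)\le\ell(\id)^{1-2^{-n}}\ell(a^{2^n})^{2^{-n}}$ together with polynomial growth of the Fr\'echet seminorms of $a^{2^n}$) can be made to work, but this requires a nontrivial estimate on $\dot S^0$-seminorms of powers and is certainly not ``standard''; you would need to supply it.

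The paper's proof closes this gap by an entirely different and more elementary mechanism, which bypasses positivity for the purposes of boundedness. For a symbol that is an elementary tensor $\tau(x,\dot\pi)=f(x)\,\sigma(\dot\pi)$, the quantity $v_k^{(\tau)}=(\Op(\sigma\psi(\pi(\cR)))u_k,\bar f u_k)_{L^2}$ is estimated directly by $\|\Op(\sigma\psi)\|_{\sL(L^2)}\le\|\sigma\psi\|_{L^\infty(\Gh)}$ — the point being that $\Op(\sigma\psi)$ is a \emph{Fourier multiplier} (the symbol is $x$-independent), for which the $L^2$-operator norm is exactly the sup-norm of the symbol, with no loss from higher derivatives. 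This gives $|\ell(\tau)|\le\|\sigma\|_{L^\infty(\Gh)}\|f\|_{L^\infty}$, and then the identification (recorded in the paper's proof) of the $C^*$-norm on $C^*(\dot S^0(\Omega))$ with the injective tensor norm
$\|\sigma\|_{C^*}=\inf\bigl\{\sum_i\|\tau_i\|_{L^\infty(\Gh)}\|f_i\|_{L^\infty}:\sigma=\sum_i f_i\tau_i\bigr\}$
lets one pass to general symbols by density. Only at the very end is positivity used, to upgrade the bounded functional to a state. You should either reproduce this tensor-decomposition estimate, or supply the full spectral-invariance / iterated-Cauchy--Schwarz argument; as written, the extension step does not follow.
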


\begin{remark}
\label{rem_prop_defect_measure}
As the sequence
$(u_k)$ 
converges weakly to 0, it is bounded in norm.
If $(u_k)$ converges to 0 for the $L^2$-norm, then
 $\lim_{k\to\infty}(Au_k,u_k)_{L^2}=0$ since $A$ is bounded on $L^2(\Omega)$.
Therefore, in the proof of Proposition \ref{prop_defect_measure},
up to extraction of a subsequence, 
we may assume
\begin{equation}
\label{eq_WMA_prop_inv_defect_measure1}
\sup_{k}\|u_k\|_{L^2(\Omega)} =
\lim_{k\to\infty} \|u_k  \|_{L^2(\Omega)}  =1.
\end{equation}
Furthermore, 
  Rellich's theorem (cf. Theorem \ref{thm_cq_rellich} and its proof)
 shows that it suffices to consider only operators 
 $A\in \Psi^0_{asymp}$ with one term (the 0-homogeneous one) in their homogeneous expansion.
\end{remark}

The proof of Proposition \ref{prop_defect_measure} relies on the following lemma:

\begin{lemma}
\label{lem_limit}
Let $\Omega$ be a non-empty bounded open set of $G$.
Let $(u_k)$ be a  sequence in $L^2(\Omega)$
converging weakly to 0
 and satisfying \eqref{eq_WMA_prop_inv_defect_measure1}.
If $\cR$ is a positive Rockland operator, 
$\psi\in C^\infty(\bR)$ a function
such that $\psi\equiv0$ on a neighbourhood of 0 and $\psi\equiv 1$ on a neighbourhood of $+\infty$, 
and  $\sigma\in \dot S^0_{comp}$,
we set
$$
v_k^{(\sigma)}:=
(\Op(\sigma \psi(\pi(\cR))) u_{k},  u_{k}
\big)_{L^2(\bar \Omega)} \, .
$$

\begin{enumerate}
\item 
The sequence $(v_k^{(\sigma)})_{k\in \bN}$ is bounded.
\item 
We can extract a subsequence $(k_j)_{j\in \bN}$
such that 
the sequence $(v_{k_j}^{(\sigma)})_{j\in \bN}$
has a finite limit in~$\bC$.
\item 
If $(k_j)$ is as in Part (2), then the limit is 
 independent of $\psi$ and $\cR$
and it is the same with 
$\Op(\psi(\pi(\cR)\sigma )$
or with 
$\Op(\psi(\pi(\cR)\sigma \psi(\pi(\cR))$
instead of $\Op(\sigma \psi(\pi(\cR))$.
\item If $(k_j)$ is as in Part (2), then 
the sequence
$(v_{k_j}^{(\sigma^*)})_{j\in \bN}$
 has also a finite limit
 and 
$$
\lim_{j\to \infty} v_{k_j}^{(\sigma^*)}
=
\overline{\lim_{j\to \infty} v_{k_j}^{(\sigma)}}
.
$$

\item 
If $\sigma$ is of the form $\sigma=\tau^* \tau$ 
with $\tau\in \dot S^0(\bar \Omega)$ then 
any limit obtained as in Part (2) is non-negative.

\end{enumerate}
\end{lemma}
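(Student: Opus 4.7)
The plan is to reduce everything to the following principle, call it $(\star)$: if $R\in\Psi^m$ with $m<0$, then $(Ru_k,u_k)_{L^2(\Omega)}\to 0$. Once $(\star)$ is established, Parts (1)--(5) each follow by identifying a difference of operators (resp.\ an expansion remainder) as a pseudodifferential operator of negative order. Parts (1) and (2) are almost free: Proposition~\ref{prop_equiv_dotS} gives $\sigma\psi(\pi(\cR))\in S^0$, so $\Op(\sigma\psi(\pi(\cR)))$ is bounded on $L^2(G)$ by Theorem~\ref{thm_calculus}. This yields $|v_k^{(\sigma)}|\le C\|u_k\|_{L^2}^2\le C$, and Bolzano--Weierstrass takes care of (2). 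To establish $(\star)$, I would pick $\chi\in\cD(G)$ with $\chi\equiv 1$ on $\bar\Omega$; since $u_k$ is supported in $\bar\Omega$, one has $(Ru_k,u_k)_{L^2(\Omega)}=(\chi R\chi u_k,u_k)_{L^2(G)}$. Theorem~\ref{thm_calculus} shows $\chi R\chi\in\Psi^m$ and its integral kernel is supported in $\supp\chi\times\supp\chi$, so $\chi R\chi\in\Psi^m_{cl}(\widetilde\Omega)$ for a bounded open $\widetilde\Omega$ containing $\supp\chi$. Theorem~\ref{thm_cq_rellich} then makes $\chi R\chi$ compact $L^2(\widetilde\Omega,loc)\to L^2(\widetilde\Omega)$, and the weak convergence $u_k\rightharpoonup 0$ forces $\|\chi R\chi u_k\|_{L^2}\to 0$.

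For Part (3), I apply $(\star)$ to three symbol differences, each of which lies in $S^{-\infty}$:
\begin{itemize}
\item $\sigma\psi_1(\pi(\cR_1))-\sigma\psi_2(\pi(\cR_2))\in S^{-\infty}$, by Proposition~\ref{prop_homogeneous_asymptotic} and Definition~\ref{def_homogeneous_asymptotic}, which say that the class of $\sigma\psi(\pi(\cR))$ in $S^0/S^{-\infty}$ is independent of $(\cR,\psi)$;
\item $\sigma\psi(\pi(\cR))-\psi(\pi(\cR))\sigma\in S^{-\infty}$, by Corollary~\ref{cor_prop_equiv_dotS};
\item $\psi(\pi(\cR))\sigma\psi(\pi(\cR))-\sigma\psi(\pi(\cR))=-(1-\psi)(\pi(\cR))\cdot\sigma\psi(\pi(\cR))\in S^{-\infty}$, since $(1-\psi)(\pi(\cR))\in S^{-\infty}$ by Corollary~\ref{cor_prop_multipliers_symbol} (the function $1-\psi$ has compact support in $[0,\infty)$) and $S^{-\infty}\cdot S^0\subset S^{-\infty}$ by Theorem~\ref{thm_calculus}.
\end{itemize}

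For Part (4) I write $\overline{v_k^{(\sigma)}}=(\Op(\sigma\psi(\pi(\cR)))^*u_k,u_k)$; the adjoint formula in Theorem~\ref{thm_calculus}, together with $\psi(\pi(\cR))^*=\psi(\pi(\cR))$, gives $\Op(\sigma\psi(\pi(\cR)))^*=\Op(\psi(\pi(\cR))\sigma^*)+R$ with $R\in\bigcup_{m<0}\Psi^m$; the remainder is absorbed by $(\star)$, while Part (3) identifies the leading contribution with $\lim v_{k_j}^{(\sigma^*)}$. For Part (5) I set $B:=\Op(\tau\psi(\pi(\cR)))\in\Psi^0$. The composition rule of Theorem~\ref{thm_calculus} produces $B^*B=\Op(\psi(\pi(\cR))\tau^*\tau\psi(\pi(\cR)))+R'$ with $R'\in\bigcup_{m<0}\Psi^m$, so
\[
0\le \|Bu_{k_j}\|_{L^2(G)}^2=(B^*Bu_{k_j},u_{k_j})=(\Op(\psi(\pi(\cR))\sigma\psi(\pi(\cR)))u_{k_j},u_{k_j})+o(1),
\]
and by Part (3) the right-hand side converges to $\lim v_{k_j}^{(\sigma)}$, which is therefore non-negative.

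The hard part will be verifying that each of the symbol differences in Part (3), and the remainders in Parts (4)--(5), are genuinely in $S^{-\infty}$ rather than merely of some slightly negative order, so that after cutoff by $\chi$ they land in the classical class $\Psi^m_{cl}(\widetilde\Omega)$ to which Theorem~\ref{thm_cq_rellich} applies. This uses the combined weight of Section~\ref{sec_hom}: Proposition~\ref{prop_equiv_dotS} to enter $S^0$, Corollary~\ref{cor_prop_equiv_dotS} and Proposition~\ref{prop_homogeneous_asymptotic} to identify differences as smoothing, Corollary~\ref{cor_prop_multipliers_symbol} to dispatch $(1-\psi)(\pi(\cR))$, and Theorem~\ref{thm_calculus} for the composition and adjoint expansions. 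Everything else is bookkeeping.
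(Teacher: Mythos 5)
Your proposal is correct and follows essentially the same approach as the paper's proof: $L^2$-boundedness of $\Op(\sigma\psi(\pi(\cR)))$ for (1)--(2), then expansions modulo a remainder of negative order absorbed by the Rellich-type compactness argument for (3)--(5). You in fact supply two pieces the paper leaves implicit: the printed proof omits Part (3) altogether (it only treats (1), (2), (4), (5)), and it merely alludes to the compactness step (``Using Rellich's theorem as in Theorem~\ref{thm_cq_rellich} and its proof'') that you isolate cleanly as $(\star)$ with the $\chi R\chi$ reduction.
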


\begin{proof}[Proof of Lemma \ref{lem_limit}]
Part (1) follows from $\Op(\sigma \psi(\pi(\cR))) \in \Psi^0$ being bounded on $L^2(G)$. Indeed, we have 
\begin{equation}\label{est_symb}
|v_{k}^{(\sigma)}|
\leq \|\Op(\sigma \psi(\pi(\cR))\|_{\sL(L^2(G))}
\sup_{k'\in\bN} \| u_{k'} \|_{L^2(G)}^2,
\end{equation}
and the supremum above is assumed to be equal to 1.
This proves Part (1) and thus Part~(2).

By Proposition \ref{prop_homogeneous_asymptotic} and its proof
together with the properties of the pseudo-differential calculus,
\begin{eqnarray*}
\Op(\sigma^* \psi(\pi(\cR))) 
&=& 
\Op(\psi(\pi(\cR))\sigma^* ) 
\ \mbox{mod} \Psi^{-\infty}(\Omega_{N_0})
\\
&=& 
\Op(\sigma \psi(\pi(\cR)))^* +E,
\end{eqnarray*}
where $E$ is an error term in $\Psi^{-1}(\Omega_{N_0})$.
Using Rellich's theorem as 
in Theorem \ref{thm_cq_rellich} and its proof shows Part (4).

If  $\sigma=\tau^* \tau$ with $\tau\in \dot S^0(\bar\Omega)$, 
then, extending $\sigma$ and $\tau$ to symbols on $\dot S^0$, by Proposition~\ref{prop_homogeneous_asymptotic} and its proof
together with the properties of the pseudo-differential calculus,
\begin{eqnarray*}
\Op(\sigma \psi(\pi(\cR))) 
&=& 
\Op(\psi(\pi(\cR))\sigma \psi(\pi(\cR))) 
\ \mbox{mod} \Psi^{-\infty}(\Omega_{N_0})
\\
&=& 
\Op(\psi(\pi(\cR))\tau^* )
\Op(\tau \psi(\pi(\cR)))
\ \mbox{mod} \Psi^{-1}(\Omega_{N_0}).
\\
&=& 
\Op(\psi(\pi(\cR))\tau )^*
\Op(\tau \psi(\pi(\cR))) +E,
\end{eqnarray*}
where $E$ is an error term in $\Psi^{-1}(\Omega_{N_0})$.
Thus we have
$$
v_k^{(\sigma)}
=
\|\Op(\psi(\pi(\cR))\tau ) u_k\|_{L^2(\Omega)}^2
+ (E u_k,  u_k)_{L^2( \Omega)}.
$$
The first term of the right-hand side is non-negative for all $k\in \bN$
whereas the second term tends to 0 as $k\to \infty$
by Theorem \ref{thm_cq_rellich} and its proof.
This shows Part (5).
\end{proof}

We can now prove Proposition \ref{prop_defect_measure}.

\begin{proof}[Proof of Proposition \ref{prop_defect_measure}]
Let $(u_k)$ be a  sequence in $L^2(\Omega)$
converging weakly to 0
 and satisfying equation~\eqref{eq_WMA_prop_inv_defect_measure1}.
By Proposition \ref{prop_dotS0_separability}, 
there exists a dense and countable $(\bQ+i\bQ)$-subspace 
$V_0$ of $\dot S^0(\bar \Omega)$.
By diagonal extraction, we may assume that the subsequence 
obtained in Lemma \ref{lem_limit}
is the same for any element of $V_0$
and we set
$$
\ell(\sigma) :=\lim_{j\to \infty} v_{k_j}^{(\sigma)}\, ,
\qquad \sigma\in V_0.
$$
Using the density of $V_0$ and the proof Part (1) of  Lemma \ref{lem_limit}, 
we extend $\ell$ to a continuous linear functional on the Fr\'echet space $\tilde S^0$
 satisfying
for any $\sigma\in\dot S^0(\bar\Omega)$
$$
|\ell(\sigma)|\leq 
\|\sigma \psi(\pi(\cR)) \|_{\sL(L^\infty(\Gh)}
\lesssim
\|\sigma\|_{\dot S^0(\bar\Omega),a,b,c}.
$$
having kept the same notation for $\ell$ and its extension.

Note that we can construct the subspace 
$V_0$ of  $\dot S^0(\Omega)$ as follows.
We consider $V$  a dense and countable $(\bQ+i\bQ)$-subspace of $\tilde S^0$ and $V_1$ a dense and countable $(\bQ+i\bQ)$-subspace of $\cC^\infty( \Omega)$.
Then the tensor product of $V$ and $V_1$ yields 
$V_0$, the set of symbols which are finite linear combinations over $(\bQ+i\bQ)$
of $\phi(x)\sigma(\dot \pi)$ with $\phi\in V_1$, $\sigma\in V_0$.
Then $V_0$ is a dense countable subset of the Fr\'echet space $\dot S^0 (\Omega)$.
The proof of Proposition \ref{prop_dotS0_separability} shows that $V_0$ is also dense in the Banach space $C^*(\dot S^0(\Omega))$ 
whose norm satisfies
$$
\sup_{(x,\dot \pi) \in \bar \Omega \times (\Gh/\bR^+)}
\| \sigma (x,\dot\pi)\|_{\sL(\cH_\pi)}
= \inf
\left\{\sum_i \sup_{\dot\pi\in\Gh/\bR^+}\|\tau_i\|_{\sL(\cH_\pi)}
\sup_{x\in\bar \Omega} |f_i(x)|
: \sigma=\sum_i f_i\tau_i
\right\}.
$$
If the symbol $\tau$ is of the form $f(x)\sigma(\dot \pi)$, 
with $f\in V_1$, $\sigma\in V$, then $\tau\in \dot S^0(\Omega)$ and
\begin{eqnarray*}
v_k^{(\tau)}
&=&
\big(\Op(\sigma \psi(\pi(\cR))) u_{k},\, \bar f  u_{k}
\big)_{L^2(\Omega)}
\\
|v_k^{(\tau)}|
&\leq &
\|\Op(\sigma \psi(\pi(\cR)))\|_{\sL(L^2(\Omega))}
\|  u_{k}\|_{L^2(\Omega)} 
\|\bar f  u_{k}\|_{L^2(\Omega)}
\\
&\leq &
\|\sigma\|_{L^\infty(\Gh)} 
\|\psi\|_{L^\infty(\bR)}
\sup_{k}\| u_{k}\|_{L^2(\Omega)} ^2
\|f \|_{L^\infty(\Omega)} \, ,
\end{eqnarray*}
thus
$$
|\ell(\tau)|\leq  \|\sigma\|_{L^\infty(\Gh)} \|f \|_{L^\infty(G)}
=\sup_{(x,\dot \pi) \in  \bar \Omega \times (\Gh/\bR^+)}
\| \tau (x,\dot\pi)\|_{\sL(\cH_\pi)}.
$$
Hence $\ell$ admits a unique continuous extension to a linear functional of  $C^*(\dot S^0(\Omega))$.
Lemma~\ref{lem_limit} implies
 that $\ell$ is a state of the $C^*$-algebra $C^*(\dot S^0( \Omega))$.
 The application of Proposition \ref{prop_state_C*dotS0Omega} (see also Remark \ref{rem_prop_defect_measure}) concludes the proof of Proposition \ref{prop_defect_measure}, and
so of Theorem \ref{thm_defect_measure}.
\end{proof}

 \subsection{Example: spatial concentration}\label{sec_ex1}
 
In this section, we study the example of a sequence of functions 
whose mass in concentrating in 0:
 
 \begin{proposition}
 \label{prop_spatial_concentration}
 Let $u_1\in \cS(G)$. 
 We define 
$$
u_k(x) =k^{Q\over 2} u_1(k x),\quad x\in G, \ k\in \bN_0\, .
$$
Then $u_k\TendWeak{k}{\infty} 0$ in $L^2(G,loc)$ and this sequence is pure. 
Its MDM is given  
(with the notation of Section~\ref{subsec_polar_dec_Gh}) by
$$
\gamma(x,\pi)= \delta_{x=0} \otimes \varsigma(\pi), 
\quad
\Gamma(\pi)=\int_{r=0}^\infty 
 \hat u_1(r\cdot \pi) \hat u_1(r\cdot \pi)^* r^{Q-1} dr
 \ \mbox{for}\ |\pi|=1.
 $$
\end{proposition}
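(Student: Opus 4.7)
The strategy is to compute $\lim_k(Au_k,u_k)_{L^2}$ directly for $A\in\Psi^0_{cl}(\Omega)$ and invoke the uniqueness part of Theorem~\ref{thm_defect_measure}: once one verifies that the limit exists and equals the integral in the statement for every such $A$, both the purity of $(u_k)$ and the identification of its MDM follow.

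Weak convergence is the easy step: for $\phi\in\cD(G)$ the substitution $y=kx$ gives $(u_k,\phi)_{L^2}=k^{-Q/2}\int u_1(y)\overline{\phi(k^{-1}y)}dy\to0$ by dominated convergence, and this extends to compactly supported $\phi\in L^2(G)$ by density combined with the uniform bound $\|u_k\|_{L^2}=\|u_1\|_{L^2}$ coming from \eqref{eq_int_Gf(rx)dx}. By Theorem~\ref{thm_cq_rellich}, operators in $\Psi^{-1}_{cl}(\Omega)$ act compactly $L^2(\Omega,\mathrm{loc})\to L^2(\Omega)$, so it suffices to handle $A=\Op(\sigma_0\,\psi(\pi(\cR)))$ where $\sigma_0\in\dot S^0$ is the principal part of the symbol of $A$, $\cR$ is a fixed positive Rockland operator of degree $\nu$, and $\psi\in C^\infty(\bR)$ vanishes near $0$ and is identically $1$ on $(\Lambda,\infty)$.

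The core computation is a twofold change of variables. From \eqref{eq_rcdotpif} with $r=k^{-1}$ one has $\hat u_k(\pi)=k^{-Q/2}\hat u_1(k^{-1}\cdot\pi)$. Plugging this into the Plancherel expression for $(Au_k,u_k)$, then substituting $x=k^{-1}y$ (Jacobian $k^{-Q}$) and $\pi=k\cdot\pi'$ (Jacobian $k^Q$ via \eqref{eq_dilation_mu}), and using $(k^{-1}\cdot k\cdot\pi')(y)=\pi'(y)$, the $0$-homogeneity $\sigma_0(x,k\cdot\pi')=\sigma_0(x,\pi')$, and $\psi((k\cdot\pi')(\cR))=\psi(k^\nu\pi'(\cR))$, all prefactors cancel exactly and one finds
\begin{equation*}
(Au_k,u_k)_{L^2}=\int_G\int_{\Gh}\overline{u_1(y)}\,\tr\bigl(\pi'(y)\,\sigma_0(k^{-1}y,\pi')\,\psi(k^\nu\pi'(\cR))\,\hat u_1(\pi')\bigr)\,d\mu(\pi')\,dy.
\end{equation*}
As $k\to\infty$, $\sigma_0(k^{-1}y,\pi)\to\sigma_0(0,\pi)$ in operator norm by smoothness in $x$, while $\psi(k^\nu\pi(\cR))\to\id_{\cH_\pi}$ in SOT by Lemma~\ref{lem_normL2_E(epsilon,infty)}(3). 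Since $u_1\in\cS(G)$, the factorisation $\hat u_1(\pi)=\cF_G((\id+\cR)^{s/\nu}u_1)(\pi)\,\pi(\id+\cR)^{-s/\nu}$ with $s>Q$ combined with \eqref{eq_inttr(I+R)} shows $\tr|\hat u_1|\in L^1(\Gh)$; together with $u_1\in L^1(G)$ this supplies the integrable majorant required by dominated convergence, and the trace passes to the limit by a standard SOT/trace-class argument.

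The limiting value $(\Op(\sigma_0(0,\cdot))u_1,u_1)_{L^2}=\int_{\Gh}\tr(\sigma_0(0,\pi)\hat u_1(\pi)\hat u_1(\pi)^*)d\mu(\pi)$ is rewritten via the polar decomposition on $\Gh$ (Lemma~\ref{lem_pol_dec_Gh}(2)) and the $0$-homogeneity of $\sigma_0(0,\cdot)$ as
\begin{equation*}
\int_{\Sigma_1}\tr\bigl(\sigma_0(0,\pi)\,\Gamma(\pi)\bigr)\,d\varsigma(\pi),\qquad \Gamma(\pi):=\int_0^\infty\hat u_1(r\cdot\pi)\hat u_1(r\cdot\pi)^*\,r^{Q-1}\,dr,
\end{equation*}
which matches the statement with $\gamma=\delta_{x=0}\otimes\varsigma$; the trace-class bound $\int_{\Sigma_1}\tr\Gamma\,d\varsigma=\|u_1\|_{L^2}^2$ is precisely Lemma~\ref{lem_pol_dec_Gh}(3). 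The main technical obstacle is the simultaneous passage to the limit under the trace for the two factors $\sigma_0(k^{-1}y,\pi)\to\sigma_0(0,\pi)$ and $\psi(k^\nu\pi(\cR))\to\id_{\cH_\pi}$; the Schwartz regularity of $u_1$ is exactly what furnishes the integrable domination. The remaining steps are essentially bookkeeping within the framework of Sections~\ref{subsec_polar_dec_Gh} and~\ref{sec_hom}.
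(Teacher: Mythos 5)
Your proposal is correct and follows essentially the same route as the paper: the same Rellich reduction to $A=\Op(\sigma_0\psi(\pi(\cR)))$, the same change of variables $(x,\pi)\mapsto(k^{-1}y,k\cdot\pi')$ using \eqref{eq_int_Gf(rx)dx} and \eqref{eq_dilation_mu}, the same identification of $\tr|\widehat u_1|\in L^1(\Gh)$ and $u_1\in L^1(G)$ as the integrable majorant, and the same final rewriting via polar decomposition (Lemma~\ref{lem_pol_dec_Gh}). The only divergence is cosmetic: the paper splits the error into two pieces $T_1$, $T_2$ and controls $T_1$ via the mean value theorem (Lemma~\ref{lem_mvt}) and $T_2$ via \eqref{eq_psi(rpiR)=I}, whereas you invoke SOT-convergence of $\psi(k^\nu\pi(\cR))\to\id$ combined with norm-convergence of $\sigma_0(k^{-1}y,\cdot)\to\sigma_0(0,\cdot)$ and the standard fact that $\tr(A_kT)\to\tr(AT)$ when $A_k\to A$ in SOT with uniform bound and $T$ trace-class; both variants establish the same pointwise convergence inside the dominated-convergence argument, so this is a bookkeeping preference rather than a different proof.
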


Recall that $\Sigma_1=\{|\pi|=1\}$ and that Remark \ref{rem_lem_pol_dec_Gh} then implies that the measure $\gamma$ is independent on a choice of quasi-norm. Besides, 
Lemma \ref{lem_Kmap_quotient}
shows that $\gamma$
may be viewed as a measure on $G\times (\Gh/\bR^+ \ \backslash\{1\})$. Note also that $\Gamma(\dot \pi)\geq0$ and 
\begin{eqnarray*}
\int_{\Sigma_1} \tr \Gamma(\dot \pi)\ d\varsigma(\pi)
&=&
 \int_{\Sigma_1}
\int_{r=0}^\infty 
 \|\hat u_1(r\cdot \pi) \|_{HS(\cH_\pi) }^2
\ r^{Q-1} dr \ d\varsigma(\pi)
 \\
&=&
 \int_{\Gh}
 \|\hat u_1(\pi) \|_{HS(\cH_\pi) }^2
  d\mu(\pi)  
  =
  \|u_1\|_{L^2(G)}<\infty.
\end{eqnarray*}
One checks easily that $\Gamma$ on $\Gh$ is a $(-Q)$-homogeneous field of operators.

\begin{proof}[Proof of Proposition \ref{prop_spatial_concentration}] 
By Rellich's theorem (cf. Theorem \ref{thm_cq_rellich} and its proof),
we may assume that $A=\Op(\tilde \sigma_0)$
where $\tilde \sigma_0 =\sigma_0 \psi(\pi(\cR))$.
Using \eqref{eq_int_Gf(rx)dx}, 
the group Fourier transform of $u_k$ is
$$
\widehat u_k(\pi)
=k^{- \frac Q 2} \widehat u_1(k^{-1}\cdot \pi).
$$
Hence we have:
\begin{eqnarray*}
(A u_k,u_k)
&=&
\int_G \int_{\Gh} \tr \left(\pi(x) \tilde\sigma_0(x,\pi) \widehat u_k(\pi)\right) d\mu(\pi) \bar u_k(x) dx\\
&=&
\int_G \int_{\Gh} \tr \left(\pi(x) \tilde\sigma_0(x,\pi) 
\widehat u_1(k^{-1}\cdot \pi)\right)  d\mu(\pi) \bar u_1(kx) dx\\
&=&
\int_G \int_{\Gh} \tr \left(\pi'(x') \tilde \sigma_0(k^{-1}x',k\cdot \pi') 
\widehat u_1(\pi') \right) d\mu(\pi') \bar u_1(x') dx',
\end{eqnarray*}
after the change of variables $x'=kx$ and $\pi'=k^{-1}\cdot\pi$,
using \eqref{eq_int_Gf(rx)dx}
and \eqref{eq_dilation_mu}.
We are going to prove 
that the following expression 
tends to 0 as $k\to\infty$:
\begin{eqnarray*}
&&(A_0u_k,u_k)-\int_{\Gh} \tr \left(\widehat u_1(\pi')^*
\sigma_0(0,\pi') 
\widehat u_1(\pi') \right) d\mu(\pi')
\\ &&\qquad=
\int_G \int_{\Gh} \tr \left(\pi(x)
\left(\tilde \sigma_0(k^{-1}x,k\cdot \pi) -\sigma_0(0,k\cdot\pi)\right)
\widehat u_1(\pi) \right) d\mu(\pi) \bar u_1(x) dx
\\ &&\qquad=
T_1+T_2,
\end{eqnarray*}
where 
\begin{eqnarray*}
T_1&=&
\int_G \int_{\Gh} \tr \left(\pi(x) 
\left(\sigma_0(k^{-1}x,k\cdot \pi) 
- \sigma_0(0,k\cdot \pi) \right) 
\psi(k\cdot \pi(\cR)) \
\widehat u_1(\pi) \right) d\mu(\pi) \bar u_1(x) dx,
\\
T_2&=&
\int_G \int_{\Gh} \tr \left(\pi(x)
\sigma_0(0,\pi)  (1-\psi)(k\cdot \pi(\cR))
\ \widehat u_1(\pi) \right) d\mu(\pi) \bar u_1(x) dx
\\
&=&
 \int_{\Gh} \tr \left(
\sigma_0(0,\pi) (1-\psi)(k\cdot \pi(\cR))
\ \widehat u_1(\pi) \widehat u_1(\pi)^*\right) d\mu(\pi) ,
\end{eqnarray*}
and the function $\psi$ is chosen as usual, $\psi\equiv 0$ on a neighbourhood of~$0$ and $\psi\equiv 1 $ on $(\Lambda,\infty)$ for some $\Lambda>0$.
 
For $T_2$,
 we  use that by \eqref{eq_psi(rpiR)=I}, 
 for each $\pi\in \Gh$, there exists $k_\pi\in \bN$ such that
$(1-\psi)(k\cdot \pi(\cR))=0$  for all $k\geq k_\pi$.
Hence for $k\geq k_\pi$, we also have
$$
\tr \left(
\ (1-\psi)(k\cdot \pi(\cR))\sigma_0(0,\pi) 
\ \widehat u_1(\pi) \widehat u_1(\pi)^*\right)
=0.
$$
Since we have
$$
\left|\tr \left(
\ (1-\psi)(k\cdot \pi(\cR))\sigma_0(0,\pi) 
\ \widehat u_1(\pi) \widehat u_1(\pi)^*\right) \right|
\leq
C_{\psi,\sigma_0}
\| \widehat u_1(\pi) \|_{HS(\cH_\pi)}^2
$$
with 
$C_{\psi,\sigma_0}:=\sup_{\lambda>0}|1-\psi(\lambda)| \ 
\sup_{\pi'\in \Gh}\|\sigma_0(0,\pi') \|_{\sL(\cH_\pi')}\in (0,\infty)$
and
$$
\int_{\Gh} \| \widehat u_1(\pi) \|_{HS(\cH_\pi)}^2 d\mu(\pi) =\|u_1\|_2^2<\infty,$$
the Lebesgue dominated convergence theorem yields
that $T_2$
tends to 0 as $k\to\infty$.

\medskip

Let us now study $T_1$.
The mean value theorem stated in Lemma \ref{lem_mvt}
extends to Banach value functions.
Hence fixing a homogeneous quasi-norm $|\cdot|$, 
there exists a constant $C>0$ such that
for any $\sigma\in \dot S^0$, $x\in G$ and $r>0$,
we have
$$
\sup_{\pi\in \Gh}
\|\sigma_0(rx,\pi) -\sigma_0(0,\pi)\|_{\sL(\cH_\pi)}
\leq 
C
\sum_{j=0}^n |r x|^{\nu_j} 
\sup_{y\in G, \pi\in \Gh} \| X_j  \sigma_0(y,\pi)\|_{\sL(\cH_\pi)}.
$$
We obtain
\begin{eqnarray*}
\left|\tr \left(\pi(x) \
\psi(k\cdot \pi(\cR))\left(\sigma_0(k^{-1}x,k\cdot \pi) 
- \sigma_0(0,k\cdot \pi) \right)
\widehat u_1(\pi) \right) \bar u_1(x)\right|
\\
\leq
C C'_{\psi,\sigma_0} |u_1(x)| 
\sum_{j=0}^n |k^{-1} x|^{\nu_j} 
\tr |\widehat u_1(\pi)|
\end{eqnarray*}
where 
$\displaystyle{
C'_{\psi,\sigma_0}:=\sup_{\lambda>0}|\psi(\lambda)|
\sup_{y\in G,\pi \in \Gh, j=1,\ldots,n} \| X_j  \sigma_0(y,\pi)\|_{\sL(\cH_\pi)} \in (0,\infty).
}$
Since $u_1\in{\mathcal S}(G)$, we have
$$
\int_{G} |u_1(x)| 
\sum_{j=0}^n |x|^{\nu_j} dx <\infty
\quad\mbox{and}\quad
\int_{ \Gh} 
\tr |\widehat u_1(\pi)| \ d\mu(\pi)
 <\infty,
$$
and
the Lebesgue dominated convergence theorem yields again
that $T_1$ tends to 0 as $k\to\infty$.

We have shown that $T_1$ and $T_2$ tend to 0 as $k\to\infty$
and this implies  
$$
(Au_k,u_k)_{L^2(G)}\Tend{k}{\infty}
\int_{\Gh} \tr \left(\widehat u_1(\pi')^*
\sigma_0(0,\pi') 
\widehat u_1(\pi') \right) d\mu(\pi'),
$$
which gives the result by use of the polar decomposition for the Plancherel measure, 
see Section~\ref{subsec_polar_dec_Gh}.
\end{proof}

\subsection{Example: oscillations from square integrable representations}\label{sec_ex2}

We now study another example, which may be viewed as a spectral or dual concentration.
We consider a graded group $G$ which admits a (unitary irreducible) representation $\pi_0$ 
which is square integrable modulo its centre.
We also fix a smooth unit vector $v_0\in \cH_{\pi_0}^\infty$ and consider the associated matrix coefficient:
$$
e_0 (x):=
(\pi_0(x)v_0,v_0)_{\cH_{\pi_0}}, \quad x\in G.
$$
We may assume that the basis $\{X_1, \ldots, X_n\}$ of the Lie algebra $\fg$ 
has been chosen so that 
a subset $\{X_{j_1},\ldots, X_{j_{n_\fz}}\}$, form a basis for the centre $\fz$ of $\fg$.
Therefore we can write any element $x$ as
$$
x= \exp_G (x_1X_1+\ldots+x_nX_n)
=x' x_\fz=x_\fz x' ,
$$
where
$\displaystyle{
x_\fz= \exp_G (x_{j_1}X_{j_1}+\ldots+x_{n_\fz}X_{n_\fz})
\quad\mbox{and}\quad
x'=\exp_G \left(\sum_{j\notin \{j_1,\ldots,j_{n_\fz}\}} x_j X_j\right).
}$
Naturally, we identify the centre of the Lie algebra $\fz$ and the centre of the group $Z:=\exp_G\fz$ with $\bR^{n_{\fz}}$.
Note that we still consider anisotropic dilations in those directions.
The quotient group $G':=G/Z$ is also graded and we denote by $Q'$ its homogeneous dimensions, also given by
$$
Q':=\sum_{j\notin \{j_1,\ldots,j_{n_\fz}\}} \upsilon_j.
$$
Finally, we denote by 
 $d_{\pi_0}$  the formal degree of $\pi_0$
 for which we have for any $v_1,w_1,v_2,w_2\in \cH_{\pi_0}$:
 \begin{equation}
d_\pi \int_{G/Z} 
 (\pi_0(x') v_1,w_1)_{\cH_{\pi_0}}
\overline{ (\pi_0(x') v_2,w_2)_{\cH_{\pi_0}}}
 dx'=
 (v_1,w_1)_{\cH_{\pi_0}}  
\overline{  (v_2,w_2)_{\cH_{\pi_0}}} ,
 \end{equation}
see \cite[p. 169 and theorem 4.5.11]{corwingreenleaf}.

\begin{proposition}
\label{prop_oscillation}
Let $u_0\in \cS(\bR^{n_\fz})$.
For each $k\in \bN$, let  $u_k:G\to \bC$ be the square integrable function given by
$$
u_k(x)=k ^{{Q'}\over 2} e_0(kx) u_0(x_\fz), 
\quad x\in G.
$$
Then $\|u_k\|_{L^2(G)}=\|e_0\|_{L^2(G')} \|  u_0\|_{L^2(Z)}<\infty$
and  $u_k\TendWeak{k}{\infty} 0$ in $L^2(G,loc)$.
This sequence is pure and its MDM  is given by
$$\gamma(x,\dot\pi)= \left(\frac {|u_0(x_\fz)|^2}{d_{\pi_0}} dx_\fz \otimes \delta_{x'=0}\right)  
\otimes \delta_{\dot\pi=\dot\pi_0},$$
with  $\Gamma(\dot \pi_0)=v_0 \otimes v_0^*$ being the orthogonal projection on $\bC v_0$.
\end{proposition}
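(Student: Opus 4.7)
My plan breaks into three parts: norm and weak-convergence checks, reduction of the MDM computation to $0$-homogeneous symbols via Rellich, and a Fourier change of variables on $G$ and $\Gh$ that extracts the limit.

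First, since $\pi_0$ is irreducible, Schur's lemma gives $\pi_0(x_\fz) = \chi(x_\fz)\id_{\cH_{\pi_0}}$ for a unitary character $\chi$ of $Z$; writing $x = x' x_\fz$ (centrality making this factorisation compatible with the dilations) one has $e_0(kx) = \chi(k \cdot x_\fz)\, e_0(kx')$ and hence $|u_k(x)|^2 = k^{Q'} |e_0(kx')|^2 |u_0(x_\fz)|^2$. The substitution $y' = kx'$ on $G/Z$ (of homogeneous dimension $Q'$) together with the orthogonality relation $\|e_0\|_{L^2(G/Z)}^2 = d_{\pi_0}^{-1}$ yields the claimed $L^2$ identity. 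For weak convergence, the same substitution applied to $\langle u_k,\phi\rangle$ with $\phi\in\cD(G)$ extracts a factor $k^{-Q'/2}$ in front of an integral bounded by $\|e_0\|_{L^1(G/Z)}\|u_0\|_{L^1(Z)}\|\phi\|_{\infty}$ (using Schwartz decay of $e_0$ on $G/Z$ since $v_0$ is smooth), whence $u_k\rightharpoonup 0$ in $L^2(G,\text{loc})$.

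Second, by Theorem \ref{thm_cq_rellich} and Remark \ref{rem_prop_defect_measure} it suffices to evaluate the limit for $A = \Op(\sigma_0\,\psi(\pi(\cR)))$ with $\sigma_0\in\dot S^0(\Omega)$ $0$-homogeneous and $\psi\in C^\infty(\bR)$ a standard cut-off vanishing near $0$ and equal to $1$ near $+\infty$. Since $\Gamma(\dot\pi_0) = v_0\otimes v_0^*$ is rank one, the task reduces to showing
\[
\lim_{k\to\infty}(A u_k, u_k)_{L^2(G)} = \int_Z \frac{|u_0(x_\fz)|^2}{d_{\pi_0}}\,\bigl(\sigma_0(x_\fz,\pi_0)\, v_0,\, v_0\bigr)_{\cH_{\pi_0}}\, dx_\fz .
\]

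Third, I write $(Au_k,u_k)$ with the quantisation formula and factor $\widehat{u_k}$ by separating centre and non-centre: $\widehat{u_k}(\pi) = k^{-Q'/2}\,W_k(\pi)\,T(k^{-1}\!\cdot\!\pi)$ with $W_k(\pi) := \int_Z u_0(y_\fz)\,\chi(ky_\fz)\,\bar\chi_\pi(y_\fz)\, dy_\fz$ (using $\pi(x_\fz) = \chi_\pi(x_\fz)\id$) and $T(\rho) := \int_{G/Z} e_0(y')\,\rho(y')^*\, dy'$. Substituting $x = k^{-1}y'\!\cdot\! x_\fz$ and $\pi = k\cdot\tilde\pi$ (with Jacobians $k^{-Q'}$ and $k^Q$ via \eqref{eq_dilation_mu}) cancels all powers of $k$; pointwise $\sigma_0(k^{-1}y'\!\cdot\! x_\fz,\tilde\pi)\to\sigma_0(x_\fz,\tilde\pi)$ by smoothness, and $\psi(k^\nu\tilde\pi(\cR))\to\id_{\cH_{\tilde\pi}}$ for $\tilde\pi\ne 1$ by \eqref{eq_psi(rpiR)=I}. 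The surviving integrand carries an oscillatory factor $\chi_{\tilde\pi}(kx_\fz)\bar\chi(kx_\fz)$ in $x_\fz$ which, paired with the rescaled $W_k(k\tilde\pi)$, forms a Fourier-type pair in the dual variables $(x_\fz,\tilde\pi)$.

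The main obstacle is the passage to the limit in this oscillatory integral. For $\pi_0$ square integrable modulo $Z$, the Kirillov orbit method identifies (generic) irreducible representations with their central character, so that the set of $\tilde\pi$ sharing $\chi$ with $\pi_0$ is the single point $\{\pi_0\}$. Parametrising $\tilde\pi$ by $\zeta = \chi_{\tilde\pi}\in\hat Z$ on the generic stratum and using the explicit form of the Plancherel density for graded nilpotent groups, the oscillation in $x_\fz$ combined with the rescaled $W_k$-factor acts as an approximate identity concentrating $\zeta$ at $\chi$: the limit transfers the $x_\fz$-weight $|u_0(x_\fz)|^2\, dx_\fz$ unchanged, while evaluating the operator factor at $\tilde\pi=\pi_0$. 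The orthogonality relation for matrix coefficients of $\pi_0$ then gives $T(\pi_0) = d_{\pi_0}^{-1}(v_0\otimes v_0^*)$, so the trace inside the integrand becomes $d_{\pi_0}^{-1}\tr(\sigma_0(x_\fz,\pi_0) v_0\otimes v_0^*) = d_{\pi_0}^{-1}(\sigma_0(x_\fz,\pi_0)v_0,v_0)$. Comparison with Proposition \ref{prop_state_C*dotS0Omega} identifies the MDM as the pair $(\gamma,\Gamma)$ stated.
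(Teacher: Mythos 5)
Your first two steps match the paper's proof in essence: the $L^2$-norm identity and weak convergence are shown with the same substitutions and orthogonality relation, and the reduction to $0$-homogeneous symbols via Rellich/Theorem~\ref{thm_cq_rellich} is exactly the paper's Remark~\ref{rem_prop_defect_measure}.

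Your third step, however, diverges from the paper and has a genuine gap. The paper never parametrises $\Gh$ by central characters; its key move is to pair the compactly supported convolution kernel of $A$ with the bounded smooth function $e_0$ via the abstract Fourier inversion formula (Proposition~\ref{prop_FIF2}), obtaining the scalar identity
\[
\int_{\Gh}\int_G \tr\bigl(\pi(y)\,\tau(\pi)\bigr)\,e_0(xy^{-1})\,dy\,d\mu(\pi)
=\bigl(\pi_0(x)\,\tau(\pi_0)\,v_0,v_0\bigr)_{\cH_{\pi_0}},
\]
which in one stroke collapses the $\Gh$-integral to the single representation $\pi_0$. No continuity of the field $\tilde\pi\mapsto T(\tilde\pi)$ across varying Hilbert spaces is needed, and no explicit form of the Plancherel density enters. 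Your proposal replaces this by a concentration argument on the Kirillov parametrisation of the generic (flat-orbit) stratum: you claim that the rescaled $W_k$ together with the central phase acts as an approximate identity, concentrating $\tilde\pi$ at $\pi_0$. This claim is the heart of the proof, and you do not carry it out. You would need to: (a) justify that integration against $d\mu(\tilde\pi)$ on the flat stratum can be rewritten as integration in $\lambda\in\hat{\fz}$ against the Pfaffian density $d_{\pi_\lambda}\,d\lambda$, (b) check the anisotropic Jacobian bookkeeping under the dilations $D_k$ that relates $Q$, $Q'$, and $Q-Q'$ (it is not obvious that all powers of $k$ cancel once the Pfaffian is included), (c) supply the uniform bound in $\tilde\pi$ needed to apply dominated convergence, given that $T(\tilde\pi)$ is an operator on the varying space $\cH_{\tilde\pi}$, and (d) verify that the normalisation of the approximate identity together with the Pfaffian at $\lambda_0$ and the orthogonality relation conspires to give exactly one factor $1/d_{\pi_0}$ and not, say, $d_{\pi_0}/d_{\pi_0}^2$ or similar. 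None of these are done, and (c)–(d) in particular are not routine. Moreover, your approach is tied to the flat-orbit picture available for $\pi_0$ square-integrable modulo the centre; the paper's Fourier-inversion route is the one that generalises to arbitrary $\pi_0$ via Pedersen's theory (Proposition~\ref{prop_oscillation_general}).

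In short: your outline is plausible as a heuristic but the decisive limiting step is asserted rather than proved, and the cleaner device actually used in the paper — the scalar Fourier inversion formula reducing the $\Gh$-integral directly to an evaluation at $\pi_0$ — is missing from your argument.
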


The Schwartz function on the centre is needed to contain the oscillations,
as in the abelian case. 
Indeed, on the one hand, on the centre $Z$ of the group, 
$\pi_0$ coincides with the character $e^{i\lambda_0\cdot}$, i.e. 
$
\pi(x_\fz)=e^{i\lambda_0x_\fz}
$
where we identify $x_\fz$ with an element of $\bR^{n_\fz}$ 
and
where $\lambda_0x_\fz$ denotes the standard scalar product of the two elements $\lambda_0$ and $x_\fz$ of $\bR^{n_\fz}$.
Thus for any $x=x'x_\fz$ in $G$ we have
$$e_0(x)
= 
(\pi_0(x'x_\fz)v_0,v_0)_{\cH_{\pi_0}} \\
= 
e^{i\lambda_0x_\fz} 
(\pi_0(x')v_0,v_0)_{\cH_{\pi_0}} \\
= 
e^{i\lambda_0x_\fz} 
e_0(x').
$$
On the other hand, 
$
e_0\big|_{G'} \in \cS(G').
$
See again \cite[p. 169 and theorem 4.5.11]{corwingreenleaf}.

\medskip

Before starting the proof, let us describe the more concrete case of the Heisenberg group and the matrix coefficient given by the bounded spherical functions, see e.g. \cite{BFG}. More precisely, 
we realise 
the Heisenberg group as $\bH_1=\{(x,y,t)\in \bR^3\}$ with law
$$
(x,y,t)(x',y',t')=\big(x+x',y+y',t+t'+\frac 12 (xy'-x'y)\big).
$$
Let $\pi_0$ be the representation of $\bH_1$ determined (up to equivalence) by the fact that it coincides with the character $t\mapsto e^{it}$ on the centre of $\bH_1$.
For the smooth vector, we choose the $\ell$-th  Hermite function (with $L^2(\bR)$-normalisation)
if we realise this representation in the Schr\"odinger model, 
or equivalently, the (suitably normalised) monomial of degree $\ell$ in the Bergman-Fock model. 
In this case, the matrix coefficient is given by 
$$
e_0(x,y,t)=e^{it} \cL_\ell (\frac {x^2+y^2}2 ),
$$
where $\cL_\ell$ is the $\ell$-th Laguerre function, that is,
$\cL_\ell(r)= e^{-r\over 2} L_\ell (r)$ and $L_\ell$ is the $\ell$-th Laguere polynomial.
Note that the $e_0$ in this particular case 
is of the form described above.

\begin{proof}[Proof of Peoposition \ref{prop_oscillation}]
First let us show that each function $u_k$ is square integrable:
\begin{align*}
\|u_k\|_{L^2(G)}^2
&=
\int_{G'} \int_{\bR^{n_\fz}}
|e_0(kx' kx_\fz) u_0 (x_z) k^{Q'\over 2}|^2 
dx_\fz dx' =
\int_{G'} |e_0(kx')|^2 k^{Q' }dx'
\int_{\bR^{n_\fz}} | u_0 (x_z) |^2 dx_\fz  \\
&=
\int_{G'} |e_0(x'')|^2 dx''
\int_{\bR^{n_\fz}} | u_0 (x_z) |^2 dx_\fz,
\end{align*}
having used the change of variable $x''=kx'$.
As the functions $e_0$ and $u_0$ are Schwartz on $G'$ and $\bR^{n_\fz}$ respectively, 
the quantity above is finite, and  $u_k\in L^2(G)$ with $\|u_k\|_{L^2(G)}=\|e_0\|_{L^2(G')} \|  u_0\|_{L^2(Z)}$.

For any $\phi_1 \in \cD(\bR^{n_\fz}),\phi_2\in \cD(G')$, we have
$$
\int_{G'} \int_{\bR^{n_\fz}}
 u_k(x_\fz x') \phi_1(x_\fz) \phi_2(x') dx_\fz dx'
=
 \int_{\bR^{n_\fz}}
e^{i\lambda_0 (k x_\fz)} u_0(x') \phi_1(x_\fz) dx_\fz
\int_{G'}e_0(k x')  \phi_2(x') k^{Q'\over 2}  dx'.
$$
By the Riemann-Lebesgue theorem, the integral over $\bR^{n_\fz}$ tends to  zero as $k\to\infty$.
After the change of variable $x''=kx'$, 
the integral over $G'$ becomes
$$
\int_{G'}e_0(k x')  \phi_2(x') k^{Q'\over 2}  dx'
=
k^{-Q'\over 2} \int_{G'}e_0(x'')  \phi_2(k^{-1}x')   dx'
\sim_{k\to\infty} 
k^{-Q'\over 2} \phi_2(0) \int_{G'}e_0(x'')     dx',
$$
thus this integral tends to zero as $k\to\infty$.
Hence $u_k\TendWeak{k}{\infty} 0$ in $L^2(G,loc)$.

Let us now compute the MDM of $(u^k)$.
Let $A=\Op(\sigma)\in \Psi_{cl}^0(G)$.
Let $\chi\in \cD(G)$ be real valued and such that the support of the integral kernel of $A$ is in 
$\{\chi=1\}\times \{\chi=1\}$. 
\begin{eqnarray*}
(Au_k,u_k)_{L^2(G)}
&=&
(A(\chi u_k) , \chi u_k)_{L^2(G)}
\\&=&
\int_{\Gh}\int_{G}
\tr\left( \pi(x) \sigma_0(x,\pi)\psi(\pi_0(\cR))
\pi(\chi u_k)\right)
 (\chi \bar u_k)(x) dx d\mu(\pi);
\end{eqnarray*}
here we understand the double integral over $\Gh$ as in Proposition \ref{prop_FIF2}, that is, as the limit  of the absolutely convergent double integral: 
$$
 \lim_{N\to +\infty}\int_{N\cdot \cC} \int_G \trN 
\trN\left( \pi(x) \sigma_0(x,\pi)\psi(\pi_0(\cR))
\pi(\chi u_k)\right)
 (\chi \bar u_k)(x) dx d\mu(\pi),
$$
where  $\cC$ a compact neighbourhood of $1\in \Gh$ such that $\cup_{N\in \bN} N\cdot \cC=\Gh$, 
and $\trN$ denotes the trace of the operators projected on the subspace spanned by the first $N$ vectors, having fixed a fundamental sequence of vector fields.
Hence   we are led to study:
\begin{eqnarray*}
&&\int_{\Gh}\int_{G}
\tr \left(\pi(x) \sigma(x,\pi)\pi(\chi u_k) \right)
(\chi \bar u_k)(x) dx d\mu(\pi)
\\&&\quad=
\int_{\Gh}\int_{G}\int_G
\tr \left(\pi(x) \sigma(x,\pi)\pi(y)^*\right)
 (\chi \bar u_k)(x) (\chi u_k)(y)dxdyd\mu(\pi),
\end{eqnarray*}
having expanded $\pi (\chi u_k)$. This multiple integral is again convergent. 
Applying the change of variables first $y\mapsto w=y^{-1}x$
and using the properties of the trace,
 the integral above becomes
\begin{eqnarray*}
&&\int_{\Gh}\int_{G}\int_G
\tr \left( \pi(x) \sigma(x,\pi)\pi(wx^{-1})\right) 
(\chi \bar u_k)(x) \
(\chi u_k)(x w^{-1})dxdwd\mu(\pi)
\\&&\qquad=
\int_{\Gh}\int_{G}\int_G
\tr \left(\pi(w) \sigma(x,\pi)\right)
 (\chi \bar u_k)(x) 
 \ 
 (\chi u_k)(x w^{-1})dxdwd\mu(\pi)
\\&&\qquad=
\int_{\Gh}\int_{G}\int_G
\tr \left(\pi'(w') \sigma(x,k\cdot\pi')\right)
 (\chi \bar u_k)(x)
 \  (\chi u_k)(x\  k^{-1}{w'} ^{-1})dxdw'd\mu(\pi'),
\end{eqnarray*}
after the change of variable  $(\pi,w)\mapsto (\pi',w')= (k^{-1}\cdot\pi,kw)$,
 whose Jacobian is 1
 by \eqref{eq_int_Gf(rx)dx} and \eqref{eq_dilation_mu}.
 Let us write
\begin{eqnarray*}
(\chi \bar u_k)(x)\ (\chi u_k)(x \ k^{-1}{w'}^{-1})
&=&
k^{Q'}\chi(x)  \chi(x \ k^{-1}{w'}^{-1}) \
\bar e_0(kx) e_0(kx  \ {w'}^{-1}) \
\bar u_0(x_\fz)
u_0( (x \ k^{-1} {w'}^{-1})_{\fz})
\\&=&
k^{Q'}|\chi|^2(x) |u_0|^2(x_\fz)
\bar e_0(kx) e_0(kx  \ {w'}^{-1})
+ \varepsilon_{k}(x,w').
\end{eqnarray*}

We claim that for any $\tau\in L^\infty (\Gh)$ 
 such that $\cF_G^{-1}\tau $ is a compactly supported distribution on $G$,
we have for any $x\in G$
\begin{equation}
\label{claim_pf_lem_oscillation}
\int_{\Gh }\int_G
\tr \pi(y) \tau(\pi) e_0(xy^{-1}) dy d\mu(\pi)
=(\pi_0(x)\tau(\pi_0) v_0,v_0)_{\cH_{\pi_0}}.
\end{equation}
Indeed by the Fourier inversion formula, 
the limit is equal to 
$$
\int_G \cF_G^{-1}\tau (y)\ e_0(xy^{-1})  dy
$$
interpreted in the sense of a compactly supported distribution 
at a smooth bounded function,
and this is equal to the right hand side of \eqref{claim_pf_lem_oscillation}.
We can apply this to $\tau=\{\sigma(x,\pi),\pi\in \Gh\}$ 
since $\cF^{-1}\sigma(x,\cdot)$ is the convolution kernel of $A$ 
which is compactly supported (as the integral kernel of $A$ is compctly supported).
Hence the claim in \eqref{claim_pf_lem_oscillation} is proved 
and we may apply it to obtain:
$$
(Au_k,u_k)_{L^2(G)} = T(k) +\tilde\varepsilon(k)
$$
where 
\begin{eqnarray*}
T(k)&:=&
\int_G k^{Q'} |\chi|^2(x)|u_0|^2(x_\fz) \bar e_0(kx)
(\pi_0(kx)\sigma(x,k\cdot\pi_0) v_0,v_0)_{\cH_{\pi_0}}
dx,\\
 \tilde\varepsilon(k)&:=&
\lim_{R\to+\infty} 
\int_{(k ^{-1} R)\cdot\cC}\int_{G} 
\int_G
\tr \left(\pi'(w')\sigma(x,k\cdot\pi')
 \varepsilon_{k}(x,w')\right)
dxdw' d\mu(\pi').
\end{eqnarray*}

Let us show that $\tilde\varepsilon(k)$  tends to zero 
as $k\to\infty$.
It is easy to see that in the Sobolev space $L^2_s$ for any $s>Q/2$,
we have the uniform convergence:
$$
\sup_{x\in G}
\|\varepsilon_k(x,\cdot)\|_{L^2_s(G)}\longrightarrow_{k\to\infty} 0.
$$
From Section \ref{subsec_Bessel+FIF}, 
we have
$$
\forall \phi \in L^2_s(G), \ \tau\in L^\infty(\Gh)\qquad
\left|\int_{\Gh} \tr \left(\tau(\pi) \widehat \phi(\pi)\right) d\mu(\pi)\right|
\leq 
C_s \|\tau\|_{L^\infty(\Gh)} \|\phi\|_{L^2_s(G)}.
$$
From the two properties above, we obtain easily 
$$
| \tilde\varepsilon(k)|
\leq \int_G \|\sigma(x,\cdot)\|_{L^\infty(\Gh)}
\|\varepsilon_k (x,\cdot)\|_{L^2_s(G)} dx
\longrightarrow_{k\to\infty} 0,
$$
as the integrand has compact support in $x\in G$.

For $T(k)$, 
as we have 
\begin{eqnarray*}
\bar e_0(kx)
\pi_0(kx)
&=&
e^{-i\lambda_0(kx_\fz)} \bar e_0(kx')
\ e^{i\lambda_0(kx_\fz)} \pi_0(kx')
\\&=&
\bar e_0(kx') \pi_0(kx'),
\end{eqnarray*}
 the change of variable $x''=kx'$
whose Jacobian is $k^{-Q'}$ yields:
\begin{eqnarray*}
T(k)
&=&
\int_{G'}\int_{\bR^{n_\fz}} k^{Q'} |\chi|^2(x)|u_0|^2(x_\fz) \bar e_0(kx')
(\pi_0(kx')\sigma(x_\fz x',k\cdot\pi_0) v_0,v_0)_{\cH_{\pi_0}}
dx_\fz dx'\\
&=&
\int_{G'}\int_{\bR^{n_\fz}} |\chi|^2(x_\fz k^{-1} x'')|u_0|^2(x_\fz) \bar e_0(x'')
(\pi_0(x'')\sigma(x_\fz k^{-1} x'',k\cdot\pi_0) v_0,v_0)_{\cH_{\pi_0}}
dx_\fz dx''.
\end{eqnarray*}
We claim that, by the Lebesgue theorem,
 this  converges towards
\begin{equation}
T(k) \underset{k\to\infty}\longrightarrow
 \int_{G'}\int_{\bR^{n_\fz}} |\chi|^2(x_\fz)|u_0|^2(x_\fz) \bar e_0(x'')
(\pi_0(x'')\sigma_0(x_\fz,\pi_0) v_0,v_0)_{\cH_{\pi_0}}
dx_\fz dx'',
\label{eq_lem_oscillation_pf_cvT1}
\end{equation}
where $\sigma_0=\princ (A)$.
Indeed
we  fix a positive Rockland operator $\cR$ (of homogeneous degree~$\nu$)
and  $\psi\in C^\infty(\bR)$  a smooth function such that 
$\psi\equiv0$ on a neighbourhood of 0 
and $\psi\equiv 1$ on $[\Lambda,\infty)$. 
We know that the symbol
$$
\rho:=\sigma -\sigma_0\psi(\pi(\cR)),
$$
is in $S^{m_1}$ with $m_1<0$.
We may write
$$
\left(\rho (x,k\cdot\pi_0) v_0,v_0\right)_{\cH_{\pi_0} }
=
 \left( \tilde \rho_{k,x} v_k, v_0\right)_{\cH_{\pi_0} }
$$
where
$\displaystyle{
\tilde\rho_{k,x}=
  \rho (x,k\cdot\pi_0) k\cdot\pi_0 (\id+\cR)^{- \frac{m_1}\nu}}$ and
$\displaystyle{
v_k:=k\cdot\pi_0 (\id+\cR)^{\frac{m_1}\nu} v_0.
}$
The operator $\tilde \rho_{k,x}$ is uniformly bounded:
$$
\| \tilde\rho_{k,x}
\|_{\sL(\cH_{\pi_0})}  
\leq
\sup_{x\in G, \pi_1\in \Gh}
\| \rho (x,\pi_1) \pi_1 (\id+\cR)^{- \frac{m_1}\nu}
\|_{\sL(\cH_{\pi_0})}  
\leq
\| \rho\|_{S^{m_1},0,0, m_1},
$$
and 
so is the vector $v_k$:
$$
\|v_k\|_{\cH_0} \leq \left\|k\cdot\pi _0(\id+\cR)^{\frac{m_1}\nu}
\right\|_{\sL(\cH_{\pi_0})}   \|v_0\|_{\cH_0} 
\leq \sup_{\lambda>\lambda_{min}(\pi_0)} (1+k^\nu \lambda)^{\frac{m_1}\nu}\leq 1.
$$
Here $\lambda_{min}(\pi_0)$ is the smallest eigenvalue of $\pi_0(\cR)$, 
see Lemma \ref{lem_normL2_E(epsilon,infty)} (2),
so $\lambda_{min}(\pi_0)\in (0,\infty)$ and $\|v_k\|_{\cH_0}$ tends to 0 as $k\to\infty$.
It is now a routine exercise left to the reader 
to apply the Lebesgue theorem
and obtain the convergence in \eqref{eq_lem_oscillation_pf_cvT1}.

As $A$ is compactly supported in $\{\chi=1\}\times\{\chi=1\}$, 
we may  assume that $\sigma_0$ is compactly supported in $x$, 
and that this support is included in $\{\chi=1\}$.
Hence we have obtain that the $(Au_k,u_k)_{L^2(G)}$ has the same limit as $T(k)$ which can be rewritten as
$$\frac 1{d_{\pi_0}}
 \left( \Big( \int_G \princ_0(A)(x_\fz , \pi_0) \
 |u_0(x_\fz)|^2 dx_\fz \Big) v_0 \, , \, v_0\right)_{\cH_{\pi_0}}.$$
  \end{proof}
  
\subsection{Example: general oscillations}
\label{sec_ex3}

In Section \ref{sec_ex2}, we constructed a pure sequence associated 
with a square integrable representation.
In this section, we generalise the idea to any irreducible representation of $G$. 
If the representation is finite dimensional, then it is of dimension 1 (see~\cite{corwingreenleaf})  and we may proceed as in the Euclidean case. 
Let us consider $\pi_0$ an irreducible representation of infinite dimension.
We will replace the properties of square integrability 
with the general results on representations of nilpotent Lie groups 
due to Pedersen \cite{pedersen}.

Unfortunately, the notation adapted to the presence of dilations is in conflict with the conventional notation for Jordan-H\"older bases. Indeed, our canonical basis $X_1,\ldots, X_n$ of $\fg$, that is, a basis adapted to the gradation (see Section \ref{subsec_G}), is adapted to the  Jordan-H\"older sequence: 
$$
\fg =: \fg_n \subset \fg_{n-1} \subset \ldots \subset \fg_1 \subset \fg_0 :=\{0\},
\quad \mbox{where}\quad 
\fg_k := \bR X_{n-k} \oplus\ldots \oplus \bR X_n, \  k=1,\ldots, n-1,
$$
except for the order of the labels in the basis; for instance $X_k\in \fg_{n-k}$.
We denote by $J$ the set of jump indices of $\pi_0$:
$$
J:=\{1\leq  k \leq n \ : \ d\pi_0 (X_k) \notin d\pi_0 (\fg_{n-k-1})\}.
$$
We observe that the set of jump indices is the same for $r\cdot \pi_0$, 
for any $r>0$.
We set
$$
\fg_J:=\oplus_{k\in J} \bR X_k
\qquad\mbox{and}\qquad
\fg_{J^c}:=\oplus_{k\notin J} \bR X_k.
$$
The natural Haar measures on $\fg_J$ and $\fg_{J^c}$ 
are $\Pi_{k\in J} dx_j$ and $\Pi_{k\notin J} dx_j$ respectively;
we will denote them by $dx$, or any other letter representing the variable of integration.

For any Schwartz function $\phi\in \cS(\fg_J)$ on the vector space $\fg_J$, we define:
$$
\tilde \phi:=
 \int_{x=(x_k)_{k\in J}\in\fg_J} \phi (x)\ 
 \pi_0\left(\exp \left(\sum_{k\in J} x_j X_j)\right)\right)
dx
$$
This is a smooth operator on $\cH_{\pi_0}$, i.e. $\tilde \phi\in \sL(\cH_{\pi_0})_\infty$.
Moreover (cf. \cite{pedersen}), $\phi\mapsto \tilde \phi$ is an isomorphism between the Fr\'echet spaces $\cS(\fg_J)$ and $\sL(\cH_{\pi_0})_\infty$;
its inverse is given by 
\begin{equation}
\label{eq_def_fA}
\sL(\cH_{\pi_0})_\infty\ni A\mapsto {f_A\circ \exp}\big|_{\fg_e},
\quad \mbox{where}\quad
f_A (x) :=\tr 
 \left( \pi_0(x) A\right), \quad x\in G.
\end{equation}

Any element in  $ \sL(\cH_{\pi_0})_\infty$ is trace-class.
An example of an element of $\sL(\cH_{\pi_0})_\infty$ is $v\otimes w^*$ where $v$ and $w$ are two smooth vectors of $\cH_{\pi_0}$.
For any $\phi\in \cS(\fg_J)$ the operator $\tilde \phi$ is traceclass with  \cite{pedersen}
\begin{equation}
\label{eq_tracetildephi}
\tr \tilde \phi =  \frac{1}{d_{\pi_0}} \phi(0).
\end{equation}
Here $d_{\pi_0}>0$ is  a computable constant 
depending on $\pi_0$ and on the choice of Jordan-H\"older basis, but not on $\phi$.

\smallskip

We can now state and prove the following generalisation of Proposition \ref{prop_oscillation}:

\begin{proposition}
\label{prop_oscillation_general}
Let $\pi_0$ be an irreducible representation of $G$ of infinite dimension.
We define its jump set $J$ and the subspaces $\fg_J,\fg_{J^c}$ of $\fg$ 
as above. We set
$$
Q_J:=\sum_{k\in J} \upsilon_k.
$$
Let $u_0\in \cS(\fg_{J^c})$. Let $A\in \sL(\cH_{\pi_0})_\infty$
and define $f_{A}$ as in \eqref{eq_def_fA}.
For each $k\in \bN$, let  $u_k:G\to \bC$ be the  function given by
$$
u_k(x)=k ^{{Q_J}\over 2}  f_{A}(kx)\  u_0(x_{J^c}).
$$
Then $\|u_k\|_{L^2} = d_{\pi_0}^{-1/2} \|A\|_{HS(\cH_{\pi_0})} \|u_0\|_{L^2(\fg_{J^c})}$ and $u_k\TendWeak{k}{\infty} 0$ in $L^2(G,loc)$. This sequence is pure and its MDM is given by 
$$\gamma(x,\dot\pi)= \left(\frac {|u_0(x_{J^c})|^2}{d_{\pi_0}} dx_{J^c} \otimes \delta_{0}(x_{J})\right)  
\otimes \delta_{\dot\pi_0}(\dot\pi),
\qquad \Gamma(\dot \pi_0)=A A^*.
$$
\end{proposition}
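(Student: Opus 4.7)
My plan is to follow the strategy of the proof of Proposition \ref{prop_oscillation} closely, replacing the explicit structure of matrix coefficients of square-integrable representations by Pedersen's Fr\'echet isomorphism $\cS(\fg_J) \ni \phi \leftrightarrow \tilde\phi \in \sL(\cH_{\pi_0})_\infty$ together with the trace identity \eqref{eq_tracetildephi}.

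First I would establish the $L^2$-norm identity and the weak convergence $u_k \rightharpoonup 0$ in $L^2(G, loc)$. To compute $\|u_k\|_{L^2(G)}^2$, I write the integral in coordinates as a double integral over the $J^c$ and $J$ directions, perform the anisotropic change of variables $y_j = k^{\upsilon_j} x_j$ (Jacobian $k^{-Q_J}$) in the $J$-coordinates to absorb the prefactor $k^{Q_J}$, and identify the remaining $\fg_J$-integral with the squared $L^2(\fg_J)$-norm of $f_A \circ \exp|_{\fg_J}$. Pedersen's isomorphism applied to $AA^*$ together with the trace identity \eqref{eq_tracetildephi} then yields that this norm equals $d_{\pi_0}^{-1}\|A\|_{HS(\cH_{\pi_0})}^2$, and the $J^c$-integration produces the $L^2(\fg_{J^c})$-norm of $u_0$. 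Weak convergence against a test function $\phi \in \cD(G)$ follows from the same scaling: after the anisotropic dilation in the $J$-coordinates the integrand acquires a vanishing factor, while the Schwartz decay of $f_A \circ \exp|_{\fg_J}$ and the compact support of $\phi$ allow one to conclude by dominated convergence.

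Next, for the quadratic form $(Tu_k, u_k)_{L^2}$ with $T = \Op(\sigma) \in \Psi^0_{cl}$, I repeat essentially verbatim the computation performed in Proposition \ref{prop_oscillation}: after a cut-off $\chi \in \cD(G)$ adapted to the integral kernel of $T$, one expands via the Fourier inversion formula of Proposition \ref{prop_FIF2} and performs the changes of variables $w = y^{-1}x$ and $(\pi, w) \mapsto (k^{-1}\cdot\pi, kw)$ using \eqref{eq_int_Gf(rx)dx} and \eqref{eq_dilation_mu}. This leads to an expression involving the factor $\overline{f_A(kx)}\, f_A(kxw^{-1})\,|u_0(x_{J^c})|^2$ inside a triple integral. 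The analogue of the reproducing identity \eqref{claim_pf_lem_oscillation} becomes
$$\int_{\Gh} \int_G \tr\bigl(\pi(y)\, \tau(\pi)\bigr)\, f_A(xy^{-1})\, dy\, d\mu(\pi) = \tr\bigl(\pi_0(x)\, \tau(\pi_0)\, A\bigr),$$
valid for $\tau \in L^\infty(\Gh)$ with compactly supported inverse Fourier transform; it follows from Fourier inversion combined with the definition $f_A(\cdot) = \tr(\pi_0(\cdot)A)$ just as in Proposition \ref{prop_oscillation}.

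The hardest step will be the final identification of the limit. Using Lemma \ref{lem_mvt} together with the functional-calculus bounds of Lemma \ref{lem_normL2_E(epsilon,infty)} to replace $\sigma(\cdot, k \cdot \pi_0)$ by its principal part $\sigma_0(x_{J^c}, \pi_0)$, and discarding the lower-order remainder $\sigma - \sigma_0\,\psi(\pi(\cR))$ by a Sobolev argument as at the end of the proof of Proposition \ref{prop_oscillation}, one obtains a limit of the schematic form
$$\lim_{k \to \infty} (Tu_k, u_k)_{L^2} = \int_{\fg_{J^c}} |u_0(x_{J^c})|^2\, \Phi\bigl(\sigma_0(x_{J^c}, \pi_0)\bigr)\, dx_{J^c},$$
where $\Phi(B)$ is a pairing of $f_A$ with a generalised matrix coefficient of $\pi_0(\cdot)\, B\, A$. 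Identifying $\Phi(B) = d_{\pi_0}^{-1}\,\tr(B\, AA^*)$ is precisely where the Pedersen framework is essential: translating this pairing through the isomorphism $\phi \leftrightarrow \tilde\phi$ and applying the trace identity \eqref{eq_tracetildephi} to the resulting smooth operator produces the stated MDM with $\Gamma = AA^*$ carried by $\dot\pi_0$ and measure $\gamma = d_{\pi_0}^{-1}\, |u_0(x_{J^c})|^2\, dx_{J^c} \otimes \delta_0(x_J) \otimes \delta_{\dot\pi_0}$. The main technical difficulty is that $f_A$ is only shown to be Schwartz along $\fg_J$ and carries oscillations in the $J^c$-directions, so that exchanges of integration and limits across these directions must be handled carefully, likely via a regularisation and cut-off argument in the spirit of the $\chi_R$-truncation of Proposition \ref{prop_FIF2}.
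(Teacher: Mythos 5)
Your overall strategy — mimic the proof of Proposition \ref{prop_oscillation}, replacing the explicit central character of a square-integrable representation by Pedersen's correspondence $\cS(\fg_J)\ni\phi\leftrightarrow\tilde\phi\in\sL(\cH_{\pi_0})_\infty$ — is exactly the one the paper follows. The norm computation and the weak convergence of $(u_k)$ are routine and your sketch for them is fine, and you correctly identify the generalisation \eqref{claim_pf_lem_oscillation} as the reproducing step (this is precisely Lemma \ref{lem_prop_oscillation_general}(3) in the paper).

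Where your proposal has a genuine gap is the ``hardest step'' that you rightly flag but do not resolve. After performing the changes of variables (dilating the $J$-coordinates by $k^{-1}$ and applying the reproducing formula), you are left with an integrand containing $\pi_0\bigl(\exp(X_J+kX_{J^c})\bigr)$ paired against $\overline{f_A\bigl(\exp(X_J+kX_{J^c})\bigr)}$, integrated over $X_J\in\fg_J$ and $X_{J^c}\in\fg_{J^c}$ with $k\to\infty$. In Proposition \ref{prop_oscillation} the corresponding expression simplified because on the centre the oscillation $e^{i\lambda_0 (k x_\fz)}$ appearing in $\pi_0(kx)$ exactly cancels the conjugate oscillation in $\bar e_0(kx)$, and $\fg'=\fg/\fz$ is a subalgebra. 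Neither of these features survives in the general case: $\fg_J$ is merely a linear complement, not a subalgebra, and there is no manifest factorisation. What makes the argument go is a structural fact about the jump set that you never invoke: there is a \emph{linear} map $F:\fg_J\times\fg_{J^c}\to\fg_J$ such that
\begin{equation*}
\pi_0(x)=\pi_0\bigl(\exp(X_J+F(X_J,X_{J^c}))\bigr),
\end{equation*}
and for each $X_{J^c}$ the map $X_J\mapsto X_J+F(X_J,X_{J^c})$ is a unimodular diffeomorphism of $\fg_J$ (this is Lemma \ref{lem_prop_oscillation_general}(1) in the paper). Applying this with $X_{J^c}$ replaced by $kX_{J^c}$, both $\pi_0\bigl(\exp(X_J+kX_{J^c})\bigr)$ and $f_A\bigl(\exp(X_J+kX_{J^c})\bigr)=\tr\bigl(\pi_0(\exp(X_J+kX_{J^c}))A\bigr)$ become functions of the new variable $X_J'=X_J+F(X_J,kX_{J^c})$ alone, so the $k$-dependence disappears from the $\fg_J$-integral \emph{before} taking the limit. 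It is only after this change of variable that the Pedersen orthogonality relation $\int_{\fg_J} f_A\bigl(\exp(X_J')\bigr)\,\pi_0\bigl(\exp(X_J')\bigr)^*\,dX_J'=d_{\pi_0}^{-1}A$ (Lemma \ref{lem_prop_oscillation_general}(2), not the trace identity \eqref{eq_tracetildephi} alone) can be applied to produce $\Gamma(\dot\pi_0)=AA^*$ and the factor $d_{\pi_0}^{-1}$. Your appeal to ``translating through $\phi\leftrightarrow\tilde\phi$ and applying \eqref{eq_tracetildephi}'' does not substitute for this: without the $F$-factorisation, the operator $\pi_0\bigl(\exp(X_J+kX_{J^c})\bigr)$ still depends on $k$ and $X_{J^c}$ in a way that does not concentrate, and there is no smooth $\cS(\fg_J)$-function to which \eqref{eq_tracetildephi} can be applied.
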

In the statement, we have used the following notation:
$$
\mbox{if} \
x=\exp (\sum_{k=1}^n x_j X_j)\in G, 
\ \mbox{then}\ x_{J^c}:=(x_j)_{j\notin J^c}\in \fg_{J^c}, \ 
x_{J}:=(x_j)_{j\in J} \in \fg_{J}.
$$  
In the proof, we will use the following properties:

\begin{lemma}
\label{lem_prop_oscillation_general}
Let $\pi_0$ be an irreducible representation of infinite dimension of $G$.
We define as above its jump set $J$, the subspaces $\fg_J, \fg_{J^c}$.
\begin{enumerate}
\item 
There exists a linear function $F:\fg_J\times \fg_{J^c}\to\fg_J$ such that 
$$
\pi_0(x) = \pi_0\left( \exp \left( X_J + F(X_J,X_{J^c})\right)\right),
$$
where
$\displaystyle{
 x=\exp (\sum_{k=1}^n x_j X_j)\in G}$,
$\displaystyle{
X_J =\sum_{k\in J}x_j X_j \in \fg_J}$,
$\displaystyle{
X_{J^c} =\sum_{k\notin J}x_j X_j \in \fg_{J^c}
}$.\\
Furthermore, for any $X_{J^c}$, the change of variable $X_J\mapsto X_J'=X_J + F(X_J,X_{J^c})$ is a diffeomorphism of $\fg_J$ with determinant 1.

\item 
Let $A\in \sL(\cH_{\pi_0})_\infty$
and define $f_{A}$ as in \eqref{eq_def_fA}.
We have
$$
 \int_{(x_k)_{k\in J}\in\fg_J}  f_A(x) \ 
  \pi_0\left(\exp \left(\sum_{k\in J} x_j X_j\right)\right)^*
dx
=
 \frac{1}{d_{\pi_0}} A.
 $$
\item 
Let $\sigma\in L^\infty(\Gh)$ be such that $\cF_{G}^{-1}\sigma$ is  a compactly supported distribution on $G$.
Then 
$$
\int_G \int_{\Gh}\tr\left(\sigma(\pi) \ \pi(w) \right) f_A(w^{-1}x)d\mu(\pi)
 dw = \tr\left(  \kappa(\pi_0) \pi_0(x)  A\right),
 $$
 interpreting the left hand side as in Proposition \ref{prop_FIF2}, that is, as the limits (in this order) of the absolutely convergent double integral: 
$$
\lim_{R\to \infty} \lim_{N\to +\infty}\int_{N\cdot \cC} \int_G \trN \left(\sigma (\pi)\pi(w)\right) f_A(w^{-1}x) \chi_R(w) \ dw  d\mu(\pi),
$$
where $\chi\in \cD(G)$ with $\chi\equiv 1$ on a neighbourhood of 0
and $\chi_R(x):=\chi (R^{-1}x)$, $\cC$ a compact neighbourhood of $1\in \Gh$ such that $\cup_{N\in \bN} N\cdot \cC=\Gh$, 
and $\trN$ denotes the trace of the operators projected on the subspace spanned by the first $N$ vectors, having fixed a fundamental sequence of vector fields.
\end{enumerate}
\end{lemma}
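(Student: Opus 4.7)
All three parts flow from the Kirillov--Pedersen theory used to define $\tilde\phi$ and $f_A$.

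For Part~(1), the plan is to combine the defining property of the jump set $J$ with the Baker--Campbell--Hausdorff formula. By definition of $J$, for each $k\notin J$ one has $d\pi_0(X_k)\in d\pi_0(\fg_{n-k-1})$, so at the Lie-algebra level $X_k$ can be traded against later basis elements \emph{after passing through $\pi_0$}. Exponentiating this identity and using the nilpotency of $\fg$ (which makes BCH terminate) yields a polynomial substitution $X_J+X_{J^c}\mapsto X_J+F(X_J,X_{J^c})$ whose image lies in $\fg_J$ and which intertwines $\pi_0$. One arranges $F$ to be linear by constructing it recursively along the Jordan--Hölder filtration, at each step absorbing one non-jump generator into a jump generator of strictly higher index; this makes the differential of $X_J\mapsto X_J+F(X_J,X_{J^c})$ upper-triangular with ones on the diagonal in the Jordan--Hölder basis, hence of Jacobian~$1$.

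For Part~(2), I will invert the Pedersen isomorphism $\phi\mapsto\tilde\phi$. Writing $A=\tilde\phi_A$ for a unique $\phi_A\in\cS(\fg_J)$ and pairing the defining identity for $\tilde\phi_A$ with $\tr(\pi_0(\exp Y_J)^*\,\cdot\,)$ produces an explicit formula for $\phi_{A^*}$ in terms of the matrix coefficient $f_A$, namely $\phi_{A^*}(X_J)=d_{\pi_0}\,\overline{f_A(\exp X_J)}$, consistent with the identity $\tr\tilde\phi=d_{\pi_0}^{-1}\phi(0)$ at $X_J=0$. Substituting back into $\tilde\phi_{A^*}=A^*$ and taking the adjoint of the whole identity yields Part~(2).

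For Part~(3), I will apply the Fourier inversion formula of Proposition~\ref{prop_FIF2} to the compactly supported distribution $\kappa=\cF_G^{-1}\sigma$ and to the smooth bounded function $w\mapsto f_A(w^{-1}x)$ for fixed $x\in G$. This rewrites the double integral on the left hand side of~(3), in its prescribed regularised sense, as $\langle\kappa,f_A(\cdot^{-1}x)\rangle$. Unfolding the pairing in exponential coordinates splits the integral into one over $\fg_J$ and one over $\fg_{J^c}$; thanks to Part~(1), the $\pi_0$-dependence of the integrand enters only through $X_J+F(X_J,X_{J^c})$, and the change of variable with unit Jacobian decouples it from $X_{J^c}$. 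The remaining integral over $\fg_J$ is evaluated by Part~(2), and the orthogonality of matrix coefficients of inequivalent representations kills the contribution of $\pi\not\equiv\pi_0$ in the regularised Plancherel integral. The main obstacle will be this last orthogonality argument, since $f_A\notin L^1(G)$ in general, so $\int_G f_A(y)\pi(y)^*dy$ is only meaningful under the truncations in Proposition~\ref{prop_FIF2}. One must carefully track the order of limits $N\to\infty$ then $R\to\infty$ and exploit Part~(1) early to reduce to an absolutely convergent integral on $\fg_J$ before passing to the limit.
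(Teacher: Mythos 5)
Your treatment of Parts (1) and (2) is broadly consistent with what the paper does (Part (1) is left to the reader, Part (2) is simply cited from Pedersen), but your plan for Part (3) takes a wrong turn after the first step, and the "obstacle" you identify is a red herring.

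Your first step is correct and is exactly the paper's: Proposition~\ref{prop_FIF2} applied to the compactly supported distribution $\kappa=\cF_G^{-1}\sigma$ and the smooth bounded test function $w\mapsto f_A(w^{-1}x)$ turns the regularised double integral into the distributional pairing $\langle\kappa,\,f_A(\cdot^{-1}x)\rangle$. After that, however, there is nothing left to decompose or regularise: one simply substitutes $f_A(w^{-1}x)=\tr\bigl(\pi_0(w^{-1}x)A\bigr)=\tr\bigl(\pi_0(w)^*\pi_0(x)A\bigr)$. Since $A$ is trace-class, $\pi_0(x)A$ is trace-class; expanding the trace in an orthonormal basis, each function $w\mapsto(\pi_0(w)^*v,w')_{\cH_{\pi_0}}$ is smooth and bounded, so the pairing with $\kappa$ is legitimate term by term and re-sums. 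By the definition of $\widehat\kappa(\pi_0)$ for a compactly supported distribution (first part of Proposition~\ref{prop_FIF2}), this gives $\tr\bigl(\widehat\kappa(\pi_0)\pi_0(x)A\bigr)=\tr\bigl(\sigma(\pi_0)\pi_0(x)A\bigr)$, which is the claim. No appeal to Part (1), to Part (2), nor to orthogonality of matrix coefficients is required in Part (3).

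The concrete problems with your route: (i) after Proposition~\ref{prop_FIF2} there is no remaining Plancherel integral over $\Gh$, so the step ``the orthogonality of matrix coefficients of inequivalent representations kills the contribution of $\pi\not\equiv\pi_0$'' has no referent in the reduced expression $\langle\kappa,f_A(\cdot^{-1}x)\rangle$; (ii) the issue you flag about $f_A\notin L^1(G)$ arises only if one tries to compute $\int_G f_A(y)\pi(y)^*\,dy$ directly, but that integral never appears — the pairing is of the \emph{compactly supported} $\kappa$ against the bounded function $f_A(\cdot^{-1}x)$, and this is absolutely well-defined without truncation; (iii) ``unfolding in exponential coordinates'' presupposes $\kappa$ is a function, but $\kappa$ is only a compactly supported distribution, so the proposed split into integrals over $\fg_J$ and $\fg_{J^c}$ does not in general make sense. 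In short, the simple algebraic identity $f_A(w^{-1}x)=\tr(\pi_0(w)^*\pi_0(x)A)$ does all the work, and routing through the jump-set decomposition both overcomplicates the proof and introduces steps that do not go through as stated.
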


\begin{proof}
Part (1)  is a simple consequence of  the definition of a jump set, it  is left to the reader. Part (2) is in \cite{pedersen}.
 For Part (3), we apply  Proposition \ref{prop_FIF2} to obtain:
\begin{align*}
&\int_G \int_{\Gh}\tr\left(\sigma(\pi) \ \pi(w) \right) f_A(w^{-1}x)d\mu(\pi)
 dw = \int_G  \cF_{G}^{-1}\sigma(w)\ f_A (w^{-1} x) dw\\
&\qquad =  \int_G \tr \left( \cF_{G}^{-1}\sigma (w) \pi_0 (w)^*\pi_0(x) \ A   \right) dw 
=  \tr\left( \sigma(\pi_0) \pi_0(x)   A\right),
\end{align*}
since $A$ is traceclass and $\sigma(\pi_0)\in \sL(\cH_{\pi_0})$.
\end{proof}

The arguments to show Proposition \ref{prop_oscillation_general} 
follow the ones for Proposition \ref{prop_oscillation}.
The main modifications come from 
replacing  the properties of the centre with Lemma \ref{lem_prop_oscillation_general} Part (1).
We will only outline the ideas, the technical details being very similar to the ones in the proof of Proposition~\ref{prop_oscillation}.

\begin{proof}[Sketch of the proof of Proposition \ref{prop_oscillation_general}]
Let $\chi\in \cD(G)$ and $\sigma\in S^0_{cl}(G)$.
\begin{align*}
&(\Op(\sigma)(\chi u_k) ,\chi u_k)_{L^2(G)}
=
k ^{{Q_J}}  \int_{\Gh} \int_G \int_G 
\tr \left(\sigma(x,\pi) \pi(w) \right)
(\chi u_k) (xw^{-1}) \ \overline{\chi u_k}(x)  dw dx d\mu(\pi)
\\
&\qquad=
k ^{{Q_J}}  \int_{\Gh} \int_G \int_G 
\tr \left(\sigma(x,k\cdot \pi) \pi(w) \right)
(\chi u_k) (x\ k^{-1}w^{-1}) \ \overline{\chi u_k}(x) \  dw dx d\mu(\pi),
\end{align*}
after the change of variable $(\pi,w)\mapsto (k\cdot \pi, k^{-1} w)$.
For $k$ large,
$$ 
(\chi u_k) (x\ k^{-1}w^{-1}) \sim (\chi u_0) (x) \ f_A((kx) w^{-1}).
$$
 Lemma \ref{lem_prop_oscillation_general} Part (3) implies
$$
\int_{\Gh} \int_G 
\tr \left(\sigma(x,k\cdot \pi) \pi(w) \right) f_A((kx) w^{-1})
 \  dwd\mu(\pi) 
 = \tr \left( \pi_0(kx) \sigma(x,k\pi_0) A\right).
$$
Let us define $u_0(x) = u_0((x_k)_{k\in J^c})$ when 
$x=\exp (\sum_{k=1}^n x_j X_j)\in G$.
Therefore
\begin{align*}
&(\Op(\sigma)(\chi u_k) ,\chi u_k)_{L^2(G)}
\sim 
k ^{{Q_J}} \int_G 
\tr \left( \pi_0(kx) \sigma(x,k\pi_0) A\right)
\overline{f_A(kx)} |\chi u_0|^2(x)
 dx\\
 &\quad=
 \int_{\fg_{J^c}}\int_{\fg_J}
\tr \left( \pi_0(e^{X_J + kX_{J^c}}) \sigma(e^{k^{-1}X_J + X_{J^c}},k\pi_0) A\right)
\overline{f_A(e^{X_J + kX_{J^c}})} |\chi u_0|^2(e^{k^{-1}X_J + X_{J^c}})
 dX_J dX_{J^c},
  \end{align*}
having written $x=\exp (X_J + X_{J^c})$ and then performed the change of variable $X_J\mapsto k^{-1} X_J$.
We have $k^{-1}X_J \to 0$, so
\begin{align*}
&
(\Op(\sigma)(\chi u_k) ,\chi u_k)_{L^2(G)}
\\
&\quad\sim 
 \int_{\fg_{J^c}}\int_{\fg_J}
\tr \left( \pi_0(e^{X_J + kX_{J^c}}) \sigma(e^{X_{J^c}},k\pi_0) A\right)
\overline{f_A(e^{X_J + kX_{J^c}})} |\chi u_0|^2(e^{X_{J^c}})
 dX_J dX_{J^c}
 \\
 &\qquad = 
 \int_{\fg_{J^c}}\int_{\fg_J}
\tr \left( \pi_0(e^{X'_J}) \sigma(e^{X_{J^c}},k\pi_0) A\right)
\overline{f_A(e^{X'_J})} |\chi u_0|^2(e^{X_{J^c}})
 dX'_J dX_{J^c},
  \end{align*}
after having used the change of variable  $X_J\mapsto X_J'=X_J + F(X_J,kX_{J^c})$, see Lemma \ref{lem_prop_oscillation_general} Part (1).
Applying 
 Lemma \ref{lem_prop_oscillation_general} Part (2)
 on  the integral over $\fg_{J}$ concludes the (sketch of the) proof.
\end{proof}  

In the next section, we will need the following limits which follow from similar computations to the ones above:

\begin{corollary}
\label{cor_prop_oscillation_general}

\begin{enumerate}
\item Let $\pi_0$, $A$, $u_0$ and $u_k$ as in Proposition \ref{prop_oscillation_general}.
If $x_0\not=x_1$ and $u_0$ has a compact support small enough then 
$$
\lim_{k\to \infty}
(\Op(\sigma)(\chi u_k(x_1 \cdot)) ,\chi u_k(x_0 \cdot))_{L^2(G)}=0.
$$

\item Let $\pi_0$ and $u_0$ as in Proposition \ref{prop_oscillation_general}
and $A$, $B$ be in $\sL(\cH_{\pi_0})_\infty$.
We construct $(u_k)$ and $(v_k)$ as in Proposition \ref{prop_oscillation_general} for  $A$ and $B$ respectively.
If $AB^*=0$ then
$$
\lim_{k\to \infty}
(\Op(\sigma)(\chi u_k) ,\chi v_k)_{L^2(G)}=0.
$$

\item Let $\pi_0$, $A$ and $u_0$ as in Proposition \ref{prop_oscillation_general}
and consider $\dot\pi_1\in \Gh$ with $\dot\pi_1\not=\dot\pi_0$, and
$v_0$ and $B$ in $\sL(\cH_{\pi_1})_\infty$.
We construct $(u_k)$ and $(v_k)$ as in Proposition \ref{prop_oscillation_general} for  $u_0,\pi_0,A$ 
and $v_0,\pi_1, B$ respectively.
Then 
$$
\lim_{k\to \infty}
(\Op(\sigma)(\chi (u_k+v_k)) ,\chi (u_k+v_k))_{L^2}=
\lim_{k\to \infty}(\Op(\sigma)(\chi u_k) ,\chi u_k)_{L^2}
+
\lim_{k\to \infty}(\Op(\sigma)(\chi v_k) ,\chi v_k)_{L^2}.
$$
\end{enumerate}
\end{corollary}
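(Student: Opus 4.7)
The plan is to prove all three parts by adapting the computation sketched in the proof of Proposition~\ref{prop_oscillation_general}. That proof proceeds by expanding the quantisation of $\sigma$, performing the change of variable $(\pi, w) \mapsto (k\cdot\pi, k^{-1}w)$, applying Lemma~\ref{lem_prop_oscillation_general} Part~(3) to select the representation $\pi_0$ via the matrix coefficient $f_A$, making the Pedersen change of variable $X_J \mapsto X_J' = X_J + F(X_J, kX_{J^c})$ in the jump directions, and finally invoking Lemma~\ref{lem_prop_oscillation_general} Part~(2) to evaluate the resulting $\fg_J$-integral as a trace. The three statements of the corollary correspond to the same chain of manipulations with small modifications.

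For Part~(1), the translates $\chi u_k(x_j\cdot)$ introduce in the integrand a product $u_0((x_1\,\cdot)_{J^c})\,\overline{u_0((x_0\,\cdot)_{J^c})}$ together with an extra operator factor $\pi_0(k(x_1 x_0^{-1}))$ arising from the selection step. When $\supp u_0$ is small and $x_0 \neq x_1$, two complementary mechanisms force the limit to vanish. Either the essential supports of $\chi u_k(x_0\cdot)$ and $\chi u_k(x_1\cdot)$ become disjoint, in which case the pseudo-locality of $\Op(\sigma) \in \Psi^0_{cl}$ (cf.\ the kernel estimates of Proposition~\ref{prop_Sm_kernel}) combined with Rellich's theorem (as in Theorem~\ref{thm_cq_rellich}) yields the conclusion; or else the residual oscillating phase in $\pi_0(k(x_1 x_0^{-1}))$ integrated against the integrable matrix coefficient $\overline{f_A}$ vanishes by a Riemann--Lebesgue argument, analogous to the way the character $e^{i\lambda_0 kx_\fz}$ disappears in the proof of Proposition~\ref{prop_oscillation}.

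For Part~(2), the analogous calculation applied to $(\Op(\sigma) \chi u_k, \chi v_k)_{L^2}$ produces in the limit an integrand proportional to
\[
\int_{\fg_J} \tr\bigl(\pi_0(e^{X_J'}) \sigma_0(x, \pi_0) A\bigr)\, \overline{\tr\bigl(\pi_0(e^{X_J'}) B\bigr)}\, dX_J'.
\]
Rewriting $\overline{\tr(\pi_0(e^{X_J'}) B)} = \tr(B^* \pi_0(e^{X_J'})^*)$ and applying Lemma~\ref{lem_prop_oscillation_general} Part~(2) to $\int f_{\sigma_0(x,\pi_0) A}(e^{X_J'}) \pi_0(e^{X_J'})^*\, dX_J' = d_{\pi_0}^{-1}\sigma_0(x,\pi_0) A$, the expression reduces to $d_{\pi_0}^{-1}\tr(\sigma_0(x, \pi_0) A B^*)$ by cyclicity of the trace. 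This is identically zero under the hypothesis $AB^* = 0$.

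For Part~(3), we decompose $(\Op(\sigma)(u_k + v_k), u_k + v_k)_{L^2}$ into four pieces. The two diagonal pieces converge to the MDMs of $(u_k)$ and $(v_k)$ by Proposition~\ref{prop_oscillation_general}. The delicate term is the cross term $(\Op(\sigma) u_k, v_k)$: the matrix coefficient $f_A$ still selects $\pi_0$ via Lemma~\ref{lem_prop_oscillation_general} Part~(3), but no analogous Pedersen-type identity collapses $\overline{f_B(kx)}$, since $\pi_1$ is a distinct irreducible whose jump set may differ from that of $\pi_0$. After the change of variable adapted to $\pi_0$, the integral becomes an oscillatory integral in which matrix coefficients of $\pi_0$ and $\pi_1$ appear simultaneously with the scaling $k \to \infty$; the inequivalence $\pi_1 \not\sim \pi_0$ rules out any stationary phase, and the limit vanishes by a Riemann--Lebesgue argument in the directions where $\pi_1$ acts nontrivially. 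This mixed-representation step is the main obstacle and is the only place where the hypothesis $\dot\pi_1 \neq \dot\pi_0$ is crucially used.
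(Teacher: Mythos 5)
Parts (1) and (2) are essentially correct, and Part (2) in fact coincides with the paper's own computation (push the $\overline{f_B}$ integral inside the trace via Lemma~\ref{lem_prop_oscillation_general}~(2), then use cyclicity and $AB^*=0$). For Part (1) you overcomplicate: once one reduces to $x_1=0$ by translation and runs the same expansion as in Proposition~\ref{prop_oscillation_general}, the integrand carries the factor $u_0(x)\,\overline{u_0(x_0 x)}$, which is \emph{identically zero} for $x_0\neq 0$ once the support of $u_0$ is small enough. No pseudo-locality, Rellich, or Riemann--Lebesgue mechanism is needed at all; the alternative routes you sketch are superfluous.

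Part (3), however, has a genuine gap. You correctly identify the obstacle --- the Pedersen change of variable is tied to the jump set of $\pi_0$ and does not collapse $\overline{f_B(kx)}$ --- but the proposed resolution (``the inequivalence $\pi_1\not\sim\pi_0$ rules out any stationary phase, and the limit vanishes by Riemann--Lebesgue in the directions where $\pi_1$ acts nontrivially'') is not a proof. The jump sets $J(\pi_0)$ and $J(\pi_1)$ can be unrelated, the matrix coefficient $f_B$ is not a pure exponential in any direction, and there is no obvious sense in which ``stationary phase'' applies to a trace of $\pi_0(e^{X_J+kX_{J^c}})\sigma A$ paired against $f_B(e^{X_J+kX_{J^c}})$. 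The paper avoids the computation entirely and works at the level of symbols: it first observes (from the asymptotic in Proposition~\ref{prop_oscillation_general}) that $\lim_k(\Op(\sigma)\chi u_k,\chi v_k)_{L^2}=0$ whenever $\princ_0(\sigma)(\cdot,\dot\pi_0)\equiv 0$; then it chooses a continuous $\eta$ on $\Gh/\bR^+$ with $\eta(\dot\pi_0)=0$, $\eta(\dot\pi_1)=1$, writes $\sigma=\sigma\eta+\sigma(1-\eta)$, kills the first piece because $\sigma\eta$ vanishes at $\dot\pi_0$, and for the second piece takes adjoints to pass to $(\chi u_k,\Op(\sigma^*(1-\eta))\chi v_k)_{L^2}$, which vanishes because $\sigma^*(1-\eta)$ vanishes at $\dot\pi_1$. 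This localization-in-$\Gh/\bR^+$ plus adjoint-exchange argument is the content that is missing from your proposal; without it, the cross-term does not go away.
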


\begin{proof}[Proof of  Part (1)]
An argument of translation shows that it suffices to prove the case $x_1=0$.
Proceeding as in the proof of Proposition \ref{prop_oscillation_general}, 
we obtain:
$$
(\Op(\sigma)(\chi u_k) ,\chi u_k(x_0 \cdot))_{L^2(G)}
\sim 
k ^{{Q_J}} \int_G 
\tr \left( \pi_0(kx) \sigma(x,k\pi_0) A\right)
\overline{f_A(k(x_0x))} (\chi u_0)(x)\ \overline{\chi u_0}(x_0x)
 dx.
 $$
 Now $u_0(x)\bar u_0(x_0x)=0$ for any $x\in G$ 
  when $u_0$ as a support small enough and $x_0\not=0$.
\end{proof}

\begin{proof}[Proof of Part (2)]
Proceeding as in the proof of Proposition \ref{prop_oscillation_general}, 
we obtain:
\begin{align*}
&(\Op(\sigma)(\chi u_k) ,\chi v_k)_{L^2(G)}
\sim 
k ^{{Q_J}} \int_G 
\tr \left( \pi_0(kx) \sigma(x,k\pi_0) A\right)
\overline{f_B(kx)} |\chi u_0|^2(x)
 dx\\
 &\quad \sim 
 \int_{\fg_{J^c}}\int_{\fg_J}
\tr \left( \pi_0(e^{X'_J}) \sigma(e^{X_{J^c}},k\pi_0) A\right)
\overline{f_B(e^{X'_J})} |\chi u_0|^2(e^{X_{J^c}})
 dX'_J dX_{J^c}\\
 &\qquad=\int_{\fg_{J^c}}
\tr \left( \frac 1{d_{\pi_0}} B^* \sigma(e^{X_{J^c}},k\pi_0) A\right)
 |\chi u_0|^2(e^{X_{J^c}})
 dX'_J dX_{J^c}.
  \end{align*}
  Hence this is zero when $AB^*=0$.
\end{proof}

\begin{proof}[Proof of Part (3)]
Proceeding as in the proof of Proposition \ref{prop_oscillation_general}, 
we obtain:
\begin{align*}
&(\Op(\sigma)(\chi u_k) ,\chi v_k)_{L^2(G)}
\sim 
k ^{{Q_J}} \int_G 
\tr \left( \pi_0(kx) \sigma(x,k\pi_0) A\right)
\overline{f_B(kx)} |\chi u_0|^2(x)
 dx\\
 &\quad\sim 
 \int_{\fg_{J^c}}\int_{\fg_J}
\tr \left( \pi_0(e^{X_J + kX_{J^c}}) \sigma(e^{X_{J^c}},k\pi_0) A\right)
\overline{f_B(e^{X_J + kX_{J^c}})} |\chi u_0|^2(e^{X_{J^c}})
 dX_J dX_{J^c},
  \end{align*}
  having used the jump set for $\pi_0$.
  And this is equivalent to the same quantity with $\princ_0(\sigma)$ replacing $\sigma$. 
 So  when $\princ_0(\sigma)$ is zero at $(x,\dot \pi_0)$ for all $x\in G$, we have
  $$
\lim_{k\to \infty}
(\Op(\sigma)(\chi u_k) ,\chi v_k)_{L^2(G)}=0.
$$

Let us fix a continuous real-valued function on $\Gh/\bR^+$ such that $\eta(\dot\pi_0)=0$ and $\eta(\dot\pi_1)=1$.
Considering a general symbol $\sigma$,
 we write
 $\sigma=\sigma \eta + (1-\eta)\sigma$
 so 
 $$
\Re  (\Op(\sigma)(\chi u_k) ,\chi v_k)_{L^2}
  =
\Re  (\Op(\sigma \eta)(\chi u_k) ,\chi v_k)_{L^2}
+
\Re  ((\chi u_k) , (\Op(\sigma (1-\eta))^*  \chi v_k)_{L^2}.
  $$
As $\sigma \eta$ vanishes at $\dot \pi_0$, 
the limit of the first term on the right hand side is zero.
For the second term, we have as in the proof of Lemma \ref{lem_limit} or 
$$
\lim_{k\to \infty}
 ((\chi u_k) , (\Op(\sigma (1-\eta))^*  \chi v_k)_{L^2}
 =
\lim_{k\to \infty}
 ((\chi u_k) , \Op(\sigma^* (1-\eta))  \chi v_k)_{L^2}
 $$
 and it must be zero since 
  $\sigma^* (1-\eta)$ vanishes at $\dot \pi_1$.
\end{proof}

 \subsection{Consistency of the description}
 
Our main result describes MDMs as 
trace-class-valued positive measures, 
see  Section \ref{subsec_main_result}.
In this section, we will show the converse, 
that is, that any trace-class-valued positive measure is 
 a MDM:
 
 \begin{proposition}\label{prop_consistency}
 Let $\Omega$ be a non-empty open set of $G$.
  Let $(\Gamma,\gamma)$ be a trace-class-valued positive measure on $\Omega\times \Gh/\bR^+$.
 Then, there exists a pure sequence $(u_k)$ with $\Gamma d\gamma$ as MDM.
  \end{proposition}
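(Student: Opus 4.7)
The plan is as follows: approximate the given trace-class-valued positive measure by finite convex combinations of pure states in the sense of Proposition~\ref{prop_state_C*dotS0Omega}, realize each such combination as the MDM of an explicit pure sequence built from the examples of Sections~\ref{sec_ex1} and~\ref{sec_ex3}, and conclude by a diagonal extraction against a countable dense family of test symbols. By an exhaustion $\Omega_1 \Subset \Omega_2 \Subset \cdots \Subset \Omega$ together with Theorem~\ref{thm_defect_measure}, it suffices to treat the case in which $(\Gamma, \gamma)$ has compact support in $\Omega \times \Gh/\bR^+$ and finite total trace-mass $\int \tr \Gamma \, d\gamma < \infty$. The associated positive linear form
$$\ell_{(\Gamma, \gamma)}(\sigma) := \int \tr\bigl(\sigma(x, \dot\pi) \Gamma(x, \dot\pi)\bigr) d\gamma(x, \dot\pi)$$
is then finite on the separable $C^*$-algebra $C^*(\dot S^0(\Omega))$, and by Proposition~\ref{prop_state_C*dotS0Omega} combined with the Krein--Milman theorem applied to its state space, $\ell_{(\Gamma, \gamma)}$ is a weak-$*$ limit of finite positive combinations $\ell^{(N)} = \sum_{i=1}^N c_i^{(N)} \ell_{x_i, \pi_i, v_i}$ of pure states, with $x_i \in \overline\Omega$, $\pi_i \in \Gh$, and $v_i \in \cH_{\pi_i}^{\infty}$ smooth unit vectors.

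To realize a single pure state $\ell_{x_0, \pi_0, v_0}$ by an explicit pure sequence, I would combine the constructions of Propositions~\ref{prop_spatial_concentration} and~\ref{prop_oscillation_general}. Setting $A := v_0 \otimes v_0^*$, fixing a cutoff $\chi \in \cD(\Omega)$ equal to $1$ near $x_0$, choosing $u_1 \in \cS(\fg_{J^c})$ with $\|u_1\|_{L^2}^2 = d_{\pi_0}$, and, for each $m \in \bN$, the concentrated profile $u_{0,m}(y) := m^{Q_{J^c}/2} u_1(my)$, consider
$$u_k^{(m)}(x) := \chi(x) \, k^{Q_J/2} \, f_A\bigl(k \, x_0^{-1} x\bigr) \, u_{0,m}\bigl((x_0^{-1} x)_{J^c}\bigr).$$
For each fixed $m$, Proposition~\ref{prop_oscillation_general} applied with $u_0 = u_{0,m}$ (after the left-translation by $x_0$, which is harmless thanks to $\chi$) identifies $(u_k^{(m)})_k$ as a pure sequence with MDM $(|u_{0,m}|^2/d_{\pi_0}) dx_{J^c} \otimes \delta_0(x_J) \otimes \delta_{\dot\pi_0}$ centred at $x_0$, equipped with $\Gamma = A A^* = v_0 \otimes v_0^*$. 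Letting $m \to \infty$, the scalar profile $|u_{0,m}|^2 dx_{J^c}$ converges weakly to $d_{\pi_0}\, \delta_0$ in the spirit of Proposition~\ref{prop_spatial_concentration}, so the corresponding limits $\ell^{(m)}$ converge weakly to $\ell_{x_0, \pi_0, v_0}$. A diagonal extraction in $(k,m)$ along a countable dense family of $\dot S^0(\Omega)$ (dense by Proposition~\ref{prop_dotS0_separability}) then furnishes a single one-parameter pure sequence realizing $\ell_{x_0, \pi_0, v_0}$. For a finite combination $\ell^{(N)}$, the superposition $U_k^{(N)} := \sum_i \sqrt{c_i^{(N)}} \, u_k^{(i)}$ of individual pure-state sequences is again pure with MDM $\ell^{(N)}$: Corollary~\ref{cor_prop_oscillation_general} kills all cross-terms, via part~(1) when $x_i \ne x_j$, part~(3) when $\dot\pi_i \ne \dot\pi_j$, and part~(2) when $(x_i, \dot\pi_i) = (x_j, \dot\pi_j)$ with $v_i \perp v_j$---a constraint always arrangeable by spectrally decomposing each rank-$r$ operator attached to an atom of $\gamma$ into rank-one orthogonal pieces.

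The conclusion is a final diagonal extraction in $N$ along the same countable dense family, producing a pure sequence $(u_k)$ whose limits against every $\sigma \in \dot S^0(\Omega)$ coincide with $\ell_{(\Gamma, \gamma)}(\sigma)$, so that Theorem~\ref{thm_defect_measure} identifies its MDM with $(\Gamma, \gamma)$ up to equivalence. The principal obstacle lies in the coupled concentration--oscillation step of the pure-state construction: running the oscillation at rate $k$ in the $\fg_J$ direction together with the concentration at rate $m$ in the $\fg_{J^c}$ direction requires compatibility between the two parameters. The approach above sidesteps a direct analysis of coupled rates by splitting the asymptotics into two successive one-parameter limits, each governed by an already-established proposition, and reassembling them through a standard two-parameter diagonal extraction; a more direct approach taking $m = m_k \to \infty$ with $m_k = o(k)$ would require rerunning the proofs of Propositions~\ref{prop_spatial_concentration} and~\ref{prop_oscillation_general} with the extra parameter, which is more delicate but presumably also feasible.
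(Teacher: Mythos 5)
Your proposal follows essentially the same route as the paper's proof: both reduce to a bounded $\Omega$ by exhaustion, invoke Krein--Milman on the state space of $C^*(\dot S^0(\Omega))$ to approximate $\ell_{(\Gamma,\gamma)}$ by finite positive combinations of pure states, realize a single pure state $\ell_{x_0,\pi_0,v_0}$ by composing the oscillation construction of Proposition~\ref{prop_oscillation_general} with a concentration limit $u_0^{(\epsilon)}=\epsilon^{-Q_{J^c}/2}u_0(\epsilon^{-1}\cdot)$ and passing to the limit via a diagonal extraction along a countable dense family of symbols, handle cross-terms in finite sums through Corollary~\ref{cor_prop_oscillation_general}, and conclude by a further diagonal extraction. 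The paper packages the closure-under-limits step as the statement that the set $\tilde\cM$ of achievable limits is closed, convex and translation-invariant, and checks $\tilde\cM=\cM^+$; your direct multi-parameter extraction is the same argument unrolled, so there is no substantive difference.
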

    
As in  Section \ref{subsec_main_result}, 
an argument of diagonal extraction 
over a suitable sequence of bounded open subsets exhausting $\Omega$
shows that it suffices to prove:

     \begin{lemma}\label{lem_prop_consistency}
 Let $\Omega$ be a non-empty bounded open set of $G$.
 We fix a positive Rockland operator  $\cR$,
 and  a function
$\psi\in C^\infty(\bR)$ 
such that $\psi\equiv0$ on a neighbourhood of 0 and $\psi\equiv 1$ on a neighbourhood of $+\infty$.
  For any trace-class-valued positive measure $(\Gamma,\gamma)$  on $\bar\Omega\times \Gh/\bR^+$,  there exists a sequence $(u_k)$  in $L^2(\Omega)$ such that 
   $u_k \rightharpoonup_{k\to\infty}  0$ and 
   \begin{equation}
\label{eq_lem_prop_consistency}
\forall \sigma\in \dot S^0(\Omega),\qquad
\lim_{k\to\infty} 
\left(\Op(\sigma \psi(\pi(\cR))) u_k,  u_k \right)_{L^2(\Omega)} 
=
\int_{\bar \Omega \times (\Gh /\bR^{+})}
\tr \left(\sigma (x,\dot \pi) \ \Gamma(x,\dot \pi) \right)
d  \gamma(x,\dot\pi) \, .
\end{equation}
  \end{lemma}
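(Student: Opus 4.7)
The plan is to reformulate the statement in the $C^*$-algebraic language developed in Section~\ref{subsec_C*dotS0barOmega}. Let $E$ denote the cone of positive linear functionals $\ell$ on $C^*(\dot S^0(\Omega))$ of the form
\[
\ell(\sigma) \;=\; \lim_{k\to\infty} \bigl(\Op(\sigma\,\psi(\pi(\cR)))\,u_k,\,u_k\bigr)_{L^2(\Omega)},
\]
arising from some pure, weakly-null, $L^2$-bounded sequence $(u_k) \subset L^2(\Omega)$. I would establish that $E$ exhausts the full cone of positive linear functionals; Proposition~\ref{prop_state_C*dotS0Omega} then identifies any such functional with an equivalence class of trace-class-valued positive measures through~\eqref{eq_propx_elltrGg}, yielding~\eqref{eq_lem_prop_consistency}.

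First I would verify that $E$ contains every pure state $\ell_{x_0,\dot\pi_0,v_0}$ with $\dot\pi_0 \ne \dot 1$. Proposition~\ref{prop_oscillation_general} already produces a pure sequence with MDM of the form $\bigl(|u_0(x_{J^c})|^2/d_{\pi_0}\bigr)\,dx_{J^c}\otimes \delta_{0}(x_J)\otimes \delta_{\dot\pi_0}$ and $\Gamma(\dot\pi_0) = AA^*$; choosing $A = v_0\otimes w_0^*$ for a smooth unit vector $w_0 \in \cH_{\pi_0}$ gives $\Gamma(\dot\pi_0) = v_0\otimes v_0^*$. To further concentrate the spatial measure to $\delta_{x_0}$, I would replace $u_0$ by a concentrating family $u_0^{(\varepsilon)}(y) = \varepsilon^{-Q_{J^c}/2}\chi(\varepsilon^{-1}(y-(x_0)_{J^c}))$ with $\chi \in \cS(\fg_{J^c})$ of unit $L^2$-norm, and perform a diagonal extraction in $(k,\varepsilon)$; the resulting sequence is pure and realizes $\ell_{x_0,\dot\pi_0,v_0}$ up to the absolute constant $1/d_{\pi_0}$, which is absorbed into the equivalence class of the trace-class-valued measure by Definition~\ref{def_traceclass_pos_meas}. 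The case of $\pi_0$ a nontrivial character is handled similarly by combining the constructions of Propositions~\ref{prop_spatial_concentration} and~\ref{prop_oscillation}.

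Next I would show that $E$ is closed under finite positive sums by superposing pure sequences. Given $\ell_1,\dots,\ell_N \in E$ realized by oscillation-type sequences $(u_k^{(j)})_k$, Corollary~\ref{cor_prop_oscillation_general} exhibits three mechanisms that annihilate the cross terms $\bigl(\Op(\sigma)u_k^{(i)},\,u_k^{(j)}\bigr)_{L^2}$ as $k\to\infty$: disjoint spatial localization (distinct points $x_0^{(j)}$ with shrinking supports), distinct representation classes $\dot\pi_0^{(j)}$, or—when the pure states share the same $(x_0,\dot\pi_0)$—mutually orthogonal auxiliary vectors $w_0^{(j)}$ forcing $A_i A_j^* = 0$ for $i\ne j$. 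Combined with the trivial scaling $u_k\mapsto\sqrt{\lambda}\,u_k$ for $\lambda\ge 0$, these tools show that $E$ contains the convex cone generated by the pure states of the previous step. By Proposition~\ref{prop_state_C*dotS0Omega}(3) this cone is weak-$*$ dense in the cone of all positive linear functionals on $C^*(\dot S^0(\Omega))$.

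The last step is to close $E$ under weak-$*$ limits on bounded sets by a diagonal extraction exploiting the separability of $C^*(\dot S^0(\Omega))$ (Proposition~\ref{prop_C*dotS0}) and of $L^2(\Omega)$: if $\ell_n \in E$ converges weak-$*$ to some $\ell$, each $\ell_n$ being realized by $(u_k^{(n)})_k$, I would choose $k_n$ large enough that $|\ell(\sigma_m)-\bigl(\Op(\sigma_m\psi(\pi(\cR)))u_{k_n}^{(n)},u_{k_n}^{(n)}\bigr)_{L^2}| \le 1/n$ holds for all $m\le n$ along a countable dense subset $\{\sigma_m\} \subset C^*(\dot S^0(\Omega))$, and simultaneously $|(u_{k_n}^{(n)},v_m)_{L^2}|\le 1/n$ along a countable dense subset $\{v_m\}\subset L^2(\Omega)$, guaranteeing weak convergence of $u_n := u_{k_n}^{(n)}$ to $0$. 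The principal obstacle will be the superposition step: one must realize every pure state uniformly inside the oscillation family of Section~\ref{sec_ex3} so that Corollary~\ref{cor_prop_oscillation_general} really applies to every pair $(\ell_i,\ell_j)$, and in particular one must verify that atoms of $\gamma$ of arbitrary multiplicity can be realised by taking the $w_0^{(j)}$ orthonormal within a fixed infinite-dimensional $\cH_{\pi_0}$—this is what makes the spectral diagonalization of $\Gamma(x,\dot\pi)$ implementable through the $C^*$-algebra machinery.
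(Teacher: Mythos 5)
Your proposal follows essentially the same strategy as the paper's proof: you define the cone of achievable positive functionals, realize every pure state via the oscillation construction of Section~\ref{sec_ex3} together with spatial concentration, close the cone under finite positive sums using Corollary~\ref{cor_prop_oscillation_general} and scaling, establish weak-$*$ closedness by a diagonal extraction that exploits the separability and the uniqueness statement in Proposition~\ref{prop_defect_measure}, and conclude by Krein--Milman. The paper's proof is organized around the same three claims (scaling, translation invariance, closedness) applied to the set $\tilde\cM$ of achievable measures; your reformulation in terms of functionals on $C^*(\dot S^0(\Omega))$ and your observation that taking $A = v_0\otimes w_0^*$ with orthonormal auxiliary vectors $w_0^{(j)}$ forces $A_iA_j^*=0$ are helpful clarifications but do not change the substance of the argument.
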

    
We will need the following vocabulary:
\begin{definition}
\label{def_traceclass_meas}
Let $\Omega$ be a Borelian set of $G$ with non-empty interior.
\begin{itemize}
\item 
A \emph{trace-class-valued measure} on $\Omega\times \Gh/\bR^+$
is a pair $(\gamma,\Gamma)$
where
 $\gamma$ is a complex measure on $\Omega\times \Gh/\bR^+$,
and  $\{\Gamma (x,\dot\pi)\in \sL(\cH_\pi), (x,\dot\pi)\in \Omega\times\Gh/\bR^+\}$
is a measurable field of  operators 
such that $(|\gamma|, \Gamma)$ is a positive trace-class-valued measure,
see Definition \ref{def_traceclass_pos_meas}.
\item 
Two trace-class-valued measures $(\gamma,\Gamma)$
and $(\gamma',\Gamma')$ are \emph{equivalent} when
there exists a measurable positive function $f$ on $\Omega\times\Gh/\bR^+$ such that 
$$
d\gamma'(x,\dot \pi)=f(x,\dot \pi) d\gamma(x,\dot \pi)
\quad\mbox{and}\quad
\Gamma'(x,\dot \pi) = \frac1{f(x,\dot \pi)} \Gamma(x,\dot \pi).
$$ 
The equivalence class of $(\gamma,\Gamma)$ is denoted by $\Gamma d\gamma$. We may sometimes allow ourselves to identify an equivalence class with one of its representatives. 
\item 
A trace-class-valued measure on $\Omega\times \Gh/\bR^+$
$(\gamma,\Gamma)$ is \emph{positive} when $\gamma$ is positive
and in  this case, we may write $\Gamma d\gamma \geq 0$.
\end{itemize}
\end{definition}

The following lemma describes the topological dual of $C^*(\dot S^0(\Omega))$  
when $\Omega$ is bounded:

\begin{lemma}
\label{lem_dual_C*dotS0Omega}
Let $\Omega$ be a non-empty bounded open set of $G$.
We denote by 
$\cM$ the set of trace-class-valued measures on 
$\bar \Omega\times \Gh/\bR^+$
modulo equivalence, 
and by $\cM^+=\{\Gamma d\gamma \geq 0\}$ the set of positive trace-class-valued measures on 
$\bar \Omega\times \Gh/\bR^+$
modulo equivalence. 

\begin{enumerate}
\item 
For any trace-class-valued measure $(\gamma,\Gamma)$ on $\bar \Omega\times \Gh/\bR^+$, 
the linear form $\ell_{(\gamma,\Gamma)}$ given by 
$$
\ell_{(\gamma,\Gamma)}(\sigma):=
\int \tr (\sigma \ \Gamma)\  d\gamma 
=\int_{(x,\dot\pi)\in\bar \Omega\times \Gh/\bR^+} 
\!\!\!\!\!\!\!\!\!\!\!\!\!\!\!\!
\tr (\sigma(x,\dot\pi) \ \Gamma(x,\dot\pi))\  d\gamma (x,\dot\pi), 
\quad \sigma\in C^*(\dot S^0(\Omega)),
$$
is continuous on $C^*(\dot S^0(\Omega))$; 
its norm is 
$\int \tr (\Gamma) d|\gamma|$.

\item 
Conversely, given any continuous form on $C^*(\dot S^0(\Omega))$, 
there exists a trace-class-valued measure $(\gamma,\Gamma)$ on $\bar \Omega\times \Gh/\bR^+$
such that $\ell=\ell_{(\gamma,\Gamma)}$.

\item\label{item_lem_dual_C*dotS0Omega_eq}
If two trace-class-valued measures 
$(\gamma,\Gamma)$ and $(\gamma',\Gamma')$
 yield the same linear form, i.e.
 $\ell_{(\gamma,\Gamma)}=\ell_{(\gamma',\Gamma')}$, 
 then they are equivalent.

\item 
The map $\Phi:\ell =\ell_{(\gamma,\Gamma)}\mapsto \Gamma d\gamma $
is an isomorphism from
the topological dual of the Banach space $C^*(\dot S^0(\Omega))$ 
onto the Banach space $\cM$ which is equipped with the norm given by
$$
\|\Gamma d\gamma\|_{\cM}
:= \int \tr (\Gamma) d|\gamma|.
$$

\item\label{item_lem_dual_C*dotS0Omega_purestate} 
The states of the $C^*$-algebra $C^*(\dot S^0(\Omega))$  
are mapped by $\Phi$ onto the measures $\Gamma d\gamma\geq 0$
with $\int \tr (\Gamma) d\gamma=1$.
The pure states corresponds to 
$(\delta_{x_0}(x)\otimes \delta_{\dot\pi_0}(\pi), v_0\otimes v_0^*)$
where $x_0\in \bar\Omega$, $\dot \pi_0\in \Gh/\bR^+$
and $v_0$ a unit vector in $\cH_{\pi_0}$.

\item 
The positive forms of the $C^*$-algebra $C^*(\dot S^0(\Omega))$  
are mapped by $\Phi$ onto $\cM^+$.
\end{enumerate}
\end{lemma}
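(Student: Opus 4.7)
The strategy is to reduce each assertion to the state characterization of Proposition \ref{prop_state_C*dotS0Omega}, invoking the Jordan decomposition of continuous linear forms on a $C^*$-algebra.

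First, Parts 1, 5, and 6 follow by direct computation. For a trace-class-valued measure $(\gamma,\Gamma)$, the pointwise bound $|\tr(\sigma\Gamma)|\leq \|\sigma\|_{\sL(\cH_\pi)}\,\tr\Gamma$ integrated against $d|\gamma|$ yields
\[
|\ell_{(\gamma,\Gamma)}(\sigma)|\leq \|\sigma\|_{C^*(\dot S^0(\Omega))}\int\tr\Gamma\,d|\gamma|,
\]
proving continuity and the norm bound of Part 1. When $\Gamma d\gamma\geq 0$, pointwise positivity of $\Gamma(x,\dot\pi)$ forces $\tr(\sigma^*\sigma\Gamma)\geq 0$ pointwise, so $\ell_{(\gamma,\Gamma)}$ is a positive form, and since $C^*(\dot S^0(\Omega))$ is unital its norm is achieved at $\id$, giving $\|\ell_{(\gamma,\Gamma)}\|=\int\tr\Gamma\,d\gamma$. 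This identifies states with positive $\Gamma d\gamma$ of total mass one. Part 5 then reduces to the standard bijection between pure states of a unital $C^*$-algebra and vector states of its irreducible representations: by Proposition \ref{prop_C*dotS0}, the latter are parametrized by $\bar\Omega\times(\Gh/\bR^+)$, with each $\rho_{x_0,\pi_0}$ acting by evaluation, so the pure states are precisely the functionals $\sigma\mapsto(\sigma(x_0,\dot\pi_0)v_0,v_0)_{\cH_{\pi_0}}$, which in measure form read $(\delta_{x_0}\otimes\delta_{\dot\pi_0},v_0\otimes v_0^*)$.

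For Parts 2 and 4, I would invoke the Jordan decomposition theorem on the $C^*$-algebra $C^*(\dot S^0(\Omega))$ (see e.g.\ \cite[\S 12.3]{Dixmier_C*}) to write any continuous form as $\ell=\ell_1-\ell_2+i(\ell_3-\ell_4)$ with positive forms $\ell_j$ chosen so that $(\ell_1,\ell_2)$ and $(\ell_3,\ell_4)$ are mutually orthogonal and the total norm decomposes additively into the $\|\ell_j\|$. Each $\ell_j$ is then represented by a positive trace-class-valued measure $(\gamma_j,\Gamma_j)$ via Proposition \ref{prop_state_C*dotS0Omega}(2). To package these four pieces into a single pair, take the dominating positive measure $\gamma_0:=\sum_j\gamma_j$, let $h_j:=d\gamma_j/d\gamma_0\geq 0$ be the Radon--Nikodym densities, set
\[
d\gamma:=\bigl(h_1-h_2+i(h_3-h_4)\bigr)\,d\gamma_0,
\]
and use the equivalence freedom $(\gamma,\Gamma)\sim(f\gamma,\Gamma/f)$ to distribute the positive scalar factors between $\gamma$ and $\Gamma$ so that the resulting $\Gamma$ remains a field of positive trace-class operators while the complex phases are carried by $\gamma$. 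One then verifies $\ell=\ell_{(\gamma,\Gamma)}$, and combining the positive-case identity $\|\ell_j\|=\int\tr\Gamma_j\,d\gamma_j$ with the Jordan orthogonality yields $\|\ell\|=\int\tr\Gamma\,d|\gamma|$, completing Part 4. Part 3 is then obtained by applying the same decomposition to both sides of the identity $\ell_{(\gamma,\Gamma)}=\ell_{(\gamma',\Gamma')}$: the positive Jordan components on each side must coincide, and the uniqueness clause of Proposition \ref{prop_state_C*dotS0Omega}(2) supplies the positive function $f$ witnessing $\Gamma d\gamma=\Gamma'd\gamma'$.

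The main obstacle is the assembly step in Part 2: combining four positive trace-class-valued measures into a single pair $(\gamma,\Gamma)$ under the constraint that $\Gamma$ be a field of positive trace-class operators while $\gamma$ carries the signs and phases as a complex measure. This is a bookkeeping task requiring the Radon--Nikodym theorem together with the equivalence relation to move positive scalar weights between the two factors, and some care to ensure that the pointwise assembly of the $\Gamma_j$'s respects the trace-class integrability against $|\gamma|$. Once this technical point is settled, the remaining assertions are routine consequences of standard $C^*$-algebra duality and Proposition \ref{prop_state_C*dotS0Omega}.
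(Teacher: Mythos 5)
Your overall route — reduce everything to the state characterisation of Proposition \ref{prop_state_C*dotS0Omega} and use the Jordan decomposition $\ell = \ell_1 - \ell_2 + i(\ell_3-\ell_4)$ — is indeed what the paper's one-line proof gestures at, and Parts~1, 5, 6 as you describe them are fine. But the assembly step you flag as ``the main obstacle'' and then dismiss as bookkeeping is in fact a genuine gap, and I do not think your recipe closes it.

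The issue is structural: at a fixed point $(x,\dot\pi)$ the quantity $\sum_j \varepsilon_j h_j(x,\dot\pi)\,\Gamma_j(x,\dot\pi)$ (with $\varepsilon_1=1,\varepsilon_2=-1,\varepsilon_3=i,\varepsilon_4=-i$) is a general complex linear combination of positive trace-class operators, hence a general trace-class operator, and a general trace-class operator is \emph{not} of the form $c\,\Gamma$ with $c\in\bC$ a scalar and $\Gamma\geq 0$. The polar decomposition produces a partial isometry, not a scalar phase, and the equivalence relation of Definition \ref{def_traceclass_meas} only lets you transfer a \emph{positive scalar} function between $\gamma$ and $\Gamma$. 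Consider the concrete test case $\ell(\sigma)=\tr(\sigma(x_0,\dot\pi_0)\,T)$ for a single fixed point and a trace-class $T$ with both positive and negative eigenvalues: any representation $\ell=\ell_{(\gamma,\Gamma)}$ would force $\gamma$ to have an atom at $(x_0,\dot\pi_0)$ of weight $c$ and then $T=c\,\Gamma(x_0,\dot\pi_0)$ with $\Gamma(x_0,\dot\pi_0)\geq 0$, which fails unless $T$ happens to be a complex scalar times a positive operator. So your proposed $d\gamma:=(h_1-h_2+i(h_3-h_4))\,d\gamma_0$ cannot be paired with a single positive field $\Gamma$ to reproduce $\sum_j\varepsilon_j\Gamma_j h_j\,d\gamma_0$, and no distribution of positive scalar weights via the equivalence relation can fix this. (This same obstruction means that $\cM$, as Definition \ref{def_traceclass_meas} defines it, is not obviously closed under addition, so Part~4 as literally stated also needs care.)

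What would work, and what the duality of a type-I $C^*$-algebra actually gives you, is to allow $\Gamma$ to be a general (not necessarily positive) measurable field of trace-class operators paired with a \emph{positive} Radon measure $\gamma$ — i.e.\ the $L^1$-direct-integral $\int^\oplus \sL^1(\cH_\pi)\,d\gamma$. Your Jordan decomposition then assembles cleanly: take $\gamma_0:=\sum_j\gamma_j$ as the positive reference measure and $\Gamma:=\sum_j\varepsilon_j h_j\Gamma_j$ as the (general) trace-class field, and the norm computation $\|\ell\|=\int\tr|\Gamma|\,d\gamma_0$ follows from the orthogonality of the Jordan pieces. If you wish to stay with the paper's formulation with $\Gamma\geq0$ and $\gamma$ complex, you must restrict Parts~2–4 to forms whose associated operator field is a scalar multiple of a positive field $d\gamma_0$-a.e., which is precisely what you cannot guarantee for a general continuous functional. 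Since in the downstream application (Lemma \ref{lem_prop_consistency}) only positive forms and states are actually used, the mismatch does not propagate, but as a self-contained proof of this lemma your Part~2 needs either the reformulation above or an explicit argument showing that the operator-level phases can be avoided — and I do not see how the latter can succeed.
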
   
    
  \begin{proof}
  The  states  were characterised in Proposition \ref{prop_state_C*dotS0Omega}.
The properties are easily proved  from well-known facts or routine exercises in functional analysis.
\end{proof}
  
We can now prove Lemma \ref{lem_prop_consistency}.
  
\begin{proof}[Proof of Lemma \ref{lem_prop_consistency}]
We denote by $\tilde \cM$ the subset of $\cM^+$
of $\Gamma d\gamma\geq 0$ 
for which there exists a sequence $(u_k)$  in $L^2(\Omega)$ 
satisfying
   $u_k \rightharpoonup_{k\to\infty}  0$ and 
 \eqref{eq_lem_prop_consistency}.
 We already know that $\tilde \cM$ contains 0, 
 and the examples in Propositions \ref{prop_spatial_concentration}
 and \ref{prop_oscillation_general}.
 
 \medskip

\textit{Claim 1:} Let us show that $\tilde \cM$ is convex.
Indeed, one checks easily that if the sequence $(u_k)$  in $L^2(\Omega)$ 
satisfies
   $u_k \rightharpoonup_{k\to\infty}  0$ and 
 \eqref{eq_lem_prop_consistency} with $\Gamma d\gamma\geq 0$, 
 then for any $r>0$  the sequence $(r u_k)$ satisfies the same property with 
$r^2\Gamma d\gamma\geq 0$.

\medskip

\textit{Claim 2:} 
One checks easily that if $x_0\in G$ and if the sequence $(u_k)$  in $L^2(\Omega)$ 
satisfies
   $u_k \rightharpoonup_{k\to\infty}  0$ and 
 \eqref{eq_lem_prop_consistency} with $\Gamma d\gamma\geq 0$, 
 then  the sequence $(u_k(x_0\, \cdot ))$ satisfies the analogous properties with 
$\Gamma(x_0 x,\dot \pi) d\gamma(x_0 x,\dot \pi)$ when this is in $\tilde \cM$.
In this sense,  $\tilde \cM$ is invariant under spatial translations.

\medskip

Lemma \ref{lem_dual_C*dotS0Omega} allows us to identify $\cM$ with the topological dual of $C^*(\dot S^0(\Omega))$;
we now equip it with the weak-* topology.
By the Krein-Milman Theorem, 
$\cM^+$ is the positive span of the pure states and 0 (i.e. the closure of the set of all non-negative linear combinations of pure states).

\medskip

\textit{Claim 3:} Let us show that $\tilde \cM$ is closed in $\cM^+$.
Indeed, let  $(\Gamma^{(j)} d\gamma^{(j)})_{j\in \bN}$ be a sequence in $\tilde \cM$
converging to $\Gamma d\gamma$ in $\cM^+$.
Considering corresponding sequences  $(u_k^{(j)})_{k\in \bN}$   in $L^2(\Omega)$ 
satisfying
   $u_k^{(j)} \rightharpoonup_{k\to\infty}  0$ and 
 \eqref{eq_lem_prop_consistency} with $\Gamma^{(j)} d\gamma^{(j)}\geq 0$, 
 then we extract a diagonal subsequence  $(u_{k(j)}^{(k(j))})_{j\in \bN}$
 satisfying $u_{k(j)}^{(k(j))} \rightharpoonup_{j\to\infty}  0$.
By Proposition \ref{prop_defect_measure}, 
we may assume that this subsequence satisfies \eqref{eq_lem_prop_consistency} for a certain positive trace-class-valued measure which has to coincide with $\Gamma d\gamma$
by the uniqueness properties in Proposition \ref{prop_defect_measure}
and Lemma \ref{lem_dual_C*dotS0Omega} \eqref{item_lem_dual_C*dotS0Omega_eq}.
Hence the limit $\Gamma d\gamma$ is in $\tilde \cM$
which is thus closed.

\textit{Conclusion:}
Considering a sequence as in Proposition
\ref{prop_oscillation_general} 
with 
$\pi_0\in \Gh$  of infinite dimension,
$A\in \sL(\cH_{\pi_0})_\infty$,
$u_0^{(\epsilon)} = \epsilon^{-Q_{J^c}/2} u_0(\epsilon^{-1} x)$
where $\epsilon>0$, $u_0\in \cD(\fg_{J^c})$ with a support small enough and $Q_{J^c}=\sum_{k\in J^c} \upsilon_k$, 
the proof of Claim 3 shows that 
$(\delta_{x=0}\otimes \delta_{\dot\pi=\pi_0},AA^*) \in \tilde \cM$
when $0\in \Omega$.
Using the invariance under spatial translation (cf. Claim 2), 
$(\delta_{x=x_0}\otimes \delta_{\dot\pi=\dot\pi_0},AA^*) \in \tilde \cM$
for any $x_0\in \bar\Omega$ and $\pi_0\in \Gh$ of infinite dimension.
Note that this membership also holds when $\pi_0$ is of finite dimension (therefore of dimension one, and it suffices to adapt the Euclidean case),  and we  view it as a degenerate case of Proposition
\ref{prop_oscillation_general}.
We choose $A=v_0\otimes v_0^*$ with $v_0\in \cH_{\pi_0}$ smooth and unitary. We can remove the hypothesis `smooth' by considering a sequence of such vectors and Claim 3.
This shows that $\tilde \cM$ contains all the  pure states, see  Lemma \ref{lem_dual_C*dotS0Omega} \eqref{item_lem_dual_C*dotS0Omega_purestate}.
Moreover, they can all be obtained as MDM of sequences obtained by diagonal extractions of suitable sequences constructed in  Proposition
\ref{prop_oscillation_general}. 
This together with 
Corollary \ref{cor_prop_oscillation_general} and Claim 1 show
that $\tilde \cM$ also contains the positive span of the pure states and 0.
Therefore $\tilde \cM=\cM^+$.
\end{proof}

\section{Applications}\label{sec_app}

In this section, we investigate the properties of the MDM of a sequence of functions that satisfy a differential equation. In particular, we are concerned with {\it Div-Curl} type results and, as a consequence, we shall focus on vector-valued sequences. 

\medskip

Let $\Omega$ be an open subset of $G$ and let us consider a vector-valued sequence of functions of $L^2(\Omega)$, $(U_k)_{k\in \bN}=(u_1^k,\cdots,u_N^k)_{k\in\bN}$, $N\in\bN$. We assume that $(U_k)$ converges weakly to some vector valued function $U=(u_1,\cdots,u_N)$ of $L^2(\Omega)^N$, in the sense that for all $j\in \{1,\cdots,N\}$, $u_j^k$ tends weakly to $u_j$ in $L^2(G)$.  In order to study the defects of compactness of a family of the form $(U_k)_{k\in \bN}$, we shall use matrices of symbols in $\dot S^0$. We denote by 
${\mathcal M}_N(\dot S^0)$ the set of such matrices with $N$ lines and $N$ rows. More generally, we denote by 
$\cM_{N}(A)$ the set of matrices with $N$ lines and $N$ rows and with entries in a given algebra $A$, for instance $A=\bC$ or $\dot S^0$ or $C^*(\dot S^0)$.
We shall need basic notions about the $C^*$-algebra ${\mathcal M}_N(A)$ for a general $C^*$-algebra $A$, and this is done in the first subsection. Then, we shall define MDM for vector-valued sequences and discuss localisation property of MDM whenever $(U_k)_{k\in\bN}$ satisfies a system of differential equations. Finally, the last subsection is devoted to compensated compactness results and application to {\it Div-Curl} Lemma.

\subsection{Matrices of a $C^*$-algebra}

Let $A$ be an algebra with unit $1_A$ and let $N\in \bN$.
We have already defined the algebra ${\mathcal M}_N(A)$  of the $N\times N$ matrices with coefficients in $A$. 
We need to set some natural notation.
We denote   by $\id_N\in \cM_N(\bC)$ the identity matrix and by $A \id_N$ the set of diagonal matrices in $M_N(A)$ with the same repeated entry on the diagonal,  by $E_{ij}$ the $N\times N$ complex matrix with 0 in every entry except for the $i$th row and $j$th column where the entry is 1, and by $1_A E_{ij}\in M_N(A)$ the matrix  with 0 in every entry except for the $i$th row and $k$th column where the entry is $1_A$. 
Finally, we denote by $\cM_{P,Q}(A)$
the set of matrices with $P$ lines and $Q$ rows and with entries in $A$. 
If the algebra $A$ is also a normed vector space, 
we set the following norm on $\cM_{N,1}(A)$:
$$ 
\|V\|_{\cM_{N,1}(A)}^2 = \| \sum_{j=1}^N v_jv_j^*\|_A
\quad\mbox{when}\quad
V=\left(\begin{array}{c} 
v_1 \\ \vdots \\v_N\end{array}\right).
$$
The next lemma gives the main properties of $\cM_N(A)$ 
 when $A$ is a $C^*$-algebra.

\begin{lemma}
\label{lem_MN(A)}
Let $A$ be a $C^*$ algebra with unit $1_A$ and let $N\in \bN$.
\begin{itemize}
\item[(i)] 
Equipped with the norm given by 
$$
\|M\|_{\cM_N(A)}=\sup\{\| V_1^* M V_2\|_A : V_1,V_2\in \cM_{N,1}(A), 
\|V_1\|_{\cM_{N,1}(A)}\leq 1,
\|V_2\|_{\cM_{N,1}(A)}\leq 1\},
$$
$\cM_N(A)$ is a $C^*$-algebra with unit $1_A\id_N$.
The sub-$C^*$-algebra $A \id_N$  of $\cM_N(A)$ is isomorphic to the $C^*$-algebra~$A$.

\item[(ii)] 
Let $\pi$ be a representation of the $C^*$-algebra $\cM_N(A)$ .
Let $\xi_1$ be a non-zero vector of this representation
with $\xi_1\in \pi(E_{11})\cH_\pi$. 
We denote by $W$ the closed subspace of $\cH_\pi$ generated by $\xi_1$. 
As Hilbert spaces, $W$ is isomorphic to the orthogonal sum of $N$ copies of $\pi(A \id_N)\xi_1$.
Furthermore the representation $\pi$ of the $C^*$-algebra 
$\cM_N(A)$ on $W$ is completely determined by its restriction  $\pi\big|_{A \id_N}$ to $A \id_N$.
\item[(iii)] 
The spectrum of the $C^*$-algebra 
$\cM_N(A)$ may be identified with the spectrum of the $C^*$-algebra 
$A$ via the homeomorphism which maps an irreducible representation $\pi$ of $\cM_N(A)$ to the irreducible representation of $A$ defined by the restriction $\pi\big|_{A \id_N}$.
\item[(iv)] 
If $\ell$ is a state of $A$ and $V\in \cM_{N,1}(A)$ with $\ell(V^*V)=1$,
then the functional $L=L_{\ell,V}$ defined on $\cM_N(A)$ via
$$
L(M)=\ell(V^*MV), \quad M\in \cM_N(A), 
$$
is a state of $\cM_N(A)$.
The pure states of $\cM_N(A)$ are of the form $L_{\ell,v}$ 
with $\ell$ a  pure state  of $A$ and  $V\in \cM_{N,1}(\bC)1_A$ 
a complex vector satisfying  $\ell(V^*V)=1$.
\end{itemize}
\end{lemma}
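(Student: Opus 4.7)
The plan mirrors the classical theory of matrix algebras over $C^*$-algebras; parts (ii)--(iv) can be viewed as manifestations of the Morita equivalence between $A$ and $\cM_N(A)$, and I would treat them in that order.

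For (i), I would view $\cM_{N,1}(A)$ as a Hilbert $C^*$-module over $A$ with pairing $\langle V, W\rangle := \sum_j v_j w_j^*$, so that $\|V\|^2 = \|\langle V,V\rangle\|_A$ recovers the formula in the statement; left multiplication by matrices in $\cM_N(A)$ then gives an isometric embedding into the adjointable operators of this module. A concise alternative is to invoke Gelfand--Naimark to realise $A \subset \sL(\cH)$ faithfully, producing the natural embedding $\cM_N(A) \hookrightarrow \sL(\cH^N)$, and to check that the induced operator norm coincides with the supremum formula of the statement. Either way, the $C^*$-identity is inherited, and the identification $A\id_N \simeq A$ is transparent.

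For (ii), I would set $\xi_i := \pi(1_A E_{i1})\xi_1$ for $i = 1, \ldots, N$. The relations $E_{ij}^* = E_{ji}$ and $E_{ij}E_{kl} = \delta_{jk}E_{il}$, combined with $\pi(E_{11})\xi_1 = \xi_1$, yield by direct computation the pairwise orthogonality of $W_i := \overline{\pi(A\id_N)\xi_i}$ and the isometry $\pi(a\id_N)\xi_1 \mapsto \pi(aE_{i1})\xi_1$ from $W_1$ onto $W_i$. The formula $(bE_{kl})(aE_{i1}) = \delta_{li}\,ba\,E_{k1}$ then gives invariance of $W := \oplus_i W_i$ under $\pi(\cM_N(A))$ and shows the entire action is encoded by $\pi|_{A\id_N}$ acting on $W_1$.

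For (iii), I would first check that $\pi(E_{11}) \neq 0$ for any $\pi \in \widehat{\cM_N(A)}$ (otherwise all mutually equivalent $\pi(E_{ii})$ would vanish, contradicting $\pi(1_A\id_N)\neq 0$), pick a non-zero $\xi_1 \in \pi(E_{11})\cH_\pi$, and apply (ii): irreducibility forces $\cH_\pi = W$ and the restriction $\pi|_{A\id_N}$ on $W_1$ becomes irreducible, giving the map $\widehat{\cM_N(A)} \to \widehat A$. The inverse is the amplification $\tilde\rho := \rho \otimes \id_{\bC^N}$, irreducible since $\rho$ is. To upgrade this bijection to a homeomorphism, I would verify that every closed two-sided ideal of $\cM_N(A)$ is of the form $\cM_N(I)$ for a closed two-sided ideal $I$ of $A$, so that primitive ideals correspond under the hull-kernel topology. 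For (iv), the positivity $L(M^*M) = \ell((MV)^*(MV)) \geq 0$ and normalisation $L(1_A\id_N) = \ell(V^*V) = 1$ are immediate, and the pure state characterisation follows by combining the GNS construction with (iii), unpacking the GNS cyclic vector inside $\cH_\rho \otimes \bC^N$. The main technical obstacle I foresee lies in (iii): tracking the hull-kernel topology through the bijection requires the precise classification of closed ideals of $\cM_N(A)$, for which the matrix-unit computation of (ii) is the right tool but demands careful bookkeeping.
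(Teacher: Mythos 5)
Your overall architecture matches the paper's almost exactly: matrix-unit computations in (ii), deducing (iii) from (ii), and GNS for (iv). Your treatment of (i) via Hilbert $C^*$-modules or a faithful representation is a sound way to fill in what the paper leaves ``to the reader,'' and your remark in (iii) about classifying closed ideals of $\cM_N(A)$ to track the hull-kernel topology is a genuine improvement over the paper's ``follows from Part (ii) and its proof,'' which is too terse to address the topological statement. These parts are correct.

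However, the step you wave through in (iv) --- ``unpacking the GNS cyclic vector inside $\cH_\rho\otimes\bC^N$'' --- is precisely where the stated conclusion fails, and you should not expect to extract a \emph{complex} vector $V\in\cM_{N,1}(\bC)1_A$ from that unpacking. If $L$ is a pure state of $\cM_N(A)$ with GNS representation $\pi\simeq\rho\otimes\id_N$ on $\cH_\rho\otimes\bC^N$ and cyclic unit vector $\xi$, then $\xi$ is an arbitrary unit vector of $\cH_\rho\otimes\bC^N$, not necessarily a simple tensor $\eta\otimes V$. Concretely, take $A=\cM_2(\bC)$, $N=2$ (so $\cM_N(A)\simeq\cM_4(\bC)$), $\eta_1\perp\eta_2$ unit in $\bC^2$, and $\xi=\tfrac1{\sqrt2}(\eta_1\otimes e_1+\eta_2\otimes e_2)$; the resulting vector state $L(M)=(M\xi,\xi)$ is pure, has $\Gamma:=\big(L(1_AE_{ij})\big)$ already diagonal, yet $L(aE_{11})=\tfrac12(a\eta_1,\eta_1)$ and $L(aE_{22})=\tfrac12(a\eta_2,\eta_2)$ use two different vector states of $A$, so $L\neq L_{\ell,V}$ for any pure state $\ell$ of $A$ and complex $V$. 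The paper's own argument has the same gap: after diagonalising $\Gamma$, it shows $\xi_j\perp\xi^{(j)}$ for $j\geq 2$, but the space generated by $\xi_1$ is $\bigoplus_j\overline{\pi(A\id_N)\xi^{(j)}}$ and one must show $\xi_j\perp\pi(a\id_N)\xi^{(j)}$ for every $a\in A$, which does not follow; in the example above $\xi_2\neq0$. The correct statement, which is all that the later application to compensated compactness actually uses (note the $V(x_0,\dot\pi_0)$ notation in the proof of Proposition~\ref{prop:comp}), is that every pure state of $\cM_N(A)$ has the form $L_{\ell,V}$ with $\ell$ pure and $V\in\cM_{N,1}(A)$ unrestricted. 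To prove this you take the components $\eta_j$ of the cyclic vector $\xi=\sum_j\eta_j\otimes e_j$, pick any unit cyclic $\eta\in\cH_\rho$, and use Kadison's transitivity theorem to realise each $\eta_j=\rho(v_j)\eta$ for some $v_j\in A$; then $L=L_{\ell,V}$ with $V=(v_1,\dots,v_N)^T$. In short: you should weaken the form of $V$ before the unpacking can close, and be aware that the paper's displayed characterisation is stronger than what the proof establishes.
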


\begin{proof}
Part (i) is left to the reader. Let us prove Part (ii). Let $\pi$ be a representation of the $C^*$-algebra 
$\cM_N(A)$. 
For each $j=1, \ldots, N$, 
we set $\cH_\pi^{(j)} :=\pi(1_A E_{jj})\cH_\pi$.
The subspaces $\cH_\pi^{(j)}$ are closed, orthogonal and their sum is 
$\cH_\pi=\oplus^\perp_{1\leq j\leq N}  \cH_\pi^{(j)}$
since $E_{ii}E_{jj}=\delta_{i=j}E_{ii}$ and $\id_N=\sum_{j=1}^N E_{jj}$ in $\cM_N(\bC)$.
Furthermore, since $\pi(m\id_N)$ and $\pi(1_A E_{jj})$ commutes for $j$ fixed and any $m\in A$, 
the algebra $A$ acts on $\cH_\pi^{(j)}$ via $m\mapsto \pi(m\id_N)$.
We also observe that for any $1\leq i,j \leq N$, $\pi(1_A E_{ji})$ maps $\cH_\pi^{(i)}$ to $\cH_\pi^{(j)}$ since $E_{ji}E_{ii} =E_{jj} E_{ji}E_{ii}$. 
In fact, $\pi(1_A E_{ji})$ maps unitarily $\cH_\pi^{(i)}$ onto  $\cH_\pi^{(j)} $ with inverse $\pi(1_A E_{ij})$ since $E_{ij}E_{ji}=E_{ii}$ and $E_{ji}E_{ij}=E_{jj}$.

Let us fix a non-zero vector $\xi^{(1)}\in \cH_\pi^{(1)}$.
Let $\overline{\pi(\cM_N(A)) \xi^{(1)}} $ be
the closed subspace  of $\cH_\pi$ generated by $\xi^{(1)}$ under $\pi$.
Its orthogonal projection on $\cH_\pi^{(j)}$ is
$$
\pi(1_A E_{jj})\overline{\pi(\cM_N(A)) \xi^{(1)}}
=
\overline{\pi(1_A E_{jj})  \oplus_{l,k}\pi(A \id_N)\pi(1_A E_{lk}) \xi^{(1)}}
=
 \overline{\pi(A \id_N)\pi(1_A E_{j1}) \xi^{(1)}},
$$
that is, the closed subspace in $\cH_\pi^{(j)}$ generated by
$\xi^{(j)}:=\pi(1_A E_{j1}) \xi^{(1)}$
 under the action of $A$ given by the restriction of $\pi$ to $A\id_N$.
 All these orthogonal projections are unitarily isomorphic:
$$
\pi(1_A E_{jj})\overline{\pi(\cM_N(A)) \xi^{(1)}}
=
 \overline{\pi(A \id_N) \xi^{(j)}}
 =
 \pi(1_A E_{j1})\overline{\pi(A \id_N) \xi^{(1)}}.
 $$
So, as vector spaces and in terms of actions of $A$ via $\pi|_{A\id_N}$, 
 $\overline{\pi(\cM_N(A)) \xi^{(1)}} $  is isomorphic to $N$ copies of $\overline{\pi(A \id_N)\xi^{(1)}}$.
Furthermore, writing any matrix $M\in \cM_N(A)$ as $M=\sum_{1\leq l,k \leq N} m_{lk} 1_A E_{lk}$ with $m_{l,k}\in A$, we have
\begin{align*}
\pi(M)  \xi^{(j)}
&=
\sum_{1\leq l,k \leq N} 
\pi(m_{lk}1_A E_{lk})
\xi^{(j)} 
=
\sum_{1\leq l,k \leq N} 
\pi(m_{lk} \id_N)\pi(1_A E_{lk} E_{j1}) \xi^{(1)} 
\\&=
\sum_{1\leq l \leq N} 
\pi(m_{lj} \id_N)\pi(1_A E_{l1}) \xi^{(1)}
=\sum_{1\leq l \leq N} 
\pi(m_{lj} \id_N)\xi^{(l)},
\end{align*}
is in $\overline{\pi(\cM_N(A)) \xi^{(1)}} $.
So $\pi$ acts on $\overline{\pi(\cM_N(A)) \xi^{(1)}} $ where it is completely determined  by its restriction to  
$\pi(AI_N)$.
This shows Part (ii). 

Part (iii) follows from Part (ii) and its proof.
Before proving Part (iv), let us observe that the last computations in Part (ii) above imply
$$
(\pi(M)\xi^{(1)},\xi^{(1)})_{\cH_\pi}= (\pi(m_{11}\id_N)\xi^{(1)},\xi^{(1)})_{\cH_\pi}.
$$
A form of converse of this property consists in noticing that if $\ell$ is a state of the $C^*$-algebra $A$, then 
the functional $L$ defined on $\cM_N(A)$ via 
$L(M)=\ell(m_{11})$ is a state of the $C^*$-algebra $\cM_N(A)$.
More generally, if $V\in \cM_{N,1}(A)$ is a fixed vector valued in $A$ and $\ell$ is a state of the $C^*$-algebra $A$, then 
the functional $L$ defined on $\cM_N(A)$ via 
$L(M)=\ell(V^* M V)$ is a state of the $C^*$-algebra $\cM_N(A)$
provided that $\ell (V^* V)=1$.

Let us now prove Part (iv).
Let $L$ be a state of $\cM_N(A)$.
We set $\Gamma_{ji}:=L (1_A E_{ij})$ for $1\leq i,j\leq N$, 
and consider the matrix $\Gamma=(\Gamma_{ij})\in \cM_N(\bC)$.
Since $\overline{L(M)}=L(M^*)$, the matrix $\Gamma=\Gamma^*$ is Hermitian so 
there exists a $N\times N$ unitary matrix  $P$ such that $P^* \Gamma P$ is diagonal. 
We may replace $L$ by $M\mapsto L(P M P^*)$ and assume that $\Gamma$ is diagonal.
Since $L$ is a positive linear functional, so is its restriction $M\mapsto \tr (M \Gamma)$ to $\cM_N(\bC)$ and this implies $\Gamma\geq 0$. Furthermore as $L$ is a state, $\tr \Gamma=1$.
So we may assume that $\Gamma = \diag (\lambda_1,\ldots, \lambda_N)$
with $\lambda_1\geq \lambda_2\geq \ldots \geq \lambda_N \geq 0$ 
and $\lambda_1+  \ldots + \lambda_N=1$.

We now assume that $L$ is pure.
Let $\pi$ be the  irreducible representation of the $C^*$-algebra $\cM_N(A)$
and  $\xi$ the unit vector associated with $L$.
We can decompose 
$$ \xi=\xi_1+\ldots+\xi_N, 
\quad\mbox{where}\quad \xi_j:=\pi(1_AE_{jj})\xi \in \cH_\pi^{(j)}:=\pi(1_A E_{jj})\cH_\pi.
$$
We have $1=\|\xi\|_{\cH_\pi}^2=\|\xi_1\|_{\cH_\pi}^2+\ldots+\|\xi_N\|_{\cH_\pi}^2 = \lambda_1+\ldots+\lambda_N$ and more generally
$$
(\xi_i,\xi_j)_{\cH_\pi}
=(\pi(1_A E_{ij}) \xi,\xi)_{\cH_\pi}
=L(1_A E_{ij}) = \lambda_j \delta_{i=j}.
$$
Necessarily $\xi_1\not=0$.
Naturally, $\xi_i\in \cH^{(i)}_\pi$ is orthogonal to $\xi^{(j)}:=\pi(1_A E_{j1})\xi_1=\pi(1_A E_{j1})\xi\in \cH_\pi^{(j)}$ if $i\not=j$.
And for $j=2,\ldots, N$, $\xi_j$ is orthogonal to $\xi^{(j)}$ since
$$
(\xi_j, \xi^{(j)})_{\cH_\pi}
=
(\pi(1_A E_{jj}) \xi, \pi(1_A E_{j1} \xi )_{\cH_\pi}
= 
(\pi(1_A E_{1j}) \xi,  \xi )_{\cH_\pi}
= L(1_A E_{1j}) =\delta_{1=j}.
$$
This shows that 
if $\xi_2\not=0$ then the space $\overline{\pi(\cM_N(A)) \xi_2}$ generated by $\xi_2$ under the representation $\pi$ of $\cM_N(A)$ will be non-zero and distinct from $\overline{\pi(\cM_N(A)) \xi_1}$,
contradicting the irreducibility of $\pi$. 
Therefore $0=\xi_2=\ldots=\xi_N$ and $\xi=\xi_1$ is unitary. Furthermore  we have:
$$
L(M)
=\sum_{1\leq i,j\leq N} ( \pi(m_{ij}E_{ij})\xi,\xi)
=\sum_{1\leq i,j\leq N} ( \pi(m_{ij}E_{ij})\xi_j,\xi_i)
=( \pi(m_{11}E_{11})\xi_1,\xi_1)
=\ell_1(m_{11}),
$$
where $\ell_1$ is the state of $A$ associated 
with the restriction of $\pi$ to $A$ on $\overline{\pi(A\id_N) \xi_1}$
and the unit vector $\xi_1$.
With the notation of the statement, this shows $L=L_{\ell_1,V}$ with $V=e_1 1_A$ where $e_1$ is the first vector of the canonical basis of $\bC^N$;
 one checks easily $\ell_1 (V^* V)=\ell_1(1_A)=1$.
This concludes the proof of Part (iv) and of Lemma \ref{lem_MN(A)}.
\end{proof}

\subsection{Microlocal defect measures of vector-valued sequences and localisation properties}

Let us now go back to the family  $(U_k)_{k\in \bN}$ in $L^2(\Omega)^N$
where $\Omega$ is an open subset of $G$. We are concerned with the limit of quantities of the form 
$$\left({\rm Op}(\sigma)U_k,U_k\right)_{L^2(\Omega)^N}$$
for $\sigma\in\cM_N(\dot S^0_{cl}(\Omega))$.
In view of the description of the preceding section, a MDM of $(U_k)_{k\in \bN}$ is a pair $(\Gamma,\gamma)$ consisting of a positive Radon measure $\gamma$ and of a $N\times N$ matrix of $\gamma$-integrable fields of non-negative trace-class operators $\Gamma=(\Gamma_{i,j})_{1\leq i,j\leq N}$, such that, up to a subsequence,  for all $\sigma \in \cM_{P,Q}(\dot S^0_{cl}(\Omega))$, 
$$\lim_{k\to\infty} \left({\rm Op}(\sigma)U_k,U_k\right)_{L^2(\Omega)^N}
= \int_{\Omega \times (\Gh /\bR^{+})}
\trN \left(\sigma (x,\dot \pi) \ \Gamma(x,\dot \pi) \right)
d  \gamma(x,\dot\pi) $$
where the trace $\trN$ denotes the trace of operators of  ${\mathcal L}^1(\cH_\pi^N)$: if $\sigma = (\sigma_{i,j})_{1\leq i,j\leq N},\;\Gamma = (\Gamma_{i,j})_{1\leq i,j\leq N} $,
$$\trN \left(\sigma (x,\dot \pi) \ \Gamma(x,\dot \pi) \right)= \sum_{1\leq i,j\leq N} 
\tr \left(\sigma_{i,j} (x,\dot \pi) \ \Gamma_{j,i}(x,\dot \pi) \right).$$
Since $U_k=(u_1^k,\cdots, u_N^k)$, $\Gamma_{i,j}d\gamma$ describes the limit of quantities $\left({\rm Op}(\sigma)u_i^k,u_j^k\right)_{L^2(\Omega)}$ for scalar symbols $\sigma$. The field $\Gamma_{i,j}d\gamma$ is the joint measure of the sequences $(u_i^k)$ and $(u_j^k)$. 

\medskip
 
 Let us now consider a matrix-valued operator  $P$  consisting of $K$ lines and $N$ rows of differential operators of order~$m$ such that 
$(PU_k)_{k\in \bN}$ converges to 0  in $L^2_{-m}(\Omega,loc)^K$ as $k\rightarrow +\infty$.
Recall that $L^2_s(\Omega,s)$ was defined in Definition \ref{def_L2sloc}.
 If the family $(U_k)_{k\in \bN}$ solves a differential equation,  in the sense that $PU_k$ tends to $0$ in $L^2_{-m}(G)^K$, then the MDM  $\Gamma d\gamma$ satisfies the following localization property.

\begin{proposition} \label{prop_localisation}
Let $p(x,\pi)$ be  the principal symbol of the matrix-valued differential operator $P\in \cM_{K,N}(S^m(\Omega))$
where $\Omega$ is an open subset of $G$.
We assume that the family $(U_k)_{k\in \bN}$ in $L^2(\Omega)^N$ is such that $PU_k$ tends to $PU$ in $L^2_{-m}(G)^K$.
Let $\Gamma d\gamma$ be a MDM of $(U_k)_{k\in\bN}$, then
$$
 p_0(x,\dot\pi)\Gamma (x,\dot \pi)p_0(x,\dot\pi)^*=0,\;d\gamma(x,\dot\pi) \; a.e.,
$$ where 
 $p_0(x,\dot\pi):=\pi(\cR)^{-{m\over\nu}} p(x,\dot\pi) $
 for any positive Rockland operator $\cR$.
\end{proposition}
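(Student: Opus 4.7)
The plan is to test the MDM against matrix-valued classical symbols of the form $B = \chi P^{*} (\id+\cR)^{-m/\nu} \Phi (\id+\cR)^{-m/\nu} P \chi$ for cutoffs $\chi \in \cD(\Omega)$ and test operators $\Phi$. Without loss of generality one may take $U=0$, by replacing $U_k$ with $U_k - U$: the weak convergence, the hypothesis $P(U_k-U) \to 0$ in $L^2_{-m}(G)^K$, and the MDM are all preserved.

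Fix $\chi \in \cD(\Omega)$ and $\Phi \in \cM_K(\Psi^0_{cl}(\Omega))$ with principal symbol $\phi_0 = \princ_0(\Phi) \in \cM_K(\dot S^0(\Omega))$. Because $P$ is a differential operator, its integral kernel is supported on the diagonal, hence the sandwich
$$
B := \chi\, P^{*}\, (\id+\cR)^{-m/\nu}\, \Phi\, (\id+\cR)^{-m/\nu}\, P\, \chi
$$
lies in $\cM_N(\Psi^0_{cl}(\Omega))$ with integral kernel compactly supported in $\Omega \times \Omega$. Moreover $\pi(\id+\cR)^{-m/\nu}$ sits in $S^{-m}_{asymp}$ with principal $(-m)$-homogeneous part $\pi(\cR)^{-m/\nu}$, thanks to the Taylor expansion $(1+\lambda)^{-m/\nu} = \lambda^{-m/\nu}(1 + O(1/\lambda))$ at $\lambda=+\infty$. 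Multiplicativity of principal symbols under composition (cf. the proposition accompanying Definition \ref{def_homogeneous_asymptotic}) then gives
$$
\princ_0(B)(x,\dot\pi) = \chi(x)^2\, p_0(x,\dot\pi)^{*}\, \phi_0(x,\dot\pi)\, p_0(x,\dot\pi),
$$
since $p_0 = \pi(\cR)^{-m/\nu} p$ implies $p^{*}\pi(\cR)^{-m/\nu} = p_0^{*}$.

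Next, I would evaluate $\lim_k (BU_k,U_k)_{L^2(\Omega)^N}$ in two different ways. Moving $\chi P^*$ across the inner product yields
$$
(BU_k,U_k)_{L^2(\Omega)^N} = (\Phi W_k, W_k)_{L^2(G)^K}, \qquad W_k := (\id+\cR)^{-m/\nu} P(\chi U_k).
$$
Decompose $P(\chi U_k) = \chi PU_k + [P,\chi]U_k$. The first summand satisfies $\chi PU_k \to 0$ in $L^2_{-m}(G)^K$ since multiplication by $\chi$ is continuous on $L^2_{-m}$, and $(\id+\cR)^{-m/\nu}: L^2_{-m}(G) \to L^2(G)$ is continuous by Theorem \ref{thm_sobolev_spaces}, so $(\id+\cR)^{-m/\nu}\chi PU_k \to 0$ strongly in $L^2(G)^K$. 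The second summand involves the composition $(\id+\cR)^{-m/\nu}[P,\chi]$, of order strictly less than $0$ (the commutator $[P,\chi]$ is of order at most $m-\upsilon_1 < m$), hence compact from $L^2(\Omega,loc)$ to $L^2(\Omega)^K$ by Theorem \ref{thm_cq_rellich}; as $U_k \rightharpoonup 0$ in $L^2(\Omega,loc)$, this contribution also tends to $0$ strongly. Thus $W_k \to 0$ strongly in $L^2(G)^K$, and the $L^2$-boundedness of $\Phi$ forces $(BU_k,U_k) \to 0$. Applying Theorem \ref{thm_defect_measure} to $B$ and using cyclicity of the trace yields
$$
0 = \int_{\Omega \times \Gh/\bR^+} \chi(x)^2\, \trK\!\bigl(\phi_0\, p_0\, \Gamma\, p_0^{*}\bigr)(x,\dot\pi)\, d\gamma(x,\dot\pi).
$$

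To conclude, I specialise $\Phi = \chi_1\id$ for arbitrary $\chi_1 \in \cD(\Omega)$, so that $\phi_0 = \chi_1(x) \id_{\cH_\pi^K}$ and the identity reduces to $\int \chi(x)^2\chi_1(x)\,\trK(p_0\Gamma p_0^{*})(x,\dot\pi)\,d\gamma = 0$. As $\chi^2\chi_1$ runs through a total subset of $\cD(\Omega)$ when $\chi, \chi_1$ vary, the non-negative function $(x,\dot\pi) \mapsto \trK(p_0\Gamma p_0^{*})(x,\dot\pi)$ vanishes $\gamma$-almost everywhere. Positivity of $\Gamma$ ensures that $p_0\Gamma p_0^{*}$ is a non-negative trace-class operator on $\cH_\pi^K$, so vanishing of its trace forces $p_0(x,\dot\pi)\Gamma(x,\dot\pi) p_0(x,\dot\pi)^{*} = 0$ for $\gamma$-almost every $(x,\dot\pi)$. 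The main obstacle lies in the bookkeeping: certifying that $B$ truly belongs to the classical calculus $\cM_N(\Psi^0_{cl}(\Omega))$ despite the non-compactly-supported Bessel factors $(\id+\cR)^{-m/\nu}$ (the outer cutoffs $\chi$ are there precisely to offset this), and correctly tracking the principal $0$-homogeneous symbol through a triple composition by matching the Bessel multipliers with their homogeneous principal parts $\pi(\cR)^{-m/\nu}$ modulo lower-order remainders, while confirming that lower-order contributions disappear from the limit via Rellich's theorem.
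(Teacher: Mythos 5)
Your proof is correct and follows essentially the same route as the paper's: testing the MDM against the sandwiched symbol $p^{*}\,\pi(\cR)^{-m/\nu}\,\sigma_0\,\pi(\cR)^{-m/\nu}\,p$, whose quadratic form vanishes in the limit precisely because $PU_k\to PU$ in $L^2_{-m}$. The additional cutoffs $\chi$, the treatment of the commutator $[P,\chi]$ via Rellich, and the explicit specialisation $\Phi=\chi_1\id$ to pass from vanishing of the integral to $\gamma$-a.e.\ vanishing of the non-negative trace are sensible refinements spelling out technical details the paper's terse proof elides.
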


\begin{proof}
We may assume that the sequence $(U_k)$ is pure.
The equation satisfied by $(U_k)_{k\in\bN}$ implies that for any 
$\sigma\in \cM_K(S^0_{cl}(\Omega))$
we have
$$
\lim_{k\to\infty}\left(\Op(\sigma) (\id+\cR)^{-{m\over\nu}}  PU_k, (\id+\cR)^{-{m\over\nu}} PU_k\right)_{L^2(\Omega)^K}=0.$$
By the definition of $\Gamma d\gamma$, we deduce 
$$\int_{\Omega\times (\Gh /\bR^{+})}
\trN \left(p(x,\dot\pi)^*\pi(\cR)^{-{m\over\nu}} \sigma_0 (x,\dot \pi) \pi(\cR)^{-{m\over\nu}} p(x,\dot\pi)\Gamma(x,\dot \pi) \right)
d  \gamma(x,\dot\pi) =0,$$
and this relation holds for any  $\sigma_0\in \cM_K(\dot S^0(\Omega))$, 
and this implies the result.
\end{proof}

\subsection{Compensated compactness}

The issue of compensated compactness result is to pass to the limit in quantities of the form 
$$\int _\Omega \phi(x)(q(x) U_k(x),U_k(x))_{\bC^N}dx,$$
for some compactly supported scalar-valued smooth  function $\phi$ and for smooth bounded matrix-valued function $q\in\cM_N(\cD(\Omega))$. The aim is to find conditions on the matrix $q$ which allow to pass to the limit in terms of weak limits $U$ of $(U_k)_{k\in\bN}$. 
The next proposition is a  compensated compactness result.
Recall that the spaces $L^2_s(\Omega, loc)$ were defined in Definition \ref{def_L2sloc}.

\begin{proposition}\label{prop:comp}
Let $p(x,\pi)$ be  the principal symbol of the matrix-valued differential operator $P\in \cM_{K,N}(S^m(\Omega))$
where $\Omega$ is an open subset of $G$.
Let $(U_k)_{k\in \bN}$ be a sequence in $L^2(\Omega,loc)^N$ which converges to $U$ weakly in $L^2(\Omega,loc)^N$
and such that $(PU_k)_{k\in \bN}$ converges to $PU$ in $L^2_{-m}(\Omega, loc)^K$.
\begin{itemize}
\item[(i)] 
Let $q\in\cM_N(\cC^\infty(\Omega))$ be such that 
for all $x\in \Omega$, $\pi\in \Gh$
and $h\in (\cH_\pi^\infty)^N$,
we have
$$
p(x,\pi)h=0 \Longrightarrow \left( q(x) h,h\right)_{\cH_\pi^N}= 0.
$$
Then the sequence of smooth functions given by $x\mapsto (q(x) U_k(x),U_k(x))_{\bC^N}$ converges to 
 $x\mapsto (q(x) U(x),U(x))_{\bC^N}$ in $\cD'(\Omega)$.
 \item[(ii)] 
Let $q\in\cM_N(\cC^\infty(\Omega))$ be such that $q^*=q$ and satisfying
for all $x\in \Omega$, $\pi\in \Gh$
and $h\in (\cH_\pi^\infty)^N$,
$$
p(x,\pi)h=0 \Longrightarrow \left( q(x) h,h\right)_{\cH_\pi^N}\geq 0.
$$
Then, for any non-negative $\phi\in\cD(\Omega) $,
 $$
 \liminf_{k\to\infty} \int _\Omega 
 \phi(x)(q(x) U_k(x),U_k(x))_{\bC^N}dx 
 \geq  
 \int_\Omega\phi(x) (q(x) U(x),U(x))_{\bC^N}dx.
 $$
\end{itemize}

\end{proposition}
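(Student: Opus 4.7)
My plan is to combine the vector-valued MDM theory (a vector analogue of Theorem~\ref{thm_defect_measure}) with the localisation property of Proposition~\ref{prop_localisation}. I first reduce to the case $U=0$ by setting $V_k:=U_k-U$, so that $V_k\rightharpoonup 0$ weakly in $L^2(\Omega,\mathrm{loc})^N$ and $PV_k\to 0$ in $L^2_{-m}(\Omega,\mathrm{loc})^K$. For a non-negative test function $\phi\in\cD(\Omega)$ I would expand
\[
\int_\Omega\phi(qU_k,U_k)_{\bC^N}\,dx=\int_\Omega\phi(qV_k,V_k)_{\bC^N}\,dx+I_k+\int_\Omega\phi(qU,U)_{\bC^N}\,dx,
\]
where $I_k$ gathers the two cross terms $\int\phi(qV_k,U)$ and $\int\phi(qU,V_k)$. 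These cross terms pair $V_k$ against the compactly supported $L^2$ functions $\phi q^{*}U$ and $\phi qU$, and hence $I_k\to 0$ by the weak convergence of $V_k$.

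The key step is the analysis of the diagonal term. Multiplication by the matrix $\phi q$ defines an operator in $\cM_N(\Psi^0_{cl}(\Omega))$: its integral kernel is supported in the diagonal over $\supp\phi\times\supp\phi$, and its principal symbol is $\phi(x)q(x)\otimes\mathrm{id}_{\cH_\pi}$. Extracting a subsequence along which $(V_k)$ admits a microlocal defect measure $\Gamma d\gamma$, the MDM formula then gives
\[
\lim_k\int_\Omega\phi(qV_k,V_k)_{\bC^N}\,dx=\int_{\Omega\times\Gh/\bR^+}\phi(x)\,\trN\bigl(q(x)\Gamma(x,\dot\pi)\bigr)\,d\gamma(x,\dot\pi).
\]
Proposition~\ref{prop_localisation} yields $p_0\Gamma p_0^*=0$ with $p_0=\pi(\cR)^{-m/\nu}p$. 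Since $\Gamma\geq 0$, this forces $\Gamma^{1/2}p_0^*=0$, hence $p_0\Gamma=0$, so the range of $\Gamma(x,\dot\pi)$ is contained in $K(x,\dot\pi):=\ker p_0(x,\dot\pi)$. Writing $P(x,\dot\pi)$ for the orthogonal projection of $\cH_\pi^N$ onto $K$, one has $\Gamma=P\Gamma P$, and cyclicity of the trace gives $\trN(q\Gamma)=\trN(PqP\cdot\Gamma)$.

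To conclude (i), I decompose $q=q_R+iq_I$ into Hermitian and anti-Hermitian parts; the hypothesis separates into $(q_Rh,h)=(q_Ih,h)=0$ for $h\in K$, and the standard polarisation identity for bounded Hermitian operators on a closed subspace upgrades this to $Pq_RP=Pq_IP=0$, hence $PqP=0$, so $\trN(q\Gamma)\equiv 0$ $\gamma$-a.e. and the diagonal term tends to zero along the subsequence. Since this argument applies to every subsequence, the entire sequence converges, giving (i). For (ii), the Hermicity of $q$ together with the hypothesis yields $PqP\geq 0$ on $\cH_\pi^N$, so $\trN(PqP\cdot\Gamma)=\trN\bigl((PqP)^{1/2}\Gamma(PqP)^{1/2}\bigr)\geq 0$, which combined with $I_k\to 0$ gives $\liminf_k\int\phi(qU_k,U_k)\geq\int\phi(qU,U)$.

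The main obstacle I anticipate is a density issue: the hypothesis is stated for smooth vectors $h\in(\cH_\pi^\infty)^N$ with $p(x,\pi)h=0$, whereas the polarisation step requires the corresponding quadratic condition on the full closed subspace $K(x,\dot\pi)=\ker p_0$, because the eigenvectors of the trace-class operator $\Gamma(x,\dot\pi)$ need not be smooth. I plan to handle this by regularising elements of $K$ via the spectral projections of $\pi(\cR)$ and exploiting that $p_0$ is a bounded operator of order zero: together with the boundedness of $q$, this allows the quadratic conditions to extend by continuity from the dense subspace $\ker p\cap(\cH_\pi^\infty)^N$ of $K$ to the whole of $K$.
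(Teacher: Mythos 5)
Your proof takes a genuinely different route from the paper. You bypass the paper's central technical step, which is a G\aa rding-type inequality
\[
(q(x)h,h)_{\cH_\pi^N}+C_\eps\|p_0(x,\dot\pi)h\|^2_{\cH_\pi^K}\ge-\eps\|h\|^2_{\cH_\pi^N},
\]
proved uniformly on $\Omega'\times\Gh/\bR^+$ by contradiction via weak-$*$ limits of states of $C^*(\cM_N(\dot S^0(\Omega')))$, followed by the localisation property to kill the $p_0^*p_0$ term. Instead you exploit the localisation directly: from $p_0\Gamma p_0^*=0$ and $\Gamma\ge 0$ you deduce $p_0\Gamma^{1/2}=0$, hence $\Gamma=P\Gamma P$ with $P$ the orthogonal projection onto $\ker p_0$, and reduce the whole problem to showing $PqP\ge 0$ pointwise. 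This is more direct and sidesteps the uniformity-in-$(x,\dot\pi)$ question entirely, since the conclusion is obtained at the level of the integrand $\trN(PqP\,\Gamma)\ge 0$ before integrating against the positive measure $\phi\,d\gamma$. The price is that you must pass from the hypothesis, stated on smooth vectors annihilated by $p$, to the full closed subspace $\ker p_0(x,\dot\pi)$, and that is precisely the point on which the paper relies on the $C^*$-algebraic machinery (disintegration into pure states, Krein--Milman, density of states built from smooth vectors).

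The density claim you flag — that $\ker p(x,\pi)\cap(\cH_\pi^\infty)^N$ is dense in $K(x,\dot\pi)=\ker p_0(x,\dot\pi)$ — is the real gap in your write-up, and the remedy you sketch does not work as stated. Spectral projections $E_{\pi(\cR)}(I)$ of $\pi(\cR)$ do produce smooth vectors, but they do not commute with $p_0(x,\dot\pi)$, so applying them to an element of $K$ takes you out of $K$; and a smooth approximant $h_n\to h$ in $\cH_\pi^N$ need only satisfy $p_0h_n\to 0$, which is not enough to invoke the hypothesis (which requires $p(x,\pi)h_n=0$ exactly). You should either (a) prove the density separately for principal symbols of differential operators, or (b) reformulate the positivity hypothesis so that it is a statement about the kernel of the bounded operator $p_0(x,\dot\pi)$ acting on all of $\cH_\pi^N$, which is what the eigenvectors of the trace-class operator $\Gamma(x,\dot\pi)$ actually live in. Note that the paper's own proof has the same implicit reliance (it applies the hypothesis to $V(x_0,\dot\pi_0)v_0$ with $v_0$ a not-necessarily-smooth unit vector coming from the disintegration), so your instinct to pause here was correct; but since you have identified the issue explicitly, you need to close it cleanly rather than gesture at regularisation.

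Two smaller points: your derivation of the formula $\lim_k(\phi q V_k,V_k)=\int\phi\,\trN(q\Gamma)\,d\gamma$ treats $\phi q$ as a classical zero-order operator, which is fine since multiplication by a compactly supported smooth matrix-valued function has integral kernel $\phi(x)q(x)\delta_{x=y}$ supported in $\supp\phi\times\supp\phi$ and principal symbol $\phi(x)q(x)\id_{\cH_\pi}$. And the argument from $\Gamma=P\Gamma P$ and $PqP\ge 0$ to $\trN(q\Gamma)\ge 0$ via $\trN(q\Gamma)=\trN(PqP\Gamma)=\trN\bigl(\Gamma^{1/2}PqP\Gamma^{1/2}\bigr)\ge 0$ is sound; likewise the polarisation step giving $Pq_RP=Pq_IP=0$ in Part~(i) once you know $(q_Rh,h)=(q_Ih,h)=0$ on all of $K$.
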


\begin{proof}[Proof of Proposition~\ref{prop:comp}]
The proof follows the lines of~\cite[Theorem~2]{gerard_91}. 
Part (i) follows from Part (ii), using in particular the decomposition of $q=q_1+iq_2$ with $q_1,q_2$ smooth function valued in the space of Hermitian $N\times N$-matrices.
 So we just have to prove Part (ii).
We may assume that the sequence $(U_k)$ is pure and that $U=0$. As a consequence, we know that 
$$
\lim_{k\to \infty} 
\int _\Omega \phi(x)(q(x) U_k(x),U_k(x))_{\bC^N}dx
= 
\int_{\Omega\times \Gh/\bR^+}
\phi(x) {\tr}(q(x) \Gamma(x,\dot\pi)) d\gamma(x,\dot\pi)
$$
and our aim is to show that the right-hand side of the preceding equality is $0$.
 The proof comes from the following observation: 
fixing $\Omega'$  a bounded open subset of $G$ whose closure is included in~$\Omega$, 
 we have
\begin{align}
\forall \eps>0,\quad \exists C_\eps>0,\quad \forall (x,\dot\pi)\in \Omega'\times \Gh/\bR^+,\quad\forall h\in\cH_\pi,\quad \quad\quad\nonumber
\\ (q(x)h,h)_{\cH_\pi^N}+C_\eps\| p_0(x,\dot\pi) h\| ^2_{\cH_\pi^K} \geq -\eps \| h\|^2_{\cH_\pi^N}, 
\label{compactness_bis}
\end{align}
where  $p_0(x,\dot\pi)=\pi(\cR)^{-{m\over\nu}} p(x,\dot\pi) $.
Indeed, this equation yields the positivity of the operator 
$$R_\eps(x,\dot \pi) = q(x) + C_\eps p_0^*(x,\dot\pi) p_0(x,\dot \pi) + \eps {\rm Id}$$
and  we deduce 
$$ 
\int_{\Omega\times \Gh/\bR^+}
\phi(x) \trN(R_\eps(x,\dot\pi) \Gamma(x,\dot\pi)) d\gamma(x,\dot\pi) \geq 0  .
$$
On the other hand, 
$$
\trN(R_\eps(x,\dot\pi) \Gamma(x,\dot\pi))= 
\trN(q(x) \Gamma(x,\dot\pi))
+ C_\eps \trK(p_0(x,\dot\pi) \Gamma(x,\dot\pi)p_0^*(x,\dot\pi)) 
+\eps\, \tr(\Gamma(x,\dot\pi)).
$$
By Proposition \ref{prop_localisation}, we obtain that for all $\phi\in \cD(\Omega')$ and $\eps>0$, 
$$ 
\int_{\Omega\times \Gh/\bR^+}
\phi(x) \trN(q(x)\Gamma(x,\dot\pi)) d\gamma(x,\dot\pi) 
\geq -\eps  
\int_{\Omega\times \Gh/\bR^+}\phi(x) {\tr}\Gamma(x,\dot\pi) \ d\gamma(x,\dot\pi),
$$
which allows us to conclude. 

\medskip 

It remains to prove~(\ref{compactness_bis}). We first note that the injectivity of $\pi(\cR)$ implies that the property on~$q$ with respect to $p$ is also satisfied by $p_0$,  i.e. 
for all $x\in \Omega$, $\pi\in \Gh$
and $h\in \cH_\pi^N$,
$$
 p_0(x,\pi)h=0 \Longrightarrow \left( q(x) h,h\right)_{\cH_\pi^N}\geq  0.
$$
We argue by contradiction: if~(\ref{compactness_bis}) is false, then there exists  $\eps_0>0$
and sequences $(x_n)_{n\in \bN}$ of $\Omega'$
and 
$(\dot\pi_n)_{n\in \bN}$ of $\Gh/\bR^+$, 
together with unit vectors  $h_n\in \cH_{\pi_n}^N$ for each $n\in \bN$
  such that 
$$
(q(x_n)h_n,h_n)_{\cH_{\pi_n}^N}  + n\| p_0(x_n,\dot\pi_n)h_n\|_{\cH_{\pi_n}^N} ^2<-\eps_0.
$$
We interpret these sequences as the data of a sequence of states $(L_n)_{n\in\bN}$ of the $C^*$-algebra $C^*(\cM_N(\dot S^0(\Omega')))$, defined by 
$$\forall \sigma \in C^*(\cM_N(\dot S^0(\Omega'))),\;\;L_n(\sigma) = \left(\sigma(x_n,\dot\pi_n)h_n,h_n\right)_{\cH_\pi^N}.$$
We have 
$$\forall n\in\bN,\;\; L_n(q) + n L_n(p_0^*p_0)<-\eps_0,$$
in particular $L_n(q)<-\eps_0 $ for all $ n\in\bN$.
We extract a weak-$*$ converging sub-sequence from of $(L_n)_{n\in\bN}$
and
we denote its weak limit by $L$. Note that $L$ is a state and it satisfies $L(q)\leq -\eps_0$ and $L(p^*_0p_0)=0$. 
Desintegrating $L$ into pure states \cite[\S 8.8]{Dixmier_C*}
and 
combining  Lemma \ref{lem_MN(A)} with Proposition~\ref{prop_state_C*dotS0Omega},  we obtain $L$ as an integral of states of the form 
$\sigma \mapsto 
(V^*(x_0,\dot \pi_0) \sigma  (x_0,\dot \pi_0)  V(x_0,\dot \pi_0) v_0, v_0)_{\cH_{\pi_0}}$
against a positive measure $\nu$. 
Since $L(p^*_0p_0)=0$, we have:
$$
(V^*(x_0,\dot \pi_0) p_0^*p_0  (x_0,\dot \pi_0)  V(x_0,\dot \pi_0) v_0, v_0)_{\cH_{\pi_0}}
=
\|p_0  (x_0,\dot \pi_0)  V(x_0,\dot \pi_0) v_0\|^2_{\cH_{\pi_0}^K} =0
\quad \nu\mbox{-a.e.}
$$
But our hypothesis implies 
$$
\left( q(x_0) V(x_0,\dot \pi_0) v_0,V(x_0,\dot \pi_0) v_0\right)_{\cH_\pi^N}\geq  0
\quad \nu\mbox{-a.e.},
$$
and therefore $L(q)\geq0$.
This contradicts $L(q)\leq -\eps_0<0$.
Hence (\ref{compactness_bis}) is proved and this concludes the proof of 
Proposition \ref{prop:comp}.
\end{proof}

\subsection{Link with {\it div-curl} results}

Our result below gives a new approach to {\it div-curl lemma}, which had already been considered from a more geometric (sub-Riemannian) perspective in \cite{baldi_franchi,baldi_franchi_tchou_tesi}. 
We assume that $G$ is a stratified Lie  group. We fix a canonical basis $X_1,\ldots, X_{n_1}$ on the first stratum. Then the divergence operator is defined by 
$$
\forall f=(f_1,\cdots,f_{n_1})\in \cS(G)^{n_1},\quad 
\div (f)=X_1 f_1 + \cdots + X_{n_1} f_{n_1}.
$$
We denote by $\pi(\div)$ the symbol of the operator $\div$. This symbol is a vector of $n_1$ symbols of order~$1$, i.e.  $\div \in \cM_{1,n_1}(S_{cl}^1(G))$.
We define the curl-property as follows:
\begin{definition}
\label{def_curl_property}
Let $\Omega$ be an open subset of $G$.
A $n_1\times n_1$ matrix   $\rho(x,\pi)\in \cM_{n_1}(S^m_{cl}(\Omega))$ of symbols of order~$m$ satisfies the \emph{curl}-property when, for all $x\in \Omega$, $\pi\in \Gh$, $h_1,h_2\in (\cH_\pi^\infty)^{n_1}$:
$$
\pi(\div)\cdot h_1=0\;{\rm and} \; \rho(x,\pi) h_2=0 \Longrightarrow (h_1,h_2)_{\cH_{\pi}^{n_1}}=0.
$$
\end{definition}

Recall that the spaces $L^2_s(\Omega, loc)$ were defined in Definition \ref{def_L2sloc}.
We have the following {\it div-curl} type result. 

\begin{proposition}\label{divcurl}
Let $\Omega$ be an open subset of $G$.
Let $(V_k)_{k\in\bN}$ and $(W_k)_{k\in\bN}$ be two bounded pure families of $L^2(\Omega,loc)^{n_1}$  with weak limits $V$ and $W$ respectively.
We assume
that  the sequence of scalar functions $(\div(V_k))_{k\in \bN}$ converges  to $\div(V)$  in $L^2_{-1}(\Omega,loc)$ as $k\to +\infty$
 and 
 that  $(\Op(\rho)W_k)_{k\in \bN}$ converges to $\Op(\rho)W$ in $L^2_{-m}(G,loc)^{n_1}$ 
where $\rho(\pi)\in \cM_{n_1\times n_1}(S^m_{cl}(\Omega))$ satisfies the curl-property, cf Definition \ref{def_curl_property}.
Then the sequence of functions given by $x\mapsto \phi(x) ( V_k(x),W_k(x))_{\bC^{n_1}}$ converges to
$x\mapsto \phi(x)( V(x),W(x))_{\bC^{n_1}}$ in the sense of distribution on $\Omega$ as $k\to\infty$.
\end{proposition}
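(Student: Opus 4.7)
The plan is to pack the two vector sequences into a single $L^2(\Omega,loc)^{2n_1}$-valued sequence $U_k := (V_k, W_k)$ with weak limit $U := (V, W)$, and rewrite the scalar pairing as a quadratic form on $U_k$. With the $C^\infty$ matrix multiplier
$$q(x) := \phi(x)\begin{pmatrix} 0 & 0 \\ I_{n_1} & 0 \end{pmatrix} \in \cM_{2n_1}(\cC^\infty(\Omega)),$$
one checks that $(q(x)h, h)_{\cH_\pi^{2n_1}} = \phi(x)(h_1, h_2)_{\cH_\pi^{n_1}}$ for $h = (h_1, h_2)$, and in particular
$$\int_\Omega \phi(x)(V_k(x), W_k(x))_{\bC^{n_1}}\,dx = (q\, U_k, U_k)_{L^2(\Omega)^{2n_1}}.$$
Expanding this quantity around $U$ kills the two cross-terms by the weak convergence of $U_k$ to $U$, so it suffices to prove that $(q(U_k-U), U_k-U)_{L^2} \to 0$. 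Up to extracting a subsequence (harmless, because the limit will turn out to be independent of the extraction), one may assume that $U_k - U$ is a pure sequence weakly tending to $0$ with vector-valued MDM $\Gamma\,d\gamma$, so that
$$(q(U_k - U), U_k - U)_{L^2} \underset{k\to\infty}{\longrightarrow} \int_{\Omega\times\Gh/\bR^+} \trN\!\left(q(x)\,\Gamma(x,\dot\pi)\right) d\gamma(x,\dot\pi).$$

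The hypotheses supply two localisation constraints of distinct orders: the matrix-valued differential operator $P_1 := (\div, 0) \in \cM_{1,2n_1}(S^1(\Omega))$ satisfies $P_1(U_k - U) = \div(V_k - V) \to 0$ in $L^2_{-1}(\Omega, loc)$, and $P_2 := (0, \Op(\rho)) \in \cM_{n_1, 2n_1}(S^m(\Omega))$ satisfies $P_2(U_k-U) = \Op(\rho)(W_k - W) \to 0$ in $L^2_{-m}(\Omega, loc)^{n_1}$. Applying Proposition~\ref{prop_localisation} once for each operator, one obtains, for $\gamma$-a.e.~$(x,\dot\pi) \in \Omega \times \Gh/\bR^+$, the two identities
$$p_1(x,\dot\pi)\,\Gamma(x,\dot\pi)\,p_1(x,\dot\pi)^* = 0 \quad\text{and}\quad p_2(x,\dot\pi)\,\Gamma(x,\dot\pi)\,p_2(x,\dot\pi)^* = 0,$$
where $p_1(x,\dot\pi) := \pi(\cR)^{-1/\nu}(\pi(\div),0)$ and $p_2(x,\dot\pi) := \pi(\cR)^{-m/\nu}(0,\rho(x,\pi))$ are the normalised principal symbols.

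Since $\Gamma(x,\dot\pi)$ is a positive trace-class operator on $\cH_\pi^{2n_1}$, each of these identities forces $p_i\,\Gamma^{1/2} = 0$, so that every vector in the range of $\Gamma$ lies in $\ker p_1 \cap \ker p_2$. Decomposing spectrally $\Gamma = \sum_j \lambda_j\, e_j\otimes e_j^*$ (with $\lambda_j > 0$ and eigenvectors written as $e_j = (h_1^j,h_2^j) \in \cH_\pi^{n_1} \oplus \cH_\pi^{n_1}$), and using that $\pi(\cR)^{-s/\nu}$ is injective on smooth vectors for non-trivial~$\pi$, one reduces the two conditions to $\pi(\div)\,h_1^j = 0$ and $\rho(x,\pi)\,h_2^j = 0$ for every $j$. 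The curl-property then yields $(h_1^j, h_2^j)_{\cH_\pi^{n_1}} = 0$ for every $j$, whence
$$\trN(q(x)\Gamma(x,\dot\pi)) = \phi(x)\sum_j \lambda_j(x,\dot\pi)(h_1^j,h_2^j)_{\cH_\pi^{n_1}} = 0$$
$\gamma$-almost everywhere, so that the MDM limit vanishes as required.

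The main delicate step is the passage from the operator identity $p_i\,\Gamma\, p_i^* = 0$ to a pointwise constraint on the eigenvectors of $\Gamma$; this relies on the positivity of $\Gamma$ together with a measurable selection of its spectral decomposition, and is the place where the block-diagonal structure of the problem plays a genuine role. Everything else is a direct combination of Theorem~\ref{thm_defect_measure}, the localisation property of Proposition~\ref{prop_localisation}, and the curl-property of~$\rho$.
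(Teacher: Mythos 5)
Your argument is correct, but it takes a genuinely different route from the paper's proof. The paper packs the two constraints into a single matrix-valued operator $p(x,\pi) := \begin{pmatrix} \cR(\pi)^{m-1}\pi(\div) & 0 \\ 0 & \rho(x,\pi) \end{pmatrix}$ of uniform order $m$ (padding the order-$1$ divergence by $\cR(\pi)^{m-1}$ so that the block matrix is homogeneous of a single degree) and then invokes the general compensated compactness result, Proposition~\ref{prop:comp}~(i), in one shot; Proposition~\ref{prop:comp}~(i) is itself proved via the G\aa rding-type inequality \eqref{compactness_bis} and a compactness/contradiction argument in the state space of $C^*(\cM_N(\dot S^0(\Omega')))$. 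You instead apply the localisation result, Proposition~\ref{prop_localisation}, twice --- once at order $1$ for the divergence block and once at order $m$ for the curl block --- and then close the argument with an elementary operator-theoretic observation about the MDM: since $\Gamma \geq 0$, the identities $p_i\Gamma p_i^* = 0$ force $p_i$ to annihilate $\mathrm{Ran}(\Gamma)$, so $\mathrm{Ran}(\Gamma) \subseteq K_1 \oplus K_2$ where $K_1 = \ker\bigl(\pi(\cR)^{-1/\nu}\pi(\div)\bigr)$ and $K_2 = \ker\bigl(\pi(\cR)^{-m/\nu}\rho(x,\pi)\bigr)$, and the curl-property says exactly $K_1 \perp K_2$, whence $\trN(q\Gamma)=0$. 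What each route buys: the paper's version is modular and short once Proposition~\ref{prop:comp} is available, and reusable for general quadratic forms $q$; yours is self-contained and more transparent, sidesteps the G\aa rding inequality and the contradiction argument entirely, and works at the two natural homogeneity orders without any padding. One small improvement to your write-up: the ``measurable spectral decomposition'' you flag as delicate can be avoided --- writing $\Pi_i$ for the orthogonal projection of $\cH_\pi^{n_1}$ onto $K_i$, the constraints give $(\Pi_1\oplus\Pi_2)\Gamma(\Pi_1\oplus\Pi_2)=\Gamma$, and then $\trN(q\Gamma)=\trN\bigl((\Pi_1\oplus\Pi_2)q(\Pi_1\oplus\Pi_2)\Gamma\bigr)$, where the compressed $q$ is $\begin{pmatrix} 0 & 0 \\ \Pi_2\Pi_1 & 0 \end{pmatrix}=0$ by the curl-property; this dispenses with any measurable selection of eigenvectors, as the projections $\Pi_i$ depend measurably on $(x,\dot\pi)$ through the symbols themselves.
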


\begin{proof}
We set:
$$
p(x,\pi):= \begin{pmatrix}
{\mathcal R}(\pi)^{ m-1}\pi(\div) &  0 \\
0 & \rho(x,\pi)
\end{pmatrix} 
\in \cM_{n_1+1,2n_1}(S^m_{cl}(\Omega))
\quad\mbox{and}\quad
 q(x):=\begin{pmatrix} 0 &  0 \\
\id_{n_1} & 0
\end{pmatrix}
\in \cM_{2n_1}(\bC).
$$
Then, for any $h=(h_1,h_2)\in (\cH_\pi^\infty)^{n_1}\times (\cH_\pi^\infty)^{n_1}=(\cH_\pi^\infty)^{2n_1} $, we have 
$$
(q(x)h,h)_{\cH_{\pi}^{2n_1}}=(h_1,h_2)_{\cH_{\pi}^{n_1}},
$$
and
$$
p(x,\pi)h=0
\quad \Longleftrightarrow \quad
\pi(\div)\cdot h_1=0
\;\; {\rm and} \;\; \rho(x,\pi) h_2=0.
$$
The curl-property allows us to apply Proposition~\ref{prop:comp} Part (i) to the sequence $(U_k)_{k\in \bN}$ given by $U_k=(V_k,W_k)\in \bC^{2n_1}$, the operator $P={\rm Op}(p)$ and the matrix-valued function $q(x)$.
The statement follows.
\end{proof}


\bibliographystyle{amsplain}

\providecommand{\bysame}{\leavevmode\hbox to3em{\hrulefill}\thinspace}
\providecommand{\MR}{\relax\ifhmode\unskip\space\fi MR }
\providecommand{\MRhref}[2]{%
  \href{http://www.ams.org/mathscinet-getitem?mr=#1}{#2}
}
\providecommand{\href}[2]{#2}
\begin{thebibliography}{}

\end{thebibliography}


\begin{thebibliography}{99}

\bibitem{AG} S. Alinhac, P. G\'erard, Op\'erateurs pseudo-diff\'erentiels
et th\'eor\`eme de Nash-Moser. {\it InterEditions/Editions du CNRS}, 1991.

\bibitem{AnantharamanMacia}
N. Anantharaman, F. Maci\`{a},
\newblock Semiclassical measures for the {S}chr\"odinger equation on the torus.
 {\it Journal of the European Mathematical Society}, 16(6) (2014), 1253--1288.
 
\bibitem{AnantharamanMaciaSurv}
N. Anantharaman, F. Maci\`{a},
\newblock The dynamics of the {S}chr\"odinger flow from the point of view of
  semiclassical measures.
\newblock In {\em Spectral Geometry}, Proc. Sympos. Pure Math. Amer. Math.
  Soc., 84 (2012), Amer. Math. Soc., Providence, RI, 93--116.
\newblock to appear.

\bibitem{AnRiv}
N. Anantharaman, G.Rivi\`ere.
\newblock Dispersion and controllability for the {S}chr\"odinger equation on
  negatively curved manifolds.
{\it Analysis \& PDE}, 5(2) (2012), 313--338.

\bibitem{BFG} H. Bahouri, C. Fermanian Kammerer, I. Gallagher,
Phase space analysis and pseudodifferential calculus on the Heisenberg group,  {\it AstŽ\'erisque}, 342 (2012).

\bibitem{baldi_franchi} A. Baldi, B. Franchi, Sharp a priori estimates for div-curl systems in Heisenberg groups, {\it J. Funct. An.}, 265 (2013), 2388--2419.

\bibitem{baldi_franchi_tchou_tesi} A. Baldi, B. Franchi, N. Tchou and M. C. Tesi, Compensated Compactness for Differential Forms in Carnot Groups and Applications, {\it Advances in  Mathematics}, {223}  (2010), 1555--1607.


\bibitem{rbeals2}
R. Beals, Weighted distribution spaces and pseudodifferential
operators, {\it Journal d'Analyse Math\'ematique}, 39 (1981),
130--187.

\bibitem{bealsgreiner} R. Beals, P.  Greiner,   Calculus on Heisenberg manifolds,
{\it Annals of Mathematics Studies}, 119,  Princeton University Press,
Princeton, NJ, 1988. 

\bibitem{bony} J.-M. Bony, Calcul symbolique et propagation des
singularit{\'e}s pour les {\'e}quations aux d{\'e}riv{\'e}es
partielles non lin{\'e}aires, {\it Ann. Sci.  {\'E}c. Norm.
Sup.}, {14} (1981), 209--246.

\bibitem{brown}
 {I. Brown},
{Dual topology of a nilpotent {L}ie group},
   {\it Ann. Sci. \'Ec. Norm. Sup.},
4(6) (1973).
    
\bibitem{B}  N. Burq, Mesures semi-classiques et mesures de d\'efaut, {\it S\'eminaire Bourbaki}, 49-i\`eme ann\'ee, 1996-97, 826.
    
\bibitem{BL} N. Burq, G. Lebeau, Mesures de d\'efaut de compacit\'e, application au syst\`eme de Lam\'e. {\it Ann. Sci. \'Ec. Norm. Sup.} {34} (2001), 817-870.

    
\bibitem{colin_de_verdiere_trelat} Y. Colin de Verdi\`ere, L. Hillairet, E. Trelat,  Spectral asymptotics for sub-Riemannian Laplacians. I: quantum ergodicity and quantum limits in the 3D contact case, https://arxiv.org/abs/1504.07112.

\bibitem{christ_etc}
 {M. Christ and D. Geller and P.  G\l owacki  and L. 
              Polin},
 {Pseudodifferential operators on groups with dilations},
  {\it Duke Math. J.}, 68(1) (1992),
{31--65}.

    
\bibitem{gellercore} S. Cor\'e and D. Geller, H\"ormander type pseudodifferential calculus on homogeneous groups, {\it pr\'epublication}, 2008.
    
\bibitem{corwingreenleaf} L.-J. Corwin and F.-P. Greenleaf, Representations of nilpotent Lie groups and their applications, Part 1: Basic theory and examples,  {\it Cambridge studies in advanced Mathematics},  {18},  Cambridge university Press, 1990.
		
\bibitem{di_perna_lions} Di Perna, P. L. Lions, Global weak solutions of Vlasov Maxwell systems, {\it Comm. Pure Appl. Math.}, 42 (1989), 729--757.

\bibitem{DimassiSjostrand}
M. Dimassi and J. Sj{\"o}strand.
\newblock {Spectral asymptotics in the semi-classical limit}, volume 268 of
  {\it London Mathematical Society Lecture Note Series}.
\newblock Cambridge University Press, Cambridge, 1999.


\bibitem{Dixmier_C*}
 {J. Dixmier},
  {{$C\sp*$}-algebras},
  {Translated from the French by Francis Jellett},
{\it              North-Holland Mathematical Library}, 15,
  {North-Holland Publishing Co., Amsterdam-New York-Oxford},
  {1977}.


\bibitem{EvansZworski}
L.~C. Evans and M. Zworski,
\newblock {\em Lectures on semiclassical analysis}.
\newblock 2010.
\newblock Avalaible at:
\url{http://www.math.berkeley.edu/~zworski/semiclassical.pdf}.


\bibitem{Fermanian2micro}
C. Fermanian-Kammerer,
\newblock Mesures semi-classiques 2-microlocales.
\newblock {\it C. R. Acad. Sci. Paris S\'er. I Math.}, 331(7) (2000), 515--518.

\bibitem{FermanianShocks}
C. Fermanian~Kammerer.
\newblock Propagation and absorption of concentration effects near shock
  hypersurfaces for the heat equation.
\newblock {\it Asymptot. Anal.}, 24(2) (2000), 107--141.

\bibitem{fischer16}
 {V. Fischer and M. Ruzhansky},
Fourier multiplier on graded Lie groups, 
submitted and
ArXiv:1411.6950.

\bibitem{R+F_monograph}
 {V. Fischer and M. Ruzhansky},
 {Quantization on nilpotent Lie groups},
 {\it Progress in Mathematics}, 314,
 Birkh\"auser Basel, 2016.

\bibitem{folland-75}
 {G. Folland},
{Subelliptic estimates and function spaces on nilpotent {L}ie
              groups},
{\it Ark. Mat.},
 13(2)
(1975),
 {161--207}.
 
\bibitem{folland+stein_82}
 {G. Folland and E. Stein},
 {Hardy spaces on homogeneous groups},
    {\it Mathematical Notes},
     {28},
 {Princeton University Press},
  {1982}.

\bibitem{francfort_murat} G. A. Francfort, F. Murat, Oscillations and energy
densities in the wave equation. {\it Comm. in Part. Diff. Eq.}
{17} (1992),  1785--1865.

\bibitem{geller} D. Geller,   Analytic pseudodifferential operators for the Heisenberg group and local solvability, {\it  Mathematical Notes}, { 37},  Princeton University Press, 1990.



 \bibitem{gerard_91}
{P. G\'erard},
{Microlocal defect measures},
{\it Commun. in PDE}, 16(11) (1991), 1761-1794.

\bibitem{GMMP} P. G\'erard, P. A. Markowich, N. J. Mauser, F. Poupaud:
Homogenization Limits and Wigner Transforms, {\it Comm. Pure Appl.
Math.},  50(4) (1997), 323--379 and Erratum: Homogenization
limits and Wigner Transforms, {\it Comm. Pure Appl. Math.}, {
53} (2000), 280--281.



\bibitem{hormander}
L. H\"ormander,  The analysis of linear partial differential
equations, {\it Springer Verlag}, 3, 1985.

\bibitem{hulanicki} A. Hulanicki,
A functional calculus for Rockland operators on nilpotent Lie
groups, {\it Studia Mathematica}, {78} (1984), 253--266.

\bibitem{lebeau1} G. Lebeau, Equation des ondes amorties, {\it S\'eminaire de l'Ecole polytechnique}, Expos\'e XV (1994).

\bibitem{lebeau2} G. Lebeau: Equation des ondes amorties, {\it Algebraic and Geometric Methods in Mathematical Physics} (Kaciveli, 1993), Math. Phys. Stud., 19, Kluwer Acad. Publ., Dordrecht, 1996.

\bibitem{MaciaTorus}
Fabricio Maci\`{a}.
\newblock High-frequency propagation for the {S}chr\"odinger equation on the
  torus.
\newblock {\it J. Funct. Anal.}, 258(3) (2010), 933--955.


\bibitem{miller}
L. Miller,
\newblock { Propagation d'ondes semi-classiques \`{a} travers une interface
  et mesures 2-microlocales}.
\newblock PhD thesis, \'Ecole Polythecnique, Palaiseau, 1996.

\bibitem{nier}
F. Nier,
\newblock A semi-classical picture of quantum scattering.
\newblock {\it Ann. Sci. \'Ec. Norm. Sup.}, 29(2) (1996), 149--183.


\bibitem{pedersen}
 {N. Pedersen},
 {Matrix coefficients and a {W}eyl correspondence for nilpotent
              {L}ie groups},
 {\it Invent. Math.}
118 (1) (1994), {1--36}.



\bibitem{tartar} L. Tartar, H-measures, a new approach for studying
homogenisation, oscillations and concentration effects in partial
differential equations, {\it Proc. Roy. Soc. Edinburgh}, Sect.~A,
{ 115} (1990), 193-230.


\bibitem{taylor_84}
 {M. Taylor},
 {Noncommutative microlocal analysis. {I}},
    {\it Mem. Amer. Math. Soc.},
 {52}, {1984}.


\bibitem{U} A.  Unterberger, Oscillateur harmonique et op\'erateurs pseudo-diff\'erentiels, {\it Annales de l' Institut de Fourier}, {29 (3)}  (1979), 201--221.

	
\end{thebibliography}

\end{document}